\documentclass[12pt,oneside]{amsart}

\usepackage[a4paper, margin=3cm]{geometry}
\usepackage{amsmath}
\usepackage{amsthm}
\usepackage{amsfonts}
\usepackage{amssymb}
\usepackage{mathabx}
\usepackage{mathtools}
\usepackage[normalem]{ulem}
\usepackage{tikz}

\usepackage{graphicx}
\usepackage{caption}
\usepackage{subcaption}
\captionsetup[subfigure]{labelfont=rm}

\usepackage[export]{adjustbox}

\usepackage[nodayofweek]{datetime}
\usepackage{hyperref}
\usepackage{hyphenat}

\usepackage{color}
\usepackage{transparent}

\usepackage{thm-restate} %allows restating a theorem with the same numbering

\definecolor{Green}{RGB}{30, 150, 30}

\newcommand{\sminus}{\mathbin{\setminus}}

\newtheorem{theorem}{Theorem}[section]
\newtheorem{lemma}[theorem]{Lemma}
\newtheorem{proposition}[theorem]{Proposition}
\newtheorem{corollary}[theorem]{Corollary}

\theoremstyle{definition}
\newtheorem{definition}[theorem]{Definition}
\newtheorem{example}[theorem]{Example}
\newtheorem{remark}[theorem]{Remark}

\declaretheorem[
  style=remark,
  name=Claim,
  within=theorem,
]{claim}

\newcommand{\sep}{\operatorname{Sep}}

 %%%%Graph of disjoint witnesses on S_{g,1}
\newcommand{\FO}{\mathcal{F}_0(\Sigma)} %%%%%Set of multicurves which are the image of \DW under forgetfull map
\newcommand{\F}{\mathcal{F}(\Sigma)} %%%%%Graphbuilt from \FO
\newcommand{\C}{\mathcal{C}}
\newcommand{\ksep}{\mathcal{K}}

\newcommand{\mcg}{\operatorname{MCG}}
\newcommand{\pmcg}{\operatorname{PMCG}}

\newcommand{\G}{\mathcal{G}}

\newcommand{\diam}{\text{diam}}

\newcommand{\mc}[1]{\mathcal{#1}}
\newcommand{\mf}[1]{\mathfrak{#1}}
\newcommand*\Wit{\operatorname{Wit}}
\newcommand{\hierarchical}{hierarchical}

\newcommand{\Dis}{\operatorname{Dis}}
\newcommand{\PDis}{\operatorname{PDis}}

%%%%Threshold function macros%%%%%%%%%%%
\newcommand{\tsh}[1]{\left\{\kern-.7ex\left\{#1\right\}\kern-.7ex\right\}}
\newcommand{\Tsh}[2]{\tsh{#2}_{#1}}
\newcommand{\ignore}[2]{\Tsh{#2}{#1}}
%%%%%%%%%%%%%%%%%%%%%%%%%%%%%%%%%%%%%%%5

%%%% Table of Contents Control %%%%%%%

\setcounter{tocdepth}{1}

%%%%%%%%%%%%%%%%%%%%%%%%%%%%%%%%%%%%%%

\makeatletter
\def\subsection{\@startsection{subsection}{2}
  \z@{.5\linespacing\@plus.7\linespacing}{.3\linespacing}
  {\normalfont\scshape}}
\makeatother

\title{Thickness and Relative Hyperbolicity for Graphs of Multicurves}

\author{Jacob Russell}

\address{Rice University, 6100 Main St, Houston, TX 77005, USA}
\email{jacob.russell@rice.edu}

\author{Kate M. Vokes}
\address{University of Luxembourg, 2 avenue de l'Universit\'e, 4365 Esch-sur-Alzette, Luxembourg}
\email{kate.vokes@uni.lu}

\begin{document}

\maketitle

\begin{abstract}
    We prove that any  graph of multicurves satisfying certain natural properties is either hyperbolic, relatively hyperbolic, or thick. Further, this geometric characterization is determined by the set of subsurfaces that intersect every vertex of the graph. This extends  previously established results for the pants graph and the separating curve graph to a broad family of graphs associated to surfaces.
\end{abstract}

\tableofcontents

\section{Introduction}

Given a connected, oriented, finite type surface~$S$, there are many graphs we can associate to~$S$ whose vertices represent curves or multicurves in~$S$.
Such graphs have been central to the study of Teichm\"uller spaces, geometric structures on 3\hyp{}manifolds, and mapping class groups.
A prominent example is the curve graph $\C(S)$, which has a vertex for every isotopy class of essential simple closed curves on~$S$ and an edge joining two vertices if the curves are disjoint.
A seminal result in this area is Masur and Minsky's proof that the curve graph is Gromov hyperbolic~\cite{mm1}. However, many other graphs naturally associated to surfaces are not Gromov hyperbolic.

 The object of this article is to give a classification of the coarse geometry of  the \emph{hierarchical} graphs of multicurves defined by the second author in~\cite{vokessep}.
The definition of a hierarchical graph of multicurves (see Definition~\ref{definition:hierarchical}) is in fact a list of properties that hold for many naturally defined graphs of multicurves, in particular, the curve graph, the pants graph, the separating curve graph, the non\hyp{}separating curve graph, and the cut system graph.
The motivation for the name ``hierarchical'' is that the second author proved in~\cite{vokessep} that these graphs are hierarchically hyperbolic spaces as defined by Behrstock, Hagen and Sisto~\cite{hhs1,hhs2}  (see Section~\ref{section:hierarchical graphs} for more detail).

In the present work, we sort hierarchical graphs of multicurves into the following classes.

\pagebreak[4]

\begin{theorem}\label{intro_theorem:relatively_hyperbolic_vs_thick}
 If $\G(S)$ is a hierarchical graph of multicurves on $S$, then $\G(S)$ is either:
 \begin{enumerate}
     \item hyperbolic,
     \item relatively hyperbolic, with each peripheral quasi\hyp{}isometric to a product of two infinite diameter hierarchical graphs of multicurves,
     \item thick of order~$1$,
     \item thick of order at most~$2$.
 \end{enumerate}
 Furthermore, which of the above cases $\G(S)$ fits into is determined by the set of subsurfaces that every vertex of $\G(S)$ intersects.
\end{theorem}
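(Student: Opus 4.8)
The plan is to read off all four cases from the combinatorics of the \emph{witnesses} of $\G(S)$ --- the essential subsurfaces $W \subseteq S$ that cannot be isotoped off any vertex of $\G(S)$, whose collection I denote $\Wit(\G(S))$ --- using the hierarchically hyperbolic structure on $\G(S)$ constructed in \cite{vokessep}. In that structure the coordinate space of a witness $W$ is, coarsely, the curve graph $\C(W)$, which has infinite diameter; nesting is containment; orthogonality is disjointness; and, up to the usual bounded modifications, the witnesses are exactly the domains carrying unbounded coordinate spaces. Since the large-scale geometry of an HHS is governed by its unbounded coordinate spaces together with the nesting and orthogonality relations among them, every geometric feature below is detected by $\Wit(\G(S))$ alone; in particular two hierarchical graphs of multicurves with the same witnesses fall into the same case, which is the last sentence of the theorem. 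The combinatorial trichotomy I expect to use is: (a) no two witnesses are disjoint; (b) disjoint witnesses occur but the associated product regions are ``isolated''; (c) neither. A recurring ingredient is a \emph{restriction lemma}: for a witness $W$, the sub-structure indexed by the witnesses nested in $W$ (resp.\ disjoint from $W$) is quasi-isometric to a hierarchical graph of multicurves on $W$ (resp.\ on the complementary subsurface $S \sminus W$); consequently the standard product region $P_W$ of the HHS decomposes, up to quasi-isometry, as a product of two hierarchical graphs of multicurves, each of infinite diameter exactly when $W$ and $S \sminus W$ each carry a witness.

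In case (a) I would quote a hyperbolicity criterion for hierarchically hyperbolic spaces --- an HHS in which every orthogonal pair of domains has a uniformly bounded coordinate space (equivalently, an HHS of rank one) is hyperbolic (see \cite{hhs1,hhs2}). If no two witnesses are disjoint there is no nontrivial orthogonality, so $\G(S)$ is hyperbolic: case (1). In case (b) I would verify that the HHS structure has \emph{isolated orthogonality} in the sense of Behrstock--Hagen--Sisto; the condition on $\Wit(\G(S))$ is that disjoint pairs of witnesses are organised, coarsely, by a sparse family of ``container'' subsurfaces, so that distinct maximal product regions coarsely intersect in bounded sets. Their theorem then yields that $\G(S)$ is hyperbolic relative to the product regions $P_W$, and the restriction lemma identifies each peripheral with a product of two infinite-diameter hierarchical graphs of multicurves: case (2). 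One also checks from the witness condition that these peripherals have infinite diameter, so the relative hyperbolicity is with respect to proper peripherals.

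When (a) and (b) both fail there are disjoint witnesses but the product regions are not isolated, and the target is thickness. By the restriction lemma, each $P_W$ with $W$ and $S \sminus W$ both carrying witnesses is quasi-isometric to a product of two infinite-diameter spaces, hence is thick of order $0$. The work is to assemble these into a thick structure for $\G(S)$: one exhibits a family $\mc{L}$ of subsets --- either the product regions themselves, or, when these do not suffice, finite unions of product regions running along a nested family of subsurfaces --- that coarsely covers $\G(S)$ and satisfies the chaining condition (consecutive members have infinite-diameter coarse intersection, and the chains connect all of $\G(S)$). If the product regions alone already cover and chain, $\G(S)$ is thick of order $1$ (case (3)); otherwise the intermediate unions are thick of order at most $1$ and $\G(S)$ is thick of order at most $2$ (case (4)). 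The exact split between (3) and (4), and between thick and relatively hyperbolic, is a purely combinatorial statement about how disjoint witnesses and their complements interlock, which is how ``determined by the witnesses'' gets cashed out.

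The main obstacle, I expect, is the thick case. Verifying the chaining condition means controlling coarse intersections of product regions --- a Behrstock-inequality and subsurface-projection computation --- but the delicate point is the \emph{upper} bound of $2$ on the order: that one never needs a third layer. I anticipate this follows from the fact that, once disjoint witnesses exist, every vertex of $\G(S)$ lies uniformly close to some product region, together with a bounded ``depth'' for the interlocking of such regions coming from the complexity of $S$. Secondary difficulties are: proving the restriction lemma, which requires checking the factor spaces against Definition~\ref{definition:hierarchical}; pinning down the combinatorial dividing lines between the four cases and confirming each is invariant under changing $\G(S)$ with $\Wit(\G(S))$ fixed; and handling low-complexity base cases where a factor or coordinate space degenerates.
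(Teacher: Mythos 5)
Your high-level architecture matches the paper: detect everything from $\Wit(\G(S))$ via the HHS structure of \cite{vokessep}; hyperbolicity when no two witnesses are disjoint (Theorem~\ref{theorem:hyperbolic_graphs_of_multicurves}); relative hyperbolicity via an isolated-orthogonality criterion when disjoint co-connected witnesses always complement each other (the paper uses the first author's \cite[Theorem~4.3]{russell}, specialized in Theorem~\ref{theorem:special_case_of_isolated_orthogonality}, with $\mc{I}=\{Z\sqcup Z^c\}$); and thickness by chaining product regions $P_\mf{S}(m)$ otherwise. Your ``restriction lemma'' is Corollary~\ref{corollary:product regions}. Where the proposal falls short is precisely where you flag uncertainty, and the gap is real in two places.

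First, for the thick-of-order-$1$ case you need to actually prove the chaining condition, and the paper does this by showing connectivity of a graph $\Dis(Z,W)$ (or its sub\-graph $\PDis(Z,W)$) via Putman's trick, Lemma~\ref{lemma:putman_trick}: pick a base vertex $\partial Z$ or $\partial W$, pick a mapping-class-group generating set whose generators intersect the base minimally, and check each generator moves the base vertex a bounded distance. There are two genuinely different sub-cases (Theorems~\ref{theorem:thick case no separating annulus} and~\ref{theorem:thick_punctured_disc_case}); the punctured-disk case requires a separate induction on the number of punctures trapped in a bigon between $c$ and $\partial W$. None of this is ``a Behrstock-inequality and subsurface-projection computation''; it is explicitly combinatorial surgery on multicurves.

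Second, and more seriously, the mechanism you propose for the order-$\le 2$ bound --- ``finite unions of product regions running along a nested family of subsurfaces'' and ``bounded depth for the interlocking \dots coming from the complexity of $S$'' --- is not what makes it work and would not work as stated. The actual obstruction when $S\sminus(Z\cup W)$ is a nonempty union of punctured annuli is that $\PDis(Z,W)$ is disconnected, with one connected component per multicurve $\mu$ in the image of a puncture-filling map $F\colon\C(S)\to\C(\Sigma)\cup T$, where $\Sigma$ is obtained from $S$ by filling the intermediate punctures (Lemma~\ref{lemma:connectedcomponents}). Each fiber $F^{-1}(\mu)$ gives a thick-of-order-$\le 1$ set $\mc{X}(\mu)$, and these are then chained using a graph $\F$ of multicurves on $\Sigma$ (connected again via Putman's trick), with the infinite-diameter coarse intersection produced by a pseudo-Anosov in $\ker(F_*\colon\pmcg(S)\to\pmcg(\Sigma))$ (Kra's theorem). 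The two layers are not nested subsurfaces at different depths; they are ``fibers of $F$'' and ``the base $\Sigma$''. This is the key idea your proposal does not anticipate, and without it you have no route to the upper bound of $2$.
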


 The concept of a \emph{thick} metric space was introduced by Behrstock, Dru\c{t}u and Mosher, who showed that being thick of any order is an obstruction to a space being relatively hyperbolic \cite{Behrstock_Drutu_Mosher_Thickness}. Conceptually, thick metric spaces have highly intersecting non-hyperbolic parts, contrasting with relatively hyperbolic spaces where the non-hyperbolic regions are isolated. We give a more detailed review of thick metric spaces in Section~\ref{section:thick background}. Geometric classifications of thickness versus relative hyperbolicity,  akin to Theorem \ref{intro_theorem:relatively_hyperbolic_vs_thick}, have previously been shown for Artin groups \cite{Behrstock_Drutu_Mosher_Thickness}, Coxeter groups \cite{BHS_coxeter_thick_random}, $3$-manifold groups \cite{Behrstock_Drutu_Mosher_Thickness}, and free-by-cyclic groups \cite{Hagen_free-by-cyclic}.

Key to determining into which of the classes in Theorem~\ref{intro_theorem:relatively_hyperbolic_vs_thick} a graph of multicurves $\G(S)$ fits is to examine the set of \emph{witnesses} for~$\G(S)$, that is, the set of non-pants subsurfaces of $S$ that every vertex of $\G(S)$ must intersect. The heuristic for the classification in terms of witnesses is the following: if all witnesses for $\G(S)$ take up ``more than half'' of the surface~$S$, then $\G(S)$ is hyperbolic; if there are witnesses that take up ``less than half'' of~$S$, then $\G(S)$ is thick; and if the minimal witnesses of $\G(S)$ take up ``exactly half'' of~$S$, then $\G(S)$ is relatively hyperbolic. In practice, this heuristic needs to be expressed more carefully. We state the complete details of the classification of the geometry of $\G(S)$ in terms of its witnesses in Theorem~\ref{theorem:witnesses_determine_geoemetry}.

One subtlety is that instead of considering the witnesses that are minimal with respect to inclusion, we in fact need to restrict to witnesses whose complement is connected.
We will call a witness whose complement is connected a \emph{co-connected} witness.
In some special cases---if the surface $S$ has no punctures or no genus---we can now realize the above heuristic using Euler characteristic.

\begin{corollary}\label{intro_corollary:closed_surface_case}
Let $S$ be either a closed surface or a punctured sphere, and $\G(S)$ be a hierarchical graph of multicurves. Let $\chi_{\min}$ be the least negative Euler characteristic of a co-connected witness for~$\G(S)$.
\begin{itemize}
    \item If $|\chi_{\min}| >  \frac{1}{2}|\chi(S)|$, then $\G(S)$ is hyperbolic.
    \item If $|\chi_{\min}| = \frac{1}{2}| \chi(S)|$, then $\G(S)$ is hyperbolic relative to products of two infinite diameter hierarchical graphs of multicurves.
    \item If $|\chi_{\min}| < \frac{1}{2}|\chi(S)|$, then $\G(S)$ is thick of order~$1$, and hence not relatively hyperbolic.
\end{itemize}
\end{corollary}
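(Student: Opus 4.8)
The plan is to derive Corollary~\ref{intro_corollary:closed_surface_case} from the general classification of Theorem~\ref{theorem:witnesses_determine_geoemetry}: the three outputs of the corollary are exactly the three outputs of that theorem, so it suffices to show that, when $S$ is closed or a punctured sphere, the combinatorial conditions distinguishing its cases are detected by comparing $|\chi_{\min}|$ with $\tfrac12|\chi(S)|$.

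The translation rests on three structural facts. First, the witness set of $\G(S)$ is closed under enlargement: if every vertex of $\G(S)$ intersects a subsurface $W$ and $W\subseteq W'$, then every vertex intersects $W'$, so a subsurface is a witness if and only if it contains one. Second, Euler characteristic is additive: for a co-connected witness $W$ the complement $S\sminus W$ is, by definition, connected, and since $W$ is also connected and the two meet only along a multicurve, of Euler characteristic $0$, the complement $S\sminus W$ is itself co-connected and $|\chi(S)| = |\chi(W)| + |\chi(S\sminus W)|$. Combining these, if a co-connected witness $W$ has a witness complement, then $S\sminus W$ is a co-connected witness as well, so $|\chi(W)|,|\chi(S\sminus W)|\ge|\chi_{\min}|$ and hence $|\chi_{\min}|\le\tfrac12|\chi(S)|$. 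Third, since the graphs under consideration are, up to finite index, invariant under $\mcg(S)$, whether a subsurface is a witness depends only on the homeomorphism type of the pair $(S,W)$, and the change-of-coordinates principle lets one move witnesses around $S$ freely.

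Granting these, the argument is arithmetic together with one topological input: for $S$ closed or a punctured sphere, if $W$ is a co-connected witness with $|\chi(S\sminus W)|\ge|\chi(W)|$, then $S\sminus W$ contains a homeomorphic copy of $W$ and hence, by the third fact, contains a witness. For a punctured sphere this holds because a subsurface with connected complement is cut off by a single separating curve, so $W$ and $S\sminus W$ sit on the two sides of a pants-type decomposition and the larger side absorbs the smaller; for a closed surface one runs the corresponding count on genus and number of boundary curves. Given this, if $|\chi_{\min}| > \tfrac12|\chi(S)|$ then, by the contrapositive of the previous paragraph, no co-connected witness has a witness complement, and Theorem~\ref{theorem:witnesses_determine_geoemetry} places $\G(S)$ in the hyperbolic case. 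If $|\chi_{\min}| < \tfrac12|\chi(S)|$, a minimal co-connected witness $W$ has $|\chi(S\sminus W)| > |\chi_{\min}|$, so $S\sminus W$ contains a copy of $W$ with room to spare, which supplies the network of product regions needed for thickness; Theorem~\ref{theorem:witnesses_determine_geoemetry} then yields thickness of order exactly~$1$. If $|\chi_{\min}| = \tfrac12|\chi(S)|$, a minimal co-connected witness $W$ and its complement are both witnesses of the same minimal Euler characteristic with $W\cup(S\sminus W)$ filling $S$ and no room to spare, which is the balanced configuration of Theorem~\ref{theorem:witnesses_determine_geoemetry} producing relative hyperbolicity, with each peripheral quasi-isometric to the product of the hierarchical graphs of multicurves carried by the two complementary halves.

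The main obstacle is the topological input together with the borderline equality case. The passage between the condition that $S\sminus W$ contains a witness and an Euler characteristic inequality is robust in the strict inequalities, but in the equality case one must verify that the complement of a minimal co-connected witness is again minimal and co-connected, so that $\G(S)$ genuinely lands in the relatively hyperbolic case rather than a thick case with hidden extra room, and that both halves carry infinite-diameter hierarchical graphs of multicurves so the peripheral is a genuine product. This is precisely where the hypothesis on $S$ is used: once $S$ has both positive genus and punctures, punctures are a rigid invariant invisible to $\chi$, so a co-connected witness and its complement can have equal Euler characteristic while neither embeds in the other, and the clean trichotomy---as well as the sharper conclusion of thickness of order~$1$ rather than the general thickness of order at most~$2$---breaks down.
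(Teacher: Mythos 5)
Your overall plan---reducing the corollary to Theorem~\ref{theorem:witnesses_determine_geoemetry} and translating its combinatorial conditions on witnesses into Euler characteristic inequalities via the additivity $|\chi(S)| = |\chi(W)| + |\chi(S\sminus W)|$ and the change-of-coordinates principle---is precisely the route taken in the paper, which establishes the closed case and the punctured-sphere case as separate corollaries to Theorems~\ref{theorem:thick case no separating annulus} and~\ref{theorem:thick_punctured_disc_case} respectively. The three structural facts you isolate are correct, as is the key topological observation that for $S$ closed or a punctured sphere, a co-connected witness whose complement has at least as large an Euler characteristic admits a disjoint copy in the same $\mcg(S)$-orbit.

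However, the equality case is left with a gap that you flag but do not close. To land in item~(\ref{item:main_thm_rel_hyp}) of Theorem~\ref{theorem:witnesses_determine_geoemetry} you must show that \emph{every} pair of disjoint, co-connected witnesses $Z,W$ satisfies $S \sminus Z = W$, not merely that a minimal co-connected witness has a minimal complement. The missing step is short: for any such pair, $W^c \supseteq Z$ is again a co-connected witness, so $|\chi(W)|, |\chi(W^c)| \geq |\chi_{\min}| = \tfrac12|\chi(S)|$ combined with $|\chi(W)| + |\chi(W^c)| = |\chi(S)|$ forces $|\chi(W)| = |\chi(W^c)| = |\chi_{\min}|$; then $Z \subseteq W^c$ with $|\chi(Z)| \geq |\chi(W^c)|$ leaves no room for $Z$ to be proper, so $Z = W^c$. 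In the strict case $|\chi_{\min}| < \tfrac12|\chi(S)|$ you also gloss over why the ``room to spare'' satisfies the hypothesis of Theorem~\ref{theorem:witnesses_determine_geoemetry}(\ref{item:main_thm_thick_1}): for $S$ closed, $S\sminus(Z\cup W)$ has positive $|\chi|$ and no punctures, hence a non-annular component; for $S$ a punctured sphere, $Z$ is automatically a punctured disk and $S\sminus(Z\cup W)$ is an annulus with $p-2r \geq 1$ punctures, so the second alternative applies. These are the precise reasons the paper runs the closed and punctured-sphere cases through two different thickness theorems, and acknowledging the obstacle is not the same as clearing it---as written, the argument is incomplete at exactly the points you identify.
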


If the surface $S$ has both punctures and genus, then this simple Euler characteristic criterion no longer works, and the statement of Theorem~\ref{theorem:witnesses_determine_geoemetry} will be more involved.
We give more discussion and examples of why graphs of multicurves for general surfaces can fail to satisfy Corollary~\ref{intro_corollary:closed_surface_case} in Section~\ref{section:applications}.

Our main result (Theorem~\ref{theorem:witnesses_determine_geoemetry}) completes the classification for all hierarchical graphs of multicurves, but this is not the first work to consider the question of when a graph of multicurves is hyperbolic, relatively hyperbolic, or thick.
A classification of this kind for the pants graphs was completed by Brock and Masur in~\cite{Brock_Masur_WP_Low_complexity}, following on from work of Behrstock, Dru\c{t}u and Mosher~\cite{Behrstock_Drutu_Mosher_Thickness}   and Brock and Farb~\cite{Brock_Farb_Rank}.
In fact, the pants graph gives us all the cases in Theorem~\ref{intro_theorem:relatively_hyperbolic_vs_thick}, depending on the surface~$S$.
The authors of the present article also previously completed the classification for the case of the separating curve graph in~\cite{russell_vokes}.  These previously known results are special cases of
Theorem~\ref{theorem:witnesses_determine_geoemetry}. 

A new (to our knowledge) application of the results in this paper is a classification of the geometry of graphs of non\hyp{}separating multicurves, which gives another example where all the possibilities of Theorem~\ref{intro_theorem:relatively_hyperbolic_vs_thick} are realized.
The first bullet point---the hyperbolic case---was previously proved by Hamenst\"adt~\cite{Hamenstadt_non-separating}.

\begin{restatable}{corollary}{introcor}
\label{intor_corollary:Non-separating_curve_complexes}
Let $S$ have genus~$g$ and $p$~punctures, and let $k \leq g$. If $\G(S)$ is any hierarchical graph of multicurves whose vertices are all the non-separating multicurves of $S$ with $k$ components, then we have the following.
\begin{itemize}
    \item $\G(S)$ is hyperbolic  when $k< g/2+1$.
    \item $\G(S)$ is hyperbolic relative to peripheral subsets that are quasi-isometric  to a product of two curve graphs when $k = g/2+1$ and $p=0$.
    \item $\G(S)$ is thick of order at most~$2$ when $k=g/2+1$ and $p>0$.
    \item $\G(S)$ is thick of order~$1$ when $k > g/2+1$.
\end{itemize}
\end{restatable}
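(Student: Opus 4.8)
The plan is to read the four cases off the \emph{witnesses} of $\G(S)$, using the master classification (Theorem~\ref{theorem:witnesses_determine_geoemetry}) and, in the closed case, its Euler characteristic reformulation (Corollary~\ref{intro_corollary:closed_surface_case}); so the real work is a surface-topology computation identifying the witnesses, and in particular the minimal co-connected ones. The starting point is that a subsurface $W$ is a witness for $\G(S)$ exactly when $S \sminus W$ contains no non-separating $k$-component multicurve of $S$, together with the elementary fact that a multicurve $\gamma$ lying in a \emph{connected} complement $Y = S \sminus W$ is non-separating in $S$ if and only if every component of $Y \sminus \gamma$ meets $\partial W$. Combining this with an Euler characteristic count shows that the largest non-separating multicurve of $S$ contained in a connected, not essentially planar subsurface $Y$ of genus $h$ with $n$ boundary curves (all glued to $S \sminus Y$) has exactly $h + n - 1$ components, so such a $Y$ is a witness-complement precisely when $h + n \le k$. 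Maximising $Y$ under this constraint (equivalently, minimising $W$), and checking that superfluous punctures or boundary curves of $W$ can always be peeled off to produce a strictly smaller co-connected witness, one concludes that the minimal co-connected witnesses of $\G(S)$ form a single mapping class group orbit, represented by a subsurface $W_0$ of genus $g-k+1$ with one boundary component and no punctures; its complement is a connected surface of genus $k-1$ with a single boundary component carrying all $p$ punctures, and $|\chi(W_0)| = 2g - 2k + 1$.

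Next I would settle the coarse dichotomy between hyperbolicity and its failure by deciding when $\G(S)$ admits a pair of \emph{disjoint} witnesses (the condition that Theorem~\ref{theorem:witnesses_determine_geoemetry} ties to non-hyperbolicity). Since every witness contains a copy of $W_0$, and any subsurface disjoint from a witness lies in the complement of that witness, the existence of a disjoint pair reduces to whether $W_0$ and $S \sminus W_0$ are simultaneously witnesses; and $S\sminus W_0$ is a witness exactly when its complement $W_0$ carries no non-separating $k$-multicurve, that is, exactly when $g-k+1 < k$. Hence $\G(S)$ has no disjoint pair of witnesses, and so is hyperbolic, precisely when $k < g/2 + 1$, recovering Hamenst\"adt's theorem; and when $k \ge g/2+1$ the pair $\{W_0, S\sminus W_0\}$ exhibits a product region, so $\G(S)$ is not hyperbolic.

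It then remains to separate the three non-hyperbolic cases. When $p = 0$ this is immediate from Corollary~\ref{intro_corollary:closed_surface_case}: here $\chi_{\min} = |\chi(W_0)| = 2g-2k+1$ while $\tfrac{1}{2}|\chi(S)| = g - 1$, so $k = g/2+1$ gives $\chi_{\min} = \tfrac{1}{2}|\chi(S)|$ and relative hyperbolicity, while $k > g/2+1$ gives $\chi_{\min} < \tfrac{1}{2}|\chi(S)|$ and thickness of order~$1$; moreover, when $k = g/2+1$ both $W_0$ and $S\sminus W_0$ are minimal co-connected witnesses forming a disjoint complementary pair of once-holed genus-$(k-1)$ surfaces, so Theorem~\ref{theorem:witnesses_determine_geoemetry} identifies the peripheral with the product of their curve graphs, i.e.\ a product of two curve graphs as claimed. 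When $p > 0$ the Euler characteristic shortcut is unavailable and one must invoke Theorem~\ref{theorem:witnesses_determine_geoemetry} directly, with the explicit description of $W_0$ in hand. If $k > g/2 + 1$, then $W_0$ is strictly ``smaller than half'' of $S$, its product regions are correspondingly large and chain together directly, and Theorem~\ref{theorem:witnesses_determine_geoemetry} returns thick of order~$1$. If $k = g/2+1$, then $W_0$ is a once-holed genus-$(k-1)$ surface --- ``exactly half'' --- but its complement, a once-holed genus-$(k-1)$ surface with $p \ge 1$ punctures, is a witness that is \emph{not} minimal: peeling the $p$ punctures off into an essential subsurface exhibits a proper sub-witness homeomorphic to $W_0$. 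Consequently the product regions fail to be isolated, relative hyperbolicity is impossible, and Theorem~\ref{theorem:witnesses_determine_geoemetry} returns thick of order at most~$2$.

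The main obstacle I anticipate is not any single hard estimate but the bookkeeping needed to be sure the list of minimal co-connected witnesses is complete (a single orbit), and, above all, matching the resulting configuration against the precise hypotheses of Theorem~\ref{theorem:witnesses_determine_geoemetry} in the mixed-topology regime $p > 0$, where Euler characteristic no longer detects ``taking up half the surface'' and the borderline case $k = g/2+1$ turns entirely on the subtle point that the complement of the minimal witness is itself a non-minimal witness --- exactly the kind of phenomenon that forces the passage from minimal witnesses to \emph{co-connected} witnesses. A secondary point requiring care is the handful of small-parameter cases (for instance $k$ near $1$, or $k = g$ with $g$ small) where $\G(S)$ could a priori be bounded or disconnected; these are excluded by the standing hypothesis that $\G(S)$ is a hierarchical graph of multicurves, so the classification applies verbatim.
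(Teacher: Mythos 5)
Your approach is essentially the paper's: identify the witness set as all connected subsurfaces of genus at least $g-k+1$, then read the classification off the possible configurations of disjoint co-connected witnesses via Theorem~\ref{theorem:witnesses_determine_geoemetry}. The one organizational difference is that for $p=0$ you route through Corollary~\ref{intro_corollary:closed_surface_case}, whereas the paper argues directly from Theorem~\ref{theorem:witnesses_determine_geoemetry}; both work, and your observation that the minimal co-connected witness $W_0\cong S^1_{g-k+1,0}$ admits no proper sub-witness is exactly how the paper upgrades ``thick of order $0$ peripherals'' to ``products of two curve graphs.'' Your construction for $k=g/2+1$, $p>0$ (two disjoint copies of $W_0$ with a $p$-times punctured annulus between them) also matches the paper's application of Theorem~\ref{theorem:witnesses_determine_geoemetry}~(4).

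There is, however, a gap in the case $p>0$, $k>g/2+1$. Saying that ``its product regions are correspondingly large and chain together directly'' does not verify the hypothesis of Theorem~\ref{theorem:witnesses_determine_geoemetry}~(3), which requires you to \emph{exhibit} a pair of disjoint, co-connected witnesses $Z,W$ for which some component of $S\sminus(Z\cup W)$ is not a punctured annulus. (The ``$Z$ is a punctured disk'' alternative is unavailable here, since every witness has genus at least $g-k+1\geq 1$.) The complementary pair $W_0$, $S\sminus W_0$ that your write-up emphasizes has $S\sminus(W_0\cup W_0^c)=\emptyset$ and so fails this hypothesis. The fix is the paper's construction: choose two disjoint co-connected subsurfaces $Z,W$ each homeomorphic to $S^1_{h,0}$ with $h=\lfloor(g-1)/2\rfloor$, arranged so that the middle region has genus at least~$1$; equivalently, take two disjoint copies of your $W_0\cong S^1_{g-k+1,0}$, which leave genus $2k-g-2\geq 1$ in between precisely when $k>g/2+1$. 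Once this explicit pair is supplied, your argument is complete.
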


\subsubsection*{Acknowledgments} The first author would like to thank IHES and Fanny Kassel  for their hospitality and financial support during a visit where much of the work on this project began.  The authors thank the anonymous referee for a careful reading of the paper and numerous useful comments.

\section{Preliminaries}

\subsection{Curves on surfaces }

Throughout, we will consider connected, oriented, finite type surfaces.
As the graphs of multicurves we will consider do not distinguish between a boundary component and a puncture, we may assume that the surfaces we consider are without boundary, and hence homeomorphic to a surface $S_{g,p}$ with genus $g$ and $p$ punctures, for some $g$ and~$p$.

By a \emph{curve} on a surface $S$ we mean an isotopy class of simple closed curves on~$S$.
Unless otherwise stated, all curves will be essential and non\hyp{}peripheral, that is, not bounding a disk or once punctured disk.

\emph{Subsurfaces} of $S$ will be assumed to be  closed subsets of $S$.
A subsurface $Y$ may hence have both punctures (coming only from the set of punctures of~$S$) and boundary components (curves of $S$ where $Y$ meets its complement in $S$).
We will use the notation $S_{g,p}^b$ for the homeomorphism class of subsurface with genus~$g$, $p$~punctures and $b$~boundary components. We define the \emph{complexity} of a subsurface $S_{g,p}^b$ to be  $\xi(S_{g,p}^b) = 3g-3+p+b$.  For a subsurface~$Y$, the notation $\partial Y$ refers to the set of boundary components of $Y$ and excludes  any punctures of $S$ that are in $Y$.
Subsurfaces here will be \emph{essential}, meaning that every boundary component of the subsurface is an essential, non\hyp{}peripheral curve of $S$.  We will use $D_p$ to denote the surface $S_{0,p}^1$, that is, a disk with $p$ punctures.  A subsurface is an \emph{annulus with punctures} if it homeomorphic to $S_{0,p}^2$ for some $p \geq 1$. 
As for curves, subsurfaces will be considered up to isotopy.

We say curves and/or subsurfaces are \emph{disjoint} if they have disjoint representatives. A \emph{multicurve}  on $S$ is a collection of pairwise disjoint, pairwise non-isotopic curves on~$S$. If a multicurve $\mu$ and a subsurface $Y$ are not disjoint,  we say $\mu$ \emph{intersects}~$Y$.  Abusing notation, if $\mu$ is a multicurve on~$S$, $S \sminus \mu$ will denote  the complement of a regular open neighborhood of~$\mu$.
Similarly, for a subsurface $Y$ of $S$, $S \sminus Y$ will denote the closure of the complement of~$Y$.  At times, we will use the notation $Y^c$ to denote $S \sminus Y$ for a subsurface $Y$.
We will call a subsurface $Y$ \emph{co-connected} if $S \sminus Y$ is connected. A multicurve $\mu$ is \emph{separating} if $S \sminus \mu$ is disconnected. If $Y$ and $Z$ are a pair of disjoint subsurfaces, then we use $Y \cup Z$ to denote the quotient of the disjoint union of $Y$ and $Z$ where a boundary component of $Y$ is glued to a boundary component of $Z$ if they are isotopic curves. In particular, $S \sminus (Y \cup Z)$ contains no annuli.
  The \emph{intersection number} of two multicurves $\mu$ and $\nu$ on $S$ is denoted $i(\mu,\nu)$ and is the minimal number of intersections between two representatives of $\mu$ and~$\nu$. 

The \emph{curve graph}, $\C(S)$, of a surface $S$ has a vertex for every curve on $S$ and an edge joining two vertices if they are disjoint. 
We make the standard modification for $S_{1,0}$, $S_{1,1}$ and $S_{0,4}$ by putting an edge between two vertices that intersect the minimum  number of times that a pair of simple closed curves on the surface can intersect.
For a subsurface $Y \subseteq S$, the curve graph $\C(Y)$ is defined similarly using the curves on $Y$ that are not isotopic to any curve in $\partial Y$. Note that $\C(S^b_{g,p})$ is identical to $\C(S_{g,p+b})$.
All graphs will be considered as metric spaces by declaring each edge to have length~1.

For every connected subsurface $Y$ of $S$ with $\C(Y)$ non-empty, Masur and Minsky defined a \emph{subsurface projection} map $\pi_Y \colon \C(S) \to 2^{\C(Y)}$. We recall a few properties and direct the reader to \cite[Section~2.3]{mm2} for full details.
For a set of curves~$A$ on~$S$, we define $\pi_Y(A)=\bigcup_{\alpha \in A}\pi_Y(\alpha)$. If $\mu$ is a multicurve on~$S$, then $\pi_Y(\mu)$ is empty if $\mu$ is disjoint from $Y$ and is a non-empty subset of diameter at most~3 if $\mu$ intersects~$Y$. If $\mu$ and $\nu$ are two multicurves on~$S$ that both intersect a subsurface~$Y$, then we define $d_Y(\mu,\nu)=\operatorname{diam}_{\C(Y)}(\pi_Y(\mu) \cup \pi_Y(\nu))$.

A \emph{graph of multicurves} on a surface $S = S_{g,p}$ is a non\hyp{}empty graph whose vertices are multicurves on~$S$.
If $\G(S)$ is a graph of multicurves on~$S$, then we say a connected subsurface $W \subseteq S$ is a \emph{witness} for $\G(S)$ if the interior of $W$ is not homeomorphic to $S_{0,3}$ and every vertex of $\G(S)$ intersects~$W$ (in other words, $W$ is a witness if every vertex of $\G(S)$ has non\hyp{}trivial subsurface projection to~$W$ \cite[Section~2]{mm2}).
If $\xi(S)\ge1$, the entire surface $S$ will always be a witness for~$\G(S)$.
We denote the set of  witnesses for $\G(S)$ by $\Wit\bigl(\mc{G}(S)\bigr)$.

\subsection{Mapping class groups} \label{section:mapping class groups}

We define the \emph{mapping class group} of~$S$, $\mcg(S)$, to be the group of isotopy classes of orientation\hyp{}preserving self\hyp{}homeomorphisms of~$S$. The action of $\mcg(S)$ on the set of curves on~$S$ induces an action of $\mcg(S)$ on $\C(S)$ by isometries. The \emph{pseudo-Anosov} elements of $\mcg(S)$ are precisely those that act loxodromically on the curve graph~$\C(S)$ \cite[Proposition~4.6]{mm1}. The \emph{pure mapping class group} of~$S$, $\pmcg(S)$, is the finite index subgroup of $\mcg(S)$ consisting of all elements of $\mcg(S)$ that fix each of the punctures of $S$.

For one of the main tools we will use  in this paper (Lemma~\ref{lemma:putman_trick}) we will need  explicit generating sets for $\mcg(S)$ and $\pmcg(S)$.
Depending on the situation, we will use different generating sets.

Our first generating set is a variant of Humphries' famous generating set for closed surfaces \cite{Humphries}. This generalization to the case of surfaces with punctures is  known to follow by applying the Birman exact sequence; see \cite[Section~4.4.4]{primer} for a description of this approach.

\begin{theorem}\label{theorem:humphries_generators}
Let $S= S_{g,p}$ with $g \geq 2$. Let $\Gamma = \{a_1,\dots, a_g, b_1,\dots, b_g, c\}$ be the set of curves on~$S$ shown in Figure~\ref{figure:Humphries_Generators_Moveable_Base} and let $\rho_1,\dots,\rho_p$ be the punctures of~$S$ if $p \geq 1$. Let $e_{1}, e_{2} \dots, e_{p-1}$ be a collection of disjoint curves on~$S$ so that there is an $i \in \{1.\dots, g\}$ for which the following holds for each $j \in \{1,\dots,p-1\}$,
\begin{itemize}
    \item $e_{j}$ intersects $b_{i}$ exactly once;
    \item $e_j$ is disjoint from and not equal to every curve in $\Gamma \sminus b_i$;
    \item for $j > 1$, $e_{j-1}$ and $e_{j}$ cobound a once punctured annulus containing $\rho_j$;
    \item $\rho_1$ and $\rho_p$ are in different components of $S \sminus (\Gamma \cup e_j)$.
\end{itemize}
If $X$ is the collection of Dehn twists around $a_1,\dots, a_g, b_1,\dots, b_g, c, e_1,\dots, e_{p-1}$, then $X$ generates $\pmcg(S)$. Furthermore, if  $\overline{X}$ is the union of $X$ with  a set of half twists that exchange every pair of punctures on $S$, then $\overline{X}$ generates $\mcg(S)$. \qed
\end{theorem}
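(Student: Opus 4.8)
The plan is to induct on the number $p$ of punctures, using the Birman exact sequence to reduce to the closed surface, where the statement is exactly Humphries' theorem \cite{Humphries}. When $p=0$ there are no curves $e_j$ and no half twists, so $X=\overline{X}=\{T_{a_1},\dots,T_{a_g},T_{b_1},\dots,T_{b_g},T_c\}$ is precisely the Humphries generating set in the coordinates of \cite[Section~4.4.4]{primer}, and this is where the hypothesis $g\ge 2$ is used; the base case needs nothing further.

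For the inductive step I would assume the statement for $S_{g,p-1}$ and forget the puncture $\rho_p$, giving the Birman exact sequence
\[
1\to\pi_1(S_{g,p-1},\rho_p)\xrightarrow{\ \Push\ }\pmcg(S_{g,p})\xrightarrow{\ \mathcal{F}\ }\pmcg(S_{g,p-1})\to 1,
\]
which is exact because $\pi_1(S_{g,p-1})$ has trivial center for $g\ge 2$ (see \cite{primer}). The first point to check is that $\mathcal{F}$ carries $X$ onto a generating set of $\pmcg(S_{g,p-1})$: it fixes the classes $a_k,b_k,c$; it sends $e_1,\dots,e_{p-2}$ to a valid system of $e$-curves for $S_{g,p-1}$ (the bulleted conditions involving only $\rho_1,\dots,\rho_{p-1}$ are inherited, and $\Gamma\cup e_j$ still separates $\rho_1$ from $\rho_{p-1}$ because of the nesting forced by the chain condition); and $\mathcal{F}(T_{e_{p-1}})$ is simply some element of $\pmcg(S_{g,p-1})$. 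Hence the inductive hypothesis, applied to the $e$-system $\mathcal{F}(e_1),\dots,\mathcal{F}(e_{p-2})$, gives $\langle\mathcal{F}(X)\rangle=\pmcg(S_{g,p-1})$, equivalently $\pmcg(S_{g,p})=\langle X\rangle\cdot\Push\bigl(\pi_1(S_{g,p-1},\rho_p)\bigr)$. It then remains to prove $\Push\bigl(\pi_1(S_{g,p-1},\rho_p)\bigr)\subseteq\langle X\rangle$.

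For that I would place the basepoint $\rho_p$ next to $b_i$, as in the configuration in the statement, and use that $\pi_1(S_{g,p-1},\rho_p)$ is generated by simple loops of two kinds: loops $\tilde{\rho}_1,\dots,\tilde{\rho}_{p-1}$ each encircling one of the remaining punctures, and $2g$ loops running once around the handles. For a simple loop $\gamma$ based at $\rho_p$ one has $\Push(\gamma)=T_aT_b^{-1}$, where $a$ and $b$ are the two simple closed curves that, together with $\gamma$, cobound an annulus containing $\rho_p$. Applied to $\tilde{\rho}_j$ this yields a twist about a curve surrounding two punctures (the other boundary curve bounds a once-punctured disk, so its twist is trivial), and the chain $b_i,e_1,\dots,e_{p-1}$ is arranged precisely so that these twists---and hence all of the puncture-loop point-pushes---are products of the $T_{e_j}$ and the twists supported near the $i$-th handle. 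Applied to a handle loop it yields $T_{a'}T_{a''}^{-1}$ with $a',a''$ push-offs of a Humphries curve across $\rho_p$; since the Humphries curves let one carry the basepoint around each handle and $\langle X\rangle$ already surjects onto $\pmcg(S_{g,p-1})$, a single such relation together with conjugation by elements of $\langle X\rangle$ (and the change-of-coordinates principle) produces all the rest. As these point-pushes generate $\Push(\pi_1(S_{g,p-1},\rho_p))$, this closes the induction and shows that $X$ generates $\pmcg(S)$. I expect this point-push identification to be the main obstacle; it is routine but lengthy bookkeeping, which is why I would ultimately refer to \cite[Section~4.4.4]{primer} rather than write it out.

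Finally, to pass from $\pmcg(S)$ to $\mcg(S)$: the natural homomorphism $\mcg(S)\to\operatorname{Sym}(\{\rho_1,\dots,\rho_p\})$ is surjective with kernel $\pmcg(S)$, and the half twists in $\overline{X}$ project onto transpositions that generate the symmetric group, so composing any $\phi\in\mcg(S)$ with a suitable product of these half twists lands in $\pmcg(S)=\langle X\rangle$. Hence $\overline{X}$ generates $\mcg(S)$.
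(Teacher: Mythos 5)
The paper gives no proof of this theorem; it states it with a \texttt{\textbackslash qed} and explains just before the statement that it follows from Humphries' result via the Birman exact sequence, pointing to \cite[Section~4.4.4]{primer}. Your proposal carries out precisely that inductive Birman-exact-sequence argument and defers the same point-push bookkeeping to the same reference, so it is essentially the same approach. One small caution, since you flag the point-push step as the main obstacle: the claim that a single handle point-push plus ``conjugation by elements of $\langle X\rangle$'' yields the rest is not automatic, because the conjugating mapping classes one naturally writes down need not a priori lie in $\langle X\rangle$ before the theorem is proved; the clean route (as in the reference) is to realise each of the $2g+p-1$ generating point-pushes directly as an explicit word in the given twists (e.g.\ via chain/lantern relations along $b_i, e_1,\dots,e_{p-1}$), rather than by conjugating a single one.
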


\begin{figure}[ht]
    \centering
    \def\svgscale{.7}
    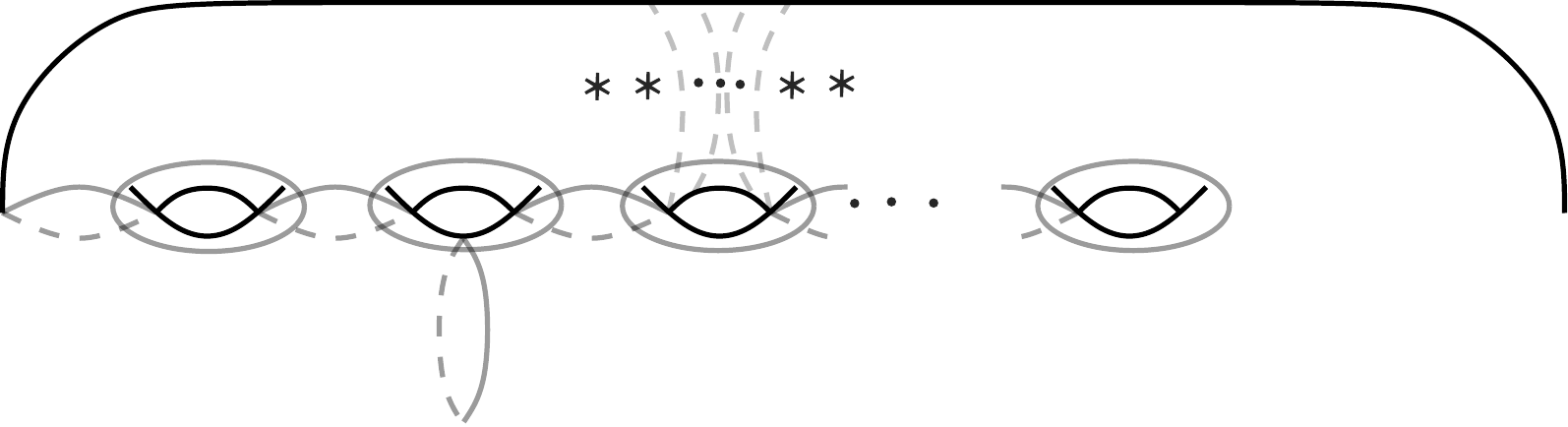
    \caption{The set of Dehn twists about the gray curves generates $\pmcg(S_{g,p})$. These Dehn twists plus half twists swapping punctures generates $\mcg(S_{g,p})$.}
    \label{figure:Humphries_Generators_Moveable_Base}
\end{figure}

A similar argument using the Birman exact sequence  applies to give the following generating set in the genus~$1$ case.

\begin{theorem}\label{theorem:gervais_generators}
Let $S = S_{1,p}$. Let $X$ be the set of Dehn twists about each curve on~$S$ shown in Figure~\ref{figure:Gervais_Generators} and let $\overline{X}$ be the union of~$X$ with a set of half twists that exchange every pair of punctures on $S$.  The set $X$ generates $\pmcg(S)$, while the set $\overline{X}$ generates~$\mcg(S)$. \qed
\end{theorem}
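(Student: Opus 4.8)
The strategy is to run an induction on the number of punctures $p$ using the point\hyp{}pushing Birman exact sequence, mirroring the passage from the closed Humphries generators to the punctured case described in \cite[Section~4.4.4]{primer} and used for Theorem~\ref{theorem:humphries_generators}. In the base cases $p = 0$ and $p = 1$ we have $\pmcg(S_{1,p}) = \mcg(S_{1,p}) \cong SL_2(\mathbb{Z})$, which is generated by the Dehn twists about two curves intersecting once; these two curves are precisely the genus\hyp{}one curves of Figure~\ref{figure:Gervais_Generators}, and since there is at most one puncture there is nothing to exchange, so $X = \overline X$ generates.

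Now suppose $p \geq 2$ and the result holds for $S_{1,p-1}$. Forgetting the puncture $\rho_p$ yields the Birman exact sequence
\[
1 \longrightarrow \pi_1(S_{1,p-1}) \longrightarrow \pmcg(S_{1,p}) \longrightarrow \pmcg(S_{1,p-1}) \longrightarrow 1,
\]
whose kernel is the point\hyp{}pushing subgroup, so $\pmcg(S_{1,p})$ is generated by (i) lifts of a generating set of $\pmcg(S_{1,p-1})$, together with (ii) the point\hyp{}pushing maps $\Push(\gamma)$ for $\gamma$ ranging over a free basis of $\pi_1(S_{1,p-1})$ (a free group of rank~$p$). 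Placing $\rho_p$ inside the once\hyp{}punctured annulus cut off by $e_{p-2}$ (or in a small disk away from all the curves if $p = 2$) makes each curve in the family for $S_{1,p-1}$ isotopic in $S_{1,p}$ to the corresponding curve of Figure~\ref{figure:Gervais_Generators}, so by induction the Dehn twists about the curves of Figure~\ref{figure:Gervais_Generators} supply the generators in~(i). For (ii) we invoke the standard identity $\Push(\gamma) = T_{\gamma_L} T_{\gamma_R}^{-1}$, where $\gamma_L$, $\gamma_R$ bound an annular neighborhood of a simple representative of $\gamma$ with $\rho_p$ between them (twists about peripheral curves being trivial). Choosing the free basis to consist of two handle loops, whose associated curves are the genus\hyp{}one curves of the figure up to twists already obtained, and loops $\delta_1, \dots, \delta_{p-2}$ around the punctures $\rho_1, \dots, \rho_{p-2}$, whose associated curves are among $e_1, \dots, e_{p-1}$ and curves bounding punctured disks, shows that every map in (ii) lies in the subgroup generated by $X$. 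Hence $X$ generates $\pmcg(S_{1,p})$.

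For the mapping class group, the exact sequence $1 \to \pmcg(S) \to \mcg(S) \to \mathrm{Sym}(p) \to 1$ together with the fact that $\mathrm{Sym}(p)$ is generated by transpositions of consecutive punctures---each realized by one of the half twists in $\overline X$---gives that $\overline X$ generates $\mcg(S)$.

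The step I expect to require the most care is (ii): one must fix an explicit free basis of $\pi_1(S_{1,p-1})$ compatible with the inductive positions of the curves $e_j$, and then check in coordinates, via the $T_{\gamma_L} T_{\gamma_R}^{-1}$ formula and conjugation by mapping classes already shown to lie in $\langle X \rangle$, that each point\hyp{}pushing generator is expressible using only Dehn twists about the curves of Figure~\ref{figure:Gervais_Generators}. This is the genus\hyp{}one analogue of the change\hyp{}of\hyp{}coordinates bookkeeping underlying Theorem~\ref{theorem:humphries_generators}; the remaining ingredients---the base case, the lifting in~(i), and the half\hyp{}twist extension---are routine.
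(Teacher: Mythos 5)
The paper does not actually spell out a proof of this theorem: it states the result with a terminal $\qed$ and justifies it with the one-line remark that ``a similar argument using the Birman exact sequence applies,'' pointing to the proof of Theorem~\ref{theorem:humphries_generators} (which itself is deferred to \cite[Section~4.4.4]{primer}). Your proposal is exactly the fleshing-out of that remark: induct on $p$ via the Birman exact sequence, handle the base cases $p=0,1$ where $\pmcg(S_{1,p})\cong SL_2(\mathbb Z)$, and supply the point-pushing kernel generators through the identity $\Push(\gamma)=T_{\gamma_L}T_{\gamma_R}^{-1}$. This is the same route the paper intends, so there is no genuine methodological divergence.

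One place where you should be more careful, and which is not closed by your sketch, is the claim that the push maps along loops around punctures $\delta_1,\dots,\delta_{p-2}$ are products of twists about the curves of Figure~\ref{figure:Gervais_Generators}. For such $\delta_j$, one of the two boundary curves of the annular push neighborhood bounds a once-punctured disk (so its twist is trivial), and the other bounds a \emph{twice}-punctured disk; twists about curves bounding multiply punctured disks are not obviously in $\langle X\rangle$ for an arbitrary choice of basis. The way this is resolved in the higher-genus case (and must be resolved here too) is that one does not choose a naive ``handle loops plus puncture loops'' basis: one instead chooses the free basis so that each push map is encoded by the chain $e_1,\dots,e_{p-1}$, using precisely the ``cobound a once-punctured annulus'' condition that appears in Theorem~\ref{theorem:humphries_generators}, so that each $\Push(\gamma_j)$ is realized as $T_{e_{j-1}}T_{e_j}^{-1}$ (or a conjugate by elements already shown to be in $\langle X\rangle$). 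You flag part (ii) yourself as the delicate step, which is fair; but as written, the assertion that the associated curves are ``among $e_1,\dots,e_{p-1}$ and curves bounding punctured disks'' glosses over the very point that makes the chain of $e_j$'s the right generating set rather than an arbitrary collection of punctured-disk boundaries. With that adjustment the argument matches the standard one and the rest of your proposal is sound.
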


\begin{figure}[ht]
    \centering
    \def\svgscale{1.6}
    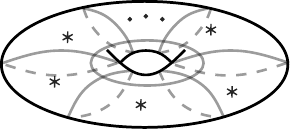
    \caption{Dehn twists about each of the gray curves generate $\pmcg(S_{1,p})$.}
    \label{figure:Gervais_Generators}
\end{figure}

Our final generating set is for surfaces with genus zero.  See \cite[Chapter 9]{primer} for details on this generating set.

\begin{theorem}[Braid generators for punctured sphere]\label{theorem:braid_generators}
Let $S = S_{0,p}$ with $p \geq 4$. Let $X$ be a set of elements of $\mcg(S)$ so that for every pair of punctures on $S$, $X$~includes a half twist exchanging them. Then the set $X$ generates $\mcg(S)$. \qed
\end{theorem}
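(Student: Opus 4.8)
The plan is to reduce the statement to the classical fact, recorded in \cite[Chapter~9]{primer}, that $\mcg(S_{0,p})$ is generated by the half-twists along the arcs of a \emph{chain}. Recall that capping the boundary of the $p$-punctured disc $D_p$ with an unpunctured disc gives a surjection $B_p\cong\mcg(D_p,\partial D_p)\to\mcg(S_{0,p})$ (where $B_p$ is the braid group on $p$ strands): it is onto because any self-homeomorphism of $S_{0,p}$ is isotopic to one fixing a prescribed disc disjoint from the punctures, all such discs being isotopic. Under this map the standard Artin generators $\sigma_1,\dots,\sigma_{p-1}$ go to the half-twists along the successive arcs $\alpha_1,\dots,\alpha_{p-1}$ of a chain, meaning $\alpha_i$ meets $\alpha_{i+1}$ in a single puncture while $\alpha_i\cap\alpha_j=\varnothing$ for $|i-j|\ge 2$. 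Since any two chains of $p-1$ arcs on $S_{0,p}$ are in the same $\mcg(S_{0,p})$-orbit by the change-of-coordinates principle, the half-twists along \emph{any} chain generate $\mcg(S_{0,p})$. Thus it is enough to produce, inside $H\defeq\langle X\rangle$, the half-twists along some chain of arcs.

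Next I would record the elementary point that $H$ surjects onto the symmetric group: the homomorphism $\mcg(S_{0,p})\to\operatorname{Sym}_p$ recording the permutation of the punctures sends half-twists to transpositions, and as $X$ contains a half-twist for every pair of punctures its image contains every transposition, hence is all of $\operatorname{Sym}_p$. Consequently $H$ contains a mapping class inducing any prescribed permutation of the punctures, and conjugating any half-twist in $H$ by such an element again lies in $H$ and is a half-twist swapping any prescribed pair of punctures --- although along an arc we cannot directly choose. So the real work is to move the (a priori arbitrary) arcs underlying the half-twists of $X$ into the configuration of a chain without leaving $H$.

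I would attempt this by induction on $p$, the case $p=3$ being immediate because $\mcg(S_{0,3})\cong\operatorname{Sym}_3$ is generated by any two half-twists. Choosing $T_\beta\in X$ with $\beta$ joining two punctures, one passes to the complementary subsurface $S'\defeq S_{0,p}\sminus N(\beta)$, a copy of $S_{0,p-1}$ with $\partial N(\beta)$ regarded as a puncture, and tries to show that $H$ contains a half-twist of $S'$ for every pair of its punctures, so that the inductive hypothesis yields $\mcg(S')\le H$; combined with $T_\beta$ and the lifts of $\operatorname{Sym}_p$ provided by the previous step, this should assemble a chain on $S_{0,p}$. The mechanism for getting a half-twist of $S'$ into $H$ for a pair of punctures of $S'$ both missed by $\beta$ is: start from the half-twist $T_{\beta'}\in X$ for that pair, whose arc $\beta'$ may cross $\beta$, and conjugate $T_{\beta'}$ by powers of $T_\beta$ and by lifts of appropriate permutations; these conjugates stay in $H$ and move $\beta'$ through part of its mapping-class orbit, and by connectedness of the arc graph of $S'$ one expects to reach a representative disjoint from $\beta$. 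Pairs involving the puncture $\partial N(\beta)$ would be handled separately using $T_\beta$ together with half-twists of $X$ meeting the punctures cut off by $\beta$, which generate the mapping class group of the three-holed sphere carrying the relevant punctures.

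The main obstacle --- and the reason the paper simply cites the classical statement --- is precisely this straightening. One must control the $H$-orbit, not just the full mapping-class orbit, of an arbitrary defining arc and force it to contain an arc in the desired position, which entangles the induction: to perform the necessary moves inside $H$ one needs $H$ already to contain enough of the mapping class group of $S'$, and the final assembly of the chain from the pieces on $S'$ and the arc $\beta$ has the same character. Since this reduction is routine but technical, in the paper we deduce the statement from the classical generation result in \cite[Chapter~9]{primer}.
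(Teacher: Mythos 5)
The paper records this statement without proof, simply citing \cite[Chapter 9]{primer}; your proposal therefore goes further than the paper itself by trying to explain why an \emph{arbitrary} choice of one half-twist per pair of punctures suffices. The opening moves are correct: reduce to the classical fact that the half-twists along a chain generate, and note that $H=\langle X\rangle$ surjects onto the symmetric group on the punctures. But you stop exactly where the content lies, and you say so. The ``straightening'' step you defer is a genuine gap, not bookkeeping: what is really needed is that $H$ acts transitively on isotopy classes of arcs joining pairs of punctures (equivalently, that $H$ contains a half-twist along \emph{every} such arc, since $T_{h(\gamma)}=hT_\gamma h^{-1}$ and a chain of half-twists then lies in $H$), and this does not follow from the surjection to the symmetric group, which controls only the endpoints of an arc, not its isotopy class.

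Your proposed cut-along-$\beta$ induction also begs the question: to straighten the remaining arcs inside the complementary copy of $S_{0,p-1}$ by elements of $H$, you already need $H$ to contain enough of the mapping class group of that subsurface, which is what the induction is supposed to produce. And the naive fixes --- conjugating by powers of the given half-twists or by the permutation lifts you constructed --- move arcs through parts of their mapping class group orbit that you have not constrained, so none of them obviously terminates. Finally, note that the literal result recorded in \cite[Chapter 9]{primer} is for one fixed chain of half-twists along pairwise disjoint arcs, so the citation alone does not discharge the generality claimed in the theorem; your sketch correctly locates where a proof must do real work, but does not supply that work, and characterizing it as ``routine but technical'' is optimistic.
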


For convenience, we name the sets of curves defining the generating Dehn twists in either Theorem~\ref{theorem:humphries_generators} or~\ref{theorem:gervais_generators}.

\begin{definition}[Standard generating curves]\label{definition:generating curves}We say a set of curves    on $S = S_{g,p}$ is a set of \emph{standard generating curves} if the set of Dehn twists about  those curves  gives the generating set for  $\pmcg(S)$ given in Theorem~\ref{theorem:humphries_generators} or Theorem~\ref{theorem:gervais_generators}.
\end{definition}

In Section~\ref{section:thick of order 2 case}, we will be making use of a map that fills in certain punctures of our surface~$S$.

\begin{definition}[The puncture\hyp{}filling map]
Let $S \cong S_{g,p}$ and let $\Pi$ be a subset of the set of punctures of~$S$.
Let $\Sigma \cong S_{g, p-\vert\Pi\vert}$ and let $T$ be the set of homotopy classes of curves on $\Sigma$ that bound a disk or once punctured disk. 
The puncture\hyp{}filling map $F_\Pi\colon \C(S) \rightarrow \C(\Sigma) \cup T$ is given by forgetting all punctures in the set~$\Pi$.
When the set $\Pi$ is understood, we will abbreviate $F_\Pi$ to~$F$.
\end{definition}

\begin{proposition}[Commutativity of puncture\hyp{}filling map]\label{proposition:capping commutes}
Each puncture-filling map $F \colon C(S) \to C(\Sigma) \cup T$ induces a surjective homomorphism $F_*\colon \pmcg(S) \rightarrow \pmcg(\Sigma)$ with the following property.
For all $f \in \pmcg(S)$ and any multicurve $c$ in $S$, we have $F_*(f)(F(c))=F(f(c))$. \qed
\end{proposition}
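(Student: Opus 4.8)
The plan is to realize everything at the level of honest homeomorphisms and then check that passing to isotopy classes is consistent. Realize $\Sigma$ as $S$ with the punctures in $\Pi$ filled in: fix an identification $S = \Sigma \setminus P$, where $P \subset \Sigma$ is a set of $|\Pi|$ interior points, disjoint from the punctures of $\Sigma$, one for each puncture in $\Pi$. Under this identification a simple closed curve on $S$ is literally a simple closed curve on $\Sigma \setminus P$, and $F$ sends its isotopy class to the isotopy class of the same curve regarded on $\Sigma$; this curve either remains essential and non\hyp{}peripheral in $\Sigma$, representing an element of $\C(\Sigma)$, or bounds a disk or once\hyp{}punctured disk in $\Sigma$, representing an element of $T$. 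Since an isotopy between curves on $\Sigma \setminus P$ is in particular an isotopy between them on $\Sigma$, the map $F$ is well defined on isotopy classes, and it extends to multicurves componentwise, $F(c) = \bigcup_i F(c_i) \subseteq \C(\Sigma) \cup T$.

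To construct $F_*$, represent $f \in \pmcg(S)$ by an orientation\hyp{}preserving homeomorphism $\phi$ of $S = \Sigma \setminus P$ fixing every puncture of $S$, that is, fixing every puncture of $\Sigma$ and every point of $P$. Then $\phi$ extends uniquely across the points of $P$ to an orientation\hyp{}preserving homeomorphism $\overline{\phi}$ of $\Sigma$ that fixes $P$ pointwise and fixes every puncture of $\Sigma$, hence defines a class $F_*(f) \in \pmcg(\Sigma)$. Homeomorphisms of $\Sigma \setminus P$ correspond bijectively to homeomorphisms of $\Sigma$ preserving $P$, and this correspondence respects isotopy (discreteness of $P$ forces an isotopy fixing $P$ pointwise at one time to do so throughout), so $F_*(f)$ does not depend on the choice of $\phi$, and $F_*$ is visibly a homomorphism. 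For surjectivity, fill in the punctures of $\Pi$ one at a time and use that an isotopy of a connected surface supported away from its punctures acts transitively on interior points: given $\psi \in \pmcg(\Sigma)$ and $p \in P$, such an isotopy carries $\psi(p)$ back to $p$, producing a homeomorphism isotopic to $\psi$ that fixes $p$ and hence descends to $\Sigma \setminus \{p\}$; iterating over $P$ exhibits a preimage of $\psi$. (Alternatively, invoke the Birman exact sequence, \cite[Section~4.2]{primer}.)

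It remains to prove the displayed identity, for which we first record that $\pmcg(\Sigma)$ acts on $\C(\Sigma) \cup T$: the set $T$ is finite, consisting of the class of an inessential curve together with, for each puncture $\rho$ of $\Sigma$, the unique class of a curve bounding a once\hyp{}punctured disk around $\rho$, and an element of $\pmcg(\Sigma)$ fixes each puncture and hence each element of $T$, so the usual action on $\C(\Sigma)$ extends to $\C(\Sigma) \cup T$ with $T$ fixed pointwise; with respect to this action $\overline{\phi}\cdot [d]_\Sigma = [\overline{\phi}(d)]_\Sigma$ for every simple closed curve $d$ on $\Sigma$. Now fix $f$, $\phi$, $\overline{\phi}$ as above and a multicurve $c$ on $S$ realized inside $S = \Sigma \setminus P$. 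Since $\overline{\phi}$ restricts to $\phi$ on $S$ and $c \subseteq S$, we have $\overline{\phi}(c) = \phi(c)$ as subsets of $\Sigma$, so passing to isotopy classes componentwise in $\C(\Sigma) \cup T$ gives $F_*(f)\bigl(F(c)\bigr) = \overline{\phi}\cdot[c]_\Sigma = [\phi(c)]_\Sigma = F\bigl(f(c)\bigr)$. The only point that requires care is the bookkeeping around $T$, since a curve essential on $S$ may become inessential or peripheral on $\Sigma$ and the extended action must be compatible with these collapses; working with the representatives $\phi$ and $\overline{\phi}$ handles this automatically, because $\overline{\phi}$ carries a curve bounding a (once\hyp{}punctured) disk to another such curve around the same puncture, matching the trivial action on $T$. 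The hypothesis that we work with the \emph{pure} mapping class group is precisely what makes this clean: it guarantees that $\phi$ fixes every point of $P$ and every puncture of $\Sigma$, so that $\overline{\phi}$ genuinely represents an element of $\pmcg(\Sigma)$ acting trivially on $T$; everything else is routine verification.
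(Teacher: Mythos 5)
The paper states this proposition with a bare \texttt{\textbackslash qed}, giving no proof; it is being treated as a standard consequence of the Birman exact sequence and the fact that one may represent pure mapping classes by homeomorphisms that extend across filled-in punctures. Your proof is correct and is precisely that standard argument, carried out at the level of honest representatives: extend $\phi$ to $\overline{\phi}$, check that the extension respects isotopy, get surjectivity either by pushing points or by citing the Birman exact sequence, and observe the displayed identity by comparing $\overline{\phi}$ and $\phi$ on a realization of $c$ inside $\Sigma \setminus P$. Your handling of $T$ is appropriate and is exactly why the proposition is stated for the pure mapping class group. One small imprecision worth tidying: when you say $\phi$ ``fixes every point of $P$,'' $\phi$ is a homeomorphism of $\Sigma \setminus P$, so it is more accurate to say it fixes each end of $S$ corresponding to a point of $P$, which is what allows the unique extension $\overline{\phi}$ to fix $P$ pointwise. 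The parenthetical about discreteness of $P$ is doing real but slightly more work than needed: for well-definedness all you need is that an isotopy on $\Sigma \setminus P$ extends to an isotopy on $\Sigma$, and the fact that the resulting isotopy fixes $P$ pointwise follows because the induced permutation of the finite set $P$ is a continuous, hence constant, function of the isotopy parameter.
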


\subsection{Hierarchical graphs of multicurves} \label{section:hierarchical graphs}

The focus of this paper will be on graphs of multicurves that satisfy the following hypotheses. Recall that $\Wit\bigl(\mc{G}(S)\bigr)$ is the set of witnesses of $\G(S)$, that is, the set of non\hyp{}pants subsurfaces of~$S$ that intersect every vertex of $\G(S)$. 

\begin{definition}[Hierarchical graph of multicurves]\label{definition:hierarchical}
We call a graph of multicurves $\mc{G}(S)$  \emph{\hierarchical} if 
\begin{enumerate}
    \item $\mc{G}(S)$ is connected;
    \item the action of the mapping class group on the set of curves on~$S$ induces an action by graph automorphisms on $\mc{G}(S)$;
    \item there exists $R\geq 0$ such that any two adjacent vertices of $\mc{G}(S)$ intersect at most $R$ times; 
    \item $\Wit\bigl(\mc{G}(S)\bigr)$ contains no annuli.
\end{enumerate}
Note, given Item (2), Item (3) is equivalent to requiring that the action of $\mcg(S)$ on $\G(S)$ is cocompact.
\end{definition}

The curve graph and many other graphs associated to surfaces in the literature are hierarchical graphs of multicurves. Several specific examples of hierarchical graphs of multicurves and their witnesses  are discussed in Section~\ref{section:applications}.

The second author showed that every hierarchical graph of multicurves is a hierarchically hyperbolic space with respect to the Masur--Minsky subsurface projection maps~\cite{vokessep}. We direct the reader to \cite{hhs2,HHS_survey} for a complete definition of a hierarchically hyperbolic space and will instead only note the salient consequences of hierarchical hyperbolicity in the context of this paper.

\begin{theorem}[{\cite[Theorem 1.1]{vokessep}}]\label{theorem: hierarchial implies HHS}
If $\G(S)$ is a hierarchical graph of multicurves on the surface $S$, then $\G(S)$ is a hierarchically hyperbolic space. \qed
\end{theorem}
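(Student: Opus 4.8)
The plan is to equip $\G(S)$ with an explicit hierarchically hyperbolic structure in the sense of \cite{hhs2} and verify the axioms. Take the index set to be $\mathfrak{S} = \Wit\bigl(\G(S)\bigr)$; for a witness $W$ let the associated hyperbolic space be the curve graph $\C(W)$, which is hyperbolic by \cite{mm1}; partially order $\mathfrak{S}$ by nesting of (isotopy classes of essential) subsurfaces, writing $W \perp W'$ when $W,W'$ are disjoint and $W \pitch W'$ when they overlap; and let every projection $\pi_W \colon \G(S) \to 2^{\C(W)}$ and every relative projection $\rho^{W'}_W$ be the corresponding Masur--Minsky subsurface projection \cite{mm2}, restricted to the vertices of $\G(S)$ viewed as multicurves on $S$. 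Since each vertex of $\G(S)$ is a multicurve meeting every witness, every $\pi_W$ has nonempty image of diameter at most $3$; and once $\xi(S)\geq1$ the surface $S$ is itself a witness and serves as the required $\nest$-maximal element (the finitely many surfaces with $\xi(S)\leq0$ are handled directly). The closure property used throughout is that \emph{any essential subsurface containing a witness is again a witness}: every vertex meets the inner subsurface, hence meets the larger one.

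Most of the axioms are inherited from the standard hierarchically hyperbolic structure on the mapping class group, equivalently on the marking graph $\M(S)$, whose index set is \emph{all} essential subsurfaces with the same relations and the same subsurface projections. The consistency (Behrstock) inequalities, the bounded geodesic image property of \cite{mm2}, finite complexity (bounded by $\xi(S)$), the large links axiom, and partial realization all hold over the full set of subsurfaces, and restricting to witnesses loses nothing: a subsurface that carries a real constraint for vertices of $\G(S)$ — one containing a witness with large projection — is automatically a witness by the closure property, subsurfaces that are not witnesses impose no condition on vertices and may be discarded, and the container subsurfaces demanded by the orthogonality axiom can likewise be chosen among witnesses. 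The one axiom that is genuinely new is that each $\pi_W$ is coarsely Lipschitz, and this is exactly where Item~(3) of Definition~\ref{definition:hierarchical} enters: adjacent vertices of $\G(S)$ intersect at most $R$ times, so by the standard bound on $d_W$ of two curves in terms of their geometric intersection number \cite{mm2}, adjacent vertices have uniformly close projections to every witness, whence $\pi_W$ is $(L,L)$-coarsely Lipschitz for $L = L(R)$.

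Two further points need argument: coarse surjectivity of the projections and the uniqueness axiom. For coarse surjectivity one must show that for each witness $W$ every vertex of $\C(W)$ lies a uniformly bounded distance from $\pi_W(v)$ for some vertex $v$ of $\G(S)$. Fixing a vertex $v_0$, which meets every witness, and a point $\delta_W \in \pi_W(v_0)$, one uses that the subgroup of $\mcg(S)$ supported on $W$ acts on $\C(W)$ as $\mcg(W)$, hence coboundedly, and preserves $\G(S)$ by Item~(2); since $\pi_W(fv_0) \ni f\delta_W$ for $f$ supported on $W$, the set $\{\pi_W(v) : v\in\G(S)\}$ is coarsely dense in $\C(W)$, uniformly in $W$ because $S$ has only finitely many topological types of subsurface. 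The uniqueness axiom asserts that if $d_W(x,y)$ is uniformly bounded for every witness $W$ then $x$ and $y$ are uniformly close in $\G(S)$; it is equivalent to a Masur--Minsky-type distance formula for $\G(S)$ whose sum ranges only over witnesses.

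The uniqueness axiom is the main obstacle, and I would prove it by induction on $\xi(S)$. The engine is the observation that a connected non-pants subsurface $W$ \emph{fails} to be a witness precisely when some vertex of $\G(S)$ is disjoint from it. Given vertices $x,y$ with every witness projection bounded by $K$: since $S$ is a witness, $d_{\C(S)}(\pi_S(x),\pi_S(y))$ is bounded, so a component of $x$ is joined to a component of $y$ by a path $c_0,\dots,c_\ell$ in $\C(S)$ with $\ell$ bounded in terms of $K$. Taking a hierarchy resolution from a marking $\mathbf{x}\supseteq x$ to a marking $\mathbf{y}\supseteq y$ with main geodesic $c_0,\dots,c_\ell$, every proper subsurface receiving a large projection along it is a non-witness, hence carries a vertex of $\G(S)$ disjoint from it; using such vertices to reroute, one replaces the resolution by a path in $\G(S)$ whose length inside each complementary subsurface $Z$ of a curve $c_i$ is controlled by the inductive hypothesis applied to the induced graph of multicurves on $Z$ (whose witnesses are exactly the witnesses of $\G(S)$ nested in $Z$, and whose complexity is strictly smaller). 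Summing the bounded contributions over the $\ell+1$ stages bounds $d_{\G(S)}(x,y)$ in terms of $K$ and $\xi(S)$. With all axioms verified, $\G(S)$ is a hierarchically hyperbolic space.
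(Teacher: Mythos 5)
The paper does not actually prove Theorem~\ref{theorem: hierarchial implies HHS}: it is quoted verbatim, with a closing $\square$, from \cite[Theorem~1.1]{vokessep}, and the proof lives entirely in that reference. So what can be compared is whether your sketch plausibly reconstructs Vokes' argument, and here there are genuine gaps.

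Your top-level framework (witnesses as index set, curve graphs as the hyperbolic spaces, Masur--Minsky projections) agrees with Vokes', and your identification of Item~(3) of Definition~\ref{definition:hierarchical} as exactly the hypothesis that makes the $\pi_W$ coarsely Lipschitz is correct and is the right place to use it. But the claim that the index set can be taken to be $\Wit(\G(S))$ alone and that ``the container subsurfaces demanded by the orthogonality axiom can likewise be chosen among witnesses'' is wrong in general: if $U\in\Wit(\G(S))$ and $S\sminus U$ has several components, more than one of which is a witness, the orthogonality axiom requires a single, properly nested element of the index set dominating all witnesses orthogonal to $U$, and there is typically no connected witness that does this. This is precisely why Vokes enlarges the index set to include disjoint unions of witnesses, with joins of curve graphs as their associated hyperbolic spaces --- a point the present paper itself records in Remark~\ref{remark:disjoint_unions}. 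Your proposal omits this entirely.

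The second gap is the claim that large links, partial realization and (implicitly) realization are ``inherited'' from the HHS structure on $\M(S)$ and that restricting to witnesses ``loses nothing.'' These axioms require producing actual \emph{vertices of $\G(S)$} with prescribed coarse projections, not markings, and that is not automatic. Vokes gets around this by replacing $\G(S)$ with the explicit combinatorial model $\ksep_\G(S)$ of Definition~\ref{definition: ksep} --- which the inclusion map makes quasi-isometric to $\G(S)$ (Proposition~\ref{proposition: k qi g}) --- and verifying the axioms there, where add/remove/flip moves make the constructions concrete. Your proposed replacement for the uniqueness axiom, an induction on $\xi(S)$ rerouting a hierarchy resolution through non-witness subsurfaces, is a reasonable heuristic but is substantially underspecified: you would need to define the ``induced graph of multicurves'' on a complementary subsurface, identify its witnesses, show the detours remain inside $\G(S)$ with bounded length, and control the interaction between consecutive detours. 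None of this is routine, and it is not the route taken in \cite{vokessep}.
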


\begin{remark}\label{remark:disjoint_unions}
The hierarchically hyperbolic space structure for $\G(S)$ from Theorem \ref{theorem: hierarchial implies HHS} is built from the set of witnesses $\Wit(S)$ and the curve graphs for each witness. For technical reasons, the structure also includes disjoint unions of the subsurfaces in $\Wit(S)$ and their corresponding curve graphs, which are joins of the curve graphs of the connected components. The interested reader can find full details of the HHS structure for $\G(S)$ in \cite{vokessep}, but this distinction can safely  be ignored in the context of the present work.
\end{remark}

The most prominent consequence of hierarchical hyperbolicity is a distance formula in the same style as Masur and Minsky's distance formula for the mapping class group~\cite{mm2}.
For any witness $Y$ for $\G(S)$ and vertices $x, y \in \G(S)$, the subsurface projections of $x$ and $y$ to $\C(Y)$ are non\hyp{}empty, so the distance $d_Y(x,y)$ is defined.

\begin{theorem}[Distance formula; {\cite[Corollary 1.2]{vokessep},\cite[Theorem 4.5]{hhs2}}]
\label{theorem: distance formula} \ \\
Let $\G(S)$ be a hierarchical graph of multicurves on $S$ and $\mf{S} = \Wit(S)$. There exists $\sigma_0 >0$ such that for all $\sigma \geq \sigma_0$, there  are $K\geq 1$, $L\geq 0$ so
that for all $x,y\in \G(S)$,
$$ \frac{1}{K}  \sum_{Y\in \mf{S}}\ignore{d_Y(x,y)}{\sigma} - \frac{L}{K} \leq d_{\G}(x,y) \leq K \sum_{Y\in \mf{S}}\ignore{d_Y(x,y)}{\sigma} +L$$
\noindent where $ \ignore{N}{\sigma} = N$ if $N \geq \sigma$ and $0$ otherwise.  \qed
\end{theorem}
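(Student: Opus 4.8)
The plan is to obtain the formula as a direct application of the general hierarchically hyperbolic distance formula, so that the only real work is identifying the relevant index set. By Theorem~\ref{theorem: hierarchial implies HHS}, $\G(S)$ carries an HHS structure, and by the construction of this structure in~\cite{vokessep} (recalled in Remark~\ref{remark:disjoint_unions}) its index set $\mf{S}$ consists of the witnesses in $\Wit(S)$ together with the finite disjoint unions of witnesses; the hyperbolic space attached to a single witness $Y$ is $\C(Y)$ with the Masur--Minsky projection $\pi_Y$, while the hyperbolic space attached to a disjoint union $Y_1\sqcup\dots\sqcup Y_k$ with $k\ge 2$ is the join $\C(Y_1)\ast\dots\ast\C(Y_k)$. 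First I would record two elementary facts: each witness is non-annular by item~(4) of Definition~\ref{definition:hierarchical} and has interior not homeomorphic to a thrice-punctured sphere (by definition of witness), so every $\C(Y_i)$ is non-empty and hence every such join has diameter at most~$2$; and, since every vertex of $\G(S)$ intersects every witness, $d_Y(x,y)$ is defined for all $x,y\in\G(S)$ and all $Y\in\Wit(S)$.

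Next I would invoke the HHS distance formula \cite[Theorem~4.5]{hhs2} for this structure: there is a threshold $\sigma_1>0$ such that for every $\sigma\ge\sigma_1$ there are $K\ge 1$, $L\ge 0$ with
\[ \frac{1}{K}\sum_{U\in\mf{S}}\ignore{d_U(x,y)}{\sigma}-\frac{L}{K}\ \le\ d_{\G}(x,y)\ \le\ K\sum_{U\in\mf{S}}\ignore{d_U(x,y)}{\sigma}+L \]
for all $x,y\in\G(S)$, where $d_U$ denotes distance in the hyperbolic space attached to $U$. Replacing $\sigma_1$ by any larger value is harmless, since it only changes $K$ and $L$, so I set $\sigma_0=\max\{\sigma_1,3\}$. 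For any $\sigma\ge\sigma_0$ and any multi-component $U=Y_1\sqcup\dots\sqcup Y_k$ with $k\ge 2$, the diameter bound above gives $d_U(x,y)\le 2<\sigma$, hence $\ignore{d_U(x,y)}{\sigma}=0$ and $U$ drops out of both sums. The sums over $\mf{S}$ therefore collapse to sums over $\Wit(S)$, and for a witness $Y$ the associated hyperbolic space is exactly $\C(Y)$, so $d_U=d_Y$ is the subsurface projection distance. This is precisely the asserted pair of inequalities, with $\sigma_0$ as just chosen.

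I do not expect a genuine obstacle: the content is the identification of the HHS index set of~\cite{vokessep} with the set of witnesses together with their curve graphs, plus the observation that the auxiliary disjoint-union domains have uniformly bounded associated spaces and hence contribute nothing once the threshold exceeds~$2$. Given Remark~\ref{remark:disjoint_unions} and item~(4) of Definition~\ref{definition:hierarchical} this is essentially bookkeeping; the one point to state carefully is that the HHS distance formula allows an arbitrarily large threshold, which is what licenses discarding the join terms.
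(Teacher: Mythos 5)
Your derivation is correct and matches the route implicit in the paper's citation: the theorem is stated with a \verb|\qed| because it follows by specializing \cite[Theorem~4.5]{hhs2} to the HHS structure built in \cite{vokessep}, and the only nontrivial point is exactly the one you isolate, namely that the auxiliary disjoint-union domains carry joins of nonempty curve graphs (Remark~\ref{remark:disjoint_unions}), which have diameter at most~$2$ and hence vanish from the sum once the threshold exceeds~$2$. Raising $\sigma_0$ to $\max\{\sigma_1,3\}$ to force this is legitimate precisely because the HHS distance formula permits an arbitrary sufficiently large threshold, as you note.
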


A consequence of the distance formula combined with Masur and Minsky's result that pseudo-Anosov maps act loxodromically on the curve graph~\cite[Proposition 4.6]{mm1}, is that any element of $\mcg(S)$ that  restricts to a pseudo-Anosov on a witness of $\G(S)$ has undistorted orbits in $\G(S)$.

\begin{corollary}[{\cite[Corollary 4.8]{russell_vokes}}]\label{corollary:pA_are_undistorted}
Let $S$ be a surface of positive complexity and $\G(S)$ be a hierarchical graph of multicurves  on $S$. Let $W$ be a witness for $\G(S)$  and $\phi \in\mcg(S)$ be a partial pseudo\hyp{}Anosov supported on $W$.
For all $x \in \G(S)$, the map $n \mapsto \phi^n(x)$ is a quasi-isometric embedding of $\mathbb{Z}$ into $\G(S)$. In particular, $\G(S)$ has infinite diameter.\qed
\end{corollary}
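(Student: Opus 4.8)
The plan is to obtain both bounds of a quasi\hyp{}isometric embedding $\mathbb{Z}\to\G(S)$, $n\mapsto\phi^n(x)$, directly from the distance formula (Theorem~\ref{theorem: distance formula}) together with Masur and Minsky's theorem that pseudo\hyp{}Anosovs act loxodromically on curve graphs \cite[Proposition~4.6]{mm1}.

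\textbf{Upper bound.} By Item~(2) of Definition~\ref{definition:hierarchical}, $\phi$ acts on $\G(S)$ by graph automorphisms, hence by isometries of $d_{\G}$. So for every $n\in\mathbb{Z}$ the triangle inequality gives $d_{\G}(x,\phi^n x)\le\sum_{k=0}^{|n|-1}d_{\G}(\phi^k x,\phi^{k+1}x)=|n|\,d_{\G}(x,\phi x)$, which is a linear upper bound with multiplicative constant $C\defeq d_{\G}(x,\phi x)+1$ and no additive error.

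\textbf{Lower bound.} Since $\phi$ is a partial pseudo\hyp{}Anosov supported on~$W$, its restriction $\phi|_W$ is a pseudo\hyp{}Anosov of $W$, so by \cite[Proposition~4.6]{mm1} it acts loxodromically on $\C(W)$: there is $\kappa>0$ with $d_{\C(W)}(v,\phi^m v)\ge\kappa|m|-\kappa$ for every vertex $v$ of $\C(W)$ and every $m\in\mathbb{Z}$. Because $\phi$ is a homeomorphism preserving~$W$, subsurface projection to $W$ is $\phi$\hyp{}equivariant, so $\pi_W(\phi^n x)=\phi^n(\pi_W(x))$. As $W$ is a witness, $\pi_W(x)\ne\emptyset$; choosing any $v\in\pi_W(x)$ and noting that $v$ and $\phi^n v$ both lie in $\pi_W(x)\cup\pi_W(\phi^n x)$ gives
\[
d_W(x,\phi^n x)=\diam_{\C(W)}\!\bigl(\pi_W(x)\cup\pi_W(\phi^n x)\bigr)\ge d_{\C(W)}(v,\phi^n v)\ge\kappa|n|-\kappa .
\]
Now fix any threshold $\sigma\ge\sigma_0$ and let $K\ge1$, $L\ge0$ be the resulting constants of Theorem~\ref{theorem: distance formula}, applied with $\mf{S}$ the set of witnesses of~$\G(S)$. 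Since $W\in\mf{S}$ and all summands are non\hyp{}negative, the lower bound of the distance formula yields $d_{\G}(x,\phi^n x)\ge\tfrac1K\ignore{d_W(x,\phi^n x)}{\sigma}-\tfrac{L}{K}$. For all $n$ with $\kappa|n|-\kappa\ge\sigma$, the cutoff $\ignore{\cdot}{\sigma}$ is inactive on the $W$\hyp{}term, so $d_{\G}(x,\phi^n x)\ge\tfrac{\kappa}{K}|n|-\tfrac{\kappa+L}{K}$; enlarging the additive constant to cover the finitely many remaining values of~$n$ gives a linear lower bound valid for all $n\in\mathbb{Z}$. Combined with the upper bound, $n\mapsto\phi^n(x)$ is a quasi\hyp{}isometric embedding, and in particular $\G(S)$ has infinite diameter (apply the statement with $W=S$, which is a witness since $\xi(S)\ge1$, and any pseudo\hyp{}Anosov of~$S$).

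\textbf{Main obstacle.} I do not anticipate a genuine difficulty: all the mathematical substance is imported from the distance formula and the loxodromic action. The only point requiring care is matching the linear growth of $d_W(x,\phi^n x)$ in $\C(W)$ against the $\sigma$\hyp{}threshold in the distance formula, which is handled by restricting to large $|n|$ and then absorbing finitely many exceptions into the additive constant; one should also double\hyp{}check that $W$ genuinely indexes a term of the distance\hyp{}formula sum (it does, being a witness) and that the $\phi$\hyp{}equivariance of $\pi_W$ is legitimate.
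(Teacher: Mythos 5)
Your proposal is correct and takes exactly the approach the paper indicates: the paper cites this corollary from the authors' earlier work and states, immediately before it, that it is ``a consequence of the distance formula combined with Masur and Minsky's result that pseudo-Anosov maps act loxodromically on the curve graph.'' Your argument---linear upper bound from the isometric action, linear lower bound via $\phi$-equivariance of $\pi_W$, loxodromicity in $\C(W)$, and the single witness term in the distance formula---is precisely that.
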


A critical step in Vokes' proof of Theorem \ref{theorem: hierarchial implies HHS} is the construction of a ``model space'' for a hierarchical graph of multicurves based only on the set of witnesses. The first step in this construction is defining a graph of multicurves corresponding to a collection of  connected subsurfaces of $S$.

\begin{definition} \label{definition: ksep}
Let $\mf{S}$ be a collection of connected  subsurfaces of $S = S_{g,p}$, with $\xi(W) \geq 1$ for all $W \in \mf{S}$.
Suppose further that $W \in \mf{S}$ and $W \subseteq Z$ implies $Z \in \mf{S}$.
If $\mf{S} = \emptyset$, define $\ksep_\mf{S}(S)$ to be a single point. Otherwise, define $\ksep_{\mf{S}}(S)$ to be the graph so that:
\begin{itemize}
    \item vertices are all multicurves $x$ on $S$ so that each component of $S \sminus x$ is \emph{not} an element of~$\mf{S}$;
    \item  two multicurves $x$ and $y$ are joined by an edge if either of the following conditions hold:
\begin{enumerate}
\item $x$ differs from~$y$ by either adding or removing a single curve; see Figure~\ref{figure: path1};
\item $x$ differs from $y$ by ``flipping" a curve in some subsurface of~$S$, that is, $y$ is obtained from $x$ by replacing a curve $\alpha$ by a curve~$\beta$, where $\alpha$ and $\beta$ are contained in the same component $Y_\alpha$ of $S \sminus (x\sminus\alpha)$ and are adjacent in~$\C(Y_\alpha$); see Figure~\ref{figure: path2}. 
\end{enumerate}
\end{itemize}
If $\G(S)$ is a graph of multicurves, define $\ksep_\G(S) = \ksep_\mf{S}(S)$ where $\mf{S} = \Wit\bigl(\G(S)\bigr)$. 
\end{definition}

\begin{figure}[h!]
\centering
\begin{subfigure}[b]{0.235\textwidth}
\centering
\includegraphics[width=\textwidth]{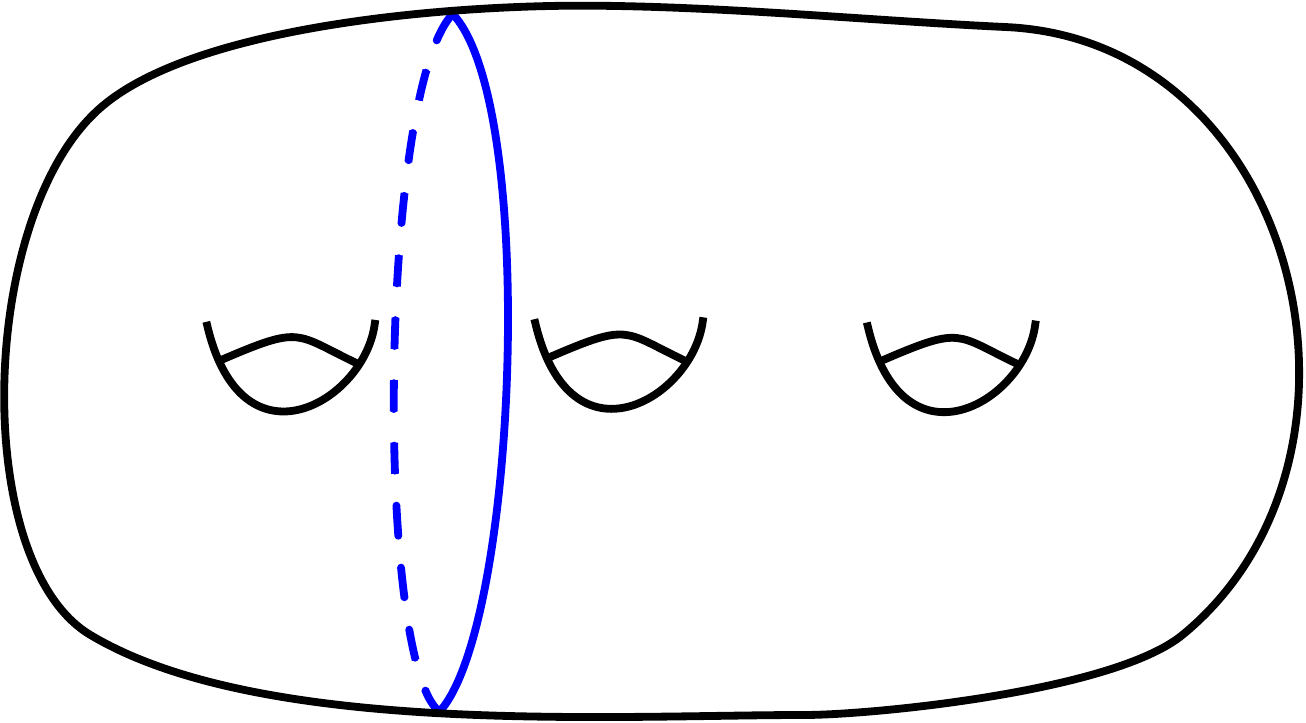}
\end{subfigure}
\begin{subfigure}[b]{0.235\textwidth}
\centering
\includegraphics[width=\textwidth]{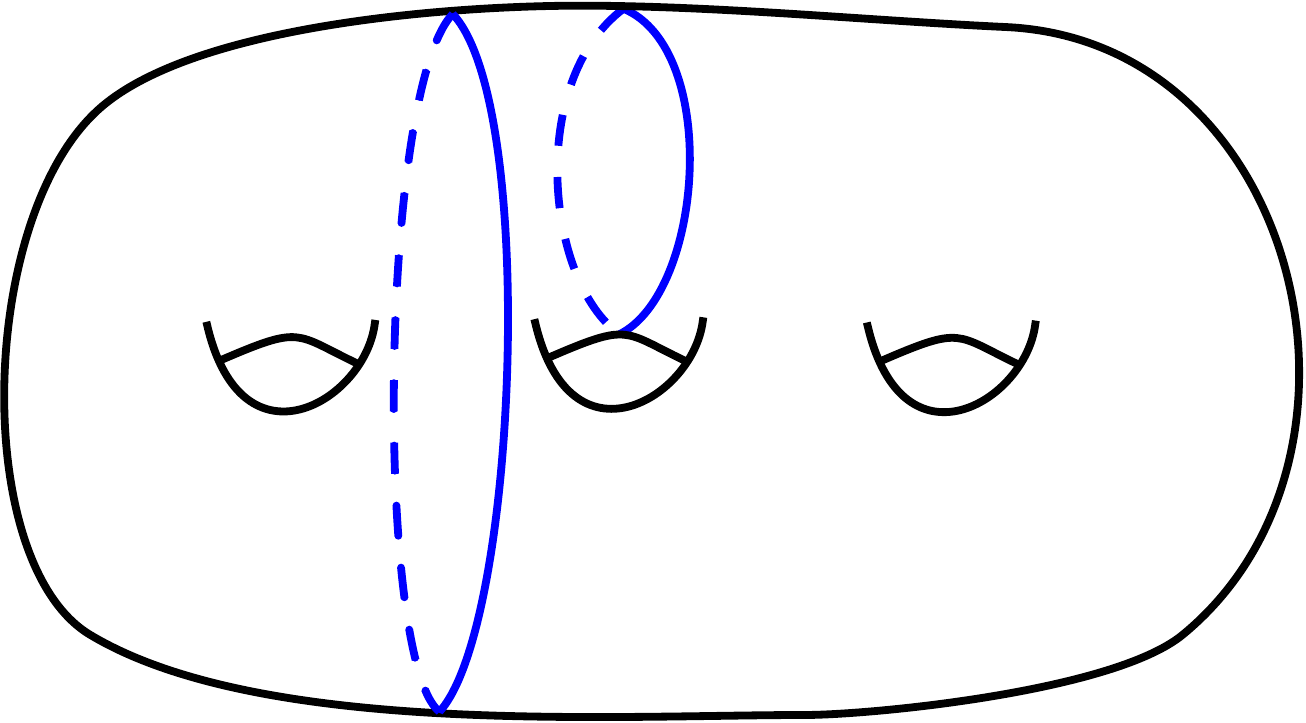}
\end{subfigure}
\begin{subfigure}[b]{0.235\textwidth}
\centering
\includegraphics[width=\textwidth]{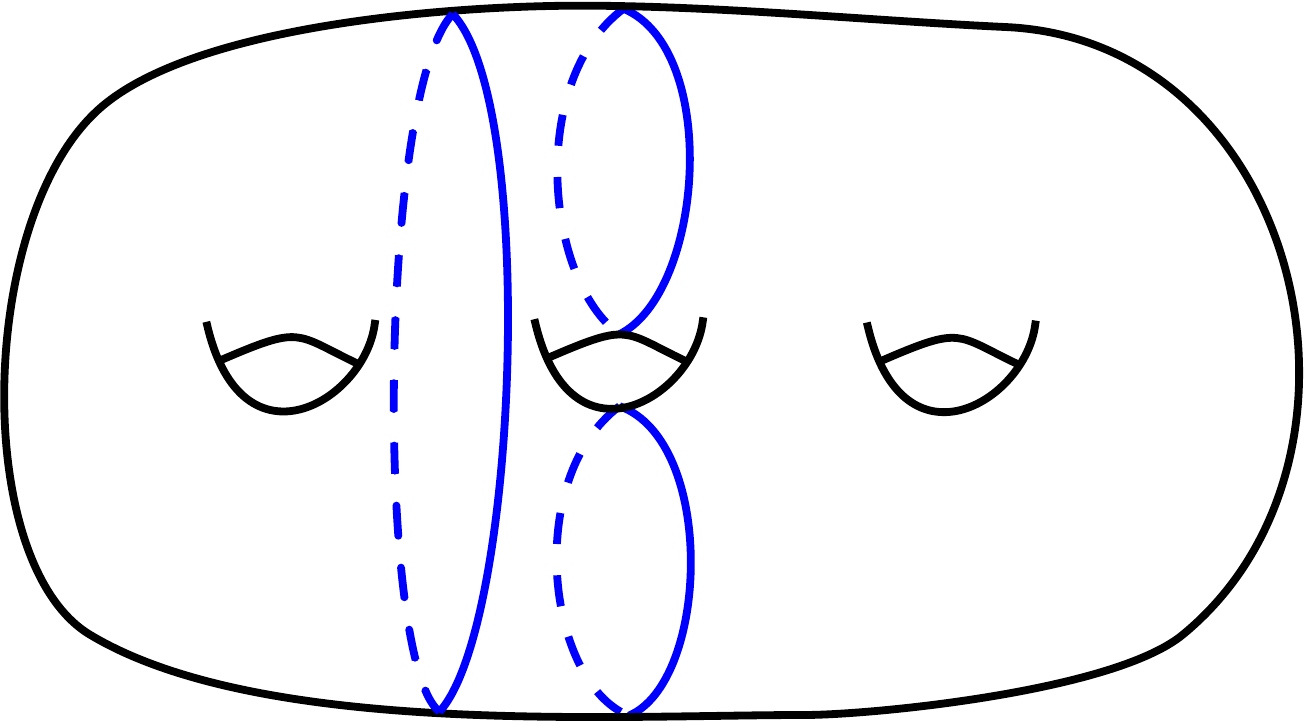}
\end{subfigure}
\begin{subfigure}[b]{0.235\textwidth}
\centering
\includegraphics[width=\textwidth]{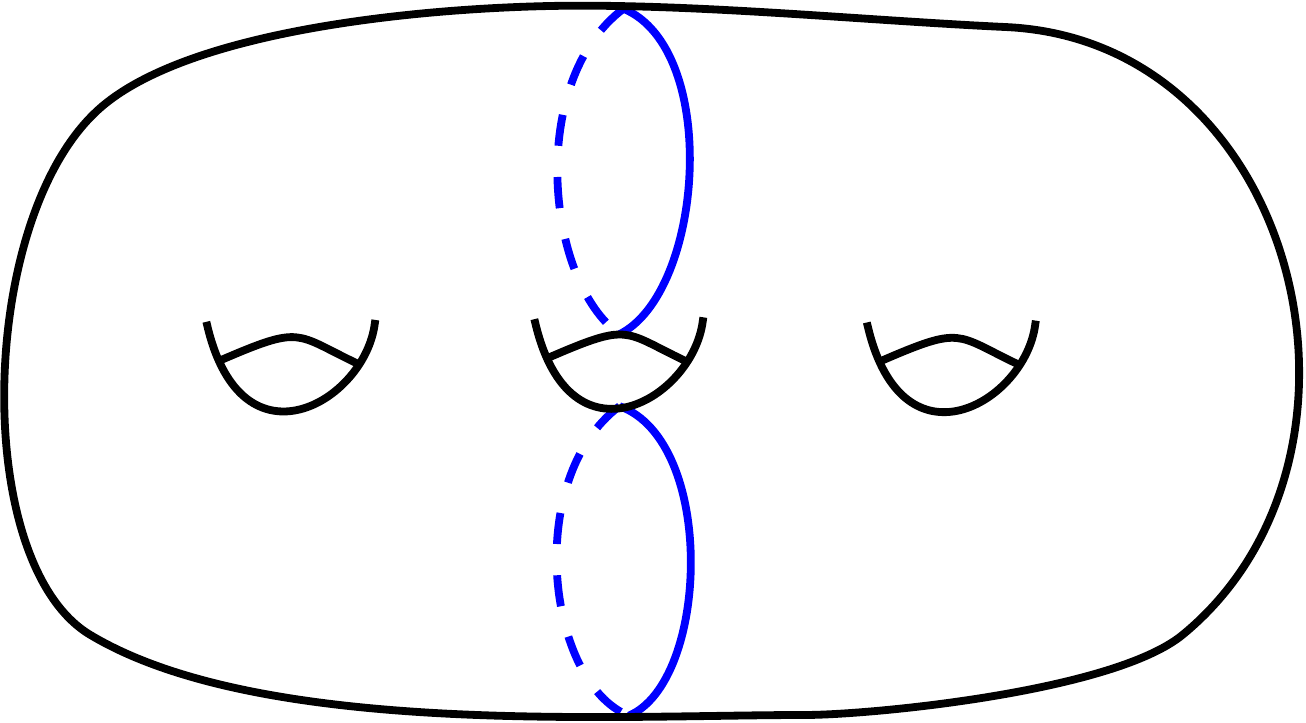}
\end{subfigure}
\caption{An example of a path in $\ksep_{\mf{S}}(S_3)$ given by adding and removing curves, where $\mf{S}$ is the set of subsurfaces whose complement does not contain genus. The set $\mathfrak S$ is the set of witnesses for the separating curve graph.}
\label{figure: path1}
\end{figure}

\begin{figure}[h!]
\centering
\begin{subfigure}[b]{0.25\textwidth}
\centering
\includegraphics[width=\textwidth]{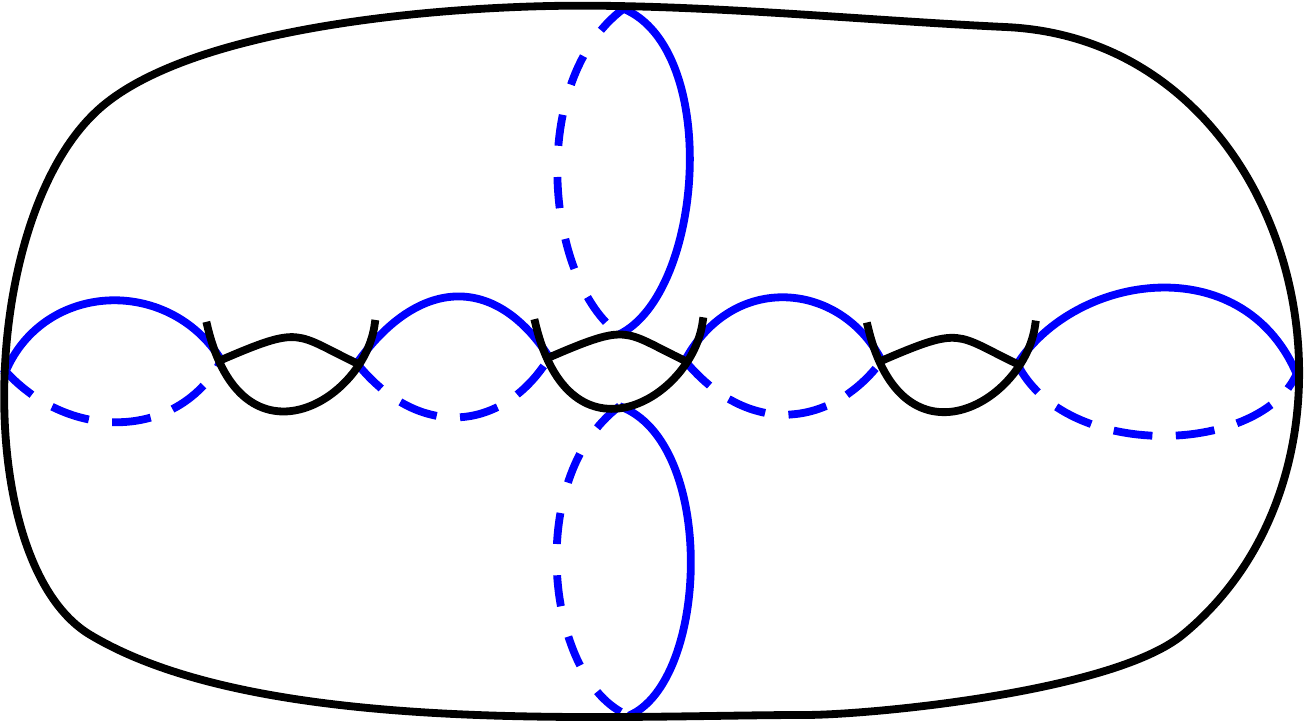}
\end{subfigure}
\qquad \qquad
\begin{subfigure}[b]{0.25\textwidth}
\centering
\includegraphics[width=\textwidth]{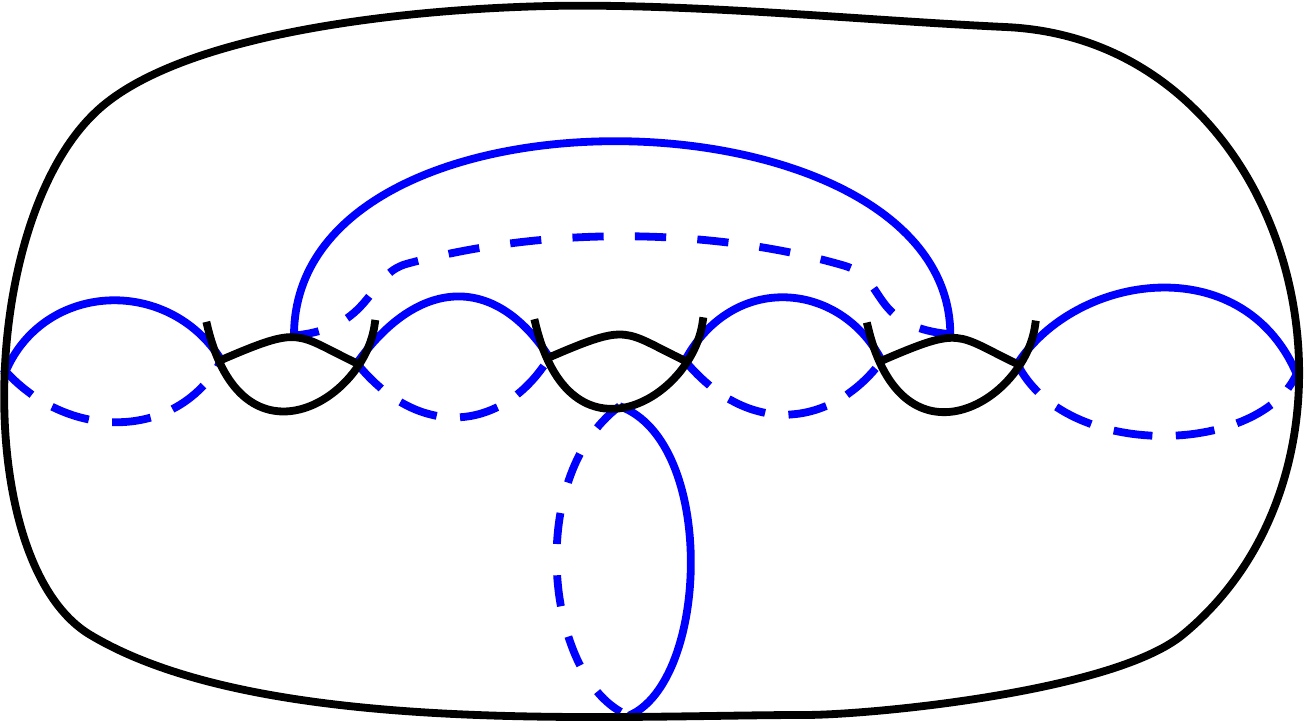}
\end{subfigure}
\caption{An example of a flip move in $\ksep_{\mf{S}}(S_3)$, where $\mf{S}$ is the set of subsurfaces whose complement does not contain genus.}
\label{figure: path2}
\end{figure}

 Since every vertex of $\G(S)$ will also be a vertex of $\ksep_\G(S)$, there is a natural inclusion $\G(S) \to \ksep_\G(S)$.
The following results tell us that $\ksep_\G(S)$ is a  quasi-isometric model for a hierarchical graph of multicurves that is determined solely by the witnesses of $\G(S)$. 

\begin{lemma}[{\cite[Section 3.1]{vokessep}}]\label{lemma:k is hierarchical}
Let $S$ be a surface with positive complexity. If $\G(S)$ is a hierarchical graph of multicurves on $S$, then $\ksep_\G(S)$ is hierarchical and $\Wit\bigl( \ksep_\G(S) \bigr) = \Wit\bigl(\G(S)\bigr)$. \qed
\end{lemma}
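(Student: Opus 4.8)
The plan is to verify the four conditions of Definition~\ref{definition:hierarchical} for $\ksep_\G(S)$ and then separately identify its witness set. The first and easiest observation is that $\ksep_\G(S)$ is a graph of multicurves: every vertex is by construction a multicurve on $S$, and the graph is nonempty because $S$ has positive complexity, so $S$ itself is a witness for $\G(S)$, whence no vertex of $\G(S)$ can be disjoint from $S$ and in particular $\G(S)$ has a vertex, which is also a vertex of $\ksep_\G(S)$. For Item~(1), connectedness, I would argue directly from the definition of the edges: given any two vertices $x$ and $y$ of $\ksep_\G(S)$, one can interpolate by first adding curves to $x$ and to $y$ to reach multicurves that are as ``full'' as possible subject to the constraint that no complementary component lies in $\mf{S} = \Wit(\G(S))$, and then comparing the resulting pants-like decompositions using flip moves; this is essentially the standard connectivity argument for the cut system / pants-type graphs, adapted to the constraint. (Alternatively, since the statement is quoted from \cite[Section 3.1]{vokessep}, one may simply cite it, but I will sketch the self-contained argument.)

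For Item~(2), the mapping class group acts on multicurves on $S$, and since $\mf{S} = \Wit(\G(S))$ is $\mcg(S)$\hyp{}invariant (by Item~(2) for $\G(S)$ itself, the witnesses form an invariant family), the vertex set of $\ksep_\G(S)$ is preserved, and both types of defining edges are preserved because ``differs by adding/removing a curve'' and ``adjacent in $\C(Y_\alpha)$'' are both mapping-class-group\hyp{}equivariant notions. For Item~(3), adjacent vertices of $\ksep_\G(S)$ differ either by a single curve (intersection number $0$) or by a flip, where $\alpha$ and $\beta$ are adjacent in $\C(Y_\alpha)$; by the standard modification of the curve graph on low-complexity surfaces, adjacency in $\C(Y_\alpha)$ means $i(\alpha,\beta)$ is at most the minimal possible self-intersection number on such a surface (at most $2$), so $R = 2$ works uniformly. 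Item~(4) will follow once we compute the witness set, since $\Wit(\G(S))$ already contains no annuli by Item~(4) for $\G(S)$.

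The main content, and where I expect the real work to lie, is the equality $\Wit\bigl(\ksep_\G(S)\bigr) = \Wit\bigl(\G(S)\bigr)$. The inclusion $\Wit\bigl(\ksep_\G(S)\bigr) \subseteq \Wit\bigl(\G(S)\bigr)$ follows because $\G(S) \subseteq \ksep_\G(S)$ as vertex sets: a subsurface that every vertex of the larger graph meets is in particular met by every vertex of $\G(S)$. For the reverse inclusion, let $W \in \mf{S}$ be a witness for $\G(S)$; I must show every vertex $x$ of $\ksep_\G(S)$ intersects $W$. Suppose not, so $x$ is disjoint from $W$; then $W$ is contained in some complementary component $Z$ of $S \sminus x$. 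By the defining property of the vertices of $\ksep_\G(S)$, no component of $S\sminus x$ is an element of $\mf{S} = \Wit(\G(S))$; but $\mf{S}$ is upward closed under inclusion among subsurfaces of complexity $\geq 1$ (this upward-closure is exactly the hypothesis ``$W \in \mf{S}$, $W \subseteq Z$ $\Rightarrow$ $Z \in \mf{S}$'' built into Definition~\ref{definition: ksep}, and it does hold for genuine witness sets because a vertex of $\G(S)$ intersecting $W$ a fortiori intersects any $Z \supseteq W$). Hence $W \subseteq Z$ forces $Z \in \mf{S}$, contradicting that $x$ is a vertex of $\ksep_\G(S)$. Therefore $x$ intersects $W$, so $W \in \Wit\bigl(\ksep_\G(S)\bigr)$. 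The subtle point to get right here is that the witness set really is upward closed (so that Definition~\ref{definition: ksep} applies to $\mf{S} = \Wit(\G(S))$ in the first place), and that ``disjoint from $W$'' genuinely implies ``contained in a single complementary component of $x$'', both of which are immediate from the definitions once stated carefully.
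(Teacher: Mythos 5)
Your proposal is correct and follows essentially the same route the paper itself uses for the closely related Lemma~\ref{lemma:ksep_is_hierarchical} (whose proof, appearing a few lines below, checks exactly these four conditions for $\ksep_\mf{S}(S)$): connectedness via reducing to the pants graph by adding curves, equivariance of the edge relations for Item~(2), the observation that flip moves intersect at most twice for Item~(3), and the absence of annular witnesses following from $\Wit(\ksep_\mf{S}(S)) = \mf{S}$. Your proof of the witness equality itself (which the paper cites to \cite{vokessep} rather than reproving) is also the right argument: the forward inclusion is immediate from $\G(S)$ sitting inside $\ksep_\G(S)$ as a vertex subset, and the reverse inclusion is exactly the upward-closure contradiction you give, using that $\Wit(\G(S))$ satisfies the closure hypothesis of Definition~\ref{definition: ksep}.

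One small imprecision, not a gap: in your sketch of connectedness you speak of adding curves until the multicurve is ``as full as possible subject to the constraint,'' but in fact the constraint is automatically preserved under adding curves (upward closure of $\mf{S}$ guarantees that if no component of $S \sminus x$ lies in $\mf{S}$ then no component of $S \sminus (x \cup a)$ does either), so one can always reach a genuine pants decomposition, and connectedness then follows directly from connectedness of the pants graph, whose elementary moves are flip moves of $\ksep_\G(S)$. Stating this makes the reduction cleaner.
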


\begin{proposition}[{\cite[Proposition 4.1]{vokessep}}] \label{proposition: k qi g}
Let $\G(S)$ be a hierarchical graph of multicurves on~$S$. The inclusion map $\G(S) \to \ksep_\G(S)$ is a quasi-isometry. In particular, if $\G_1(S)$ and $\G_2(S)$ are two hierarchical graphs of multicurves and $ \Wit\bigl(\G_1(S)\bigr)   = \Wit\bigl(\G_2(S)\bigr)$, then $\G_1(S)$ and $\G_2(S)$ are quasi-isometric. \qed
\end{proposition}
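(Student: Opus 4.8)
The plan is to deduce the proposition from the distance formula (Theorem~\ref{theorem: distance formula}) together with cocompactness of the mapping class group action; the crucial input is that $\ksep_\G(S)$ has exactly the same witnesses as $\G(S)$, which is supplied by Lemma~\ref{lemma:k is hierarchical}. We may assume $S$ has positive complexity, since this is needed for Lemma~\ref{lemma:k is hierarchical} to apply. Write $\mf{S}=\Wit\bigl(\G(S)\bigr)$; by Lemma~\ref{lemma:k is hierarchical}, $\ksep_\G(S)$ is a hierarchical graph of multicurves with $\Wit\bigl(\ksep_\G(S)\bigr)=\mf{S}$. The inclusion $\iota\colon\G(S)\to\ksep_\G(S)$ is well defined, because for any vertex $v$ of $\G(S)$ each component of $S\sminus v$ is disjoint from $v$ and so is not a witness, whence $v$ is a vertex of $\ksep_\G(S)$; note also that $\iota$ is equivariant for the mapping class group actions.

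First I would show $\iota$ is a quasi-isometric embedding. Apply Theorem~\ref{theorem: distance formula} to each of $\G(S)$ and $\ksep_\G(S)$: since their witness sets agree, both formulas are sums over the same index set $\mf{S}$, and after fixing a single threshold $\sigma$ that is admissible for both graphs, the two sums become literally the same quantity. Indeed, for vertices $x,y$ of $\G(S)$ (which are also vertices of $\ksep_\G(S)$),
\[
d_{\G(S)}(x,y)\ \asymp\ \sum_{Y\in\mf{S}}\ignore{d_Y(x,y)}{\sigma}\ \asymp\ d_{\ksep_\G(S)}(x,y),
\]
because $d_Y(x,y)=\operatorname{diam}_{\C(Y)}\bigl(\pi_Y(x)\cup\pi_Y(y)\bigr)$ depends only on the multicurves $x,y$ and the subsurface $Y$, not on the ambient graph. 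Composing these two comparisons---that is, absorbing the two sets of multiplicative and additive constants into one---gives $d_{\G(S)}(x,y)\asymp d_{\ksep_\G(S)}(x,y)$ uniformly over all vertices $x,y$ of $\G(S)$, which is exactly the statement that $\iota$ is a quasi-isometric embedding.

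Next I would verify that $\iota\bigl(\G(S)\bigr)$ is coarsely dense in $\ksep_\G(S)$. Since $\ksep_\G(S)$ is hierarchical, it is connected and the mapping class group acts on it cocompactly (Definition~\ref{definition:hierarchical}, together with the remark identifying Item~(3) with cocompactness of the action). Cocompactness gives finitely many orbits of vertices of $\ksep_\G(S)$; choosing a representative $x_i$ of each orbit and using connectedness of $\ksep_\G(S)$ together with the non-emptiness of $\G(S)$, we get a path in $\ksep_\G(S)$ from each $x_i$ to some vertex of $\G(S)$, and we let $D$ be the maximum of these finitely many path lengths. As the action is by isometries and $\iota$ is equivariant, translating these paths shows that every vertex of $\ksep_\G(S)$ lies within distance $D$ of $\iota\bigl(\G(S)\bigr)$, completing the proof that $\iota$ is a quasi-isometry. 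The final assertion then follows immediately: if $\Wit\bigl(\G_1(S)\bigr)=\Wit\bigl(\G_2(S)\bigr)=\mf{S}$, then $\ksep_{\G_1}(S)$ and $\ksep_{\G_2}(S)$ are by definition the same graph $\ksep_{\mf{S}}(S)$, so $\G_1(S)$ and $\G_2(S)$ are each quasi-isometric to it, hence to one another.

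Because Lemma~\ref{lemma:k is hierarchical} is already in hand, this argument is essentially formal, and I do not anticipate a serious obstacle. The point requiring the most care is the coarse density claim: a direct combinatorial interpolation between a vertex of $\ksep_\G(S)$ and a vertex of $\G(S)$ need not stay inside $\ksep_\G(S)$, so it is cleaner to exploit the cocompact mapping class group action as above. A smaller bookkeeping issue is that the distance formulas for the two graphs come with a priori different thresholds and constants, which is dealt with by passing to a common, sufficiently large threshold before comparing.
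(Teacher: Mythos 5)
The paper treats this as a citation to Vokes's earlier paper~\cite{vokessep} and gives no proof of its own, so there is no in-paper argument to compare against; I will evaluate your proof on its own terms. The individual steps you carry out are valid given the theorems recorded in this paper: Lemma~\ref{lemma:k is hierarchical} supplies $\Wit\bigl(\ksep_\G(S)\bigr)=\Wit\bigl(\G(S)\bigr)$, the subsurface quantities $d_Y(x,y)$ are intrinsic to the multicurves and do not depend on the ambient graph, your handling of a common threshold and composition of constants is fine, and the coarse-density argument via finitely many $\mcg(S)$-orbits of vertices together with equivariance of the inclusion is correct (and wisely avoids trying to interpolate directly).

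There is, however, a likely circularity relative to the cited source. To apply the distance formula (Theorem~\ref{theorem: distance formula}) to $\G(S)$ itself, you need $\G(S)$ to be a hierarchically hyperbolic space, which is Theorem~\ref{theorem: hierarchial implies HHS}. In~\cite{vokessep}, that theorem is obtained precisely by first establishing the HHS structure on the model space $\ksep_\G(S)$ directly, and then transporting it to $\G(S)$ along the quasi-isometry of Proposition~\ref{proposition: k qi g}. Thus the distance formula for $\G(S)$, as opposed to for $\ksep_\G(S)$, is downstream of the very proposition you are trying to prove and cannot be appealed to without circularity. The distance formula for $\ksep_\G(S)$ is available independently, and combined with the observation that the inclusion is coarsely Lipschitz (adjacent vertices of $\G(S)$ intersect at most $R$ times and there are finitely many $\mcg(S)$-orbits of such pairs, so they lie at uniformly bounded $\ksep_\G(S)$-distance) it does yield one of the two required inequalities. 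But the reverse bound, that $d_{\G(S)}(x,y)$ is controlled by $d_{\ksep_\G(S)}(x,y)$, is the substantive content of the proposition; it requires a direct path-building argument in $\G(S)$ and cannot come from a distance formula that already presupposes the quasi-isometry.
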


More generally, the next lemma shows that $\ksep_\mf{S}(S)$ is hierarchical whenever $\mf{S}$ is $\mcg(S)$-invariant and satisfies the hypotheses of Definition \ref{definition: ksep}. This allows us to construct a hierarchical graph of multicurves with any possible set of witnesses. We will use this in Section \ref{section:applications} to construct graphs whose witnesses have specific properties.

\begin{lemma}\label{lemma:ksep_is_hierarchical}
Let $\mf{S}$ be a collection of subsurface of $S$ that satisfies the hypotheses of Definition \ref{definition: ksep}. If $\mf{S}$ is $\mcg(S)$\hyp{}invariant, then $\ksep_\mf{S}(S)$ is a hierarchical graph of multicurves.
\end{lemma}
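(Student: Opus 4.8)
The plan is to check the four items of Definition~\ref{definition:hierarchical} for $\ksep_\mf{S}(S)$ directly; this mirrors the proof of Lemma~\ref{lemma:k is hierarchical} in \cite[Section~3.1]{vokessep}, so what really needs checking is that that argument uses nothing about $\mf{S}$ beyond the hypotheses of Definition~\ref{definition: ksep} and $\mcg(S)$-invariance. I would begin with the degenerate case: if $\xi(S) \le 0$ then $S$ has no connected essential subsurface of complexity $\ge 1$, so $\mf{S} = \emptyset$, $\ksep_\mf{S}(S)$ is a point, and all four items hold vacuously. So assume $\xi(S) \ge 1$. The vertices of $\ksep_\mf{S}(S)$ are multicurves by construction, and any pants decomposition of $S$ is a vertex because each of its complementary components has complexity $0$ and so is not in $\mf{S}$; hence $\ksep_\mf{S}(S)$ is a non-empty graph of multicurves.

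The one structural observation needed is that, since $\mf{S}$ is upward closed, its complement is downward closed: any connected essential subsurface of a subsurface not in $\mf{S}$ is again not in $\mf{S}$. This gives at once that \emph{adding a curve to a vertex produces a vertex}: if $x$ is a vertex and $\alpha$ is a curve inside a complementary component $Y$ of $x$, then each complementary component of $x \cup \{\alpha\}$ is either a complementary component of $x$ or a subsurface of $Y \notin \mf{S}$, hence not in $\mf{S}$. For \textbf{connectedness} (Item~(1)) I would then take any vertex and repeatedly add curves inside complexity-$\ge 1$ complementary components --- each move an edge of type~(1) --- to reach a pants decomposition, so every vertex is joined by a path to a pants decomposition. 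Every elementary move between pants decompositions (swapping a curve in a complexity-$1$ complementary component for one meeting it minimally) is precisely an edge of type~(2), so the pants graph of $S$, or the curve graph $\C(S)$ when $\xi(S) = 1$, is the induced subgraph of $\ksep_\mf{S}(S)$ on the pants decompositions; its connectedness then gives connectedness of $\ksep_\mf{S}(S)$. Connectivity of the pants graph is the only ingredient imported from outside.

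The remaining items are bookkeeping. For \textbf{Item~(2)}: $\mcg(S)$ permutes multicurves and their complementary components, $\mf{S}$ is $\mcg(S)$-invariant, and both edge relations are phrased purely via isotopy classes, intersection numbers, and curve graphs of subsurfaces, so $\mcg(S)$ acts by graph automorphisms. For \textbf{Item~(3)}: vertices joined by a type-(1) edge have disjoint representatives, and vertices joined by a type-(2) flip $\alpha \mapsto \beta$ satisfy $i(x,y) = i(\alpha,\beta) \le 2$, since $\alpha$ and $\beta$ are adjacent in $\C(Y_\alpha)$; so one may take $R = 2$. For \textbf{Item~(4)} I would verify that no annulus is a witness: given an annular subsurface $W$ with core curve $\gamma$, extend $\{\gamma\}$ to a pants decomposition $P$; then $P$ is a vertex and is disjoint from $W$, so $W$ is not a witness, and hence $\Wit\bigl(\ksep_\mf{S}(S)\bigr)$ contains no annuli. (In fact the same idea shows $\Wit\bigl(\ksep_\mf{S}(S)\bigr) = \mf{S}$: every $W \in \mf{S}$ meets every vertex, since a disjoint vertex would have $W$ inside a complementary component, forcing $W \in \mf{S}$ by upward-closure; and any $W \notin \mf{S}$ is missed by the vertex consisting of $\partial W$ together with a pants decomposition of $W^c$. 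Only the annular case is needed here.) The point demanding the most care in a full write-up is this last witness computation; the rest is routine, and the argument follows \cite[Section~3.1]{vokessep} essentially verbatim.
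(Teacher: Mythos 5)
Your proof is correct and follows essentially the same route as the paper: you check the four conditions of Definition~\ref{definition:hierarchical}, using addition of curves plus connectedness of the pants graph for Item~(1), $\mcg(S)$-invariance for Item~(2), the edge definitions for Item~(3), and the identification $\Wit\bigl(\ksep_\mf{S}(S)\bigr) = \mf{S}$ for Item~(4). The paper is merely terser, citing \cite[Claim 3.3]{vokessep} for connectivity and the witness computation from the same source, whereas you spell out the (correct) intermediate steps.
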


\begin{proof}
Every vertex  of $\ksep_\mf{S}(S)$ can be connected to a pants decomposition of $S$ by a sequence of edges coming from adding curves. Hence, the fact that the pants graph is connected implies $\ksep_\mf{S}(S)$ is always connected \cite[Claim 3.3]{vokessep}. The $\mcg(S)$\hyp{}invariance of $\mf{S}$ implies that the action $\mcg(S)$ induces an action on $\ksep_\mf{S}(S)$ as well. The definition of edges in $\ksep_\mf{S}(S)$ ensures that vertices that are joined by an edge intersect at most twice. The fact that $\Wit(\ksep_\mf{S}(S))$ contains no annuli follows from the fact that $\mf{S}$ is required to contain no annuli and that $\Wit(\ksep_\mf{S}(S)) = \mf{S}$. 
\end{proof}

Every hierarchically hyperbolic space comes equipped with a system of sub-hierarchically hyperbolic spaces called product regions; see \cite[Section 5.2]{hhs2}. An advantage to working with the model space $\ksep_\mf{S}(S)$ is that these product regions can be described concretely without appealing to the broader theory of hierarchically hyperbolic spaces.

\begin{definition}[Product region of a multicurve]\label{definition:product_region}
Let $\mf{S}$ be the set of witnesses for some hierarchical graph of multicurves on $S$. If $m$ is a multicurve on $S$, define $P_\mf{S}(m) = \{ y \in \ksep_\mf{S}(S) : m\subseteq y \}$. We give $P_{\mf{S}}(m)$ the induced metric from $\ksep_\mf{S}(S)$.
\end{definition}

The following corollary to the distance formula provides the rationale for calling $P_\mf{S}(m)$ a ``product region''. In this corollary, if $\mf{S}$ is a collection of subsurfaces of $S$ and $Y$ is a subsurface of $S$, then $\mf{S}_Y = \{ Z \in \mf{S} : Z \subseteq Y\}$.

\begin{corollary}[{\cite[Corollary 4.11]{russell_vokes}}]\label{corollary:product regions}
Let $\mf{S}$ be the set of witnesses for some hierarchical graph of multicurves on~$S$. If $m$ is a multicurve on $S$ and $S \sminus m = Y_1 \sqcup \dots \sqcup Y_r$, then $P_\mf{S}(m)$ is quasi-isometric to $\prod_{i=1}^r \ksep_{\mf{S}_{Y_i}}(Y_i)$. Moreover, a factor $\ksep_{\mf{S}_{Y_i}}(Y_i)$  has infinite diameter if and only if $Y_i \in \mf{S}$. \qed
\end{corollary}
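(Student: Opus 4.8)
The plan is to realise the coordinate bijection between $P_\mf{S}(m)$ and $\prod_{i=1}^{r}\ksep_{\mf{S}_{Y_i}}(Y_i)$ explicitly and to verify the metric comparison with the distance formula of Theorem~\ref{theorem: distance formula}. Write $S\sminus m = Y_1\sqcup\dots\sqcup Y_r$; for a multicurve $z$ on $S$ with $m\subseteq z$ set $z_i=z\cap Y_i$. Since every curve of $z$ outside $m$ is disjoint from $m$ and the curves of $z$ are pairwise non-isotopic, each $z_i$ is an essential, non-peripheral multicurve on $Y_i$, and the components of $S\sminus z$ are exactly the components of the surfaces $Y_i\sminus z_i$. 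Hence $z\in P_\mf{S}(m)$ if and only if, for each $i$, no component of $Y_i\sminus z_i$ belongs to $\mf{S}$. Writing $\mf{S}_{Y_i}=\{W\in\mf{S}:W\subseteq Y_i\}$ and using that $\mf{S}_{Y_i}$ again satisfies the hypotheses of Definition~\ref{definition: ksep} and is $\mcg(Y_i)$-invariant (which follows from $\mcg(S)$-invariance of $\mf{S}$), Lemma~\ref{lemma:ksep_is_hierarchical} makes $\ksep_{\mf{S}_{Y_i}}(Y_i)$ a hierarchical graph of multicurves with witness set $\mf{S}_{Y_i}$, and the condition above says exactly that $z_i$ is a vertex of $\ksep_{\mf{S}_{Y_i}}(Y_i)$. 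This yields a vertex map $\Phi(z_1,\dots,z_r)=m\cup z_1\cup\dots\cup z_r$ from $\prod_i\ksep_{\mf{S}_{Y_i}}(Y_i)$ to $P_\mf{S}(m)$, with the convention that $z_i$ is the empty multicurve on $Y_i$ whenever the $i$-th factor is a single point, i.e.\ whenever $\mf{S}_{Y_i}=\emptyset$.

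The crux is to feed this into the distance formula for $\ksep_\mf{S}(S)$ (which is hierarchical by Lemma~\ref{lemma:ksep_is_hierarchical}) and observe that on $P_\mf{S}(m)$ the formula splits along the $Y_i$. The witness set partitions as $\mf{S}=\bigsqcup_i\mf{S}_{Y_i}\sqcup\mf{S}'$, where $\mf{S}'$ is the set of witnesses not contained in any $Y_i$; each $W\in\mf{S}'$ is not disjoint from $m$, so since every vertex of $P_\mf{S}(m)$ contains $m$ and a multicurve projects to diameter at most $3$ in any subsurface it meets, $d_W(x,y)$ is uniformly bounded for all $x,y\in P_\mf{S}(m)$. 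Raising the threshold past this bound kills the $\mf{S}'$-terms, leaving $\sum_i\sum_{W\in\mf{S}_{Y_i}}\ignore{d_W(x,y)}{\sigma}$; and for $W\subseteq Y_i$ one has $d_W(x,y)=d_W(x_i,y_i)$, so applying the distance formula for each $\ksep_{\mf{S}_{Y_i}}(Y_i)$ (with one threshold large enough to serve all the finitely many of them) turns each inner sum into a uniform multiple of $d_{\ksep_{\mf{S}_{Y_i}}(Y_i)}(x_i,y_i)$. Altogether $d_{P_\mf{S}(m)}(x,y)\asymp\sum_i d_{\ksep_{\mf{S}_{Y_i}}(Y_i)}(x_i,y_i)$, so $\Phi$ is a quasi-isometric embedding. (One could instead argue combinatorially for the intrinsic path metric: an add/remove/flip move between two vertices containing $m$ is supported in a single $Y_i$, so a path in $P_\mf{S}(m)$ projects to paths in the factors of total length at most its own, while factor-paths concatenate back to a path in $P_\mf{S}(m)$; this even exhibits $\Phi$ as an $\ell^1$-isometric embedding on vertices.)

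It then remains to check coarse surjectivity and to identify the bounded factors. For surjectivity: given $z\in P_\mf{S}(m)$, deleting one at a time its curves lying in components $Y_i\notin\mf{S}$ keeps us inside $P_\mf{S}(m)$ — such a deletion only merges pieces inside some $Y_i\notin\mf{S}$, and no subsurface of $Y_i$ is in $\mf{S}$ because $\mf{S}$ is upward closed — and reaches the image of $\Phi$ in at most $\xi(S)$ steps, so $\Phi$ is a quasi-isometry. For the final clause: if $Y_i\notin\mf{S}$ then upward closure forces $\mf{S}_{Y_i}=\emptyset$, so $\ksep_{\mf{S}_{Y_i}}(Y_i)$ is a point; and if $Y_i\in\mf{S}$ then $Y_i$ is a witness for the hierarchical graph of multicurves $\ksep_{\mf{S}_{Y_i}}(Y_i)$, so Corollary~\ref{corollary:pA_are_undistorted} gives it infinite diameter.

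The only genuinely delicate steps are bookkeeping: confirming that $\mf{S}_{Y_i}$ really is the witness set of $\ksep_{\mf{S}_{Y_i}}(Y_i)$, so that the second distance formula is indexed correctly, and establishing the uniform bound on $d_W$ for witnesses $W$ that meet $m$. Both become short once one notices that every vertex of $P_\mf{S}(m)$ contains the fixed multicurve $m$ — this is exactly what decouples the $Y_i$-coordinates from one another and annihilates the cross-terms, and everything else is formal.
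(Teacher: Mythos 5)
The paper states Corollary~\ref{corollary:product regions} as a citation to the authors' earlier work \cite[Corollary~4.11]{russell_vokes} and gives no proof, so there is no in-paper argument to compare against; I will therefore assess your proposal on its own merits. Your argument is correct, and it is the natural hands-on derivation one would expect given the distance formula. The coordinate map $\Phi$ is well-defined because for $z \supseteq m$ the complementary components of $z$ in $S$ are precisely the complementary components of $z_i$ in $Y_i$, so ``no component is in $\mf{S}$'' decouples factor by factor into ``no component is in $\mf{S}_{Y_i}$''. The quasi-isometric-embedding step via the distance formula is sound: the witness set partitions into $\bigsqcup_i \mf{S}_{Y_i} \sqcup \mf{S}'$, and for $W \in \mf{S}'$ the term $d_W(x,y)$ is uniformly bounded (in fact by $6$, since $\pi_W(m)$ is a nonempty subset of diameter $\le 3$ of both $\pi_W(x)$ and $\pi_W(y)$, and each of these also has diameter $\le 3$), while for $W \in \mf{S}_{Y_i}$ one has $\pi_W(x) = \pi_W(x_i)$ because the curves of $m$ and of the other $x_j$ are disjoint from $W$; a single large threshold then kills the $\mf{S}'$-terms and matches each inner sum with the distance formula for $\ksep_{\mf{S}_{Y_i}}(Y_i)$. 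Your coarse-surjectivity step is also right: $Y_i \notin \mf{S}$ implies $\mf{S}_{Y_i} = \emptyset$ by upward closure, so removing curves inside such $Y_i$'s one at a time stays in $P_\mf{S}(m)$ and terminates in at most $\xi(S)$ steps. The infinite-diameter dichotomy via Corollary~\ref{corollary:pA_are_undistorted} and the empty-witness-set-is-a-point convention is likewise correct. The one genuine subtlety you flag---verifying that $\Wit\bigl(\ksep_{\mf{S}_{Y_i}}(Y_i)\bigr) = \mf{S}_{Y_i}$ via Lemma~\ref{lemma:ksep_is_hierarchical}, which requires checking that $\mf{S}_{Y_i}$ is $\mcg(Y_i)$-invariant and upward-closed---is handled correctly. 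Your alternative combinatorial argument (every move in $P_\mf{S}(m)$ is supported in a single $Y_i$, and factor paths concatenate) is a nice bonus: it gives the conclusion directly in the intrinsic path metric on $P_\mf{S}(m)$, whereas the distance-formula route a priori concerns the subspace metric; since you establish both, no issue arises from the ambiguity in ``induced metric''.
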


The final result we record on hierarchical graphs of multicurves is a result of  the second author that classifies when they are hyperbolic in terms of the set of witnesses. This paper is dedicated to understanding the coarse  geometry of a hierarchical graph of multicurves when Theorem \ref{theorem:hyperbolic_graphs_of_multicurves} does not apply.

\begin{theorem}[{\cite[Corollary 1.5]{vokessep}}]\label{theorem:hyperbolic_graphs_of_multicurves}
Let $\G(S)$ be a hierarchical graph of multicurves. The graph $\G(S)$ is hyperbolic if and only if $\Wit\bigl( \G(S) \bigr)$ does not contain a pair of disjoint subsurfaces. \qed
\end{theorem}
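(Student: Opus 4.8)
The plan is to deduce everything from the quasi-isometric model $\ksep_\G(S)$. By Proposition~\ref{proposition: k qi g}, $\G(S)$ is quasi-isometric to $\ksep_\G(S)$, and by Lemma~\ref{lemma:k is hierarchical} the latter is again a hierarchical graph of multicurves with the same witnesses, so I work entirely inside $\ksep_\G(S)$, using the distance formula (Theorem~\ref{theorem: distance formula}) and the explicit product regions of Corollary~\ref{corollary:product regions}. Write $\mf{S} = \Wit(\G(S))$.

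For the direction that a pair of disjoint witnesses obstructs hyperbolicity, suppose $W_1, W_2 \in \mf{S}$ are disjoint and let $m = \partial W_1 \cup \partial W_2$, regarded as a multicurve, so that $W_1$ and $W_2$ occur among the components of $S \sminus m$. Extending $m$ to a pants decomposition gives a vertex of $\ksep_\G(S)$ lying in $P_\mf{S}(m)$, so this product region is nonempty, and Corollary~\ref{corollary:product regions} identifies it, up to quasi-isometry, with a product two of whose factors are $\ksep_{\mf{S}_{W_1}}(W_1)$ and $\ksep_{\mf{S}_{W_2}}(W_2)$, both of infinite diameter since $W_1, W_2 \in \mf{S}$. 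Taking products of long geodesic segments in these two factors, $P_\mf{S}(m)$ contains, with uniform constants, quasi-isometrically embedded $n \times n$ grids for every $n$; since $P_\mf{S}(m)$ carries the metric induced from $\ksep_\G(S)$, these grids remain quasi-isometrically embedded in $\ksep_\G(S)$, and hence in $\G(S)$. A graph containing arbitrarily large quasi-isometrically embedded flats with fixed constants has fat geodesic triangles, so $\G(S)$ is not hyperbolic. (Alternatively, pick commuting partial pseudo-Anosovs $\phi_1,\phi_2$ supported on $W_1,W_2$ and use the distance formula together with Corollary~\ref{corollary:pA_are_undistorted} to see directly that $(n,k)\mapsto \phi_1^n\phi_2^k(x)$ quasi-isometrically embeds $\mathbb{Z}^2$.)

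For the converse, assume no two witnesses are disjoint; the goal is that $\ksep_\G(S)$ is hyperbolic. The structural point is that no standard product region has two unbounded factors: if $m$ is any multicurve with $S \sminus m = Y_1 \sqcup \dots \sqcup Y_r$, then by Corollary~\ref{corollary:product regions} the factor $\ksep_{\mf{S}_{Y_i}}(Y_i)$ is unbounded precisely when $Y_i \in \mf{S}$, and at most one $Y_i$ can lie in $\mf{S}$ since distinct components of $S \sminus m$ are disjoint subsurfaces. In the language of the hierarchically hyperbolic structure on $\ksep_\G(S)$ (Theorem~\ref{theorem: hierarchial implies HHS}), this says there is no pair of orthogonal domains with unbounded coordinate spaces --- the disjoint-union domains of Remark~\ref{remark:disjoint_unions} never arise --- equivalently, the structure has rank one, and a hierarchically hyperbolic space of rank one is hyperbolic (this is standard; cf.\ \cite{hhs2}). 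I expect justifying this last implication to be the main obstacle: if one prefers not to invoke the rank-one criterion, the fallback is to prove hyperbolicity of $\ksep_\G(S)$ by hand, building coarse geodesics from hierarchy paths and verifying a Bowditch-type thin-triangles (or slim-hulls) condition via subsurface projections, where the absence of disjoint witnesses is exactly what prevents two projection coordinates from growing independently along the sides of a triangle. Keeping the constants uniform there --- in particular the interplay between the additive error in the distance formula and the projections to the non-witness subsurfaces --- is the delicate part.
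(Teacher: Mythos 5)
The paper does not prove this statement; it is stated with a bracketed citation to \cite[Corollary~1.5]{vokessep} and a \verb|\qed| inside the theorem environment, so there is no proof in the present paper to compare against. Your proposal is therefore a reconstruction, and it is essentially the right one.

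Both of your directions are sound. For the ``only if'' direction, taking $m=\partial W_1\cup\partial W_2$ and invoking Corollary~\ref{corollary:product regions} to exhibit a product region with two infinite-diameter factors is exactly the kind of argument that rules out hyperbolicity; the product region is hierarchically quasiconvex, hence quasi-isometrically embedded, so the quasi-flats it contains persist in $\ksep_\G(S)$. The alternative via commuting partial pseudo-Anosovs and the distance formula is equally valid and a touch more self-contained given Corollary~\ref{corollary:pA_are_undistorted}. For the converse, your hesitation about the ``rank one implies hyperbolic'' step is unnecessary: the statement that a hierarchically hyperbolic space is hyperbolic if and only if it has no pair of orthogonal domains with unbounded coordinate spaces is a genuine theorem in \cite{hhs2} (the characterization of hyperbolicity via bounded orthogonality), and that is precisely how Corollary~1.5 of \cite{vokessep} is deduced from the HHS structure established there. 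The one thing to make explicit, which you note correctly but could spell out, is that the domains of the HHS structure from Theorem~\ref{theorem: hierarchial implies HHS} are witnesses together with disjoint unions of witnesses (Remark~\ref{remark:disjoint_unions}), and orthogonality corresponds to disjointness of the underlying subsurfaces; so the hypothesis that $\Wit(\G(S))$ has no disjoint pair is precisely the hypothesis needed for the rank-one criterion. No fallback ``by hand'' argument with hierarchy paths is needed, and attempting one would be strictly harder than citing the criterion.
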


\subsection{Thick metric spaces and connectivity of graphs} \label{section:thick background}

 Behrstock, Dru\c{t}u, and Mosher introduced thick metric spaces to give a geometric obstruction to relative hyperbolicity \cite{Behrstock_Drutu_Mosher_Thickness}.

\begin{definition}[Thick metric space]
A metric space $X$ is \emph{thick of order~0} if at least one of its asymptotic cones has no cut points and every point~in $X$ is uniformly close to a bi-infinite uniform quasi-geodesic. In particular,  $X$~will be thick of order~0 if $X$~is quasi-isometric to a product of two infinite diameter metric spaces, contains a bi-infinite quasi-geodesic, and admits a cocompact group action.

A metric space $X$ is \emph{thick of order at most~$n$} if there exists a constant $C\geq 0$ and a collection of subsets $\{P_\alpha\}_{\alpha \in I}$ such that the following hold.

\begin{itemize}
    \item (Thickness) Each $P_\alpha$ is thick of order at most~$n-1$.
    \item (Coarsely Covering)  The space $X$ is contained in the $C$\hyp{}neighborhood of $\bigcup_{\alpha \in I} P_\alpha$.
    \item (Thick Chaining) For any $P_\alpha$ and $P_{\alpha'}$ there exists a sequence $$P_\alpha = P_0, P_1, \dots, P_k = P_{\alpha'}$$ such that $\mc{N}_C(P_i) \cap \mc{N}_C(P_{i+1}) $ has infinite diameter for all $0\leq i \leq k-1$.
\end{itemize}
\end{definition}

\begin{theorem}[{\cite[Corollary 7.2]{Behrstock_Drutu_Mosher_Thickness}}]
If a metric space $X$ is thick of any order, then $X$ is not relatively hyperbolic. \qed
\end{theorem}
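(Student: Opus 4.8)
\section*{Proof proposal}

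The plan is to prove the contrapositive using the Dru\c{t}u--Sapir characterization of relative hyperbolicity in terms of asymptotically tree-graded spaces, combined with an induction on the order of thickness. Recall that $X$ is relatively hyperbolic with respect to a non-degenerate collection $\mathcal{A}$ of peripheral subsets if and only if every asymptotic cone $\operatorname{Cone}_\omega(X)$ is tree-graded with respect to the collection of pieces arising as ultralimits of sequences from $\mathcal{A}$, and the non-degeneracy hypotheses force, among other things, that $X$ is not within bounded Hausdorff distance of a single $A \in \mathcal{A}$ and that distinct peripheral subsets $A \neq A'$ satisfy $\operatorname{diam}\bigl(\mathcal{N}_t(A) \cap \mathcal{N}_t(A')\bigr) < \infty$ for each $t$. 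I would use two standard facts about a tree-graded space $F$: (i) a path-connected subset of $F$ with no cut point lies in a single piece, and hence (ii) if $F$ has more than one point and no cut point then $F$ is itself a single piece. I would also use that for $Y \subseteq X$ equipped with the induced metric, $\operatorname{Cone}_\omega(Y)$ embeds isometrically into $\operatorname{Cone}_\omega(X)$ as the ultralimit of $Y$.

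The heart of the argument is the following claim, which I would prove by induction on $n$: \emph{if $Y$ is thick of order at most $n$ and $Y$ is a subspace of a space $X$ that is asymptotically tree-graded with respect to $\mathcal{A}$, then $Y$ lies in a bounded neighborhood of a single peripheral subset $A \in \mathcal{A}$.} Granting the claim, applying it with $Y = X$ shows that $X$ is within bounded distance of a single peripheral subset, contradicting the non-degeneracy of the peripheral structure; since $X$ was thick of some order, this proves that a thick space cannot be relatively hyperbolic.

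For the base case $n = 0$: thickness of order $0$ gives an asymptotic cone $\operatorname{Cone}_\omega(Y)$ with no cut point, and $Y$ has infinite diameter since every point is uniformly close to a bi-infinite quasi-geodesic. Realizing this cone inside $\operatorname{Cone}_\omega(X)$ and invoking fact (i), the ultralimit of $Y$ is contained in a single piece, which translates into the statement that, at the relevant rescalings, $Y$ is contained in a sublinear neighborhood of a single peripheral subset; a Dru\c{t}u--Sapir-type argument, using that $Y$ has infinite diameter and that distinct peripheral subsets are coarsely disjoint, upgrades this to $Y$ lying in an honestly bounded neighborhood of one $A \in \mathcal{A}$. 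For the inductive step, write $Y$ as coarsely covered by a thick-chained family $\{P_\alpha\}$, each $P_\alpha$ being thick of order at most $n-1$. By the inductive hypothesis, each $P_\alpha$ lies within a bounded neighborhood of some $A_\alpha \in \mathcal{A}$, with the bound uniform in $\alpha$ (using the uniformity built into the definition of thickness). If $P_\alpha$ and $P_{\alpha'}$ are consecutive in a thick chain, then $\mathcal{N}_C(P_\alpha) \cap \mathcal{N}_C(P_{\alpha'})$ has infinite diameter, so the corresponding bounded neighborhoods of $A_\alpha$ and $A_{\alpha'}$ meet in a set of infinite diameter, and coarse disjointness of distinct peripheral subsets forces $A_\alpha = A_{\alpha'}$. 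Propagating along chains, all the $P_\alpha$ share a single peripheral subset $A$, and since the $P_\alpha$ coarsely cover $Y$ we conclude $Y \subseteq \mathcal{N}_D(A)$ for a suitable $D$.

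The step I expect to be the main obstacle is the base case: extracting an honest bounded-distance conclusion from the asymptotic statement. Thickness of order $0$ only supplies \emph{one} asymptotic cone without cut points, so one must be careful about which ultrafilters and rescalings are at one's disposal, and the passage from ``$Y$ is sublinearly close to a peripheral along a sequence of scales'' to ``$Y$ is uniformly close to a single peripheral'' is precisely where the fine structure of tree-graded spaces and the finiteness of coarse intersections of pieces do the real work; this is the substance of Dru\c{t}u and Sapir's analysis of unconstricted subsets of relatively hyperbolic spaces. Once that input is granted, the tree-graded lemmas and the chaining argument in the inductive step are comparatively routine.
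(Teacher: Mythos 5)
The paper cites this result from Behrstock--Dru\c{t}u--Mosher and gives no proof of its own, so there is nothing internal to compare against; your proposal correctly reconstructs the argument from the cited source. The induction on the order of thickness---with the base case coming from the Dru\c{t}u--Sapir analysis of unconstricted subspaces of asymptotically tree-graded spaces, and the inductive step propagating a single peripheral along thick chains via the coarse disjointness of distinct peripherals---is exactly how BDM establish the result, and your flagging of the base case (upgrading sublinear proximity to genuine bounded-neighborhood containment) as the technical heart is accurate.
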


Since the order of thickness is a quasi-isometry invariant \cite[Remark 7.2]{Behrstock_Drutu_Mosher_Thickness}, when studying the thickness of a hierarchical graph of multicurve $\G(S)$, we can instead study the quasi-isometric model space $\ksep_\G(S)$.

Proving the thick chaining condition is often the most challenging part of proving a metric space is thick of order at most~$n$. A common approach to surmount this problem is to find a graph whose vertices correspond to the thick of order at most $n-1$ subsets of~$X$ and whose edges correspond to the infinite diameter intersection of regular neighborhoods of those subsets. In this way, the question of thickly chaining the subsets  reduces to determining the connectivity of this graph.

In the context of hierarchical graphs of multicurves, Corollary \ref{corollary:product regions} provides natural thick of order~$0$ subsets---those product regions $P_\mf{S}(m)$ where at least two components of $S \sminus m$ are witnesses for our graph. In particular, the vertices of the following graph will always correspond to thick of order~$0$ subsets of $\ksep_\G(S)$ when $Z,W \in \Wit\bigl(\G(S)\bigr)$.

\begin{definition}
Let $\G(S)$ be a hierarchical graph of multicurves on $S$, and let $Z,W$ be a pair of disjoint witnesses for $\G(S)$. Define $\Dis(Z,W)$ to be the graph with the following vertices and edges.
\begin{itemize}
    \item Vertices: All multicurves $m$ such that $S \sminus m$ has at least two components $U$ and $V$ and there exists $f \in \mcg(S)$ such that $f(Z) \subseteq U$ and $f(W) \subseteq V$. 
    \item Edges: Two vertices $m$ and $n$ are joined by an edge if $m$ can be obtained from $n$ by adding or removing a curve.
\end{itemize}
\end{definition}

The graph $\Dis(Z,W)$ will guide the chaining together of the product regions corresponding to its vertices to make thick of order at most~1 subsets of $\ksep_\mf{S}(S)$.

\begin{lemma}\label{lemma:connected_components_are_thick_of_order_1}
Let $\G(S)$ be a hierarchical graph of multicurves and $Z,W \in \mf{S}=\Wit(\G(S))$  with $Z,W$ disjoint and $Z^c,W^c$ connected. If $\Omega$ is a connected subset of $\Dis(Z,W)$, then $\bigcup_{m \in \Omega} P_\mf{S}(m)$ is a thick of  order at most~1 subset of $\ksep_\mf{S}(S)$. In particular, if $\Dis(Z, W)$ is connected, then $\G(S)$ is thick of order at most~1.
\end{lemma}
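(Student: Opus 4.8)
The plan is to verify, with chaining constant $C=0$, that the family $\{P_\mf{S}(m)\}_{m\in\Omega}$ exhibits $\bigcup_{m\in\Omega}P_\mf{S}(m)$ as thick of order at most $1$, and then, when $\Omega=\Dis(Z,W)$ is connected, to promote this to all of $\ksep_\mf{S}(S)$ by checking coarse density. First I would show that each $P_\mf{S}(m)$ with $m$ a vertex of $\Dis(Z,W)$ is thick of order $0$. Writing $S\sminus m=Y_1\sqcup\dots\sqcup Y_r$, the definition of $\Dis(Z,W)$ gives $f\in\mcg(S)$ with $f(Z)$ and $f(W)$ contained in distinct components, say $f(Z)\subseteq Y_1$ and $f(W)\subseteq Y_2$; since $\mf{S}=\Wit(\G(S))$ is $\mcg(S)$-invariant and closed under passing to larger connected subsurfaces, $Y_1,Y_2\in\mf{S}$. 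By Corollary~\ref{corollary:product regions}, $P_\mf{S}(m)$ is quasi-isometric to $\prod_{i=1}^{r}\ksep_{\mf{S}_{Y_i}}(Y_i)$, with at least two of the factors of infinite diameter. Each factor $\ksep_{\mf{S}_{Y_i}}(Y_i)$ with $Y_i\in\mf{S}$ is a hierarchical graph of multicurves on $Y_i$ for which $Y_i$ is itself a witness (Lemma~\ref{lemma:ksep_is_hierarchical}), so by Corollary~\ref{corollary:pA_are_undistorted} it has a bi-infinite quasi-geodesic through every vertex, namely the orbit of a partial pseudo-Anosov supported on $Y_i$, with constants uniform over the finitely many $\mcg(S)$-orbits of witnesses. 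Hence every asymptotic cone of $P_\mf{S}(m)$ is a product of two unbounded spaces and so has no cut points, and every point of $P_\mf{S}(m)$ lies on a uniform bi-infinite quasi-geodesic; thus $P_\mf{S}(m)$ is thick of order $0$.

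Next comes the chaining. If $m$ and $n$ are adjacent in $\Dis(Z,W)$ they differ by a single curve, so after relabelling $m\subseteq n$ and $P_\mf{S}(m)\cap P_\mf{S}(n)=P_\mf{S}(n)$, which has infinite diameter by the first step since $n$ is a vertex of $\Dis(Z,W)$. Thus for any edge-path $m=m_0,\dots,m_k=m'$ in the connected set $\Omega$, the product regions $P_\mf{S}(m_0),\dots,P_\mf{S}(m_k)$ have consecutive $0$-neighborhoods meeting in an infinite-diameter set. Combined with the first step and the tautological coarse covering of $\bigcup_{m\in\Omega}P_\mf{S}(m)$ by itself, this verifies that $\bigcup_{m\in\Omega}P_\mf{S}(m)$ is thick of order at most $1$.

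For the last assertion, take $\Omega=\Dis(Z,W)$, now assumed connected, and let $B=\bigcup_{m\in\Dis(Z,W)}P_\mf{S}(m)$. It suffices to show $B$ is coarsely dense in $\ksep_\mf{S}(S)$: then $\ksep_\mf{S}(S)$ is itself thick of order at most $1$ with the family $\{P_\mf{S}(m)\}_{m\in\Dis(Z,W)}$ (the chaining condition persists after enlarging $C$ to the covering constant), and since $\G(S)$ is quasi-isometric to $\ksep_\mf{S}(S)$ (Proposition~\ref{proposition: k qi g}) and order of thickness is a quasi-isometry invariant, $\G(S)$ is thick of order at most $1$. To see $B$ is coarsely dense: any vertex $y$ of $\ksep_\mf{S}(S)$ extends to a pants decomposition by adding at most $\xi(S)$ curves, and every intermediate multicurve is again a vertex of $\ksep_\mf{S}(S)$ because no complementary component of $y$---hence, by upward-closure of $\mf{S}$, of any refinement of $y$---lies in $\mf{S}$. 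The pants graph of $S$ is connected and its edges are flip moves in $\ksep_\mf{S}(S)$, so every pants decomposition lies at finite distance in $\ksep_\mf{S}(S)$ from any one fixed pants decomposition contained in $B$, for instance a pants decomposition refining the multicurve $\partial Z\cup\partial W$ (this multicurve is a vertex of $\Dis(Z,W)$ via $f=\mathrm{id}$, and a pants decomposition refining it is a vertex of $\ksep_\mf{S}(S)$ since all its complementary components are pants). As there are only finitely many $\mcg(S)$-orbits of pants decompositions of $S$ and $B$ is $\mcg(S)$-invariant, this distance is uniformly bounded, giving coarse density.

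The first two steps should be routine: the first is a standard application of the product-region structure together with the existence of pseudo-Anosov quasi-geodesic axes in hierarchical graphs of multicurves, and the second is an elementary observation about intersections of nested product regions. The part I expect to be the real obstacle is the coarse-density step: one must guarantee that the $\mcg(S)$-translates of the product regions attached to $\Dis(Z,W)$-vertices miss no vertex of $\ksep_\mf{S}(S)$. The argument above reduces this to connectivity of the pants graph and finiteness of the list of pants-decomposition types, but making the covering constant effective---and checking carefully that a pants decomposition refining $\partial Z\cup\partial W$ genuinely lies in $B$---is where care is needed.
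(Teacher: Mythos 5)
Your proposal is correct and follows essentially the same strategy as the paper's proof: verify each $P_\mf{S}(m)$ is thick of order $0$ via the product decomposition of Corollary~\ref{corollary:product regions} together with partial pseudo-Anosov quasi-geodesics, chain adjacent vertices of $\Omega$ via the nested intersection $P_\mf{S}(m)\cap P_\mf{S}(m')=P_\mf{S}(m')$, and then for $\Omega=\Dis(Z,W)$ obtain coarse covering. The only cosmetic difference is at the final step, where the paper simply invokes coboundedness of the $\mcg(S)$-action on $\ksep_\mf{S}(S)$, while you spell this out through pants decompositions; your detour through connectivity of the pants graph is actually unnecessary, since transitivity of $\mcg(S)$ on pants decompositions together with $\mcg(S)$-invariance of $B$ already shows every pants decomposition lies in $B$, so any vertex of $\ksep_\mf{S}(S)$ is within $\xi(S)$ of $B$.
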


\begin{proof}
We first show that the $P_\mf{S}(m)$ are thick of order~0.
By Corollary~\ref{corollary:product regions},  the product region $P_{\mf{S}}(m)$ is quasi-isometric to $\prod_{i=1}^r \ksep_{\mf{S}_{Y_i}}(Y_i)$, where $Y_1,\dots,Y_r$ are the components of $S \sminus m$. Each $\ksep_{\mf{S}_{Y_i}}(Y_i)$ admits a cocompact group action of $\mcg(Y_i)$ and at least two of the factors have bi-infinite quasi-geodesics because $S \sminus m$ contains a pair of disjoint witnesses. Therefore, $P_{\mf{S}}(m)$ is thick of order~0 for each $m \in \Omega$.

We now show that any two product regions for multicurves in $\Omega$ can be thickly chained together.
Let  $m, m' \in \Omega$. First assume $m$ and $m'$ are connected by an edge in $\Dis(Z,W)$. Thus, without loss of generality, $m \subseteq m'$ and we have $P_{\mf{S}}(m') = P_{\mf{S}}(m) \cap P_{\mf{S}}(m')$. Since $P_{\mf{S}}(m')$ is infinite diameter by Corollary~\ref{corollary:product regions}, we have $\diam( P_{\mf{S}}(m) \cap P_{\mf{S}}(m')) = \infty$. Now, assume $m$ and $m'$ are not joined by an edge in $\Dis(Z,W)$.  Since $\Omega$ is connected, there exists a path in $\Omega$ connecting $m$ and $m'$ with vertices $m = m_1, \dots, m_k = m'$. By the previous case, $\diam(P_\mf{S}(m_i) \cap P_{\mf{S}}(m_{i+1})) = \infty$ for all $i \in \{1,\dots, k-1\}$. Thus, $\bigcup_{m \in \Omega} P_\mf{S}(m)$ is a thick of order at most~1 subset of $\ksep_\mf{S}(S)$.

Now, assume $\Omega = \Dis(Z,W)$. Since the action of $\mcg(S)$ on $\ksep_{\mf{S}}(S)$ is cobounded, every vertex of $\ksep_{\mf{S}}(S)$ is at uniformly bounded distance from  a vertex of $P_\mf{S}(m)$ for some $m \in \Omega$.
Hence, $\ksep_{\mf{S}}(S)$ is thick of order at most~1.
Since the order of thickness is a quasi\hyp{}isometry invariant, this implies that $\G(S)$ is also thick of order at most~1.
\end{proof}

We shall see that in most cases where a hierarchical graph of multicurves $\G(S)$ is not hyperbolic or relatively hyperbolic, we can find disjoint, co-connected witnesses $Z$ and $W$ so that $\Dis(Z,W)$, or a sufficiently large subgraph, is connected, and hence $\G(S)$ is thick of order at most~1.

 Our main tool for understanding the connectivity of $\Dis(Z,W)$ is the following trick of Putman.

\begin{lemma}[{\cite[Lemma~2.1]{putman}}]\label{lemma:putman_trick}
Let $\G$ be a simplicial graph and suppose the group $G$ acts on $\G$ by simplicial automorphisms. Fix a vertex $v_0 \in \G$ and a generating set $X$ for $G$. Suppose that:
\begin{itemize}
    \item for all vertices $v \in \G$, the orbit $G \cdot v_0$ intersects the connected component of $\G$ containing $v$;
    \item for all $g \in X \sqcup X^{-1}$, $v_0$ is connected to $g\cdot v_0$ in $\G$.
\end{itemize}
Then, the graph $\G$ is connected. \qed
\end{lemma}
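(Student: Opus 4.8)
The plan is to reduce the connectivity of $\G$ to the single assertion that the whole orbit $G \cdot v_0$ lies in one connected component of $\G$. Granting that, the first hypothesis says every component of $\G$ meets $G\cdot v_0$, so there can be only one component, i.e.\ $\G$ is connected. Thus the actual work is entirely in showing that $G \cdot v_0$ is contained in the connected component $\mc{C}$ of $v_0$.

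To prove $G\cdot v_0 \subseteq \mc{C}$ I would induct on the word length of $g \in G$ with respect to $X$. Write $g = x_1^{\epsilon_1}\cdots x_n^{\epsilon_n}$ with $x_i \in X$ and $\epsilon_i \in \{\pm 1\}$; when $n = 0$ we have $g\cdot v_0 = v_0 \in \mc{C}$. For the inductive step put $g' = x_1^{\epsilon_1}\cdots x_{n-1}^{\epsilon_{n-1}}$, so that $g\cdot v_0 = g'\bigl(x_n^{\epsilon_n}\cdot v_0\bigr)$. The second hypothesis --- stated for all $g \in X \sqcup X^{-1}$, which is exactly what covers the sign $\epsilon_n$ --- provides an edge-path in $\G$ from $v_0$ to $x_n^{\epsilon_n}\cdot v_0$. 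Since $G$ acts by simplicial automorphisms, pushing this path forward by $g'$ yields an edge-path from $g'\cdot v_0$ to $g' x_n^{\epsilon_n}\cdot v_0 = g\cdot v_0$, so $g\cdot v_0$ lies in the same component as $g'\cdot v_0$, which by the inductive hypothesis is $\mc{C}$. This closes the induction, giving $G\cdot v_0\subseteq\mc{C}$ and hence the lemma.

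There is no real obstacle here: this is the standard short argument underlying Putman's trick, and the only point requiring any attention is the bookkeeping in the induction --- peeling a generator off the correct end of the word so that the already-constructed path between $v_0$ and $x_n^{\epsilon_n}\cdot v_0$ translates by $g'$ to a path having $g \cdot v_0$ as an endpoint, and noticing that it is the inclusion of $X^{-1}$ in the second hypothesis that lets us handle negative exponents with no extra work. (Equivalently one could strip the first letter instead and translate by $x_1^{\epsilon_1}$; it makes no difference.)
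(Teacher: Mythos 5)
Your proof is correct and is exactly the standard argument for Putman's connectivity trick; the paper itself does not reprove this lemma but cites Putman's original, whose proof is the same induction on word length you give. The one point worth keeping in mind --- and you do handle it --- is that translating the path from $v_0$ to $x_n^{\epsilon_n}\cdot v_0$ by $g'$ uses that $G$ acts by simplicial automorphisms (so edge-paths are sent to edge-paths), and that including $X^{-1}$ in the hypothesis is what lets the induction absorb both signs of the exponent.
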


When we apply Lemma \ref{lemma:putman_trick}, $\G$ will be a graph of multicurves, usually $\Dis(Z,W)$ or a subgraph, and the group $G$ will be either the mapping class group or the pure mapping class group. In each case, we will use one of the generating sets for $\mcg(S)$ or $\pmcg(S)$ given in Section~\ref{section:mapping class groups}, arranging the generating curves to achieve a convenient intersection pattern with our chosen base vertex.

We will finish our preliminaries by combining several results from the literature to note that a hierarchical graph of multicurves can never be thick of order~$0$. This involves using the orbit of a pseudo-Anosov element of the mapping class group to produce a \emph{Morse quasi\hyp{}geodesic} in the graph, that is, a quasi\hyp{}geodesic $\gamma$ which is fellow\hyp{}traveled by every quasi\hyp{}geodesic whose endpoints lie in $\gamma$. We direct the reader to \cite[Section~1.3]{DMS_divergence} for details on Morse geodesics. The desired statement will then be a consequence of a result of Dru\c{t}u, Mozes and Sapir.

\begin{theorem}
If $\G(S)$ is a hierarchical graph of multicurves of $S= S_{g,p}$, then $\G(S)$ is not thick of order~$0$.
\end{theorem}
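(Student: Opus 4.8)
The plan is to exhibit a bi-infinite Morse quasi-geodesic in $\G(S)$ coming from a pseudo-Anosov mapping class, and then to contradict thickness of order~$0$ using the link between Morse quasi-geodesics and cut points in asymptotic cones due to Dru\c{t}u, Mozes and Sapir~\cite{DMS_divergence}. I would first handle the low-complexity cases: if $\xi(S)\le 0$ then $S$ is one of finitely many small surfaces, $\Wit\bigl(\G(S)\bigr)$ contains no two disjoint subsurfaces, and so $\G(S)$ is hyperbolic by Theorem~\ref{theorem:hyperbolic_graphs_of_multicurves}; a bounded space contains no bi-infinite quasi-geodesic, while a hyperbolic space of infinite diameter has asymptotic cones that are $\mathbb{R}$-trees, so in neither case is $\G(S)$ thick of order~$0$. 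Assume then that $\xi(S)\ge 1$, so that $S$ is itself a witness for $\G(S)$. Fix a pseudo-Anosov $\phi\in\mcg(S)$ and a vertex $x_0\in\G(S)$. By Corollary~\ref{corollary:pA_are_undistorted}, the orbit map $\gamma\colon n\mapsto\phi^n(x_0)$ is a quasi-isometric embedding of $\mathbb{Z}$ into $\G(S)$, hence a bi-infinite quasi-geodesic.

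The next step is to check that $\gamma$ is Morse. Since $\phi$ is pseudo-Anosov it stabilizes no proper essential subsurface of $S$, and it is standard---this is exactly what underlies Corollary~\ref{corollary:pA_are_undistorted}---that its orbit has uniformly bounded Masur--Minsky projection to every proper subsurface: there is $B_0$ with $\diam_{\C(Y)}\bigl(\pi_Y\bigl(\{\phi^n x_0 : n\in\mathbb{Z}\}\bigr)\bigr)\le B_0$ for all proper witnesses $Y$. On the other hand, $\phi$ acts loxodromically on the hyperbolic graph $\C(S)$ by \cite[Proposition~4.6]{mm1}, so $d_S(\phi^i x_0,\phi^j x_0)\asymp|i-j|$. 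Feeding this into the distance formula (Theorem~\ref{theorem: distance formula}) with a threshold $\sigma>B_0$, the only surviving term along $\gamma$, or along any quasi-geodesic joining two points of $\gamma$, is the one indexed by the $\nest$-maximal witness $S$; since $\C(S)$ is hyperbolic, one deduces that $\gamma$ is Morse. Equivalently, this is the Morse criterion for hierarchically hyperbolic spaces---a quasi-geodesic whose collection of domains with unbounded coarse projection is pairwise non-orthogonal (here the single domain $S$) is Morse; see \cite{hhs2}.

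Finally, I would conclude as follows. Suppose for contradiction that $\G(S)$ is thick of order~$0$, so some asymptotic cone $\mathcal{C}=\operatorname{Cone}_\omega\bigl(\G(S),(e_i),(s_i)\bigr)$ has no cut points. By Definition~\ref{definition:hierarchical}(3) and the remark following it, $\mcg(S)$ acts cocompactly on $\G(S)$, so there are $D\ge 0$ and elements $g_i\in\mcg(S)$ with $d_\G(e_i,g_i x_0)\le D$ for all $i$. Replacing each $e_i$ by $g_i x_0$ leaves the cone unchanged, and applying the isometries $g_i^{-1}$ identifies $\mathcal{C}$ isometrically with $\operatorname{Cone}_\omega\bigl(\G(S),(x_0),(s_i)\bigr)$. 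In this cone the rescaled segments $\gamma|_{[-s_i,s_i]}$ have an ultralimit $\bar\gamma$ that is a bi-infinite line, and by the result of Dru\c{t}u, Mozes and Sapir relating Morse quasi-geodesics to cut points \cite{DMS_divergence}, each point of $\bar\gamma$ separates its two ends in the cone and is therefore a cut point, contradicting the choice of $\mathcal{C}$. Hence $\G(S)$ is not thick of order~$0$. I expect the Morse step to be the main obstacle: the uniform control of subsurface projections along a pseudo-Anosov orbit, simultaneously over all proper witnesses, and its propagation through the distance formula to the fellow-travelling property, is where the substance lies, even though both ingredients are standard in this setting.
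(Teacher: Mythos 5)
Your overall strategy matches the paper's exactly: exhibit a bi-infinite Morse quasi-geodesic from a pseudo-Anosov orbit, invoke the ABD stability criterion for hierarchically hyperbolic spaces to verify the Morse property, and then apply the Dru\c{t}u--Mozes--Sapir result that a Morse bi-infinite quasi-geodesic forces a cut point in the asymptotic cone, contradicting thickness of order~0. Your preliminary reduction for $\xi(S)\le 0$ and your explicit recentering of the asymptotic cone via cocompactness are extra (correct) details that the paper suppresses by citing a statement about \emph{all} asymptotic cones.

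The one place you should be more careful is exactly the place you identify as the ``main obstacle.'' You assert that the uniform bound $\diam_{\C(Y)}\bigl(\pi_Y(\{\phi^n x_0\})\bigr)\le B_0$ over all proper witnesses $Y$ is ``exactly what underlies Corollary~\ref{corollary:pA_are_undistorted}.'' That attribution is not right: Corollary~\ref{corollary:pA_are_undistorted} only records that the $\phi$-orbit is a quasi-isometric embedding of $\mathbb{Z}$ into $\G(S)$, which follows from the distance formula and the growth of $d_S(\phi^i x_0,\phi^j x_0)$ alone; it does not require (and does not give) any control of projections to proper subsurfaces. The uniform projection bound is genuine additional content, and the paper proves it from scratch: one uses Masur--Minsky's ``$\pi_S(\langle\phi\rangle x_0)$ is a quasi-geodesic in $\C(S)$,'' $\delta$-hyperbolicity of $\C(S)$, and the bounded geodesic image theorem to show that for each witness $W$, the set of $n$ with $\pi_S(\phi^n x_0)$ close to $\partial W$ is an interval whose length depends only on $\phi$, and the rest of the orbit projects to a set of diameter at most the BGI constant. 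If you want your proof to be self-contained, you should include this argument (or cite a reference where it appears verbatim); calling it ``standard'' is defensible, but the pointer to Corollary~\ref{corollary:pA_are_undistorted} would mislead a reader trying to chase the justification.
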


\begin{proof}
By \cite[Proposition 1]{DMS_Corrigendum}, if a metric space  contains a bi-infinite Morse quasi-geodesic, then all of its asymptotic cones have a cut-point. Since  such a space cannot be thick of order~$0$,
it suffices to find a bi-infinite Morse quasi-geodesic in $\G(S)$.
By applying \cite[Theorem 4.4 and Lemma 4.3]{ABD} to the hierarchically hyperbolic structure for~$\G(S)$, we have that if $\gamma$ is a quasi-geodesic in $\G(S)$ and  if there exists $D\geq 0$  so that for every $W \neq S$ in $\Wit\bigl( \G(S) \bigr)$ we have $\diam(\pi_W(\gamma))\leq D$, then $\gamma$ is Morse (\emph{stable} in the terminology of \cite{ABD}). We construct such a quasi-geodesic using the orbit of a pseudo-Anosov element of $\mcg(S)$.

Let $\phi$ be a pseudo\hyp{}Anosov element of $\mcg(S)$ and $x$ be a vertex of $\G(S)$.
By Corollary~\ref{corollary:pA_are_undistorted}, $\langle \phi \rangle \cdot x$ is a quasi\hyp{}geodesic in~$\G(S)$.  By \cite[Proposition 4.6]{mm1}, $ \pi_S( \langle \phi \rangle \cdot x)$ is also a quasi-geodesic in $\C(S)$ with constants depending on $\phi$. Since $\C(S)$ is hyperbolic, there exists $R \geq 0$ (depending on $\phi$) so that for each $n,m \in \mathbb{Z}$, any $\C(S)$-geodesic from $\pi_S(\phi^n(x))$ to $\pi_S(\phi^m(x))$ is contained in the $R$-neighborhood of the set of points $$ \bigcup_{n \leq \ell \leq m} \pi_S(\phi^{\ell}(x)). $$

For a witness $W \in \Wit(\G(S))$, let $N_W$ be the set of integers $n$ so that $\pi_S(\phi^n(x))$ intersects the $(R+2)$-neighborhood of  $\partial W$ in $\C(S)$. If $N_W$ is empty, then for any integers $n,m$ any $\C(S)$-geodesic from $\pi_S(\phi^n(x))$ to $\pi_S(\phi^m(x))$ does not intersect the $1$-neighborhood of $\partial W$ in $\C(S)$. Hence, Masur and Minsky's  bounded geodesic image theorem \cite[Theorem~3.1]{mm1} says there exists $M\geq0$ depending only on $S$ so that 
$$ \diam\left( \pi_W( \langle \phi \rangle \cdot x)  \right)\leq M.$$

If instead $N_W$ is non-empty, let $\min_W$ and $\max_W$ be its minimum and maximum values respectively---since $ \pi_S( \langle \phi \rangle \cdot x)$ is a quasi-geodesic, such a maximum and minimum must exist. If $n,m <\min_W$ or $n,m>\max_W $, then any $\C(S)$-geodesic from $\pi_S(\phi^n(x))$ to $\pi_S(\phi^m(x))$ does not intersect the $1$-neighborhood of $\partial W$ in $\C(S)$. Thus,
the  bounded geodesic image theorem says
$$ \diam\left( \bigcup_{n < \min_W} \pi_W(\phi^n(x))  \right)\leq M \text{ and }  \diam\left( \bigcup_{n > \max_W} \pi_W(\phi^n(x))  \right)\leq M. $$
Hence 
$$\diam( \pi_W(\langle \phi \rangle \cdot x )) \leq \diam\left( \bigcup_{\min_W-1 \leq n \leq \max_W+1} \pi_W(\phi^{n}(x))\right) + 2M.$$ The difference between $\min_W$ and $\max_W$ is bounded by a constant depending on $\phi$  (but not $W$) because  $ \pi_S(\langle \phi \rangle \cdot x)$ is a bi-infinite quasi\hyp{}geodesic in~$\C(S)$ with constants depending on $\phi$. Hence, the diameter of $ \{\phi^n(x): \min_W <n < \max_W\}$ in $\G(S)$ depends on $\phi$, but not $W$. Since the subsurface projection maps are uniformly coarsely Lipschitz, this plus the above  inequality implies $\diam( \pi_W(\langle \phi (x) \rangle ))$ is bounded above by a constant independent of $W$. Hence $\langle \phi \rangle \cdot x$ is a bi-infinite Morse quasi-geodesic in $\G(S)$ as desired.
\end{proof}

\subsection{Statement of main theorem}

The main result of this paper is the following classification of the hyperbolicity, relative hyperbolicity, and thickness of a hierarchical graph of multicurves in terms of its set of witnesses.
 Recall that an annulus (or disk) with punctures is a subsurface homeomorphic to $S_{0,p}^2$ (or $S^1_{0,p}$) where $p$ is at least~1.

\begin{theorem}\label{theorem:witnesses_determine_geoemetry}
Let $\G(S)$ be a hierarchical graph of multicurves on $S = S_{g,p}$.
\begin{enumerate}
    \item \label{item:main_thm_hyp} $\G(S)$ is hyperbolic if and only if $\Wit\bigl(\G(S)\bigr)$ does not contain a pair  of disjoint subsurfaces.
    \item \label{item:main_thm_rel_hyp} $\G(S)$ is hyperbolic relative to thick of order~$0$ subsets if  and only if $\Wit\bigl(\G(S)\bigr)$ contains a pair of disjoint subsurfaces and whenever $Z,W \in \Wit\bigr(\G(S)\bigr)$ are disjoint and co-connected, then $S \sminus Z = W$.
    \item \label{item:main_thm_thick_1} $\G(S)$ is thick of order~$1$ if  there exists a pair $Z,W \in \Wit\bigr(\G(S)\bigr)$ that are disjoint and co-connected so that either $S\sminus (Z \cup W)$ contains a subsurface that is not an annulus with punctures or $Z$ is a punctured disk and $S \sminus (Z \cup W)$ is an annulus with  punctures.
    \item \label{item:main_thm_thick 2} $\G(S)$ is thick of order at most~$2$ if there exists a pair $Z,W \in \Wit\bigr(\G(S)\bigr)$ that are disjoint and co-connected  so that  $S\sminus (Z \cup W)$ is a non-empty disjoint union of punctured annuli. 
\end{enumerate}
\end{theorem}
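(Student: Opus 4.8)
The plan is to verify, for the quasi-isometric model $\ksep_{\mf{S}}(S)$ with $\mf{S}=\Wit(\G(S))$, the three defining conditions of thickness of order at most~$2$, using connected components of $\Dis(Z,W)$ as the thick-of-order-at-most-$1$ pieces; Proposition~\ref{proposition: k qi g} together with quasi-isometry invariance of the order of thickness then transfers the conclusion to $\G(S)$. Fix disjoint, co-connected witnesses $Z,W$ with $S\sminus(Z\cup W)=A_1\sqcup\dots\sqcup A_r$ a non-empty disjoint union of punctured annuli $A_i\cong S_{0,p_i}^2$. If, in addition, $r=1$ and $Z$ or $W$ is a punctured disk, then part~\ref{item:main_thm_thick_1} of Theorem~\ref{theorem:witnesses_determine_geoemetry} already applies and gives the stronger statement, so we may assume otherwise. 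For each connected component $\Omega$ of $\Dis(Z,W)$, set $Q_\Omega\defeq\bigcup_{m\in\Omega}P_{\mf{S}}(m)$; Lemma~\ref{lemma:connected_components_are_thick_of_order_1} gives that each $Q_\Omega$ is thick of order at most~$1$, which is the thickness condition.

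For the coarsely covering condition, observe that $\partial Z\cup\partial W$ is a vertex of $\Dis(Z,W)$ (taking $f=\mathrm{id}$), and, cutting $Z$, $W$, and each $A_i$ into pairs of pants, we obtain a vertex $y_0$ of $\ksep_{\mf{S}}(S)$ with $\partial Z\cup\partial W\subseteq y_0$, so that $y_0\in Q_{\Omega_0}$ for $\Omega_0$ the component of $\partial Z\cup\partial W$. Since $\ksep_{\mf{S}}(S)$ is connected and carries a cocompact $\mcg(S)$-action, every vertex lies within uniformly bounded distance of the orbit $\mcg(S)\cdot y_0\subseteq\bigcup_\Omega Q_\Omega$.

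The crux is the thick chaining. It suffices to build a connected graph $\mc{T}$ whose vertices are the components of $\Dis(Z,W)$ and in which $\Omega$ and $\Omega'$ are joined by an edge whenever $\mc{N}_C(Q_\Omega)\cap\mc{N}_C(Q_{\Omega'})$ has infinite diameter for the ambient constant $C$; by Corollary~\ref{corollary:product regions} this holds as soon as there are $m\in\Omega$ and $m'\in\Omega'$ with $m\cup m'$ a multicurve having a witness among the components of its complement, since then $P_{\mf{S}}(m\cup m')=P_{\mf{S}}(m)\cap P_{\mf{S}}(m')$ has infinite diameter. Adding the boundary curves of $f(Z)$ and $f(W)$ shows that every component of $\Dis(Z,W)$ contains a multicurve $f(\partial Z)\cup f(\partial W)$, so $\mcg(S)$ acts transitively on the vertices of $\mc{T}$; connectivity of $\mc{T}$ then follows from Putman's trick (Lemma~\ref{lemma:putman_trick}) provided that, for a suitably positioned standard generating set $X$ of $\mcg(S)$ or $\pmcg(S)$ (Section~\ref{section:mapping class groups}) and base vertex $v_0=[\Omega_0]$, the vertex $v_0$ is connected to $g\cdot v_0$ in $\mc{T}$ for every $g\in X\sqcup X^{-1}$. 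Generators disjoint from $\partial Z\cup\partial W$ fix $\Omega_0$ and hence require no argument (note that $S\sminus(\partial Z\cup\partial W)$ already contains the witness $Z$, so the pieces $Q_\Omega$ are genuinely infinite diameter).

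The main obstacle, and the place where the punctured-annulus hypothesis is essential, is the handling of the finitely many remaining generators. For each such $g$ (in particular each half-twist exchanging punctures lying in distinct $A_i$, and each standard Dehn twist whose curve crosses $\partial Z\cup\partial W$), one must exhibit a short path in $\mc{T}$ from $[\Omega_0]$ to $[g\Omega_0]$: concretely, a sequence of multicurves, each a common refinement of consecutive complementary-witness configurations, whose complements always retain a witness. The point is that the punctures inside the $A_i$ provide exactly enough room to realize these generators by sliding across one tube, or past one puncture, at a time, while the fact that each $A_i$ is an annulus with punctures---rather than something larger---keeps $Z$ or $W$ intact as a witness at every intermediate step. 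I expect this case analysis, organized around the explicit standard generating curves and their positions relative to $Z$, $W$, and the $A_i$, to be the technical heart of the argument; it is also the reason the conclusion is only thickness of order at most~$2$: the punctures obstruct joining all translates $f(\partial Z\cup\partial W)$ directly within $\Dis(Z,W)$, so $\Dis(Z,W)$ itself may be disconnected and the passage to the second-level chaining graph $\mc{T}$ is genuinely needed.
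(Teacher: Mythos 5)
Your general two-level structure is right---chain the thick-of-order-$\le1$ pieces indexed by connected components of a suitable $\Dis$-graph, using Putman's trick for connectivity of the chaining graph---but the mechanism you propose for obtaining infinite-diameter coarse intersection between distinct pieces is too strong and fails precisely in the cases where Theorem~\ref{theorem:thick_of_order_2_case} is actually needed. Your criterion for an edge of~$\mc{T}$ requires \emph{disjoint} representatives $m\in\Omega$, $m'\in\Omega'$ with a witness in the complement of~$m\cup m'$, so that $P_{\mf{S}}(m)\cap P_{\mf{S}}(m')=P_{\mf{S}}(m\cup m')$ is itself an infinite-diameter product region. But consider, e.g., the pants graph on $S_{2,1}$, with $Z\cong W\cong S^1_{1,0}$ separated by a once-punctured annulus, $\Sigma=S_{2,0}$. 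Then $\FO$ consists of separating genus-one curves on~$\Sigma$, and two such curves are disjoint only if they are isotopic (a one-holed torus contains no essential separating curve). Since the components of $\PDis(Z,W)=\Dis(Z,W)$ (here $\pmcg=\mcg$) are exactly the fibers $F^{-1}(\mu)$ for $\mu\in\FO$ (Lemma~\ref{lemma:connectedcomponents}), any two distinct components admit \emph{no} disjoint representatives at all---the filled-in multicurves $\mu=F(m)$ and $\nu=F(m')$ already intersect on~$\Sigma$, and filling punctures can only decrease intersection number. So your graph~$\mc{T}$ is totally disconnected and the chaining argument collapses.

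The paper's resolution is to relax the adjacency condition drastically: $\mu,\nu\in\FO$ are joined in~$\F$ when $i(\mu,\nu)\le 4$ (which holds after twisting by a standard generator of $\pmcg(\Sigma)$, yielding connectivity of~$\F$ via Putman's trick \emph{on $\Sigma$}, not on~$S$). The infinite-diameter coarse intersection is then \emph{not} a common product region. Instead, one lifts $\mu,\nu$ to $m,n$ on~$S$ with $i(m,n)\le 4$, extends to $x\in P_{\mf{S}}(m)$ and $y\in P_{\mf{S}}(n)$ with $i(x,y)$ bounded in terms of~$\xi(S)$, so $x$ lies in $\mc{N}_K(\mc{X}(\mu))\cap\mc{N}_K(\mc{X}(\nu))$ for a uniform~$K$, and then pushes $x$ by a pseudo-Anosov element in the kernel~$\mc{P}$ of the puncture-filling homomorphism $F_*\colon\pmcg(S)\to\pmcg(\Sigma)$. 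Such an element exists by Kra's theorem, and it stabilizes \emph{every} $\mc{X}(\alpha)$ setwise because it fixes the image~$\alpha$ of every lift, so its (undistorted, by Corollary~\ref{corollary:pA_are_undistorted}) orbit lies entirely in the coarse intersection. This pseudo-Anosov-in-the-kernel argument is the central idea of the proof and is entirely missing from your proposal; without it there is no source of an unbounded direction common to two distinct pieces. Finally, you defer the Putman case analysis itself; the paper's choice to run it on~$\Sigma$ makes that step routine, whereas running it on~$S$ as you suggest would re-introduce the intermediate punctures you were trying to eliminate.
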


Item~(\ref{item:main_thm_hyp}) is precisely Theorem \ref{theorem:hyperbolic_graphs_of_multicurves}.
Item~(\ref{item:main_thm_thick_1}) will be proved in Theorems~\ref{theorem:thick case no separating annulus} and~\ref{theorem:thick_punctured_disc_case}, and
Item~(\ref{item:main_thm_thick 2}) is shown in Theorem~\ref{theorem:thick_of_order_2_case}. The sufficient condition in Item~(\ref{item:main_thm_rel_hyp}) will be proved in Theorem~\ref{theorem:relatively_hyperbolic_graphs_of_multicurves}. To see that this condition is necessary, observe that if there exist disjoint and co-connected witnesses $Z,W \in \Wit\bigr(\G(S)\bigr)$ where $S \sminus Z \neq W$, then $Z$ and $W$ must satisfy the conditions for thickness in either Item~(\ref{item:main_thm_thick_1}) or~(\ref{item:main_thm_thick 2}). Since being thick of any order is an obstruction to a space being relatively hyperbolic \cite[Corollary~7.9]{Behrstock_Drutu_Mosher_Thickness}, this implies the necessary direction of Item~(\ref{item:main_thm_rel_hyp}).

 \section{The relatively hyperbolic case}
We now establish a sufficient condition for a non-hyperbolic hierarchical graph of multicurves to be relatively hyperbolic. Our work in subsequent sections will establish that this condition is also necessary. Recall that if $\mf{S}$ is the set of witnesses for a hierarchical graph of multicurves of $S$ and $m$ is a multicurve, then $P_\mf{S}(m) = \{ y \in \ksep_\mf{S}(S) : m\subseteq y \}$. By Corollary \ref{corollary:product regions}, the $P_\mf{S}(\partial Z)$ in Theorem \ref{theorem:relatively_hyperbolic_graphs_of_multicurves} will be thick of order~0.

\begin{theorem}\label{theorem:relatively_hyperbolic_graphs_of_multicurves}
Let $\G(S)$ be a hierarchical graph of multicurves that is not hyperbolic and let $\mf{S} = \Wit\bigl(\G(S)\bigr)$.   Suppose that whenever $Z,W \in \Wit\bigl(\G(S)\bigr)$ are disjoint and co-connected, then $Z = W^c$. Then $\G(S)$ is hyperbolic relative to subsets quasi-isometric to $\{P_\mf{S}(\partial Z) : Z \in \mc{Z}\}$ where  $\mc{Z} = \{Z \in \Wit\bigl(\G(S)\bigr): Z^c \in \Wit\bigl(\G(S)\bigr)\}$.
\end{theorem}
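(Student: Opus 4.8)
The goal is to show that $\G(S)$ — equivalently, its quasi-isometric model $\ksep_\mf{S}(S)$ — is hyperbolic relative to the product regions $\{P_\mf{S}(\partial Z): Z\in\mc{Z}\}$. The plan is to verify the standard combinatorial criterion for relative hyperbolicity in terms of the distance formula (Theorem \ref{theorem: distance formula}): a hierarchically hyperbolic space is hyperbolic relative to a collection of subsets precisely when those subsets coarsely contain every pair of witnesses that are both proper and disjoint, and are ``isolated'' from each other in the sense that two distinct ones share only bounded-diameter coarse intersection. Concretely, I would proceed as follows. First, using Theorem \ref{theorem:hyperbolic_graphs_of_multicurves}, since $\G(S)$ is not hyperbolic there is a pair of disjoint witnesses, so $\mc{Z}$ is non-empty and each $P_\mf{S}(\partial Z)$ is thick of order $0$ by Corollary \ref{corollary:product regions} (its two factors $\ksep_{\mf{S}_Z}(Z)$ and $\ksep_{\mf{S}_{Z^c}}(Z^c)$ are both infinite diameter because $Z, Z^c\in\mf{S}$, each admits a cocompact $\mcg$-action, and both contain bi-infinite quasi-geodesics by Corollary \ref{corollary:pA_are_undistorted}).

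The heart of the argument is that the hypothesis ``every pair of disjoint co-connected witnesses $Z,W$ satisfies $W = Z^c$'' forces the non-hyperbolic directions of $\ksep_\mf{S}(S)$ to be confined to the sets $P_\mf{S}(\partial Z)$. I would make this precise via the $ABD$-type machinery already invoked in the excerpt: a quasi-geodesic $\gamma$ in $\G(S)$ that has uniformly bounded projection to $\C(W)$ for every proper witness $W$ is Morse, and more generally the ``non-Morse'' behavior of a geodesic is detected entirely by the proper witnesses it projects far into. I would show: (a) if two proper witnesses $W_1, W_2$ are \emph{disjoint}, then (passing to the co-connected hulls obtained by filling in complementary components — here one must be slightly careful that filling in a component of $S\sminus W_i$ keeps it a witness, which follows because witnesses are closed under taking connected supersurfaces once they intersect every vertex, a point I should check against Definition \ref{definition:hierarchical}) the hypothesis forces $W_1$ and $W_2$ to be ``nested into a common $P_\mf{S}(\partial Z)$''; (b) consequently, any sequence of proper witnesses pairwise overlapping or disjoint that a geodesic sees is, up to bounded error, organized into the $Z$-fibers. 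Then the Bowditch/Drutu–Sapir criterion for relative hyperbolicity of a space with a distance formula — every pair of ``parallel'' proper witnesses lies in one peripheral, and distinct peripherals coarsely intersect in bounded sets — is satisfied, giving relative hyperbolicity with the claimed peripherals.

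For the isolation/bounded-coarse-intersection step I would argue directly: if $Z_1, Z_2\in\mc{Z}$ are distinct, then $\partial Z_1 \neq \partial Z_2$, and $P_\mf{S}(\partial Z_1)\cap P_\mf{S}(\partial Z_2)$ coarsely equals $P_\mf{S}(\partial Z_1\cup\partial Z_2)$; by Corollary \ref{corollary:product regions} this has infinite diameter only if some component of $S\sminus(\partial Z_1\cup\partial Z_2)$ is a witness giving a genuine product, which — chasing the combinatorics of how $Z_1, Z_2$ overlap — would produce a pair of disjoint co-connected witnesses \emph{not} of the form $\{Z, Z^c\}$, contradicting the hypothesis; hence the coarse intersection is bounded. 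The main obstacle I anticipate is exactly this last combinatorial bookkeeping: carefully translating ``$W = Z^c$ for all disjoint co-connected pairs'' into a statement that controls \emph{all} disjoint pairs of (not necessarily co-connected) witnesses, via the operation of replacing a witness by a co-connected witness containing it, and ensuring that operation is legitimate within the class of witnesses. Once that reduction is clean, verifying the relative-hyperbolicity criterion from the distance formula is essentially bookkeeping with the bounded geodesic image theorem, exactly as in the ``not thick of order $0$'' proof already given, and as in the authors' earlier treatment of the separating curve graph in \cite{russell_vokes}, which I would follow closely.
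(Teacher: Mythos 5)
The paper's proof is short and purely combinatorial: it invokes a cited black-box criterion (Theorem~\ref{theorem:special_case_of_isolated_orthogonality}, a special case of Theorem~4.3 of \cite{russell}) which says that $\ksep_{\mf{S}}(S)$ is hyperbolic relative to $\{P_\mf{S}(\partial I) : I \in \mc{I}\}$ provided one can exhibit a family $\mc{I}$ of (disconnected) unions of witnesses with $S\notin\mc{I}$, absorbing every disjoint pair from $\mf{T}$, and satisfying a uniqueness condition. The paper then takes $\mc{I}=\{Z\sqcup Z^c : Z\in\mc{Z}\}$ and verifies the three hypotheses by elementary subsurface combinatorics. Your proposal does not have this criterion available, and this is the central gap: in its place you propose to reprove relative hyperbolicity from scratch out of the distance formula, the ABD characterization of Morse quasi-geodesics, and a Bowditch/Dru\c{t}u--Sapir style criterion, and you describe that step as ``essentially bookkeeping.'' It is not. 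The isolated-orthogonality criterion is itself a nontrivial theorem whose proof in \cite{russell} requires building the relative hyperbolic structure directly from the hierarchically hyperbolic one; without it, your plan amounts to rederiving that theorem, which is well beyond bookkeeping.

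The structural verifications you sketch do line up with the hypotheses of Theorem~\ref{theorem:special_case_of_isolated_orthogonality}: your step (a) (disjoint witnesses can be promoted to a pair of the form $\{Z, Z^c\}$ by filling in complementary components, using upward closure of witnesses) is exactly the verification of condition~(2), and your ``isolation'' step is aiming at condition~(3) (uniqueness). The point you flag as ``the main obstacle'' --- carefully controlling \emph{all} disjoint pairs of (not necessarily co-connected) witnesses via co-connected hulls --- is indeed the nontrivial part of the paper's verification, and the paper does resolve it by the case analysis on components of $S\sminus W'$. Your isolation sketch as written is incomplete (the identification $P_\mf{S}(\partial Z_1)\cap P_\mf{S}(\partial Z_2)\simeq P_\mf{S}(\partial Z_1\cup\partial Z_2)$ only makes sense when $\partial Z_1,\partial Z_2$ are disjoint, and ``infinite diameter $\Rightarrow$ genuine product'' misstates Corollary~\ref{corollary:product regions}, which only requires one component to be a witness); but a more careful case analysis does show disjointness of $\partial Z_1$ and $\partial Z_2$ forces $\{Z_1,Z_1^c\}=\{Z_2,Z_2^c\}$ under the hypothesis, which is what the paper proves in the uniqueness step. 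So the combinatorial content of your plan is recoverable, but the framework you propose to feed it into is a missing theorem, not a calculation, and the ABD/Morse machinery you invoke is not the right tool for deducing relative hyperbolicity.
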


Our proof of Theorem \ref{theorem:relatively_hyperbolic_graphs_of_multicurves} rests on  the following result of the first author, which was originally proved in the context of any hierarchically hyperbolic space. We have translated the result to the special case of a hierarchical graph of multicurves. 
When $W$ and $V$ are two not necessarily connected subsurfaces, $W \subseteq V$ will mean that every component of~$W$ is contained in some component of~$V$.

\begin{theorem}[{Special case of \cite[Theorem 4.3]{russell}}]\label{theorem:special_case_of_isolated_orthogonality}
Let $\mf{S}$ be the set of witnesses for some hierarchical graph of multicurves and let $\mf{T}$ be the set of all (not necessarily connected) subsurfaces of $S$ where each component is an element of $\mf{S}$. Suppose there exists $\mc{I} \subseteq \mf{T}$ such that:
\begin{enumerate}
    \item $S \not \in \mc{I}$.
    \item For all $W,V \in \mf{T}$, if $W$ and $V$ are disjoint, then there exists $I \in \mc{I}$ such that $W,V \subseteq I$.
    \item If $W \in \mf{T}$ and $W \subseteq I_1$, $W \subseteq I_2$ where $I_1,I_2 \in \mc{I}$, then $I_1 = I_2$.
\end{enumerate}
Then, $\ksep_{\mf{S}}(S)$ is hyperbolic relative to the collection $\{P_\mf{S}(\partial I) : I \in \mc{I} \}$. \qedhere
\end{theorem}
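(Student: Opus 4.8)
The plan is to prove Theorem~\ref{theorem:relatively_hyperbolic_graphs_of_multicurves} by constructing, from the witness set $\mf{S}$ and the hypothesis, a collection $\mc{I} \subseteq \mf{T}$ satisfying the three bulleted conditions of Theorem~\ref{theorem:special_case_of_isolated_orthogonality}, and then checking that the resulting peripheral collection $\{P_\mf{S}(\partial I) : I \in \mc{I}\}$ coincides (as a set, up to the obvious bijection $I \leftrightarrow Z$) with $\{P_\mf{S}(\partial Z) : Z \in \mc{Z}\}$. The natural guess is
$$\mc{I} = \{ Z \cup Z^c : Z \in \mf{S}, \ Z^c \in \mf{S}, \ Z \text{ co-connected} \},$$
i.e.\ we take $I$ to be the (annulus-free) union of a co-connected witness $Z$ with its complement $W = Z^c$, which by hypothesis is also a witness whenever $Z$ is; note that under this hypothesis such an $I$ is all of $S$ minus a regular neighborhood of $\partial Z$, so $P_\mf{S}(\partial I) = P_\mf{S}(\partial Z)$. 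I should also record that $\mc{Z}$ from the theorem statement is exactly $\{Z : Z \in \mf{S}, Z^c \in \mf{S}\}$, and that every such $Z$ is automatically co-connected (its complement $Z^c$ is connected since it is the single subsurface $W$), so the bijection $Z \mapsto Z \cup Z^c$ is onto $\mc{I}$ and $\{P_\mf{S}(\partial I)\} = \{P_\mf{S}(\partial Z) : Z \in \mc{Z}\}$ as required.

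\textbf{Verifying the three conditions.} Condition~(1), $S \notin \mc{I}$, holds because each $I = Z \cup Z^c$ has $\partial Z \neq \emptyset$ (as $Z$ is a proper essential subsurface — $Z$ and $Z^c$ are both non-pants, hence both nonempty and proper), so $I$ is a proper subsurface of $S$. Condition~(2) is the substantive point: given disjoint $V,W \in \mf{T}$, I must find $I \in \mc{I}$ with $V, W \subseteq I$. Since $\G(S)$ is not hyperbolic, Theorem~\ref{theorem:hyperbolic_graphs_of_multicurves} gives a pair of disjoint witnesses, so $\mc{I} \neq \emptyset$. I would argue that, starting from the disjoint pair $V,W$ (whose union is a proper essential subsurface, as $\G(S)$ being not hyperbolic forces $V\cup W$ to miss at least a pants-worth of $S$, since if $V \cup W$ filled $S$ then... — actually more carefully: each component of $V$ and of $W$ is a witness, so $V$ and $W$ are themselves disjoint, and I want to enlarge to a co-connected witness containing all of them), one can find a connected co-connected witness $Z$ with $V \subseteq Z$ and $W \subseteq Z^c$, or symmetrically; concretely, take $Z$ to be the component of $S \sminus \partial(\text{nbhd of }V)$ containing $V$ after possibly absorbing components to make the complement connected, and verify $Z$ and $Z^c$ are both witnesses. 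The key algebraic input is that $\mf{S}$ is closed upward under inclusion (Definition~\ref{definition: ksep} / the fact that witnesses are closed under taking larger connected subsurfaces), so enlarging $V$ to $Z$ and $W$ to $Z^c$ keeps us inside $\mf{S}$; and the hypothesis "$Z,W$ disjoint co-connected $\Rightarrow Z = W^c$" is what forces this enlargement to terminate at a genuine complementary pair rather than leaving room for a third disjoint witness. Condition~(3), that $I$ is unique given $W \subseteq I_1, I_2$, then follows because if $W \subseteq Z_1 \cup Z_1^c$ and $W \subseteq Z_2 \cup Z_2^c$ with $W \in \mf{T}$ nonempty, the components of $W$ pin down which side of each $\partial Z_j$ they lie on, and the hypothesis (applied to the co-connected witnesses involved) forces $\partial Z_1 = \partial Z_2$, hence $I_1 = I_2$.

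\textbf{Main obstacle.} The hard part is Condition~(2): turning an arbitrary disjoint pair $V, W$ of (possibly disconnected) elements of $\mf{T}$ into a \emph{co-connected} pair $(Z, Z^c)$ of \emph{connected} witnesses with $V \subseteq Z$, $W \subseteq Z^c$. The subtlety is that $V$ or $W$ may be disconnected or have disconnected complement, so one cannot simply take $Z = V$; one must carefully absorb the "extra" components of $S \sminus (V \cup W)$ into one side or the other so that both resulting pieces stay connected and remain in the upward-closed set $\mf{S}$, and then invoke the hypothesis to conclude they are complementary. I expect this to require a short case analysis on how the components of $S \sminus (V \cup W)$ are distributed, using connectivity of $S$ and the upward closure of $\mf{S}$, together with the observation that a co-connected witness $Z$ together with the hypothesis leaves no room to "split off" a further disjoint witness. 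Once Condition~(2) is in hand, Conditions~(1) and~(3) and the identification of the peripheral family are routine, and Theorem~\ref{theorem:special_case_of_isolated_orthogonality} applied to $\ksep_\mf{S}(S)$ plus the quasi-isometry $\G(S) \simeq \ksep_\G(S)$ of Proposition~\ref{proposition: k qi g} finishes the proof.
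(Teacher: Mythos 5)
You have not proved the statement you were asked to prove. The statement is Theorem~\ref{theorem:special_case_of_isolated_orthogonality} itself: that conditions (1)--(3) on the family $\mc{I}$ force $\ksep_{\mf{S}}(S)$ to be hyperbolic relative to the product regions $\{P_\mf{S}(\partial I) : I \in \mc{I}\}$. Your proposal takes exactly this criterion as a black box and uses it to deduce Theorem~\ref{theorem:relatively_hyperbolic_graphs_of_multicurves}; in other words, you have sketched (a rougher version of) the paper's proof of a \emph{different} theorem, while the theorem actually in question is assumed wholesale. In the paper, Theorem~\ref{theorem:special_case_of_isolated_orthogonality} is not proved by elementary manipulation of witnesses at all: it is quoted as a special case of \cite[Theorem 4.3]{russell}, a result about hierarchically hyperbolic spaces. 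Its proof runs through the HHS structure on $\ksep_\mf{S}(S)$ built from the witnesses and their disjoint unions (Theorem~\ref{theorem: hierarchial implies HHS}, Remark~\ref{remark:disjoint_unions}): conditions (1)--(3) say precisely that orthogonality in this structure is ``isolated'' by $\mc{I}$, and the general theorem then yields relative hyperbolicity with peripherals the standard product regions, which are coarsely the sets $P_\mf{S}(\partial I)$. A blind proof of the statement would have to carry out that HHS argument, or replace it by a direct verification of a relative-hyperbolicity criterion for $\ksep_\mf{S}(S)$ (say via the distance formula, Theorem~\ref{theorem: distance formula}); nothing of this kind appears in your write-up.

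Even judged as a proof of Theorem~\ref{theorem:relatively_hyperbolic_graphs_of_multicurves} --- which is what it actually is --- the sketch leaves its substantive step open: you flag condition (2) as the ``main obstacle'' and only gesture at absorbing complementary components. The paper's verification is short and, notably, does not use the hypothesis on disjoint co-connected witnesses at this point: pick a component $W'$ of $W$; every component of $V$ lies in a component of $S \sminus W'$; let $Z$ be such a component containing a component of $V$. Then $Z$ and $Z^c$ are connected, both are witnesses (upward closure: $Z$ contains a component of $V$, $Z^c$ contains $W'$), and every component of $V$ and of $W$ avoids $\partial Z$, so $V, W \subseteq Z \sqcup Z^c \in \mc{I}$. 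The hypothesis that disjoint co-connected witnesses are complementary enters only in condition (3). So besides addressing the wrong statement, the one step you identify as hard is exactly the step left unproved, and your suggestion that the hypothesis is what makes condition (2) work misplaces where it is actually needed.
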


The presence of the set $\mf{T}$  in Theorem~\ref{theorem:special_case_of_isolated_orthogonality} is due to the fact that the hierarchically hyperbolic space structure for a hierarchical graph of multicurves includes all the subsurfaces in $\mf{T}$ and not just the witnesses of the graph; see Remark~\ref{remark:disjoint_unions}.

In light of Theorem~\ref{theorem:special_case_of_isolated_orthogonality}, we can prove Theorem~\ref{theorem:relatively_hyperbolic_graphs_of_multicurves} by showing the hypothesis on the set of witnesses implies we can apply Theorem~\ref{theorem:special_case_of_isolated_orthogonality} with  $\mc{I} = \{ Z \sqcup Z^c : Z \in \mc{Z}\}$.

\begin{proof}[Proof of Theorem \ref{theorem:relatively_hyperbolic_graphs_of_multicurves}]
Since $\G(S)$ is not hyperbolic, there exist disjoint witnesses $Z$ and $W$. Moreover, we may arrange $Z$ and $W$ so that $Z^c$ and $W^c$ are connected. This implies the set $\mc{Z} = \{Z \in \Wit\bigl(\G(S)\bigr) : Z^c \in \Wit\bigl(\G(S)\bigr)\}$ is non-empty. 

Let $\mf{T}$ be the  set of all (not necessarily connected) subsurfaces of $S$ where each component is a witness of $\G(S)$. We shall show that the hypotheses of Theorem~\ref{theorem:special_case_of_isolated_orthogonality} are met when $\mc{I} = \{Z \sqcup Z^c : Z \in \mc{Z}\}$.
\begin{enumerate}
    \item By construction $S \not \in \mc{I}$.
    \item Let $W,V \in \mf{T}$ such that $W$ and $V$ are disjoint.
    Recall that $W$ and $V$ may not be connected.
    Let $W'$ be one of the components of $W$.
    Since $V$ is disjoint from $W$, it is in particular disjoint from $W'$, so each component of $V$ is contained in some component of $S \sminus W'$.
    Let $Z$ be a component of $S \sminus W'$ that contains a component of $V$.
    Note that $Z$ and $Z^c$ are both connected subsurfaces.
    Moreover, $Z$ is a witness since it contains a component of $V$, and $Z^c$ is a witness since it contains $W'$, so $Z \sqcup Z^c$ is an element of $\mc{I}$.
    Now every component of $W$ is either contained in $Z$ or contained in $Z^c$, and similarly $V \subseteq S \sminus W' \subseteq Z \sqcup Z^c$.
    Hence, as required, we have $W,V \subseteq Z \sqcup Z^c$.
    
    \item Let $W \in \mf{T}$ and assume there exist $Z_1,Z_2 \in \mc{Z}$ such that $W \subseteq Z_1 \sqcup Z_1^c$ and $W \subseteq Z_2 \sqcup Z_2^c$. Let $W'$ be a component of $W$.  Up to switching $Z_1$ and/or $Z_2$ with its complement, we can assume $W' \subseteq Z_1$ and $W' \subseteq Z_2$. Let $V_i$ be the component of $S \sminus W'$ that contains $Z_i^c$ for $i = 1,2$. If $V_1 \neq V_2$, then $V_1$ and $V_2$ are disjoint witnesses of $\G(S)$ whose complements are both connected. Our assumption on $\G(S)$ then implies $V_1 = V_2^c$ . However,  $V_1$ and $V_2$ are both disjoint from $W'$, which contradicts that $V_1 = V_2^c$ . Therefore, we must have $V_1 = V_2$, which in turn implies  $Z_1^c,Z_2^c \subseteq V_1$. In particular, $V_1^c$ is disjoint from both $Z_1^c$ and $Z_2^c$. Since $V^c_1$, $Z_1^c$, and $Z_2^c$ are all co-connected witnesses of $\G(S)$, this implies $V_1 = Z_1^c$ and $V_1 = Z_2^c$ by hypothesis. Hence, $Z_1 \sqcup Z_1^c = Z_2 \sqcup Z_2^c$.
\end{enumerate}

By Theorem \ref{theorem:special_case_of_isolated_orthogonality}, the above implies    $\ksep_{\mf{S}}(S)$ is hyperbolic relative to $\{P_\mf{S}(\partial (Z \sqcup Z^c) : Z \in \mc{Z} \}$. This implies the conclusion of Theorem~\ref{theorem:relatively_hyperbolic_graphs_of_multicurves} as $\ksep_{\mf{S}}(S)$ is quasi-isometric to~$\G(S)$ by Proposition~\ref{proposition: k qi g} and $\partial (Z \sqcup Z^c) = \partial Z$ for any  subsurface $Z \subseteq S$.
\end{proof}

\section{The thick of order 1 case}\label{section:thick of order 1 case}

We now produce two sufficient conditions, in terms of the set of witnesses, for a hierarchical graph of multicurves to be thick of order~$1$. 
In each case, we show that the conditions imply the existence of a pair of disjoint, co-connected witnesses $Z,W$  so that $\Dis(Z,W)$ contains a large enough connected subgraph to apply Lemma~\ref{lemma:connected_components_are_thick_of_order_1}.

\subsection{Disjoint witness not separated by a punctured annulus.}

Our first condition for a graph of multicurves to be thick of order~$1$ is the existence of a pair of disjoint, co-connected witnesses $Z,W$ so that $Z$ and $W$ are separated by a subsurface that is not a punctured annulus.

\begin{theorem}\label{theorem:thick case no separating annulus}
Let $\G(S)$ be a hierarchical graph of multicurves. Suppose there exist $Z,W \in \Wit\bigl(\G(S)\bigr)$ such that
\begin{itemize}
    \item $Z$ and $W$ are disjoint;
    \item $Z^c$ and $W^c$ are connected;
    \item  $S \sminus (Z \cup W)$ has a component $Y$ that is not an annulus with punctures.
\end{itemize}
Then $\G(S)$ is thick of order 1.
\end{theorem}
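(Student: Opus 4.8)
The plan is to prove that the auxiliary graph $\Dis(Z,W)$ is connected. Once this is done, Lemma~\ref{lemma:connected_components_are_thick_of_order_1} shows $\G(S)$ is thick of order at most~$1$, and combining this with the earlier theorem that a hierarchical graph of multicurves is never thick of order~$0$ yields that $\G(S)$ is thick of order exactly~$1$. To show $\Dis(Z,W)$ is connected I would apply Putman's trick, Lemma~\ref{lemma:putman_trick}, with the group $G$ taken to be $\mcg(S)$ or $\pmcg(S)$ equipped with one of the standard generating sets of Section~\ref{section:mapping class groups}, and with base vertex $v_0 = \partial Z \cup \partial W$. Since $W \subseteq Z^c$ and $Z \subseteq W^c$, the multicurve $v_0$ is disjoint from the interiors of $Z$ and of $W$, so $Z$ and $W$ are distinct components of $S \sminus v_0$; hence $v_0$ is indeed a vertex of $\Dis(Z,W)$, via $f = \mathrm{id}$.

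For the first hypothesis of Lemma~\ref{lemma:putman_trick}, that every vertex of $\Dis(Z,W)$ lies in the connected component of some $G$-translate of $v_0$, I would argue as follows. Let $m$ be a vertex of $\Dis(Z,W)$, with $f \in \mcg(S)$ such that $f(Z)$ and $f(W)$ lie in distinct components of $S \sminus m$; then $m$ is disjoint from $f(Z) \cup f(W)$ and $\partial f(Z) \cup \partial f(W)$ is a multicurve. First add the curves of $\partial f(Z) \cup \partial f(W)$ not already in $m$, one at a time: adding a curve only refines the complementary decomposition, so $f(Z)$ and $f(W)$ remain in distinct components and every intermediate multicurve is a vertex of $\Dis(Z,W)$. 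Then, starting from $m \cup \partial f(Z) \cup \partial f(W)$, delete every curve except those in $\partial f(Z) \cup \partial f(W)$, one at a time: since those boundary curves are retained, $f(Z)$ and $f(W)$ stay cut out and in distinct components, so again each intermediate multicurve is a vertex of $\Dis(Z,W)$. This produces a path in $\Dis(Z,W)$ from $m$ to $f(\partial Z \cup \partial W) = f(v_0)$.

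The heart of the argument is the second hypothesis: for each generator $g$ of the chosen generating set, $v_0$ must be joined to $g v_0$ inside $\Dis(Z,W)$. I would first apply a homeomorphism of $S$ to position $Z$ and $W$ conveniently relative to the standard generating curves, exploiting the freedom in the generating set (for instance the placement of the curves $e_1, \dots, e_{p-1}$ in Theorem~\ref{theorem:humphries_generators}), so that all but a bounded number of the generating Dehn twists are about curves disjoint from $v_0 = \partial Z \cup \partial W$; for such a generator $g = T_c$ we have $g v_0 = v_0$. For each of the remaining generators $g$, observe that $g v_0$ differs from $v_0$ only on the $Z$-side or the $W$-side, so the task reduces to connecting $\partial Z$ to its image inside the graph of multicurves of $W^c$ (or of $Z^c$) that cut off a witness-copy of $Z$ separated from $W$, by a sequence of curve additions and removals; here one interpolates through an auxiliary co-connected witness parked in the complement, following a scheme of the shape
\[
v_0 \;\to\; \partial Z \cup \partial Z' \cup \partial W \;\to\; \partial Z' \cup \partial W \;\to\; \partial Z' \cup \partial gZ \cup \partial W \;\to\; \partial gZ \cup \partial W = g v_0 .
\]
It is exactly at this point that the hypothesis that $S \sminus (Z \cup W)$ has a component $Y$ which is \emph{not} an annulus with punctures is used: it supplies the extra complexity in $W^c$ (and $Z^c$) needed to house such an auxiliary witness away from the twisting region and to carry out the surgeries without leaving $\Dis(Z,W)$. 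The low-genus cases $g \le 1$ are treated the same way with the generating sets of Theorems~\ref{theorem:gervais_generators} and~\ref{theorem:braid_generators}, together with a separate treatment of the half-twists permuting punctures.

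The step I expect to be the main obstacle is this last one: choosing the position of $Z$ and $W$ so that only a controlled number of generating curves meet $v_0$, and then, for each of those, constructing the interpolating path while verifying at \emph{every} move that the intermediate multicurve still admits a mapping class separating a copy of $Z$ from a copy of $W$, i.e.\ that one never leaves $\Dis(Z,W)$. Without the punctured-annulus exclusion there is no spare complexity in the complement, the local surgeries get stuck, and the whole scheme fails; the complementary cases, where the spare complexity is a union of punctured annuli, are correspondingly more delicate and are handled in Theorem~\ref{theorem:thick_punctured_disc_case} and Theorem~\ref{theorem:thick_of_order_2_case}.
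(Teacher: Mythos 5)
The overall strategy — Putman's trick on a disjointness graph, with an interpolation through an auxiliary region disjoint from both $v_0$ and $g v_0$ — matches the paper's, but the sketch glosses over precisely the places where the paper has to work hardest, and the interpolating step as written would fail. The paper first \emph{replaces} the given pair of witnesses: it extracts a pair of pants $Y \subseteq Y_0$ meeting $W_0$ in a single curve and then resets $W := W_0$, $Z := S \sminus (W \cup Y)$. This reduction — making $Y$ an honest pair of pants, forcing genus at least~$1$, and arranging $Z \cup Y \cong S \sminus W$ — is what makes the interpolation close up. With it, the correct auxiliary region is a translate of $W$, not of $Z$: the union $Z \cup \phi(Z)$ sits inside a surface $\hat Z \cong Z \cup Y \cong S \sminus W$, so one can find $f \in \pmcg(S)$ with $f(W) = S \sminus \hat Z$ disjoint from \emph{both} $Z$ and $\phi(Z)$, and interpolate via $\partial Z,\; \partial Z \cup \partial f(W),\; \partial f(W),\; \partial f(W) \cup \phi(\partial Z),\; \phi(\partial Z)$. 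Your scheme parks an auxiliary witness $Z'$ and runs through $\partial Z' \cup \partial W$, which requires $Z'$ disjoint from $Z$, $gZ$, \emph{and} $W$ simultaneously; there is no room for such a $Z'$ in general ($S \sminus (Z \cup gZ \cup W)$ is roughly $Y$ minus a small neighbourhood of the twisting curve). Moreover, your base vertex $v_0 = \partial Z \cup \partial W$ is incompatible with the correct choice of interpolant: since $W \subseteq f(W)$, the curves $\partial W$ and $\partial f(W)$ do not form a useful disjoint multicurve, and the paper's base vertex is $\partial Z$ for exactly this reason.

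Two further mismatches are worth flagging. First, the paper works with $\PDis(Z,W)$ and $\pmcg(S)$, not $\Dis(Z,W)$ and $\mcg(S)$. This pairing avoids half-twist generators (Theorems~\ref{theorem:humphries_generators} and \ref{theorem:gervais_generators} give Dehn-twist generating sets for $\pmcg(S)$), and it makes Putman's first hypothesis line up, since the vertices of $\PDis(Z,W)$ are precisely the ones connected to $\pmcg(S)$-translates of the base vertex; cobounded coverage of $\ksep_\mf{S}(S)$ is then recovered from finiteness of $\pmcg(S)$-orbits. If you insist on $\Dis(Z,W)$ and $\mcg(S)$, the half-twist case is not a side remark — it is exactly the kind of case that Theorem~\ref{theorem:thick_punctured_disc_case} has to grapple with separately. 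Second, your plan considers the genus-zero generating set of Theorem~\ref{theorem:braid_generators}, but under the hypotheses of this theorem the genus must be at least~$1$ (a co-connected subsurface of a punctured sphere has a single boundary component, so $S \sminus (Z \cup W)$ would have to be a punctured annulus, contradicting the assumption on $Y$); the braid case never arises.
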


When $S$ is a closed surface, Theorems \ref{theorem:hyperbolic_graphs_of_multicurves}, \ref{theorem:relatively_hyperbolic_graphs_of_multicurves}, and \ref{theorem:thick case no separating annulus} are sufficient to classify the hyperbolicity and relative hyperbolicity of $\G(S)$. In this case, the classification will be determined by the Euler characteristic, $\chi(\cdot)$, of the co-connected witnesses. For ease of stating the theorem, we restrict to the case of $g \ge 2$ as for $g=0$ there are no graphs of multicurves and for $g=1$ all hierarchical graphs of multicurves are quasi-isometric to the curve graph  by Proposition \ref{proposition: k qi g}.

\begin{corollary}
Let $\G(S)$ be a hierarchical graph of multicurves on a closed surface $S = S_{g,0}$ with $g \geq 2$. Let $\chi_{\min}$ be the least negative Euler characteristic of a co-connected element of $\mf{S} =\Wit\bigl(\G(S)\bigr)$.
\begin{itemize}
    \item If $|\chi_{\min}| >  \frac{1}{2}|\chi(S)|$, then $\G(S)$ is hyperbolic.
    \item If $|\chi_{\min}| = \frac{1}{2} |\chi(S)|$, then $\G(S)$ is relatively hyperbolic, and each peripheral subset is quasi-isometric to $P_\mf{S}(\partial Z)$ for some $Z \in \mf{S}$ with $Z$ co-connected and $\chi(Z) = \chi_{\min}$.
    \item If $|\chi_{\min}| < \frac{1}{2} |\chi(S)|$, then $\G(S)$ is thick of order~$1$.
\end{itemize}
\end{corollary}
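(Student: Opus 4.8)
The plan is to compare $|\chi_{\min}|$ with $\tfrac{1}{2}|\chi(S)|$ and, in each of the three ranges, to invoke the appropriate one of Theorems~\ref{theorem:hyperbolic_graphs_of_multicurves}, \ref{theorem:relatively_hyperbolic_graphs_of_multicurves}, and~\ref{theorem:thick case no separating annulus}. The arithmetic engine is additivity of Euler characteristic: if $Z,W$ are disjoint essential subsurfaces of $S$, then $\chi(S)=\chi(Z)+\chi(W)+\chi\bigl(S\sminus(Z\cup W)\bigr)$; since $S$ is closed of genus at least~$1$ and $S\sminus(Z\cup W)$ has no annular components, the last term is nonpositive, and it vanishes precisely when $S\sminus(Z\cup W)=\emptyset$, i.e.\ precisely when $W=Z^c$. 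I will also use that $\Wit(\G(S))$ is $\mcg(S)$-invariant (immediate from Item~(2) of Definition~\ref{definition:hierarchical}) and that every co-connected witness $Y$ satisfies $|\chi(Y)|\ge|\chi_{\min}|$. For the first bullet, suppose $|\chi_{\min}|>\tfrac{1}{2}|\chi(S)|$ but $\G(S)$ is not hyperbolic. By Theorem~\ref{theorem:hyperbolic_graphs_of_multicurves} there are disjoint witnesses, and exactly as in the opening of the proof of Theorem~\ref{theorem:relatively_hyperbolic_graphs_of_multicurves} we may take them to be disjoint \emph{co-connected} witnesses $Z,W$ (replace each witness by a component of the complement of the other; a component of the complement of a connected subsurface is a co-connected witness since every such component meets the common boundary). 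But then $|\chi(S)|\ge|\chi(Z)|+|\chi(W)|\ge 2|\chi_{\min}|>|\chi(S)|$, a contradiction, so $\G(S)$ is hyperbolic.

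The other two bullets rest on a single construction. Assume $|\chi_{\min}|\le\tfrac{1}{2}|\chi(S)|$, and fix a co-connected witness $Z_0$ with $\chi(Z_0)=\chi_{\min}$; since $|\chi(Z_0)|<|\chi(S)|$ it is a proper subsurface, so $Z_0\cong S_{h,0}^{b}$ and its connected complement is $Z_0^c\cong S_{h',0}^{b}$ for the same number $b$ of boundary curves, and the Euler characteristic bound forces $h\le h'$. Inside $Z_0^c$ I slide a genus-$(h'-h)$ handle off one boundary curve $\beta_1$ of $Z_0$: choose a separating curve $\delta\subseteq Z_0^c$ so that $\delta$ and $\beta_1$ cobound a copy of $S_{h'-h,0}^{2}$, and set $Z'=Z_0^c\sminus S_{h'-h,0}^{2}$. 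A direct count gives $Z'\cong S_{h,0}^{b}\cong Z_0$ and $S\sminus Z'\cong S_{h',0}^{b}\cong Z_0^c$, so by the change of coordinates principle \cite[Ch.~1]{primer} there is $f\in\mcg(S)$ with $f(Z_0)=Z'$. Hence $Z'$ is a witness, it is disjoint from $Z_0$, its complement is connected, and $S\sminus(Z_0\cup Z')\cong S_{h'-h,0}^{2}$; in particular $Z_0$ and $Z'$ are disjoint co-connected witnesses, so $\G(S)$ is not hyperbolic.

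If $|\chi_{\min}|=\tfrac{1}{2}|\chi(S)|$, then $h=h'$, so $S\sminus(Z_0\cup Z')=\emptyset$ and $Z'=Z_0^c$; moreover, for any pair $Z,W$ of disjoint co-connected witnesses the inequalities $|\chi(Z)|,|\chi(W)|\ge|\chi_{\min}|$ and $|\chi(Z)|+|\chi(W)|\le|\chi(S)|=2|\chi_{\min}|$ force equality, hence $S\sminus(Z\cup W)=\emptyset$ and $W=Z^c$. Theorem~\ref{theorem:relatively_hyperbolic_graphs_of_multicurves} then shows that $\G(S)$ is hyperbolic relative to $\{P_{\mf{S}}(\partial Z):Z\in\mc{Z}\}$, where $\mc{Z}=\{Z\in\Wit(\G(S)):Z^c\in\Wit(\G(S))\}$ contains $Z_0$; and for each $Z\in\mc{Z}$ both $Z$ and $Z^c$ are co-connected witnesses with $|\chi(Z)|+|\chi(Z^c)|=2|\chi_{\min}|$, which forces $\chi(Z)=\chi_{\min}$, matching the stated description of the peripherals. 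If instead $|\chi_{\min}|<\tfrac{1}{2}|\chi(S)|$, then $h<h'$, so $S\sminus(Z_0\cup Z')\cong S_{h'-h,0}^{2}$ is a nonempty subsurface of positive genus and no punctures, hence not an annulus with punctures; thus the pair $(Z_0,Z')$ satisfies the hypotheses of Theorem~\ref{theorem:thick case no separating annulus}, and $\G(S)$ is thick of order~$1$.

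The step I expect to be the main obstacle is making the handle-slide construction of $Z'$ precise and verifying that it lies in the $\mcg(S)$-orbit of $Z_0$: checking that $\delta$, hence $Z'$, is essential, that $Z'\cong Z_0$ and $S\sminus Z'\cong Z_0^c$, and that these homeomorphisms can be matched along boundaries so that the change of coordinates principle really yields $f\in\mcg(S)$ with $f(Z_0)=Z'$ (so that $Z'$ is again a witness), together with absorbing the degenerate case $h=h'$ in which $Z'$ simply equals $Z_0^c$. Once the pair $(Z_0,Z')$ is in hand, the rest is the Euler-characteristic pigeonhole recorded above and the bootstrapping of non-hyperbolicity from $(Z_0,Z')$.
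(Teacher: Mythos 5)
Your proof is correct and follows essentially the same route as the paper: compare $|\chi_{\min}|$ with $\tfrac{1}{2}|\chi(S)|$, use additivity of Euler characteristic on the decomposition $S = Z \cup (S\sminus Z)$, and feed the resulting pair of disjoint co-connected witnesses into Theorems~\ref{theorem:hyperbolic_graphs_of_multicurves}, \ref{theorem:relatively_hyperbolic_graphs_of_multicurves}, and~\ref{theorem:thick case no separating annulus}. The one place where you go beyond the paper is the handle-slide construction of $Z'$ inside $Z_0^c$: the paper simply asserts that since $S$ is closed and $Z_0$ is co-connected, some $\mcg(S)$-translate of $Z_0$ sits disjointly inside $Z_0^c$, whereas you make that explicit (cut $Z_0^c$ along a separating curve $\delta$ cobounding an $S^2_{h'-h,0}$ with a boundary curve of $Z_0$, verify by Euler characteristic and boundary count that $Z'\cong Z_0$ and $S\sminus Z'\cong Z_0^c$, and invoke change of coordinates). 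That is a fair elaboration of a step the paper leaves implicit, and it is what lets you treat the second and third bullets from a single construction; everything else is the same Euler-characteristic pigeonhole.
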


\begin{proof}

For any subsurface $Z \subseteq S$, recall that $\chi(S) = \chi(Z) + \chi(Z^c)$.
 Moreover, an essential subsurface will always have non\hyp{}positive Euler characteristic, so $\vert\chi(S)\vert = \vert\chi(Z)\vert + \vert\chi(Z^c)\vert$.
Thus, if $|\chi_{\min}| > \frac{1}{2} |\chi(S)|$, then for every  witness $W$ of $\G(S)$, no component of $S \sminus W$ can be a witness of $\G(S)$. Therefore $\G(S)$ is hyperbolic by Theorem~\ref{theorem:hyperbolic_graphs_of_multicurves}.

Suppose $|\chi_{\min}| = \frac{1}{2} |\chi(S)|$. If $W \in \Wit\bigl( \G(S) \bigr)$ so that $W$ is co-connected and $\chi(W) = \chi_{\min}$, then $\chi(W^c) = \chi(W) = \frac{1}{2}\chi(S)$. Since $S$ is a closed surface, this implies $W^c$ is in the $\mcg(S)$-orbit of $W$. In particular,  $W^c \in  \Wit\bigl( \G(S) \bigr)$, implying $\G(S)$ is not hyperbolic by Theorem~\ref{theorem:hyperbolic_graphs_of_multicurves}. 
By Theorem~\ref{theorem:relatively_hyperbolic_graphs_of_multicurves}, $\G(S)$ will be relatively hyperbolic with the specified peripherals if we can show for any pair of disjoint, co-connected witnesses $W$ and $Z$ for $\G(S)$ we have $W^c = Z$ and $\chi(W) = \chi_{\min}$. Let $W$ and $Z$ be a pair of disjoint, co-connected witnesses for~$\G(S)$.

Because $Z \subseteq W^c$, we know $W^c$ is also a co-connected witness of $\G(S)$. 
Since $\vert \chi_{\min} \vert =\frac{1}{2} \vert \chi(S) \vert$ and $W$ and $W^c$ are both witnesses, we must have that $\chi(W) = \chi(W^c) = \chi_{\min}$.
This implies $Z = W^c$,  because $\chi(W^c) = \chi_{\min}$ prevents any proper subsurface of $W^c$ from being a witness.

Finally, assume $|\chi_{\min}| < \frac{1}{2} | \chi(S)|$. Let $W \in \Wit\bigl( \G(S) \bigr)$ so that $W$ is co-connected and $\chi(W) = \chi_{\min}$. As in the previous case, $S$ being closed and $W$ being co-connected implies there exists a subsurface $Z \subseteq W^c$ so that  $Z$ is in the $\mcg(S)$-orbit of $W$. In particular, $Z$ is a co-connected witness of $\G(S)$ that is disjoint from $W$ and also has $|\chi(Z)| < \frac{1}{2} |\chi(S)|$. Since $|\chi(W)| + |\chi(Z)| < |\chi(S)|$ and $S$ is closed, there must be a component of $S \sminus (W \cup Z)$ that is not an annulus with punctures. Therefore, $W$ and $Z$ satisfy the hypotheses of Theorem \ref{theorem:thick case no separating annulus} and $\G(S)$ is thick of order~1.
\end{proof}

\begin{proof}[Proof of Theorem~\ref{theorem:thick case no separating annulus}]
By the hypotheses of Theorem~\ref{theorem:thick case no separating annulus}, there exists a pair of disjoint, co-connected witnesses $Z_0, W_0$, so that $S \sminus (Z_0 \cup W_0)$ has a component $Y_0$ that is not an annulus or an annulus with punctures. Thus, there exists a subsurface $Y \subseteq Y_0$ so that $Y$ is homeomorphic to $S_{0,0}^3$. Moreover, after possibly switching $W_0$ and $Z_0$, we can choose $Y$ to meet $W_0$ in exactly one curve and so that $S \sminus (W_0 \cup Y)$ is connected. Let $Z = S \sminus (W_0 \cup Y)$ and $W = W_0$; see Figure~\ref{figure:separated_by_pants} for an example of $Z$, $W$, and $Y$. Because $Z_0 \subseteq Z$ and $W_0 \subseteq W$, we have that  $Z$ and $W$ are disjoint elements of $\Wit\bigl( \mc{G}(S)\bigr)$ with $Z^c$ and $W^c$ connected. By our choice of $Y$ and $Z$, the intersection $\partial Z \cap \partial Y$ contains two curves. Note, this implies that the genus of~$S$ must be at least~$1$.

\begin{figure}[ht]
    \centering
    \def\svgscale{.8}
    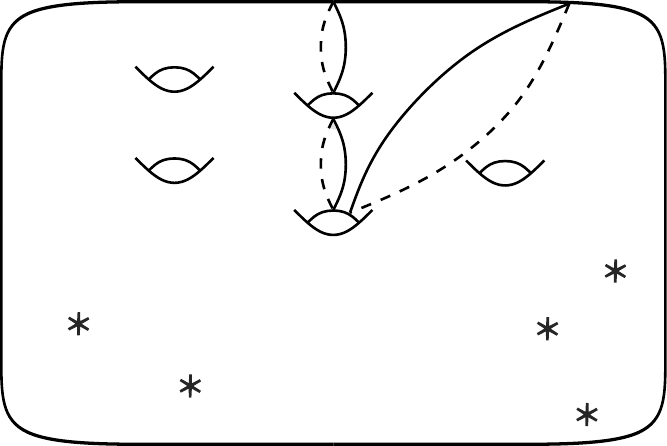
    \caption{The subsurfaces $Z$ (left), $W$ (right), and $Y$ (center).}
    \label{figure:separated_by_pants}
\end{figure}

We will use Lemma~\ref{lemma:connected_components_are_thick_of_order_1} to prove $\ksep_\mf{S}(S)$ is thick of order~$1$ by finding a connected subset of $\Dis(Z,W)$ whose corresponding product regions coarsely cover all of $\ksep_\mf{S}(S)$. Since the order of thickness is a quasi-isometry invariant, this will show $\G(S)$ is also thick of order~$1$.

Let $\PDis(Z,W)$ be the subgraph of $\Dis(Z,W)$ spanned by the  set of vertices $m \in \Dis(Z,W)$ such that there exists  $f \in \pmcg(S)$ (instead of in $\mcg(S)$) so that $S \sminus m$ has one component containing $ f(Z)$ and another component containing $f(W)$. By Lemma~\ref{lemma:connected_components_are_thick_of_order_1}, if $\PDis(Z,W)$ is connected, then $\bigcup_{m \in \PDis(Z,W)} P_\mf{S}(m)$ is thick of order at most $1$. Since there are only finitely many $\pmcg(S)$-orbits of vertices of $\ksep_\mf{S}(S)$, there exists $D\geq 0$ depending only on $S$  so that every vertex of $\ksep_\mf{S}(S)$ is contained in the $D$-neighborhood of some product region $P_\mf{S}(m)$ for $m \in \PDis(Z,W)$. Therefore, $\ksep_\mf{S}(S)$  will be thick of order at most~$1$ if $\bigcup_{m \in \PDis(Z,W)} P_\mf{S}(m)$ is a thick of order at most~$1$ subset.

We will use Lemma~\ref{lemma:putman_trick} to prove the connectedness of $\PDis(Z, W)$, using the pure mapping class group of~$S$ as the group acting on $\PDis(Z, W)$, and $\partial Z$ as the base vertex.
For every vertex $m$ of $\PDis(Z, W)$  there exists $f \in \pmcg(S)$ so that a component of $S \sminus m$ contains  $ f(Z)$.
Both $\partial f(Z)$ and $m$ are vertices of $\PDis(Z, W)$, and moreover, both $m$ and  $\partial f(Z)$ are connected to  $m \cup \partial f(Z)$  by successively adding curves.
Hence $m$ is connected to some $\pmcg(S)$ translate of $\partial Z$.
It therefore suffices to find a generating set $X$ for $\pmcg(S)$ such that $Z$ is connected to $\phi(Z)$ for each $\phi\in X \sqcup X^{-1}$.

 Arrange a set of standard generating curves for $\pmcg(S)$ (see Definition~\ref{definition:generating curves}) in such a way that every generating curve either is disjoint from $\partial Z$ or intersects exactly two curves of $\partial Z$ exactly once each; see Figure~\ref{figure:gen_twists_separated_by_pants_Humphries} for examples.
Recall that  the genus of $S$ will always be at least 1 when the hypotheses of Theorem~\ref{theorem:thick case no separating annulus} are satisfied.

\begin{figure}[ht]
\begin{tabular}{cc}
\def\svgscale{.7}
        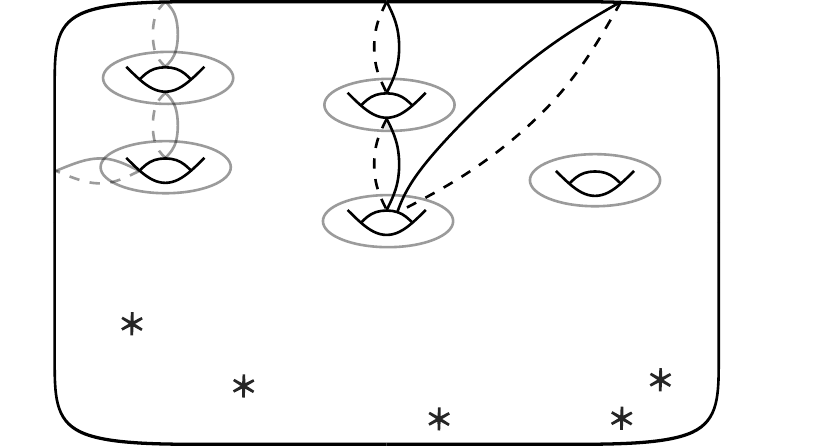 &  
             \def\svgscale{.2}
            %% Creator: Inkscape inkscape 0.92.3, www.inkscape.org
%% PDF/EPS/PS + LaTeX output extension by Johan Engelen, 2010
%% Accompanies image file '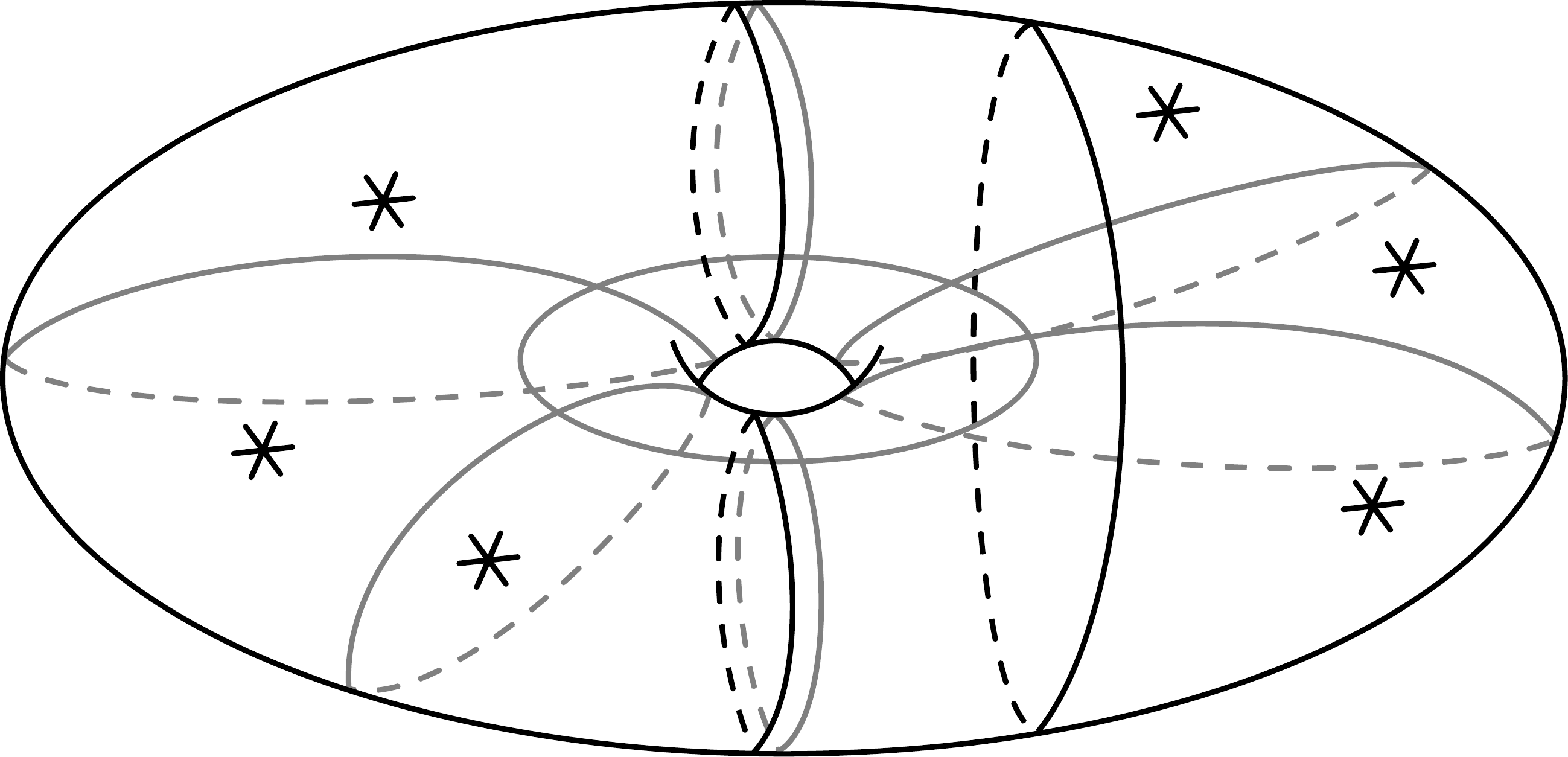' (pdf, eps, ps)
%%
%% To include the image in your LaTeX document, write
%%   \input{<filename>.pdf_tex}
%%  instead of
%%   \includegraphics{<filename>.pdf}
%% To scale the image, write
%%   \def\svgwidth{<desired width>}
%%   \input{<filename>.pdf_tex}
%%  instead of
%%   \includegraphics[width=<desired width>]{<filename>.pdf}
%%
%% Images with a different path to the parent latex file can
%% be accessed with the `import' package (which may need to be
%% installed) using
%%   \usepackage{import}
%% in the preamble, and then including the image with
%%   \import{<path to file>}{<filename>.pdf_tex}
%% Alternatively, one can specify
%%   \graphicspath{{<path to file>/}}
%% 
%% For more information, please see info/svg-inkscape on CTAN:
%%   http://tug.ctan.org/tex-archive/info/svg-inkscape
%%
\begingroup%
  \makeatletter%
  \providecommand\color[2][]{%
    \errmessage{(Inkscape) Color is used for the text in Inkscape, but the package 'color.sty' is not loaded}%
    \renewcommand\color[2][]{}%
  }%
  \providecommand\transparent[1]{%
    \errmessage{(Inkscape) Transparency is used (non-zero) for the text in Inkscape, but the package 'transparent.sty' is not loaded}%
    \renewcommand\transparent[1]{}%
  }%
  \providecommand\rotatebox[2]{#2}%
  \newcommand*\fsize{\dimexpr\f@size pt\relax}%
  \newcommand*\lineheight[1]{\fontsize{\fsize}{#1\fsize}\selectfont}%
  \ifx\svgwidth\undefined%
    \setlength{\unitlength}{748.09260652bp}%
    \ifx\svgscale\undefined%
      \relax%
    \else%
      \setlength{\unitlength}{\unitlength * \real{\svgscale}}%
    \fi%
  \else%
    \setlength{\unitlength}{\svgwidth}%
  \fi%
  \global\let\svgwidth\undefined%
  \global\let\svgscale\undefined%
  \makeatother%
  \begin{picture}(1,0.4824959)%
    \lineheight{1}%
    \setlength\tabcolsep{0pt}%
    \put(0,0){\includegraphics[width=\unitlength,page=1]{genusone3.pdf}}%
    \put(0.02097952,0.43417265){\color[rgb]{0,0,0}\makebox(0,0)[lt]{\begin{minipage}{0.05266188\unitlength}\raggedright $Z$\end{minipage}}}%
    \put(0.54861088,0.40417756){\color[rgb]{0,0,0}\makebox(0,0)[lt]{\lineheight{1.25}\smash{\begin{tabular}[t]{l}$Y$\end{tabular}}}}%
    \put(0.94051623,0.05946326){\color[rgb]{0,0,0}\makebox(0,0)[lt]{\lineheight{1.25}\smash{\begin{tabular}[t]{l}$W$\end{tabular}}}}%
  \end{picture}%
\endgroup%
 
\end{tabular}
\caption{Dehn twists around the gray curves give a  generating set for $\pmcg(S)$.} 
    \label{figure:gen_twists_separated_by_pants_Humphries}
\end{figure}

Let $X$ be the set of (left) Dehn twists about this set of standard generating curves.
Let $\phi \in X \sqcup X^{-1}$ (that is, either a left or a right Dehn twist about one of the generating curves) and let $a$ be the core curve of the twist~$\phi$.
If $a$ is disjoint from $\partial Z$, then $\phi(\partial Z) = \partial Z$ and so $\partial Z$ is trivially connected to $\phi(\partial Z)$.

Suppose now that $a$ intersects $\partial Z$.
Then $a$ intersects exactly two curves of $\partial Z$, $c_1$ and $c_2$, exactly once each.
The twist about $a$ happens in a subsurface that is a twice\hyp{}holed torus, as shown in Figure~\ref{fig:twist in twice holed torus}. 

\begin{figure}[ht]
     \centering
 \begin{subfigure}[b]{0.45\textwidth}
\def\svgscale{.6}
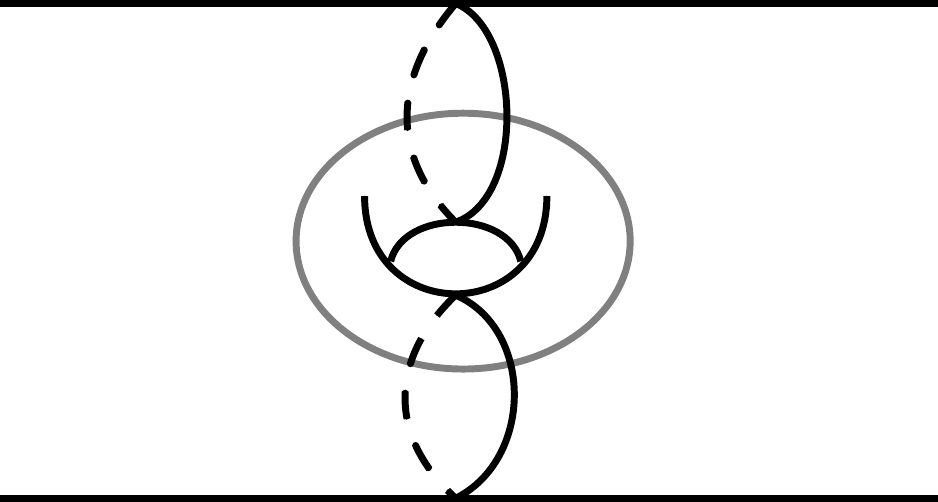
\subcaption{A generating curve $a$ intersecting two curves, $c_1$ and $c_2$ , of $\partial Z$.}
\end{subfigure}
\qquad
\begin{subfigure}[b]{0.45\textwidth}
\def\svgscale{.6}
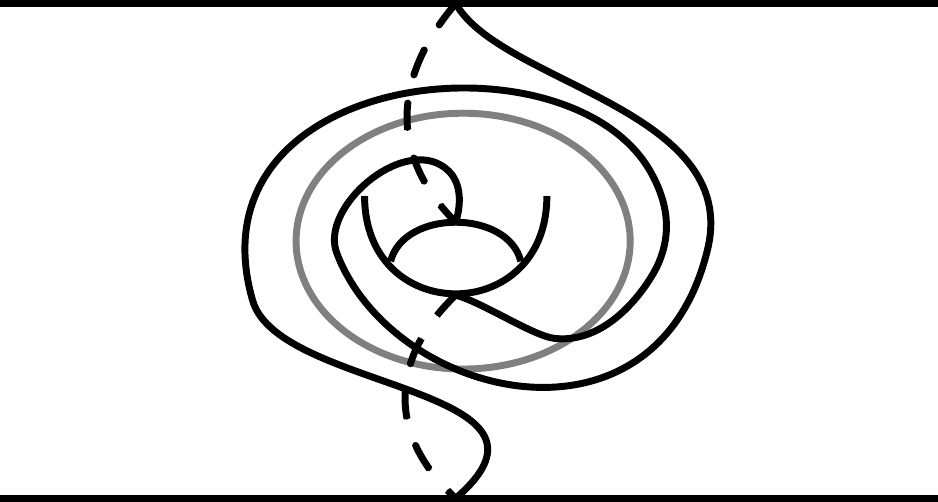
\subcaption{The result of applying a twist $\phi$ about $a$ to $c_1$ and $c_2$.}
\end{subfigure}
\caption{}
\label{fig:twist in twice holed torus}
 \end{figure}

The union of $Z$ and $\phi(Z)$ is contained in a subsurface $\hat{Z}$ that is obtained by adding a pair of pants to $Z$ by gluing one boundary component to $c_1$ and another to $c_2$.
The complement of $\hat{Z}$ is homeomorphic to $W$, because $\hat{Z}$ is homeomorphic to $Z \cup Y = S \sminus W$.
Moreover, $S \sminus \hat{Z}$ meets $Z$ along all but two of the curves of $\partial Z$, just as $W$ does.
Hence there exists $f \in \pmcg(S)$ such that $f(Z) = Z$ and $f(W)=S \sminus \hat{Z}$, and hence $f(W)$ is disjoint from both $Z$ and $\phi(Z)$; see Figure~\ref{figure:avoiding_twist} for an example.
The multicurves  $\partial f(W)$,  $\partial Z \cup \partial f(W)$, and $\phi(\partial Z) \cup \partial f(W)$ are all vertices of $\PDis(Z, W)$.
Hence we have a path in $\PDis(Z, W)$, by connecting consecutive vertices in the following sequence by adding and removing curves:
\[\partial Z,\; \partial Z \cup \partial f(W),\; \partial f(W),\; \partial f(W) \cup \phi(\partial Z),\; \phi(\partial Z). \qedhere \]

\begin{figure}[ht]
    \centering
    \def\svgscale{.8}
    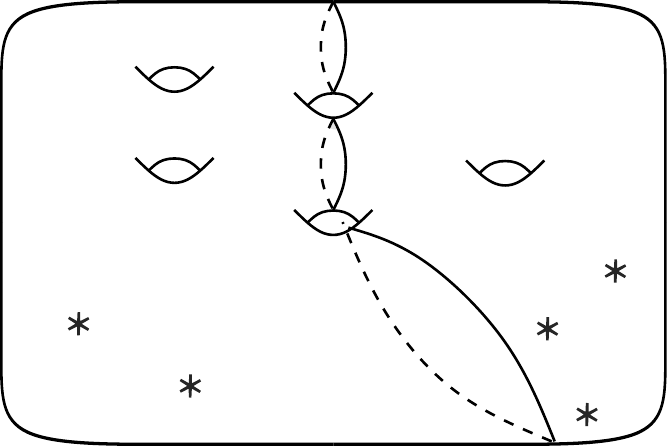
    \caption{The subsurface $f(W)$ is in the $\pmcg(S)$-orbit of $W$ and disjoint from both the standard generating curve $a$ and the subsurface $Z = f(Z)$.}
    \label{figure:avoiding_twist}
\end{figure}

\end{proof}

\subsection{Disjoint witnesses separated by a punctured annulus.}

Our second condition for a hierarchical graph of multicurves to be thick of order~$1$ requires the existence of a co-connected witness $Z$ that is a punctured disk and co-connected witness $W$ that is separated from $Z$ by a punctured annulus.

\begin{theorem} \label{theorem:thick_punctured_disc_case}
Let $\G(S)$ be a hierarchical graph of multicurves. Suppose there exist $Z,W \in \Wit\bigl(\G(S)\bigr)$ such that
\begin{itemize}
    \item $Z$ and $W$ are disjoint;
    \item $Z^c$ and $W^c$ are connected;
    \item $Z$ is a punctured disk;
    \item $S \sminus (Z \cup W)$ is an annulus with at least one puncture.
\end{itemize}
Then $\G(S)$ is thick of order 1.
\end{theorem}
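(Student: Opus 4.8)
The plan is to mirror the proof of Theorem~\ref{theorem:thick case no separating annulus}, replacing the twice\hyp{}holed torus that absorbed each generating Dehn twist with a twice\hyp{}holed \emph{punctured} surface. As in that proof, we will find disjoint, co-connected witnesses $Z, W$ as in the hypotheses, enlarge $Z$ slightly, and then use Lemma~\ref{lemma:connected_components_are_thick_of_order_1} via the connectivity of the pure mapping class group variant $\PDis(Z,W)$. Since $Z$ is a punctured disk $D_p$ and $S\sminus(Z\cup W)$ is an annulus with at least one puncture, the subsurface $\hat Z$ obtained from $Z$ by gluing on this punctured annulus along one boundary curve is a punctured disk with strictly more punctures; note its complement is still $W$, which is a witness, so $\hat Z$ is again a co-connected witness. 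The genus of $S$ plays no role here—unlike in Theorem~\ref{theorem:thick case no separating annulus}, $S$ may have genus zero—so we must be prepared to use whichever of the generating sets from Theorems~\ref{theorem:humphries_generators}, \ref{theorem:gervais_generators}, and~\ref{theorem:braid_generators} is appropriate, and in the genus zero case the generators are half twists swapping punctures rather than Dehn twists.

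First I would fix the concrete picture: realize $Z = \hat Z \cong D_p$ as a punctured disk whose single boundary curve is $\partial \hat Z$, with the puncture of $S\sminus(Z_0\cup W_0)$ now lying inside $\hat Z$, and realize $W$ in the complement. Then I would arrange a set of standard generating twists (or, in genus zero, half twists) for $\pmcg(S)$ or $\mcg(S)$ so that every generator is either disjoint from $\partial Z$ or, if it meets $\partial Z$, does so in a controlled way supported in a small subsurface near a single puncture and the curve $\partial Z$. Concretely: a Dehn twist about a generating curve $a$ that meets $\partial Z$ will be supported in a subsurface $Y'$ that is a twice\hyp{}holed torus or once\hyp{}punctured twice\hyp{}holed surface; a half twist swapping two punctures is supported in a twice\hyp{}punctured disk (a copy of $D_2$). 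In every case the key point is that $Z \cup \phi(Z)$ is contained in a subsurface $\hat Z'$ of the form $Z$ with a small piece glued on along one or two boundary curves, and that $S\sminus \hat Z'$ is a copy of $W$ that is disjoint from both $Z$ and $\phi(Z)$.

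Next, having produced for each generator $\phi$ a mapping class $f\in\pmcg(S)$ (or $\mcg(S)$) with $f(Z)=Z$ and $f(W)$ disjoint from both $Z$ and $\phi(Z)$, I would exhibit the path
\[ \partial Z,\; \partial Z \cup \partial f(W),\; \partial f(W),\; \partial f(W)\cup \phi(\partial Z),\; \phi(\partial Z) \]
in $\PDis(Z,W)$, exactly as in the previous theorem, to verify the second hypothesis of Putman's trick (Lemma~\ref{lemma:putman_trick}). The first hypothesis—that every vertex of $\PDis(Z,W)$ is connected to a translate of the base vertex $\partial Z$ by adding and removing curves—is checked verbatim as before. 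This gives that $\PDis(Z,W)$ is connected, hence by Lemma~\ref{lemma:connected_components_are_thick_of_order_1} the union of the product regions $P_\mf{S}(m)$ over $m\in\PDis(Z,W)$ is thick of order at most~$1$; since there are finitely many $\pmcg(S)$\hyp{}orbits (or $\mcg(S)$\hyp{}orbits) of vertices of $\ksep_\mf{S}(S)$, these product regions coarsely cover $\ksep_\mf{S}(S)$, which is quasi\hyp{}isometric to $\G(S)$. As $\G(S)$ is not thick of order~$0$, it is thick of order exactly~$1$.

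The main obstacle I anticipate is the case analysis for generators meeting $\partial Z$ when $S$ has genus zero, where the generating set consists of half twists rather than Dehn twists: here one must check that a half twist exchanging two punctures either fixes $\partial Z$ (if both punctures lie inside $Z$, or both outside) or, when one punctured region near $\partial Z$ is involved, is supported in a twice\hyp{}punctured disk whose effect on $\partial Z$ is again absorbed by enlarging $Z$ to a punctured disk $\hat Z'$ with complement a copy of $W$. A secondary point needing care is confirming that $\hat Z'$ (or $S\sminus\hat Z'$) is genuinely a witness, i.e.\ lies in $\Wit(\G(S))$, which follows because it contains $Z$ (respectively $W$) and witnesses are closed under taking larger connected subsurfaces; and that the enlarged witnesses remain co-connected, which is where the requirement that the separating piece be a \emph{punctured annulus} (so that gluing it on keeps the complement connected and keeps $\hat Z'$ a punctured disk) is used. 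I would also verify at the outset that $\G(S)$ is not hyperbolic under these hypotheses—immediate since $Z$ and $W$ are a disjoint pair in $\Wit(\G(S))$—so that Theorem~\ref{theorem:hyperbolic_graphs_of_multicurves} does not already apply.
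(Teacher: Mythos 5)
The main gap is in your treatment of the Dehn twist generators, which you dismiss with the unsupported claim that ``$Z \cup \phi(Z)$ is contained in a subsurface $\hat Z'$ of the form $Z$ with a small piece glued on along one or two boundary curves, and $S\sminus \hat Z'$ is a copy of $W$ disjoint from both $Z$ and $\phi(Z)$.'' This fails once $S$ has genus. Since $Z$ is a punctured disk it has a \emph{single} boundary curve, and a standard generating curve $a$ (e.g.\ one of the $e_j$ from Theorem~\ref{theorem:humphries_generators}) that meets $\partial Z$ does so in two points, so $a\cap Z^c$ is a single arc~$\beta$ with both endpoints on $\partial Z$. The generating curve $e_j$ by construction crosses the non-separating curve $b_i\subseteq W$ exactly once, which forces $\beta$ to be non-separating in $W$. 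Consequently $N(Z\cup a)$, the smallest ``enlarged $Z$'' containing $Z\cup\phi(Z)$, is homeomorphic to $S^1_{1,p'}$ (one added handle), its complement has genus $g-1$, and no $\pmcg(S)$-translate of $W$ (which has genus $g$) fits in that complement. So the path $\partial Z,\ \partial Z\cup\partial f(W),\ \partial f(W),\ \partial f(W)\cup\phi(\partial Z),\ \phi(\partial Z)$ you propose does not exist. Your genus-zero/half-twist analysis, which you flag as the ``main obstacle,'' is actually the easy case; the obstruction you've overlooked is the one that forces a genuinely different argument.

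The paper handles this by a different mechanism entirely. Rather than trying to connect $\partial W$ to $\phi(\partial W)$ in one two-step jump through a single translate of $W$ (or~$Z$), it proves by induction that $\partial W$ is connected in $\Dis(Z,W)$ to \emph{every} curve $c$ in the set $\Omega(\partial W)$ of $\pmcg(S)$-translates of $\partial W$ whose cut-off disk meets $W^c$ in two punctured bigons $B, B'$; the induction is on the smaller bigon's puncture count $M(c)$. The inductive step produces, inside the bigon picture, a curve $d$ bounding a $D_{K-1}$ (a translate of the \emph{other} witness $Z$) disjoint from $c$, and a new curve $c'\in\Omega(\partial W)$ with $M(c')=M(c)-1$, yielding the chain $c,\ c\cup d,\ d,\ d\cup c',\ c'$. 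This bypasses the need to move a genus-$g$ copy of $W$ across the twist. To repair your proof you would need to replace the ``find $f(W)$ disjoint from $Z$ and $\phi(Z)$'' step by some such chain of intermediate separating curves (or prove a version of the paper's $\Omega(\partial W)$ induction); as written the argument is incomplete. (A minor additional point: the paper also needs the maximality quantity $K$ and the observation $K\ge4$ to run both the half-twist argument and the induction base case; this bookkeeping is missing from your sketch.)
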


Note that in particular, this combines with Theorem~\ref{theorem:hyperbolic_graphs_of_multicurves} and Theorem~\ref{theorem:relatively_hyperbolic_graphs_of_multicurves} to give a simple classification for the case when $S$ is a punctured sphere.

\begin{corollary}\label{corollary:punctured_sphere_case}
Let $\G(S)$ be a hierarchical graph of multicurves on a punctured sphere $S = S_{0,p}$. Let $r$ be the minimal number of punctures of $S$ contained in a co-connected witness for $\G(S)$.
\begin{itemize}
    \item If $r > p/2$, then $\G(S)$ is hyperbolic.
    \item If $r = p/2$, then $\G(S)$ is relatively hyperbolic, and each peripheral subset is quasi-isometric to $P_\mf{S}(c)$ for some curve $c$ separating $p/2$ punctures  of $S$ on each side.
    
    \item If $r < p/2$, then $\G(S)$ is thick of order~$1$.
\end{itemize}
\end{corollary}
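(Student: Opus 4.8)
The plan is to reduce all three statements to results already in hand by exploiting the fact that, on a punctured sphere, co-connected witnesses have an extremely rigid form. First I would record the structural observation: if $Z$ is a connected subsurface of $S = S_{0,p}$ with $S\sminus Z$ connected, then $\partial Z$ consists of a single curve. Indeed, $S$ has genus zero while gluing the genus-zero surfaces $Z$ and $S\sminus Z$ along $k$ curves yields a surface of genus $k-1$, so $k=1$. Hence every co-connected witness for $\G(S)$ is a punctured disk $D_q$ cut off by a single separating curve, with $q$ punctures inside and $p-q$ outside; being a witness forces $q\geq 3$. I would also note, by the change-of-coordinates principle, that $\mcg(S)$ acts transitively on curves cutting off exactly $k$ punctures, and since witnesses are $\mcg(S)$-invariant (Definition~\ref{definition:hierarchical}(2)), once one curve cutting off $k$ punctures bounds a witness, every such curve does. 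A companion fact, proved the same way, is that each component of $S \sminus A$, for $A$ a connected subsurface, is itself a punctured disk.

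For the hyperbolic case $r > p/2$, I would use Theorem~\ref{theorem:hyperbolic_graphs_of_multicurves} and show $\Wit\bigl(\G(S)\bigr)$ contains no disjoint pair. Suppose $A,B$ are disjoint witnesses. Since $B$ is connected it lies in a single component $D_q$ of $S\sminus A$; as $D_q$ contains the witness $B$ it is a co-connected witness with $q$ punctures, and likewise $D_q^c=D_{p-q}$ contains $A$, so it is a co-connected witness with $p-q$ punctures. Minimality of $r$ gives $q\geq r$ and $p-q\geq r$, whence $p\geq 2r > p$, a contradiction. Therefore $\G(S)$ is hyperbolic.

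For the thick case $r < p/2$, fix a co-connected witness $Z=D_r$ cut off by a curve $c$, and inside the complementary disk $D_{p-r}$ choose a curve $c'$ bounding a punctured disk $W=D_r$ disjoint from $Z$; this is possible exactly because $p-2r\geq 1$, and the remaining $p-2r$ punctures lie in the annulus between $c$ and $c'$. By the transitivity observation $W$ is again a witness. Then $Z,W$ are disjoint co-connected witnesses, $Z$ is a punctured disk, and $S\sminus(Z\cup W)$ is an annulus with at least one puncture, so Theorem~\ref{theorem:thick_punctured_disc_case} applies and $\G(S)$ is thick of order~$1$. For the relatively hyperbolic case $r=p/2$ (so $p$ even), every $D_{p/2}$ is a witness, so $Z=D_{p/2}$ and $Z^c$ are disjoint witnesses and $\G(S)$ is not hyperbolic; moreover any two disjoint co-connected witnesses $D_a,D_b$ have disjoint puncture sets, so $a+b\leq p$, while $a,b\geq r=p/2$ forces $a=b=p/2$ and $D_b=D_a^c$. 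This is precisely the hypothesis of Theorem~\ref{theorem:relatively_hyperbolic_graphs_of_multicurves}, which yields relative hyperbolicity with peripherals $P_\mf{S}(\partial Z)$ over the $Z$ with $Z$ and $Z^c$ both witnesses, that is, $P_\mf{S}(c)$ for curves $c$ splitting $S$ into two pieces with $p/2$ punctures each.

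I do not anticipate a serious obstacle: the real content is the planarity bookkeeping of the first paragraph, after which each case is a one-line counting argument. What remains is to observe that the trichotomy on $r$ versus $p/2$ is exhaustive (with $r\in\{3,4,\dots\}\cup\{\infty\}$, the value $\infty$ corresponding to no co-connected witness, which by the argument of the second paragraph again forces hyperbolicity) and that the three conclusions are genuinely distinct, since a space thick of any order is not relatively hyperbolic \cite{Behrstock_Drutu_Mosher_Thickness} and, as shown at the end of Section~\ref{section:thick background}, a hierarchical graph of multicurves is never thick of order~$0$.
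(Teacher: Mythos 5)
Your proposal is correct and follows essentially the same route as the paper's proof: counting punctures on each side of a witness to contradict disjointness when $r > p/2$, building a disjoint $D_r$ inside $S \sminus Z$ and invoking Theorem~\ref{theorem:thick_punctured_disc_case} when $r < p/2$, and showing disjoint co-connected witnesses must be complementary to apply Theorem~\ref{theorem:relatively_hyperbolic_graphs_of_multicurves} when $r = p/2$. Your preamble making explicit the planarity fact (co-connected implies single boundary component) and the transitivity/$\mcg(S)$-invariance step is a welcome elaboration of reasoning the paper leaves implicit, but the argument is the same.
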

\begin{proof}
For a subsurface $Y$ of $S$, let $p(Y)$ be the number of punctures of $S$ contained in~$Y$.

Assume $r > \frac{p}{2}$ and suppose there exist  $Z_1, Z_2 \in \Wit\bigl( \G(S) \bigr)$ with $Z_1$ disjoint from~$Z_2$. Let $W$ be the component of $Z_1^c$ that contains $Z_2$. Then, $W$ and $W^c$ are both witnesses of $\G(S)$ as they contain $Z_2$ and $Z_1$ respectively. Since both $W$ and $W^c$ are connected, we must have $p(W) > p/2$ and $p(W^c) > p/2$. But since this is impossible, it must be the case that no pair of elements of $\Wit\bigl( \G(S) \bigr)$ are disjoint. Thus, $\G(S)$ is hyperbolic by  Theorem \ref{theorem:hyperbolic_graphs_of_multicurves}.
 
Assume $r = \frac{p}{2}$ and suppose $Z,W \in \Wit\bigl(\G(S)\bigr)$ are disjoint and co-connected. This implies that  $p(Z) \geq p/2$ and $p(W) \geq p/2$. However, since $W \subseteq Z^c$ implies $p(W) \leq p - p(Z)$, we must have $p(W) = p(Z) = p/2$. Therefore $Z =W^c$ and the conclusion follows by Theorem~\ref{theorem:relatively_hyperbolic_graphs_of_multicurves} because $\partial Z$ is a curve separating $p/2$ punctures on each side.
 
 Finally, assume $r < \frac{p}{2}$ and let $Z$ be a co-connected witness of $\G(S)$ that is homeomorphic to $D_r$, a disk with $r$ punctures. Since $r < \frac{p}{2}$, $S \sminus Z$ contains a subsurface $W$ so that $S \sminus W$ is connected and $W$ is a disk with $r$ punctures. In particular, $W$ is a co-connected witness of $\G(S)$ that is disjoint from $Z$ and $S \sminus (Z \cup W)$ is an annulus with $p-2r \geq 1$ punctures. Thus, $\G(S)$ is thick of order~$1$ by Theorem \ref{theorem:thick_punctured_disc_case}.
\end{proof}

\begin{proof}[Proof of Theorem~\ref{theorem:thick_punctured_disc_case}]
Notice that if the number of punctures of $S$ is $p$, then for each $k \leq p$ there is exactly one $\mcg(S)$\hyp{}orbit of subsurfaces homeomorphic to $D_k$.
Similarly, there is exactly one $\mcg$\hyp{}orbit of subsurfaces homeomorphic to $S \sminus D_k$.
Let $K$ be the maximal number so that the complement of a disk with $K$ punctures is a witness for $\G(S)$.
We know from the hypotheses of Theorem \ref{theorem:thick_punctured_disc_case} that there exists a punctured disk $D$ that is a witness for $\G(S)$ and is separated from another witness by a punctured annulus $A$.
Since $D \cup A$ is a punctured disk whose complement is a witness, it has at most $K$ punctures. Hence $D$ has at most $K-1$ punctures, and we can conclude that any subsurface homeomorphic to $D_{K-1}$ or $D_K$ is a witness for $\G(S)$ and that $K \geq 4$ (since a copy of $D_2$  can never be a witness).

By Lemma~\ref{lemma:connected_components_are_thick_of_order_1}, it is sufficient to find a pair of disjoint,  co-connected witnesses $Z$ and $W$ so that $\Dis(Z, W)$ is connected. Let $Z'$ be a subsurface of $S$ that is homeomorphic to $D_K$ and let $Z$ be a subsurface of $Z'$ that is homeomorphic to~$D_{K-1}$. Let $W = S \sminus Z'$. As noted above, $Z'$, $Z$, and $W$ are all witnesses of $\G(S)$.
Note that any $\mcg(S)$\hyp{}translate of $\partial Z$ or $\partial W$ is a vertex of $\Dis(Z, W)$.

We will use Lemma~\ref{lemma:putman_trick} to prove that $\Dis(Z, W)$ is connected.
The group we use is the mapping class group of~$S$ with generating set depending on the genus of~$S$.
Specifically, if $S$ has genus at least~2, then we will use a generating set as in Theorem~\ref{theorem:humphries_generators}, if $S$ has genus~1, then we will use a generating set as in Theorem~\ref{theorem:gervais_generators}, and if $S$ has genus~0, then we will use a generating set as in Theorem~\ref{theorem:braid_generators}.

We take the base vertex for Lemma~\ref{lemma:putman_trick} to be~$\partial W$.
If $S$ has genus, then our generating set for $\mcg(S)$ includes Dehn twists along certain curves.
We choose these generating curves so that the intersection with $\partial W$ is minimal as in Figure~\ref{fig:one_punctured_disc_with_generators}.
 In particular, in the case of genus at least~2, only the curves $e_i$ from Theorem~\ref{theorem:humphries_generators} will intersect~$\partial W$, and they will each do so at most twice.
Similarly, we choose the twice punctured disks in which the half twist generators take place so that their boundaries intersect $\partial W$ minimally.
In particular, if the two punctures are in the same component of $S \sminus \partial W$ then this intersection is empty, and if they are not, then the boundary of the disk intersects~$\partial W$ twice (Figure~\ref{fig:one_punctured_disc_half_twist_generator}).
Let $X$ be the generating set for $\mcg(S)$ we obtain after arranging these minimal intersections.

\begin{figure}[ht]
    \begin{subfigure}[b]{0.45\textwidth}
    \centering
    \def\svgscale{.5}
    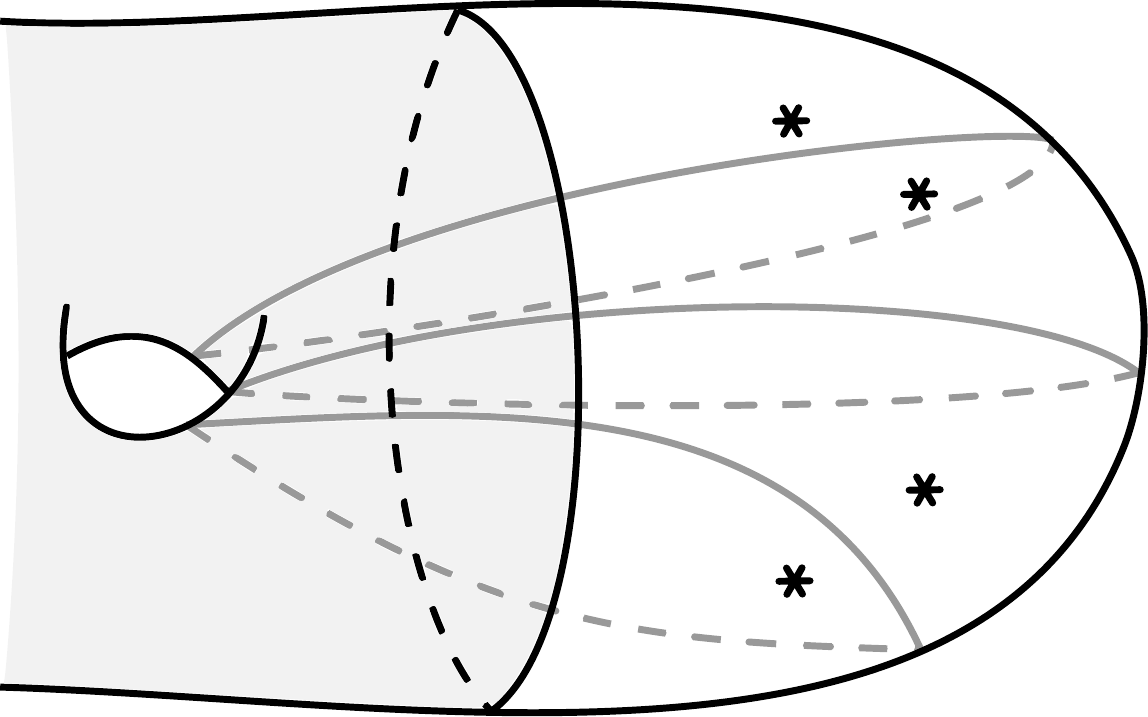
    \caption{In the case where $S$ has positive genus: the base vertex $\partial W$ with the standard generating curves that intersect it (in gray). The subsurface $W$ is shaded.}
    \label{fig:one_punctured_disc_with_generators}
    \end{subfigure}
    \quad
    \begin{subfigure}[b]{0.45\textwidth}
    \centering
    \def\svgscale{.5}
    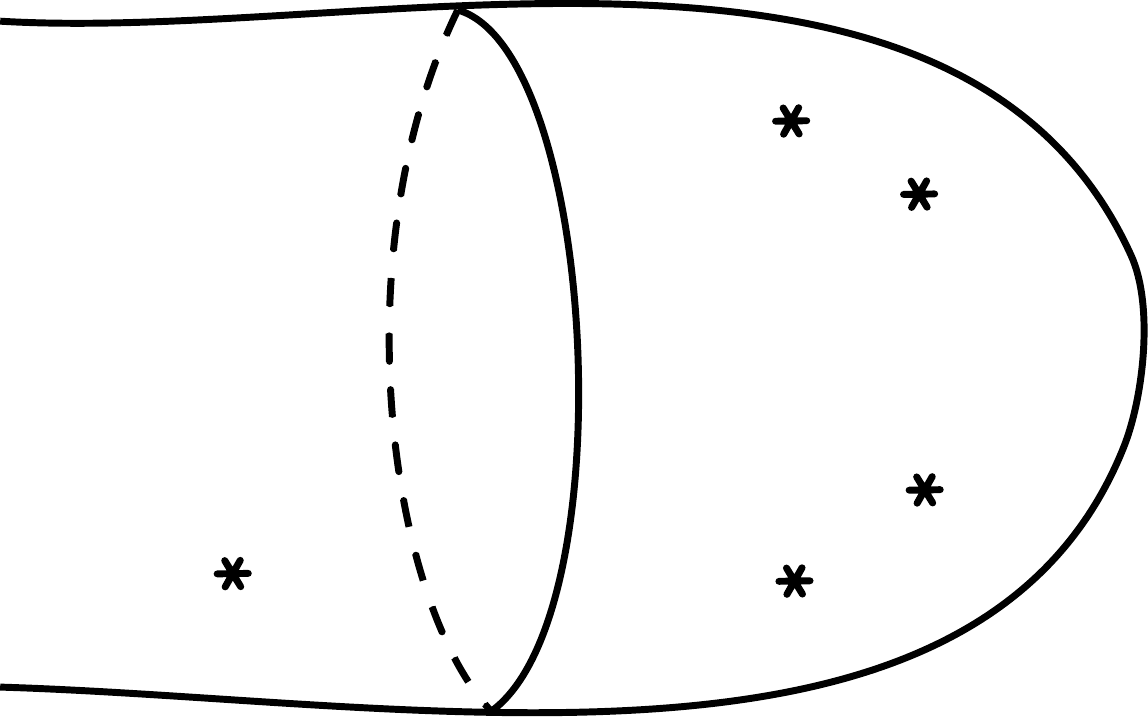
    \caption{We choose half twist generators so that the boundary of the twice punctured disk in which they take place intersects $\partial W$ minimally.}
    \label{fig:one_punctured_disc_half_twist_generator}
    \end{subfigure}
    \caption{}
\end{figure}

For the first hypothesis of Lemma~\ref{lemma:putman_trick}, let $m$ be a vertex of $\Dis(Z,W)$. By definition,   $S \sminus m$ contains $f(W)$ for some $f \in \mcg(S)$.
Then $m \cup \partial f(W)$  and $\partial f(W)$ are both vertices of $\Dis(Z, W)$. Moreover, both $m$ and $\partial f(W)$ are connected to $m \cup \partial f(W)$ in $\Dis(Z,W)$ by successively adding curves. Hence any vertex $m$ of $\Dis(Z, W)$ is connected to a vertex in the $\mcg(S)$-orbit of $\partial W$.

For the second hypothesis of Lemma~\ref{lemma:putman_trick}, we will show $\partial W$ is connected to $\phi(\partial W)$ for all $\phi \in X \sqcup X^{-1}$.  Assume first that $\phi$ is a (forward or backward) half twist swapping two punctures of~$S$, and let $Y$ be the twice punctured disk in which the half twist takes place.
Recall that we chose $Y$ so that $\partial Y$ intersects $\partial W$ minimally.
If both of the punctures are contained in $W$ or in $S \sminus W$ then $\phi$ will not affect the base vertex~$\partial W$.
Suppose that $\phi$ swaps punctures on either side of~$\partial W$; here the boundary of $Y$ will intersect $\partial W$ twice.
Then $\phi(\partial W)$ intersects  $\partial W$ twice and forms two once punctured bigons, one on each side; see Figure~\ref{fig:one_punctured_disc_half_twist} for an example.
Since there are $K-1$ punctures of $S$ that are not contained in either $W$ or $Y$, there exists a curve $a$ disjoint from $\partial W$ and $\phi(\partial W)$ so that $a$ cuts off a $(K-1)$\hyp{}times punctured disk.
Since $a$ is a vertex of $\Dis(Z, W)$, the sequence \[\partial W, \partial W \cup a, a , a \cup \phi(\partial W),  \phi(\partial W) \]   is a path from $\partial W$ to $\phi(\partial W)$ in $\Dis(Z,W)$.

\begin{figure}[ht]
    \centering
    \def\svgscale{.6}
    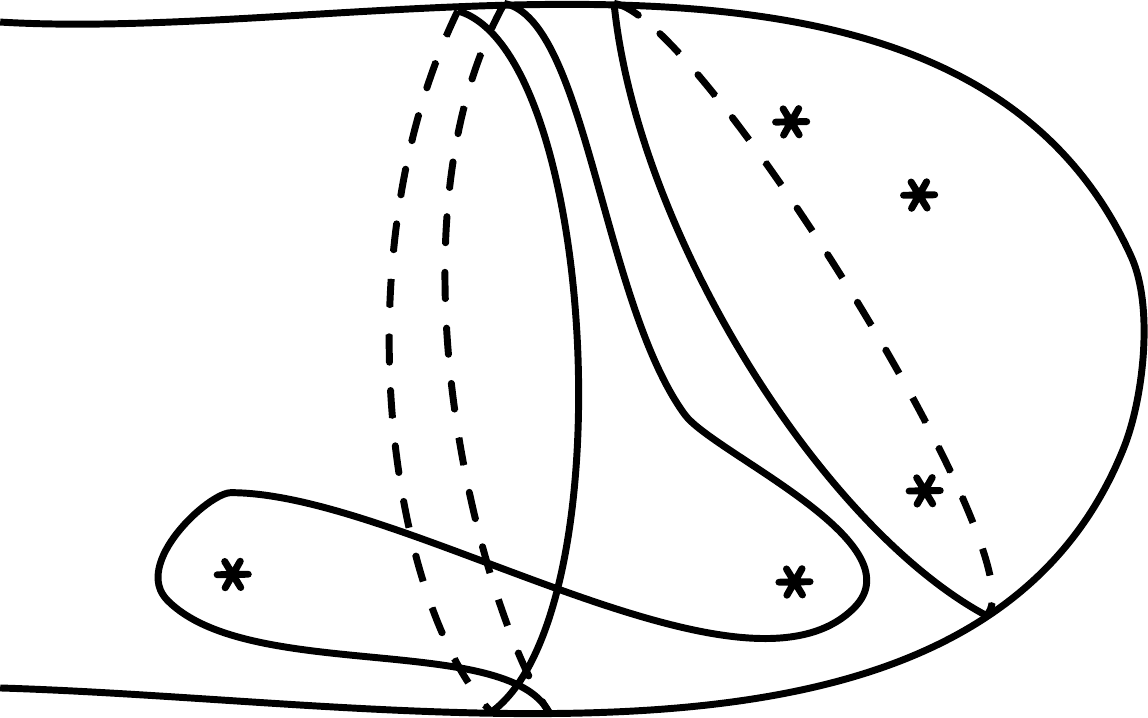
    \caption{A half twist affecting $\partial W$.}
    \label{fig:one_punctured_disc_half_twist}
\end{figure}

Now assume that $\phi$ is a (left or right) Dehn twist about a  standard generating curve. 
If $\phi$ is a twist around a standard generating curve that does not intersect the base vertex~$\partial W$, then $\phi(\partial W)$ is equal to $\partial W$. Thus, we assume $\phi$ is a twist around a standard generating curve that intersects~$\partial W$.
Figure~\ref{fig:one_punctured_disc_with_generators} shows~$\partial W$ along with the  standard generating curves that intersect it.
An example of a twist along one of these curves is shown in Figure~\ref{fig:one_punctured_disc_twisted}.
We will show that there is a path in $\Dis(Z, W)$ from $\partial W$ to~$\phi(\partial W)$.
In fact, we will show that $\partial W$ is connected in $\Dis(Z, W)$ to any vertex $c$ of $\Dis(Z, W)$ such that:
\begin{itemize}
    \item $c$ is in the pure mapping class group orbit of $\partial W$ (in particular, the punctured disks cut off by $c$ and by $\partial W$ contain the same set of punctures);
    \item the punctured disks cut off by $c$ and by $\partial W$ intersect exactly in two punctured bigons (one of which may contain no punctures).
\end{itemize}
We will call the set of curves satisfying these conditions $\Omega(\partial W)$.
The curve $\phi(\partial W)$ is in $\Omega(\partial W)$.

\begin{figure}[ht]
    \centering
    \def\svgscale{.6}
    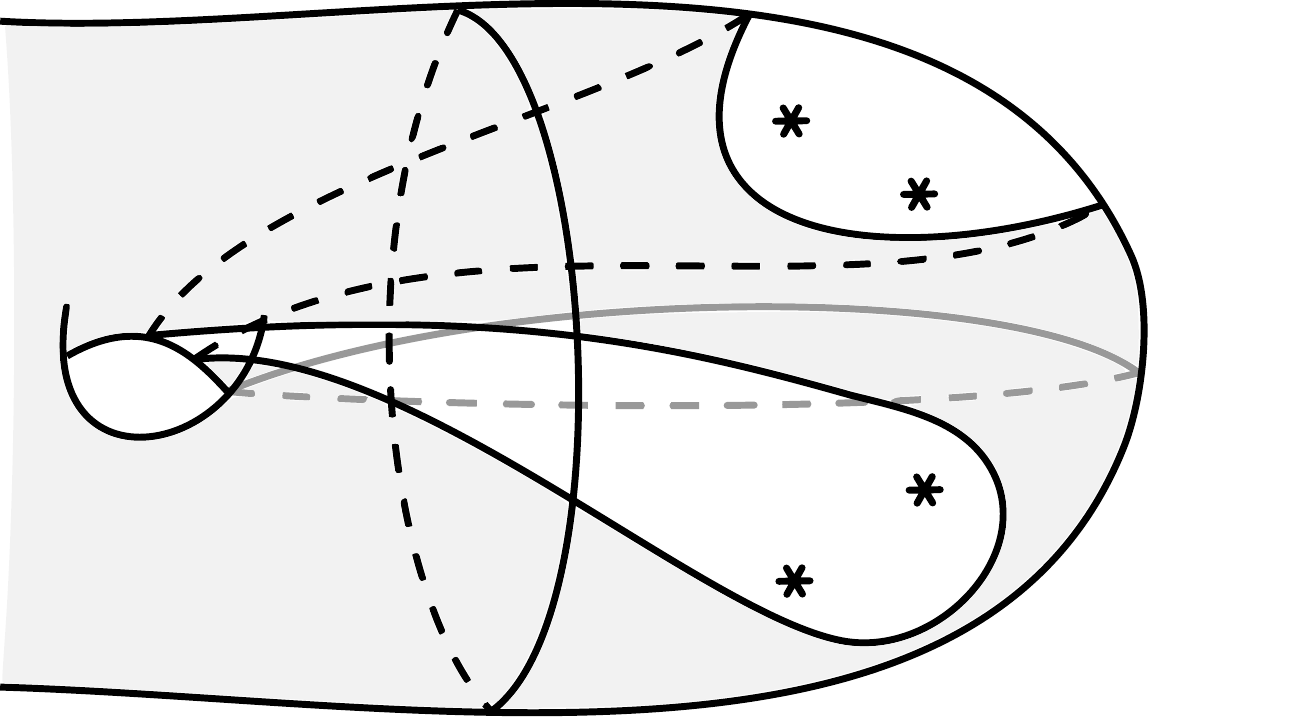
    \caption{The result $\phi(\partial W)$ of twisting $\partial W$ around a generating curve $\gamma$ (with $\phi(W)$ shaded).}
    \label{fig:one_punctured_disc_twisted}
\end{figure}

We will prove $\partial W$ can be connected to every curve in $\Omega(\partial W)$ by inducting on the following quantity $M(c)$: 
for $c \in \Omega(\partial W)$, let $B, B'$ be the two punctured bigons in the intersection of the punctured discs cut off by $\partial W$ and by~$c$. Define $M(c)$ to be the minimum of the number of punctures in $B$ and the number of punctures in $B'$.

For the base case of the induction, suppose that a curve $c$ is in $\Omega(\partial W)$ with $M(c)=0$.
Then the punctured disks cut off by $c$ and $\partial W$ intersect in a  bigon with $K$~punctures and a  bigon with $0$~punctures. The only way for two subsurfaces of $S$ homeomorphic to $D_K$ to intersect in a subsurface that is also homeomorphic to $D_K$ is for the subsurfaces to be isotopic. Thus their boundary curves, $c$ and $\partial W$, must be isotopic.

Now let $c$ be any curve in $\Omega(\partial W)$ with $M(c)>0$.
We will find a curve $c' \in \Omega(\partial W)$ so that $c$ and $c'$ are connected in $\Dis(Z, W)$ and so that $M(c')<M(c)$.

Let $C$ be the punctured disk cut off by $c$.
To make it easier to illustrate the steps, we will work in the subsurface given by the union of the punctured disks $C$ and $W^c$.
This is a planar subsurface with $K$ punctures and two boundary components; see Figure~\ref{fig:punctured_circle_a1}.
Let $B$ and $B'$ denote the two punctured bigons in the intersection of $C$ and $W^c$, with $B$ being the punctured bigon that contains fewer punctures.
Let $R$ be the rectangle in $C^c$ with two edges in $\partial W$ and two edges in $c$.

\begin{figure}[ht]
    \centering
    \def\svgscale{.4}
    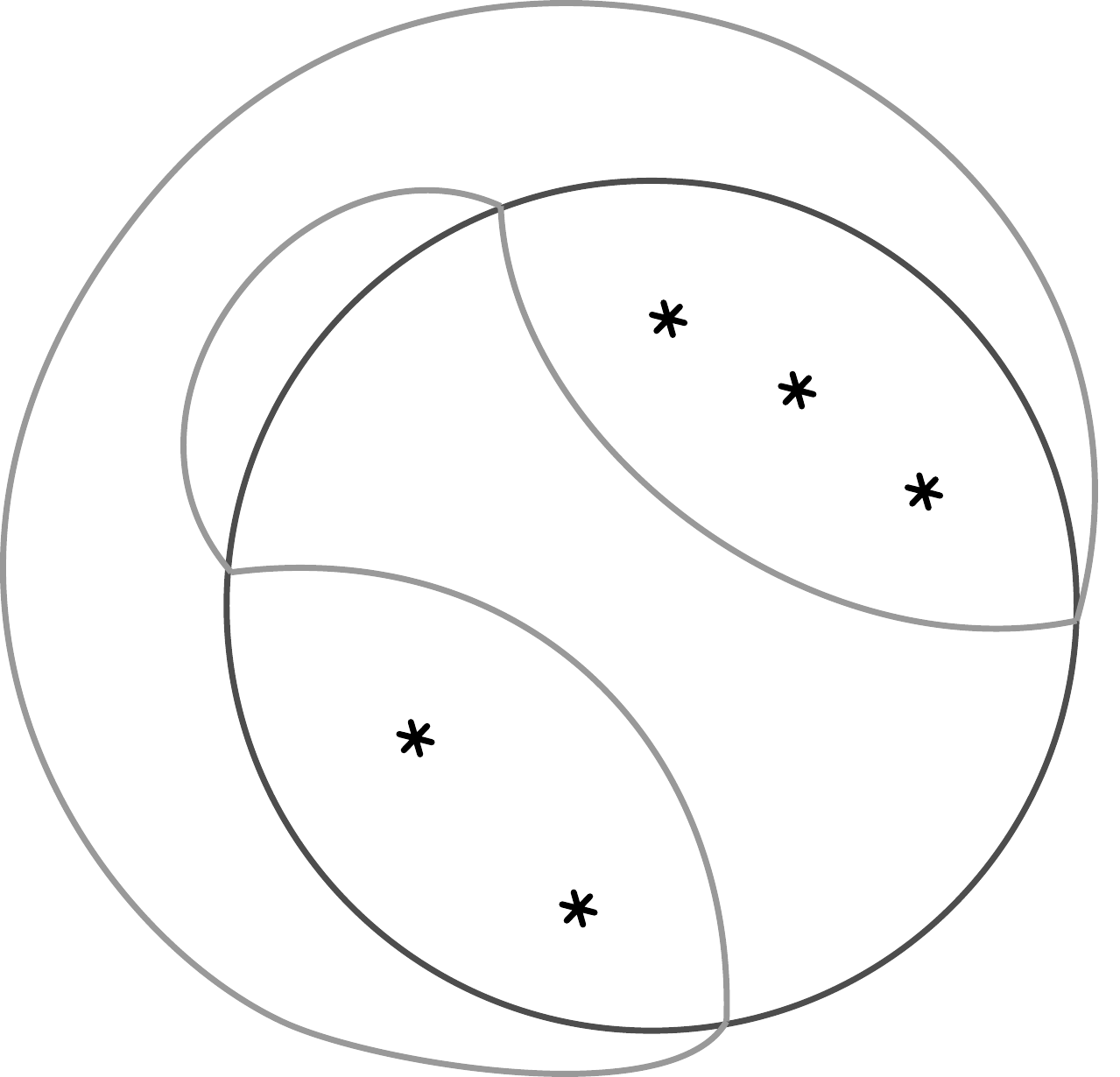
    \caption{The union of $W^c$ and the punctured disk $C$ cut off by a curve $c$ in $\Omega(\partial W)$ is a planar subsurface with $K$ punctures and two boundary components.}
    \label{fig:punctured_circle_a1}
\end{figure}

Choose a puncture $\rho$ in $B$ and let $e$ be a properly embedded arc in $C$ that is contained in $B$ and that separates $\rho$ from the other punctures of $C$; see Figure~\ref{fig:punctured_circle_b1}.
Let $d$ be the curve in $C$ which does not intersect~$e$, and which cuts off a copy $D$ of $D_{K-1}$ containing all punctures of $C$ except for~$\rho$.
The curve $d$ is in the mapping class group orbit of $\partial Z$ and hence is a vertex of $\Dis(Z, W)$.

\begin{figure}[ht]
    \centering
    \begin{minipage}{0.49\textwidth}
        \centering
            \def\svgscale{.4}
            %% Creator: Inkscape inkscape 0.92.3, www.inkscape.org
%% PDF/EPS/PS + LaTeX output extension by Johan Engelen, 2010
%% Accompanies image file '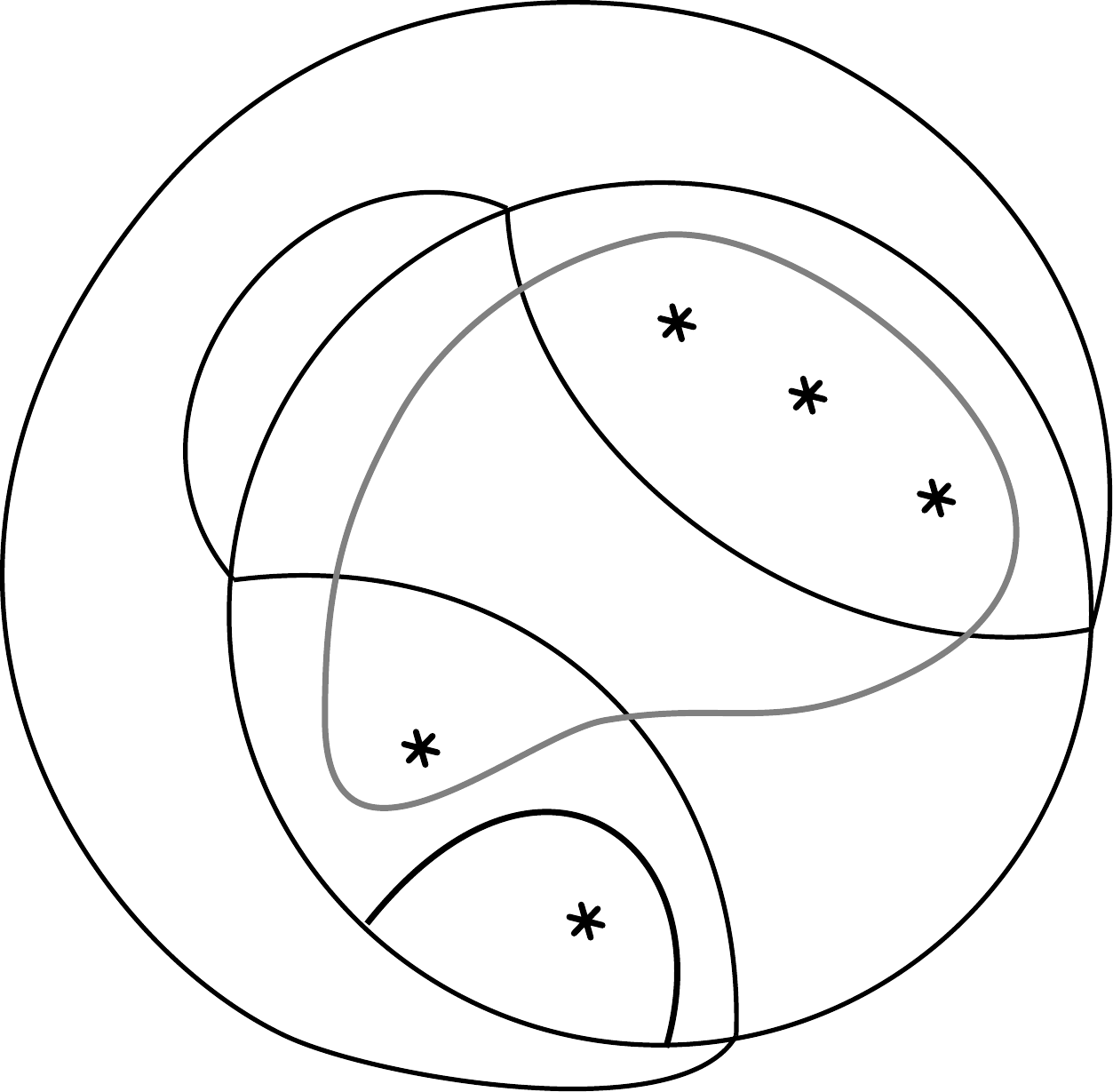' (pdf, eps, ps)
%%
%% To include the image in your LaTeX document, write
%%   \input{<filename>.pdf_tex}
%%  instead of
%%   \includegraphics{<filename>.pdf}
%% To scale the image, write
%%   \def\svgwidth{<desired width>}
%%   \input{<filename>.pdf_tex}
%%  instead of
%%   \includegraphics[width=<desired width>]{<filename>.pdf}
%%
%% Images with a different path to the parent latex file can
%% be accessed with the `import' package (which may need to be
%% installed) using
%%   \usepackage{import}
%% in the preamble, and then including the image with
%%   \import{<path to file>}{<filename>.pdf_tex}
%% Alternatively, one can specify
%%   \graphicspath{{<path to file>/}}
%% 
%% For more information, please see info/svg-inkscape on CTAN:
%%   http://tug.ctan.org/tex-archive/info/svg-inkscape
%%
\begingroup%
  \makeatletter%
  \providecommand\color[2][]{%
    \errmessage{(Inkscape) Color is used for the text in Inkscape, but the package 'color.sty' is not loaded}%
    \renewcommand\color[2][]{}%
  }%
  \providecommand\transparent[1]{%
    \errmessage{(Inkscape) Transparency is used (non-zero) for the text in Inkscape, but the package 'transparent.sty' is not loaded}%
    \renewcommand\transparent[1]{}%
  }%
  \providecommand\rotatebox[2]{#2}%
  \newcommand*\fsize{\dimexpr\f@size pt\relax}%
  \newcommand*\lineheight[1]{\fontsize{\fsize}{#1\fsize}\selectfont}%
  \ifx\svgwidth\undefined%
    \setlength{\unitlength}{360.62429115bp}%
    \ifx\svgscale\undefined%
      \relax%
    \else%
      \setlength{\unitlength}{\unitlength * \real{\svgscale}}%
    \fi%
  \else%
    \setlength{\unitlength}{\svgwidth}%
  \fi%
  \global\let\svgwidth\undefined%
  \global\let\svgscale\undefined%
  \makeatother%
  \begin{picture}(1,0.98084565)%
    \lineheight{1}%
    \setlength\tabcolsep{0pt}%
    \put(0,0){\includegraphics[width=\unitlength,page=1]{punctured_circle_b2.pdf}}%
    \put(0.72707438,0.28315113){\color[rgb]{0.50196078,0.50196078,0.50196078}\makebox(0,0)[lt]{\lineheight{1.25}\smash{\begin{tabular}[t]{l}$d$\end{tabular}}}}%
    \put(0,0){\includegraphics[width=\unitlength,page=2]{punctured_circle_b2.pdf}}%
    \put(0.42867038,0.50365504){\color[rgb]{0.50196078,0.50196078,0.50196078}\makebox(0,0)[lt]{\lineheight{1.25}\smash{\begin{tabular}[t]{l}$D$\end{tabular}}}}%
    \put(0.37606495,0.16130234){\color[rgb]{0,0,0}\makebox(0,0)[lt]{\lineheight{1.25}\smash{\begin{tabular}[t]{l}$e$\end{tabular}}}}%
    \put(0.51480495,0.10247886){\color[rgb]{0,0,0}\makebox(0,0)[lt]{\lineheight{1.25}\smash{\begin{tabular}[t]{l}$\rho$\end{tabular}}}}%
  \end{picture}%
\endgroup%

            \caption{The arc~$e$, and the curve $d$ disjoint from $c$.}
           \label{fig:punctured_circle_b1}
    \end{minipage}
    \begin{minipage}{0.49\textwidth}
        \centering
            \def\svgscale{.4}
            %% Creator: Inkscape inkscape 0.92.3, www.inkscape.org
%% PDF/EPS/PS + LaTeX output extension by Johan Engelen, 2010
%% Accompanies image file '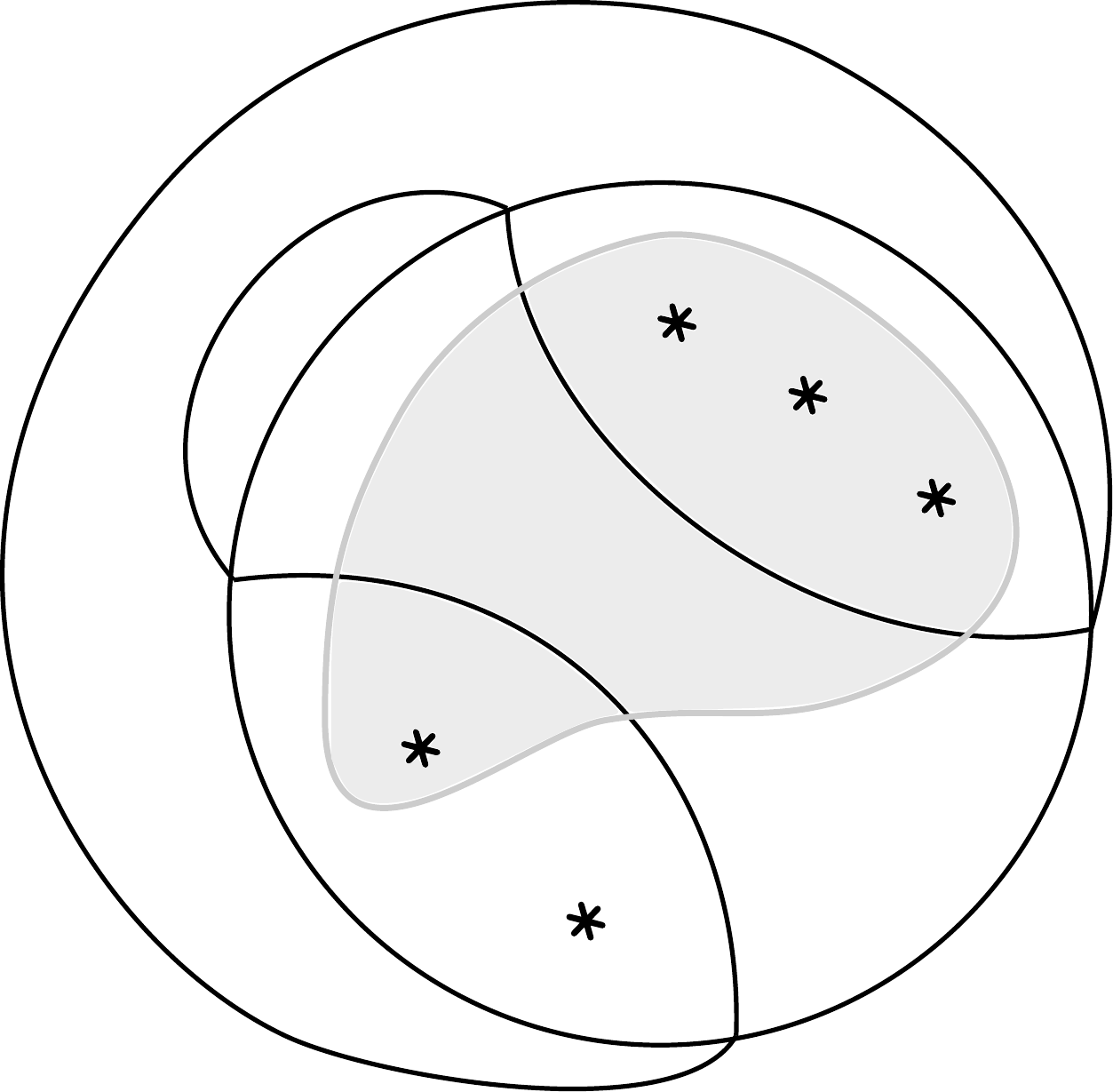' (pdf, eps, ps)
%%
%% To include the image in your LaTeX document, write
%%   \input{<filename>.pdf_tex}
%%  instead of
%%   \includegraphics{<filename>.pdf}
%% To scale the image, write
%%   \def\svgwidth{<desired width>}
%%   \input{<filename>.pdf_tex}
%%  instead of
%%   \includegraphics[width=<desired width>]{<filename>.pdf}
%%
%% Images with a different path to the parent latex file can
%% be accessed with the `import' package (which may need to be
%% installed) using
%%   \usepackage{import}
%% in the preamble, and then including the image with
%%   \import{<path to file>}{<filename>.pdf_tex}
%% Alternatively, one can specify
%%   \graphicspath{{<path to file>/}}
%% 
%% For more information, please see info/svg-inkscape on CTAN:
%%   http://tug.ctan.org/tex-archive/info/svg-inkscape
%%
\begingroup%
  \makeatletter%
  \providecommand\color[2][]{%
    \errmessage{(Inkscape) Color is used for the text in Inkscape, but the package 'color.sty' is not loaded}%
    \renewcommand\color[2][]{}%
  }%
  \providecommand\transparent[1]{%
    \errmessage{(Inkscape) Transparency is used (non-zero) for the text in Inkscape, but the package 'transparent.sty' is not loaded}%
    \renewcommand\transparent[1]{}%
  }%
  \providecommand\rotatebox[2]{#2}%
  \newcommand*\fsize{\dimexpr\f@size pt\relax}%
  \newcommand*\lineheight[1]{\fontsize{\fsize}{#1\fsize}\selectfont}%
  \ifx\svgwidth\undefined%
    \setlength{\unitlength}{360.62429115bp}%
    \ifx\svgscale\undefined%
      \relax%
    \else%
      \setlength{\unitlength}{\unitlength * \real{\svgscale}}%
    \fi%
  \else%
    \setlength{\unitlength}{\svgwidth}%
  \fi%
  \global\let\svgwidth\undefined%
  \global\let\svgscale\undefined%
  \makeatother%
  \begin{picture}(1,0.98084565)%
    \lineheight{1}%
    \setlength\tabcolsep{0pt}%
    \put(0,0){\includegraphics[width=\unitlength,page=1]{punctured_circle_c2.pdf}}%
    \put(0.42867038,0.50365504){\color[rgb]{0.50196078,0.50196078,0.50196078}\makebox(0,0)[lt]{\lineheight{1.25}\smash{\begin{tabular}[t]{l}$D$\end{tabular}}}}%
    \put(0.51480495,0.10247886){\color[rgb]{0,0,0}\makebox(0,0)[lt]{\lineheight{1.25}\smash{\begin{tabular}[t]{l}$\rho$\end{tabular}}}}%
    \put(0,0){\includegraphics[width=\unitlength,page=2]{punctured_circle_c2.pdf}}%
    \put(0.79009959,0.27264685){\color[rgb]{0.50196078,0.50196078,0.50196078}\makebox(0,0)[lt]{\lineheight{1.25}\smash{\begin{tabular}[t]{l}$c'$\end{tabular}}}}%
  \end{picture}%
\endgroup%

        \caption{The curve $c'$ in $\Omega(\partial W)$.\\}\label{fig:punctured_circle_c1}
    \end{minipage}
\end{figure}

Let $c'$ be the boundary of a disk with $K$ punctures that contains $D$ and $\rho$ and that avoids intersecting the arc $e$ by passing through $R$; see Figure~\ref{fig:punctured_circle_c1}.
The curve $c'$ is  in $\Omega(\partial W)$ and is connected to $c$ in $\Dis(Z, W)$ by the path $c'$, $c' \cup d$, $d$, $d \cup c$, $c$. Since $c'$ satisfies $M(c')=M(c)-1$,  we have that $\partial W$ is connected to  $c'$, and hence $\partial W$ is connected to every curve in $\Omega(\partial W)$ by induction. In particular, $\partial W$ is connected to $\phi (\partial W)$ for any Dehn twist generator $\phi \in X \sqcup X^{-1}$, because $\phi(\partial W) \in \Omega(\partial W)$.
\end{proof}

\section{The thick of order 2 case}\label{section:thick of order 2 case}

We now address those graphs of multicurves that do not satisfy the hypotheses of Theorem~\ref{theorem:hyperbolic_graphs_of_multicurves}  (hyperbolicity), Theorem~\ref{theorem:relatively_hyperbolic_graphs_of_multicurves} (relative hyperbolicity), or Theorems~\ref{theorem:thick case no separating annulus} or \ref{theorem:thick_punctured_disc_case} (thickness of order~$1$).
These graphs all satisfy the hypotheses of Theorem~\ref{theorem:thick_of_order_2_case} below, and we will prove that they are thick of order at most~2.
It is possible that a graph of multicurves satisfying the hypotheses of Theorem~\ref{theorem:thick_of_order_2_case} also satisfies the hypotheses of one of Theorem~\ref{theorem:thick case no separating annulus} or Theorem~\ref{theorem:thick_punctured_disc_case}.
This is no contradiction, as thick of order~1 is a case of thick of order at most~2.

\begin{theorem}\label{theorem:thick_of_order_2_case}
Let $\G(S)$ be a hierarchical graph of multicurves. Suppose there exist $Z,W \in \Wit\bigl(\G(S)\bigr)$ such that
\begin{itemize}
    \item $Z$ and $W$ are disjoint;
    \item $Z^c$ and $W^c$ are connected;
    \item  $S \sminus (Z \cup W)$ is a non-empty disjoint union of punctured annuli;
\end{itemize}
Then $\G(S)$ is thick of order at most 2.
\end{theorem}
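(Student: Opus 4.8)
The plan is to parallel the proofs of Theorems~\ref{theorem:thick case no separating annulus} and~\ref{theorem:thick_punctured_disc_case}, but at one higher level of the thickness hierarchy. Since the order of thickness is a quasi-isometry invariant and $\G(S)$ is quasi-isometric to $\ksep_\mf{S}(S)$ by Proposition~\ref{proposition: k qi g}, where $\mf{S} = \Wit\bigl(\G(S)\bigr)$, it suffices to work in $\ksep_\mf{S}(S)$. Fix disjoint, co-connected witnesses $Z,W$ as in the hypotheses, so $S\sminus(Z\cup W) = A_1\sqcup\dots\sqcup A_r$ with each $A_i$ an annulus with at least one puncture. We may assume no $A_i$ is itself a witness: if some $A_i\in\mf{S}$, then $Z$ and $A_i$ are disjoint, co-connected witnesses whose complement $S\sminus(Z\cup A_i)$ has the non-annular component $W$, and Theorem~\ref{theorem:thick case no separating annulus} already shows $\G(S)$ is thick of order~$1$, hence thick of order at most~$2$.

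For the thick of order at most~$1$ pieces I would use product regions. By Lemma~\ref{lemma:connected_components_are_thick_of_order_1}, for every connected subset $\Omega$ of $\Dis(Z,W)$ the set $Q_\Omega := \bigcup_{m\in\Omega}P_\mf{S}(m)$ is thick of order at most~$1$; I take the $Q_\Omega$ for $\Omega$ ranging over the connected components of $\Dis(Z,W)$. These coarsely cover $\ksep_\mf{S}(S)$: since $\mcg(S)$ acts coboundedly on $\ksep_\mf{S}(S)$ and $\Dis(Z,W)$ is a non-empty $\mcg(S)$-invariant vertex set containing $\partial W$, there is a uniform $D\geq 0$ so that every vertex of $\ksep_\mf{S}(S)$ lies in the $D$-neighbourhood of $P_\mf{S}(m)$ for some $m\in\Dis(Z,W)$.

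It remains to thickly chain the $Q_\Omega$, and this is the heart of the argument. The subtlety — and the reason the order is $2$ rather than $1$ — is that, unlike in Theorems~\ref{theorem:thick case no separating annulus} and~\ref{theorem:thick_punctured_disc_case}, there is no non-annular slack between $Z$ and $W$, and two multicurves in distinct components of $\Dis(Z,W)$ are never disjoint, so $\mc{N}_C(Q_\Omega)\cap\mc{N}_C(Q_{\Omega'})$ for distinct components cannot be witnessed by a common product region $P_\mf{S}(m\cup m')$. Instead I would observe that if $m,m'$ are multicurves such that $S\sminus m$ and $S\sminus m'$ have a common witness component $V$ — that is, $\partial V\subseteq m\cap m'$ and neither $m$ nor $m'$ has a curve in the interior of $V$ — then by Corollary~\ref{corollary:product regions} a coarse copy of the infinite diameter space $\ksep_{\mf{S}_V}(V)$ sits within bounded distance of both $P_\mf{S}(m)$ and $P_\mf{S}(m')$, the bound being uniform by cocompactness, so $\mc{N}_C(P_\mf{S}(m))\cap\mc{N}_C(P_\mf{S}(m'))$ has infinite diameter for a uniform~$C$. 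Accordingly, I would form the auxiliary graph $\widehat{\Dis}$ with vertex set that of $\Dis(Z,W)$ and with edges the add/remove-a-curve moves together with these ``common witness component'' moves; connectedness of $\widehat{\Dis}$ then implies the $Q_\Omega$ thickly chain, whence $\G(S)$ is thick of order at most~$2$.

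Finally, I would prove $\widehat{\Dis}$ connected using Putman's trick (Lemma~\ref{lemma:putman_trick}) with the group $\mcg(S)$, base vertex $\partial W$, and the standard generating set for $\mcg(S)$ appropriate to the genus (Theorems~\ref{theorem:humphries_generators},~\ref{theorem:gervais_generators}, or~\ref{theorem:braid_generators}), arranged so that every generating curve, and the boundary of every twice-punctured disk supporting a half-twist generator, meets $\partial W$ in at most two points. The first hypothesis of Lemma~\ref{lemma:putman_trick} follows as in the earlier proofs: any vertex $m$ of $\Dis(Z,W)$ is joined, by successively adding then removing curves within $\Dis(Z,W)$, to $f(\partial W)$, where $f(W)$ is the relevant translate inside a component of $S\sminus m$. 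For the second hypothesis, a generator $\phi$ disjoint from $\partial W$ fixes $\partial W$; this already disposes of every half-twist swapping two punctures on the same side of $\partial W$, in particular any swap of punctures both lying in $W^c = Z\cup A_1\cup\dots\cup A_r$. The remaining generators meet $\partial W$ in two points, and the main work is to connect $\partial W$ to $\phi(\partial W)$ in $\widehat{\Dis}$, using the ``common witness component'' edges — which have no analogue in the order-one arguments — together with the freedom to slide a puncture within an annulus $A_i$. I expect the principal obstacle to be the case of a half-twist exchanging a puncture of $W$ with a puncture of some $A_i$; a natural device here is the puncture-filling map $F_\Pi$ of the punctures in $\bigsqcup_i A_i$ (Proposition~\ref{proposition:capping commutes}), which collapses each $A_i$ to an unpunctured annulus and so reduces the required connectivity to the simpler situation on $\Sigma = S_{g,\,p-\sum_i q_i}$, where $Z$ and $W$ are complementary.
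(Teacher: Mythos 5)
Your high-level skeleton is the right one — reduce to $\ksep_\mf{S}(S)$, assume $Z,W$ give a configuration not already covered by Theorem~\ref{theorem:thick case no separating annulus}, build thick of order at most~$1$ pieces from the connected components of a $\Dis$-type graph, and then try to thickly chain those pieces. But the mechanism you propose for the chaining step — ``common witness component'' edges — does not do the job. If $V$ is a component of $S\sminus m$ that contains a translate of $Z$ or of $W$ (which is the generic situation for a witness component of $S\sminus m$ with $m\in\Dis(Z,W)$), then $\partial V$ is itself a vertex of $\Dis(Z,W)$, and $m$ is joined to $\partial V$ by successively removing curves; the same holds for $m'$, so $m$ and $m'$ were already in the same component of $\Dis(Z,W)$ and the new edge is redundant. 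And when $V$ is some other witness component, one would have to check very carefully both that $\partial V\in\Dis(Z,W)$ and that the bound ``uniform by cocompactness'' you invoke really is uniform: $i(m,m')$ is not bounded over all pairs sharing a witness component, so the constant $C$ you would need to exhibit for the thick chaining condition depends a priori on the pair. In short, I do not see that $\widehat{\Dis}$ has fewer components than $\Dis(Z,W)$, so I do not believe this route can close the gap, and you also leave the connectivity of $\widehat{\Dis}$ explicitly unfinished (``I expect the principal obstacle to be$\dots$'').

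The ingredient you are missing is the one the paper's proof is built around: the kernel $\mc{P}$ of the homomorphism $F_*\colon\pmcg(S)\to\pmcg(\Sigma)$ induced by filling in the intermediate punctures. Each thick of order~$1$ piece is $\mc{X}(\mu)=\bigcup_{m\in F^{-1}(\mu)}P_\mf{S}(m)$, the pieces are indexed by the $\pmcg(\Sigma)$-orbit $\FO$ of $F(\partial Z)$, and the puncture-filling map is not merely a device for simplifying a Putman argument — it is how one identifies the connected components of $\PDis(Z,W)$ (Lemma~\ref{lemma:connectedcomponents}). The chaining is then done on an auxiliary graph $\F$ with vertex set $\FO$ and edges given by bounded intersection number in $\Sigma$ (connected by Putman's trick on $\Sigma$), and the infinite-diameter coarse intersection of adjacent pieces $\mc{X}(\mu),\mc{X}(\nu)$ is produced by noting that $\mc{P}$ stabilizes every $\mc{X}(\alpha)$ setwise and, by Kra's theorem, contains a pseudo-Anosov; the $\langle\phi\rangle$-orbit of a point close to both pieces (found by lifting $\mu,\nu$ to bounded-intersection multicurves and extending to pants decompositions) then has infinite diameter inside $\mc{N}_K(\mc{X}(\mu))\cap\mc{N}_K(\mc{X}(\nu))$ for a uniform $K$, giving the thick chaining with a single constant. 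Without a stabilizing pseudo-Anosov or an equivalent replacement, your argument has no way to exhibit infinite-diameter, uniformly-constant coarse intersections between pieces coming from different components.
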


Our proof of Theorem \ref{theorem:thick_of_order_2_case} is more involved than our previous proofs of thickness, so we will delay the proof until after we have collected a number of preliminary results. From now on, $\G(S)$ will be a hierarchical graph of multicurves whose set of witnesses, $\mf{S} = \Wit\bigl(\G(S)\bigr)$, satisfies the hypotheses of Theorem \ref{theorem:thick_of_order_2_case}.   Since the order of thickness is a quasi-isometry invariant, it will suffice to prove $\ksep_\mf{S}(S)$ is thick of order at most~$2$.  Fix a pair of co-connected witnesses, $Z,W \in \mf{S}$, that satisfy the hypotheses of Theorem \ref{theorem:thick_of_order_2_case}.

If there exist co-connected witnesses $Z ' \subseteq Z$ and $W' \subseteq W$ that do  not satisfy the hypotheses of Theorem~\ref{theorem:thick_of_order_2_case}, then $Z'$ and $W'$ must satisfy the hypotheses of Theorem~\ref{theorem:thick case no separating annulus}, so we have that $\G(S)$ is thick of order~1 (and hence thick of order at most~2). Accordingly, we will assume that $Z$ and $W$ are minimal in the sense that if there exist any $Z' \subseteq Z$ and $W' \subseteq W$ such that $Z'$ and $W'$ are co-connected witnesses, then $Z'=Z$ and $W' =W$. Moreover, we will assume that $S$ has genus at least~1. In the genus~0 case, the only possibility is that $Z$ and $W$ are both punctured disks separated by a punctured annulus, and thus $\G(S)$ is thick of order~1 by Theorem~\ref{theorem:thick_punctured_disc_case}.

Over the course of our proof, we will need to keep track of the punctures of $S$ that are contained in $S \sminus (Z \cup W)$.

\begin{definition}
    For our fixed $Z$ and $W$, we call a puncture $\rho$ of $S$ \emph{intermediate} if $\rho$ is contained in $S \sminus (Z \cup W)$. We say a punctured annulus or punctured disk $Y \subseteq S$ is  \emph{intermediate} if each puncture in $Y$ is intermediate.  Figure \ref{figure:intermediate_annulus} shows  two examples of intermediate punctured annuli. 
\end{definition}

\begin{figure}[ht]
    \centering
    \def\svgscale{1}
    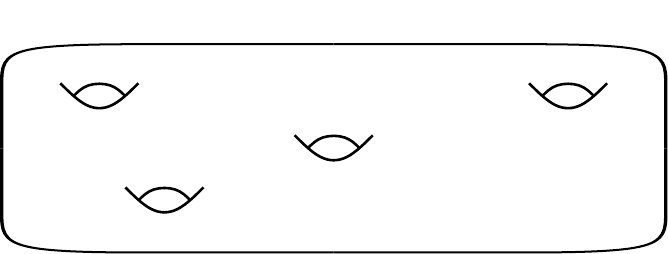
    \caption{The shaded annuli $A$ and $A'$ are intermediate punctured annuli for the depicted $Z$ and $W$. The punctures contained in $A$ and $A'$ are intermediate punctures, while the punctures contained in $Z$ and $W$ are not intermediate.}
    \label{figure:intermediate_annulus}
\end{figure}

Let $\Sigma$ be the surface obtained from $S$ by filling in all the intermediate punctures of $S$.
If $r$ is the number of intermediate punctures, the surface $\Sigma$ is fixed in the homeomorphism class of $S_{g,p-r}$.
The natural inclusion of $S$ into $\Sigma$ induces a map from curves on~$S$ to (possibly inessential) curves on~$\Sigma$.  As an aid to the reader, we will always use Roman letters to denote multicurves on the original surface~$S$ and Greek letters to denote multicurves on the surface~$\Sigma$ obtained by filling in the intermediate punctures of~$S$.

\begin{definition}[The puncture-filling map]
Let $F \colon \C(S) \to \C(\Sigma)\cup T$ be the map induced by filling in the punctures of $S \sminus (Z\cup W)$, where $T$ is the set of homotopy classes of curves on $\Sigma$ that bound a disk or a once punctured disk. We also use $F$ to denote the map $F \colon S \to \Sigma$ that fills in the punctures of $S \sminus (Z \cup W)$ with points.
\end{definition}

As in the proof of Theorem \ref{theorem:thick case no separating annulus}, let $\PDis(Z,W)$ be the subgraph of $\Dis(Z,W)$ spanned by the  set of vertices $m \in \Dis(Z,W)$ such that there exists  $f\in \pmcg(S)$ (instead of in $\mcg(S)$) so that $S \sminus m$ has one component containing $f(Z)$ and another component containing $f(W)$. Lemma~\ref{lemma: FO orbit}  below shows, in particular, that for  any multicurve $m \in \PDis(Z,W)$,  $F(m)$ contains an essential curve on~$\Sigma$.
By abuse of notation, we will use $F(m)$ to denote the essential multicurve on~$\Sigma$ obtained by removing any of the homotopy classes  in  $T$  from $F(m)$, if present.

\begin{definition}
Let $\FO$ denote the set of multicurves $\mu$ on $\Sigma$ such that there exists $m \in \PDis(Z,W)$ with $\mu = F(m)$. For a multicurve $\mu \in \FO$, we shall abuse notation and use $F^{-1}(\mu)$ to denote the full subgraph of $\PDis(Z,W)$ spanned by the set $\{m \in \PDis(Z,W) : F(m) = \mu\}$. 
\end{definition}

\begin{lemma} \label{lemma: FO orbit}
Let $m \in \PDis(Z,W)$ and $f \in \pmcg(S)$ so that $f(Z)$ and $f(W)$ are contained in distinct components of $S \sminus m$. Then:
\begin{enumerate}
    \item $\Sigma \sminus F(m) = F(f(Z)) \sqcup F(f(W))$;
    \item $F(m)$ is in the $\pmcg(\Sigma)$-orbit of $F(\partial Z)$.
\end{enumerate}
In particular, every element of $\FO$ is in the $\pmcg(\Sigma)$\hyp{}orbit of $F(\partial Z)$.

\end{lemma}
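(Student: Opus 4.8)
The plan is to establish the single clean statement that, for any $m\in\PDis(Z,W)$ with witnessing element $f\in\pmcg(S)$, the essential multicurve $F(m)$ on $\Sigma$ equals $F(f(\partial Z))$. Granting this, the lemma follows quickly: part~(2) and the final sentence hold because $F(f(\partial Z))=F_*(f)(F(\partial Z))$ with $F_*(f)\in\pmcg(\Sigma)$ by Proposition~\ref{proposition:capping commutes}; and part~(1) follows by applying $F_*(f)$ to the equality $\Sigma\sminus F(\partial Z)=F(Z)\sqcup F(W)$ and using that $F$ intertwines the $\pmcg(S)$ and $\pmcg(\Sigma)$ actions. The assertion that $F(m)$ contains an essential curve is then automatic, since $F(\partial Z)$ is nonempty (as $Z\neq S$).

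The first task is to understand the complement. Since $f\in\pmcg(S)$ fixes every puncture, the intermediate punctures of $S$ are exactly the punctures lying in $S\sminus(f(Z)\cup f(W))$, so $F$ also fills precisely these punctures. Writing $A_1,\dots,A_n$ for the components of $S\sminus(Z\cup W)$, I would first show that no $A_j$ has both of its boundary curves on $\partial Z$, and likewise none has both on $\partial W$: a component of $S\sminus(Z\cup W)$ all of whose boundary lies on $\partial Z$ would be an entire connected component of $Z^c$, forcing $Z^c$ to equal it, which is impossible since $Z^c$ is connected and contains $W$. Hence each $A_j$ is a punctured annulus joining a curve $\alpha_j\in\partial Z$ to a curve $\beta_j\in\partial W$. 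Filling the intermediate punctures collapses each $A_j$ to an annulus whose core is isotopic to $F(\alpha_j)$ and to $F(\beta_j)$; together with the curves of $\partial Z\cap\partial W$ (which $F$ leaves unchanged) this shows that $F(\partial Z)=F(\partial W)$ is a nonempty multicurve, that each of its curves is essential on $\Sigma$ (the side away from $Z$ contains the witness $W$, hence is not a disk or once-punctured disk even after filling), and that $\Sigma$ is $F(Z)\cong Z$ glued to $F(W)\cong W$ along $F(\partial Z)$, so $\Sigma\sminus F(\partial Z)=F(Z)\sqcup F(W)$.

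Now fix $m$ and $f$. As $f(Z)$ and $f(W)$ lie in distinct components of $S\sminus m$, the multicurve $m$ is disjoint from $f(Z)\cup f(W)$ and hence lies in $S\sminus(f(Z)\cup f(W))$, which is the union of the punctured annuli $f(A_j)$ together with one unpunctured annulus for each curve of $\partial f(Z)\cap\partial f(W)$. For the inclusion $F(m)\subseteq F(f(\partial Z))$ I would run a local analysis in a punctured annulus $S^2_{0,k}$: an essential non-peripheral curve there either bounds a disk containing at least two punctures, so it becomes inessential once those punctures are filled, or separates the two boundary components, so it maps to the core of the filled annulus; in either case its $F$-image lies in $F(f(\partial Z))$, while a curve lying in one of the extra unpunctured annuli is already isotopic to a curve of $\partial f(Z)$. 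For the reverse inclusion I would argue that $m$ must ``block'' each complementary annulus: if the part of $m$ in some $f(A_j)$ (or some gluing annulus) failed to separate its two boundary components, there would be a path in $S\sminus m$ joining $f(Z)$ to $f(W)$, contradicting that they lie in distinct components. A blocking curve inside $f(A_j)$ must separate its two boundary components, which in particular forces $f(A_j)$ to carry at least two punctures, so once-punctured annulus components require no separate treatment, and such a curve maps to the core; a blocking curve in a gluing annulus is isotopic to the corresponding shared curve. Thus every curve of $F(f(\partial Z))$ is realized, giving $F(m)=F(f(\partial Z))$.

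The step I expect to be hardest is this last one: controlling precisely which curves of $m$ can sit inside a complementary punctured annulus and what they become under $F$, and running the complement-connectivity argument that forces $m$ to block every complementary annulus. This is exactly where the hypothesis that $S\sminus(Z\cup W)$ is a disjoint union of \emph{annuli}, rather than subsurfaces of higher complexity, is essential: in a more complicated complementary piece there would be many isotopy types of separating curves, and $F(m)$ would not be pinned down by $\partial Z$ alone.
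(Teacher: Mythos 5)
Your proposal is correct and follows essentially the same approach as the paper: both proofs reduce to the key equality $F(m)=F(f(\partial Z))$, proven in one direction by the case analysis of a curve of $m$ in a complementary punctured annulus (it either bounds an intermediate punctured disk, and becomes inessential, or cobounds an intermediate annulus with a curve of $\partial f(Z)$, and maps to the core) and in the other direction by a separation/blocking argument; then Part~(2) is read off from $F(f(\partial Z))=F_*(f)(F(\partial Z))$ via Proposition~\ref{proposition:capping commutes}. The only cosmetic divergence is how Part~(1) is finished off: the paper establishes $F(m)=F(f(\partial Z))=F(f(\partial W))$ symmetrically and reads off $\Sigma\sminus F(m)=F(f(Z))\sqcup F(f(W))$ directly, while you first prove the base identity $\Sigma\sminus F(\partial Z)=F(Z)\sqcup F(W)$ and then push it around by $F_*(f)$. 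Both routes are equivalent and rest on the same structural observation about the annuli $A_j$ each connecting a curve of $\partial Z$ to a curve of $\partial W$; your write-up simply supplies more of the detail that the paper's terse argument leaves implicit.
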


\begin{proof}
Since each component of $S \sminus (f(Z) \cup f(W))$ is an intermediate punctured annulus, every curve of $m$ which is not in $\partial f(Z)$ either bounds a punctured disk whose punctures are all intermediate or bounds an intermediate annulus with a curve of $\partial f(Z)$. Thus  $F(m) \subseteq F(f(\partial Z))$.
Moreover, if $F(m)$ was not all of $F(f(\partial Z))$, then $\Sigma \sminus F(m)$ would not separate $F(f(Z))$ and $F(f(W))$.
Then $m$ could not have separated $f(Z)$ and $f(W)$, which is a contradiction. Hence $F(m)=F(f(\partial Z))$.
By a similar argument, $F(m) = F(f(\partial W))$. This implies $\Sigma \sminus F(m) = F(f(Z)) \sqcup F(f(W))$.

For the second item, recall from Proposition~\ref{proposition:capping commutes} that $F \colon S \to \Sigma$ induces a homomorphism $F_* \colon \pmcg(S) \to \pmcg(\Sigma)$ so that $F(m) = F(f(\partial Z)) = F_*(f)(F(\partial Z))$.
\end{proof}

Unlike in the case of Theorem \ref{theorem:thick case no separating annulus} where $\G(S)$ is thick of order~$1$, in the present case we will find that $\PDis(Z,W)$ is not connected. However, we will use the connected components of $\PDis(Z,W)$ to build thick of order at most~$1$ subsets of $\ksep_\mf{S}(S)$. We find that the connected components of $\PDis(Z,W)$ are encoded by the fibers of the  puncture\hyp{}filling map~$F$, as follows.

\begin{lemma} \label{lemma:connectedcomponents}
The map $\mu \mapsto F^{-1}(\mu)$ is a bijection from $\FO$ to the set of connected components of $\PDis(Z,W)$.
\end{lemma}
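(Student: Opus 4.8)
The plan is to show three things: (1) the fibers $F^{-1}(\mu)$, $\mu \in \FO$, are non-empty and partition the vertex set of $\PDis(Z,W)$; (2) no edge of $\PDis(Z,W)$ joins two distinct fibers, so each fiber is a union of connected components; (3) each fiber $F^{-1}(\mu)$ is connected. Granting these, $\mu \mapsto F^{-1}(\mu)$ is injective (the fibers are disjoint and non-empty) and its image is exactly the set of connected components. Fact (1) is immediate from the definition of $\FO$, so the content is Facts (2) and (3).

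For Fact (2), suppose $m,n$ are adjacent in $\PDis(Z,W)$, say $n = m \cup \{\gamma\}$. Then $F(n) = F(m) \cup \{F(\gamma)\}$, and if $F(\gamma) \in T$ or $F(\gamma)$ is isotopic into $F(m)$ we are done, so assume $F(\gamma)$ is essential on $\Sigma$ and not isotopic into $F(m)$, i.e.\ $F(m) \subsetneq F(n)$. By Lemma~\ref{lemma: FO orbit}(1) applied to $m$ and to $n$ (using their respective witnesses in $\pmcg(S)$), each of $\Sigma \sminus F(m)$ and $\Sigma \sminus F(n)$ has exactly two components, and the unordered pair of homeomorphism types of those components is $\{Z,W\}$ in both cases — recall that $Z$ and $W$ contain no intermediate punctures, so $F(Z)\cong Z$ and $F(W)\cong W$, and any $\pmcg(S)$-translate of $Z$ (resp.\ $W$) again contains no intermediate punctures, since $\pmcg(S)$ fixes punctures. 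Now $F(\gamma)$, being disjoint from $F(m)$ and not isotopic into it, lies in the interior of one of the two components of $\Sigma \sminus F(m)$ and is not boundary-parallel there; cutting that component along $F(\gamma)$ either separates it, so that $\Sigma \sminus F(n)$ has three components, or does not, so that $\Sigma \sminus F(n)$ has two components of total genus one less than the total genus of $Z$ and $W$ together. Either way this contradicts Lemma~\ref{lemma: FO orbit}(1) for $n$, so $F(m) = F(n)$, and $F$ is constant on each connected component of $\PDis(Z,W)$.

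For Fact (3), first I would reduce to a single fiber: $\pmcg(S)$ acts on $\PDis(Z,W)$, and by Proposition~\ref{proposition:capping commutes} this action permutes the fibers via the induced surjection $F_* \colon \pmcg(S) \to \pmcg(\Sigma)$, which by Lemma~\ref{lemma: FO orbit}(2) acts transitively on $\FO$; so it is enough to prove that $F^{-1}(F(\partial Z))$ is connected. I would do this with Putman's trick (Lemma~\ref{lemma:putman_trick}), taking the graph to be $F^{-1}(F(\partial Z))$, the group to be $G = \{h \in \pmcg(S) : F_*(h) \text{ fixes the multicurve } F(\partial Z)\}$ (which preserves this fiber), and the base vertex to be $\partial Z$ — note $\partial Z \in \PDis(Z,W)$, witnessed by the identity, since $Z$ is co-connected, and $F(\partial Z) = F(\partial Z)$. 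The first hypothesis of Lemma~\ref{lemma:putman_trick} goes through: if $m \in F^{-1}(F(\partial Z))$ is witnessed by $f \in \pmcg(S)$, then $F_*(f)$ fixes $F(\partial Z)$ by Lemma~\ref{lemma: FO orbit}, so $f\in G$, and one joins $m$ to $f(\partial Z)=\partial f(Z)\in G\cdot \partial Z$ through the vertex $m \cup \partial f(Z)$, adding and then removing curves one at a time; every intermediate multicurve contains either $m$ or $\partial f(Z)$ (which itself separates $f(Z)$ from $f(W)$) and so stays in $\PDis(Z,W)$, and its $F$-image is squeezed between $F(m)=F(\partial Z)$ and $F(m\cup\partial f(Z))=F(\partial Z)$ and hence equals $F(\partial Z)$.

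The main obstacle is the second hypothesis of Lemma~\ref{lemma:putman_trick}: finding a generating set $X$ of $G$ such that $\partial Z$ is joined to $g(\partial Z)$ in $F^{-1}(F(\partial Z))$ for each $g \in X \sqcup X^{-1}$. Here $G$ is the preimage under $F_*$ of $\mathrm{Stab}_{\pmcg(\Sigma)}(F(\partial Z))$, and sits in an extension $1 \to \ker F_* \to G \to \mathrm{Stab}_{\pmcg(\Sigma)}(F(\partial Z)) \to 1$, where $\ker F_*$ is the Birman point-pushing subgroup for the intermediate punctures and the stabilizer $\mathrm{Stab}_{\pmcg(\Sigma)}(F(\partial Z))$ is generated by mapping classes supported off $F(\partial Z)$ together with Dehn twists about the components of $F(\partial Z)$. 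I would choose lifts of these generators with controlled position relative to $\partial Z$ and to the intermediate annuli — point-pushes along loops arranged to meet $\partial Z$ minimally, Dehn twists about standard generating curves positioned to meet $\partial Z$ at most twice, and mapping classes supported inside a translate of $Z$ or inside the intermediate region — so that each generator either fixes $\partial Z$ outright or carries it to a multicurve cobounding intermediate annuli with $\partial Z$, which can then be joined to $\partial Z$ by a short explicit path that adds a refining multicurve lying in the intermediate region and removes it back down, exactly in the spirit of the arrangements used in the proofs of Theorems~\ref{theorem:thick case no separating annulus} and~\ref{theorem:thick_punctured_disc_case}. Verifying that such a generating set and such paths can always be arranged — which is really a statement about the part of $\PDis(Z,W)$ that lies over the intermediate annuli — is where the real work lies, and it may be cleanest to isolate it (``any two multicurves in $F^{-1}(F(\partial Z))$ that differ by corridor moves are connected'') as a separate lemma before returning to finish this one.
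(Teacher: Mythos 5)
Your Fact~(2) argument — that adding an essential curve $\gamma$ with $F(\gamma)$ not already in $F(m)$ would change the number of complementary components of $\Sigma \sminus F(\,\cdot\,)$ or their homeomorphism types, contradicting Lemma~\ref{lemma: FO orbit}(1) — is correct and is a legitimate alternative to the paper's Claim~\ref{claim:connected components have the same image}. The paper argues more directly: since every curve of $n$ not in $\partial f(Z) \cup \partial f(W)$ lies in an intermediate punctured annulus, the added curve either bounds an intermediate punctured disk or cobounds an intermediate punctured annulus with a curve of $m$, so $F$ kills or absorbs it. Both routes work; yours is slightly less explicit but sound.

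For Fact~(3) there is a genuine gap. Your setup for Lemma~\ref{lemma:putman_trick} is fine: the reduction to a single fiber $F^{-1}(F(\partial Z))$ via transitivity of $F_*$ on $\FO$ is valid, the group $G = F_*^{-1}\bigl(\mathrm{Stab}_{\pmcg(\Sigma)}(F(\partial Z))\bigr)$ does act on that fiber, and your verification of Putman's first hypothesis (joining any $m$ to $\partial f(Z)$ through $m \cup \partial f(Z)$) goes through. But Putman's second hypothesis is precisely the point, and you leave it as a sketch: you would need an explicit generating set for $G$ — an extension of $\mathrm{Stab}_{\pmcg(\Sigma)}(F(\partial Z))$ by the point-pushing kernel — together with lifts to $\pmcg(S)$ positioned so that each generator carries $\partial Z$ to a multicurve joined to it inside $F^{-1}(F(\partial Z))$. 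You acknowledge this yourself (``this is where the real work lies''), but acknowledging the gap does not close it, and the point-pushing factor in $G$ makes this nontrivial: point-pushes along loops that cross $\partial Z$ essentially can move $\partial Z$ quite drastically, and showing the resulting image cobounds only intermediate annuli with $\partial Z$ (so that a short refining path exists) requires an argument you have not supplied. The paper avoids the need for any generating set by arguing directly on multicurves: a double induction, outer on $i(m,n)$ (surgering over an innermost punctured bigon, which necessarily contains only intermediate punctures since $F(m)=F(n)$, as in Claim~\ref{claim:pull-back is connected: induction}) and inner on a secondary count $K(m,n)$ of punctures trapped in intermediate annuli between the two multicurves (Claim~\ref{claim:pull-back is connected: base case}). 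The nontrivial content in the paper's version is the change-of-coordinates verification that the surgered multicurve $m'$ stays in $\PDis(Z,W)$, which is carried out explicitly. You would need to either complete the Putman verification or switch to the paper's surgery induction; as written the proof is incomplete.
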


\begin{proof}
Our proof has two steps. First we show that each connected component of $\PDis(Z,W)$ is contained in some $F^{-1}(\mu)$,  then we  prove each $F^{-1}(\mu)$ is connected.

\begin{claim}\label{claim:connected components have the same image}
If $m, n \in \PDis(Z,W)$ are joined by an edge then $F(m)=F(n)$, and hence $m$ and $n$ are both in $F^{-1}(\mu)$ for some $\mu \in \FO$.
\end{claim}

\begin{proof}
Without loss of generality, $n$ is obtained from $m$ by adding a curve $a$.
Let $f \in \pmcg(S)$ such that $f(Z)$ and $f(W)$ are contained in different components of $S \sminus n$. Thus, every curve of $n$ that is not a curve of $\partial f(Z)$ or $\partial f(W)$ must be contained in $S \sminus (f(Z) \cup f(W))$.
Since $f$ fixes each puncture of $S$, each component of $S \sminus (f(Z) \cup f(W))$ is an intermediate punctured annulus. Therefore $a$ either bounds an intermediate punctured disk  or bounds an intermediate punctured annulus with a curve of $m$.  Hence, $F(a)$ must either be homotopically trivial or be isotopic to  a curve in $F(m)$, and we have that $F(n)$ must equal $F(m)$.
\end{proof}

Inductively,  Claim~\ref{claim:connected components have the same image} implies that any two vertices that are contained in the same connected component of $\PDis(Z,W)$ have the same image under $F$.
We will now establish that each $F^{-1}(\mu)$ is connected.
We want to show that if $m$ and $n$ are two vertices of $\PDis(Z,W)$ so that $F(m)=F(n)=\mu$, then they are connected by a path in $F^{-1}(\mu)$.
We will use  induction on the intersection number $i(m, n)$.
The following claim is the base case of the induction.

\begin{claim}\label{claim:pull-back is connected: base case}
Let $m$, $n$ be vertices in $F^{-1}(\mu)$ so that $i(m,n)=0$.
Then there exists a path joining $m$ and $n$ in $F^{-1}(\mu)$. 
\end{claim}

\begin{proof}
Since $m$ and $n$ are vertices of $\PDis(Z, W)$, there exist $f,g \in \pmcg(S)$ so that $f(Z)$ and $f(W)$ are contained in different components of $S \sminus m$ and $g(Z)$ and $g(W)$ are contained in different components of $S \sminus n$.

If $S \sminus m$ has more than two components, we may remove curves until $S \sminus m$ has exactly two components, one containing $f(Z)$ and another containing $f(W)$.
This procedure does not change that $m$ is in $F^{-1}(\mu)$, nor increase $i(m,n)$.
We can similarly ensure that $S \sminus n$ has exactly two components.
In particular, no component of $S \sminus m$ or $S \sminus n$ is an intermediate punctured disk.

Since $F(m \cup n) = \mu$ and $i(m,n)=0$, every curve of $m$ that is not a curve of $n$ must bound an intermediate punctured annulus with some curve of $n$.
Define $K(m, n)$ to be the total number of punctures contained in the intermediate punctured annuli components of $S \sminus (m \cup n)$.

If $K(m,n)=0$, then $m=n$ and we are done.
We hence assume that $K(m, n)>0$.
If we can find a vertex $m' \in \PDis(Z,W)$ such that
\begin{itemize}
\item $F(m')=\mu$ with $i(m',n) =0$,
 \item $S \sminus m'$  has exactly two components,
\item $m'$ is connected to $m$ in $F^{-1}(\mu)$,
\item $K(m',n)<K(m,n)$,
\end{itemize}
then induction on $K(m,n)$ will show that $m$ is connected to $n$ in  $F^{-1}(\mu)$.

Now, we know that $S \sminus m$ has exactly two components: one, $U$, containing $f(Z)$ and one, $V$, containing~$f(W)$.
At least one of $U$ or $V$ must contain a curve of $n$ as a non\hyp{}peripheral curve, for if this were not the case, then every curve of $n$ would be isotopic to some curve of $m$ and $K(m,n)$ would be~0.
Without loss of generality, say that $U$ contains a curve $d$ of~$n$.
The curve $d$ is separated from a curve $c$ of $m$ by an intermediate punctured annulus.

If the intermediate punctured annulus between $d$ and $c$ contains exactly one puncture, let $c' = d$. Otherwise, let $c'$ be any curve on this annulus that cobounds exactly one intermediate puncture with $c$.

Let $m' = (m \cup c') \sminus c$. 
We can immediately check several of the conditions we want $m'$ to satisfy.
We have that $F(m')=F(m)=\mu$ with $i(m',n)=0$, that $S \sminus m'$ has exactly two components, and that $K(m',n)<K(m,n)$.
We still need to check that $m'$ is a vertex of $\PDis(Z, W)$ and that $m'$ is connected to $m$ by a path in $F^{-1}(\mu)$.
We will achieve this by proving that both $m'$ and $m \cup c'= m' \cup c$ are vertices of $\PDis(Z,W)$.

Recall, $S \sminus (Z \cup W)$ is a collection of disjoint intermediate punctured annuli $A_1,\dots, A_k$. Label each intermediate puncture of $S$ by the $A_i$ that contains it.  This labeling is $\pmcg(S)$-invariant as the elements of $\pmcg(S)$ fix each puncture of $S$. 
Without loss of generality, assume the puncture cobounded by $c'$ and $c$ is labeled by $A_1$. 

Since $f(Z)$ and $f(W)$ are respectively contained in the two components of $S \sminus m$, the multicurve $f^{-1}(m)$  has components $a_1,\dots, a_\ell$ so that $a_i$ is a curve on $A_i$ (possibly peripheral in the annulus) for each $i\in \{1,\dots,k\}$ and $a_i \in \partial Z \cap \partial W$ for each $i \in \{k+1,\dots,\ell\}$ (if $\ell =k$, then all $a_i$ are as in the first case). Hence we can find curves $u_1,\dots,u_\ell$ in $U$ (possibly peripheral in $U$) and $v_1,\dots, v_\ell$ in $V$ (possibly peripheral in $V$) so that 
\begin{itemize} 
    \item  $u_i$ and $v_i$ cobound a punctured annulus that contains exactly all the intermediate punctures labeled $A_i$ for each $i \in \{1, \dots, k\}$,
    \item  $u_i = v_i = f(a_i)$ for $i\in \{k+1,\dots, \ell\}$.
\end{itemize}
Note, the intermediate annuli cobounded by $u_i$ and $v_i$ in the first case might not be $f(A_i)$. 
We can moreover choose $u_1$ and $v_1$ so that $c'$ and $c$ are each either equal to $u_1$ and $v_1$ or contained in the intermediate annulus between $u_1$ and $v_1$. Figure \ref{figure:finding_h} demonstrates how finding $u_i$ and $v_i$ can be facilitated by gathering all the intermediate punctures of the same type around a unique curve in $m$.

\begin{figure}[ht]
   \centering
   \definecolor{Purple}{RGB}{190,0,255}
   \definecolor{Grey}{gray}{.55}
   \begin{tikzpicture}
    \node[anchor=south west,inner sep=0] (image) at (0,0) {\includegraphics[scale=1.5]{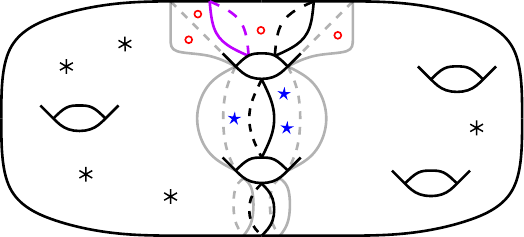}};
    \begin{scope}[x={(image.south east)},y={(image.north west)}]
     %   \draw[help lines,xstep=.1,ystep=.1] (0,0) grid (1,1);
      % \foreach \x in {0,1,...,9} { \node [anchor=north] at (\x/10,0) {0.\x}; }
      % \foreach \y in {0,1,...,9} { \node [anchor=east] at (0,\y/10) {0.\y}; }
       
       \node at (.5,-.05) {$m$};
       \node at (.42,.1) {\color{Grey}$u_3$};
       \node at (.59,.1) {\color{Grey}$v_3$};
       \node at (.6,1.05) {$c$};
       \node at (.4,1.07) {\color{Purple}$c'$};
       \node at (.3,.72) {\color{Grey}$u_1$};
       \node at (.7,.72) {\color{Grey} $v_1$};
       \node at (.34,.5) {\color{Grey}$u_2$};
       \node at (.66,.5) {\color{Grey}$v_2$};
    \end{scope}
    \end{tikzpicture}
    \caption{The red $\circ$ punctures are labeled by the annulus $A_1$, the blue $\star$ punctures are labeled by the annulus $A_2$. The black $\ast$ punctures are not intermediate.}
    \label{figure:finding_h}
\end{figure}

Now, $S \sminus (u_1 \cup \dots \cup  u_\ell \cup v_1 \cup \dots \cup v_\ell)$ contains $k$ components that are intermediate annuli, each of which contains exactly  all the intermediate punctures labeled by $A_i$. Hence, the ``change of coordinates principle'' \cite[Section 1.3]{primer} says there is some $h \in \pmcg(S)$ so that the components of $S \sminus (u_1 \cup \dots \cup  u_\ell \cup v_1 \cup \dots \cup v_\ell)$ are exactly $h(A_1), \dots, h(A_k)$ plus $h(Z)$ and $h(W)$. By construction, each curve of $m' \cup c$ is either contained in some $h(A_i)$ (possibly peripheral in $h(A_i)$) or is equal to a curve in $\partial h(Z) \cap \partial h(W)$. Hence, $h(Z)$ and $h(W)$ are contained in two different components of $S \sminus (m' \cup c)$. Similarly, $h(Z)$ and $h(W)$ are each contained in different components of $S\sminus m'$.  Thus, $m'$ and $m'\cup c$ are vertices of $\PDis(Z,W)$.
This connects $m'$ to $m$ by a path in $F^{-1}(\mu)$, completing our proof by induction.
\end{proof}

\begin{claim}\label{claim:pull-back is connected: induction}
Let $m$, $n$ be any two vertices in $F^{-1}(\mu)$.
Then there exists a path joining $m$ and $n$ in $F^{-1}(\mu)$.
\end{claim}

\begin{proof}
We will use an induction on $i(m, n)$.
The base case is when $i(m, n)=0$, and has just been verified in Claim~\ref{claim:pull-back is connected: base case}.
Now suppose $i(m, n)>0$.

Since we know that $m$ and $n$ both map to $\mu$ when we fill in the punctures of $S \sminus (Z\cup W)$, we must have an innermost punctured bigon $B$ bounded by an arc of $m$ and an arc of $n$.
We can surger $m$ across $B$  as shown in Figure~\ref{figure:surgery_across_puncture} to get a multicurve $m'$ such that:
\begin{itemize}
    \item $i(m', n) < i(m,n)$
    \item $i(m', m)=0$
    \item $F(m')=\mu$.
\end{itemize}

\begin{figure}[ht]
    \centering
    \def\svgscale{1}
    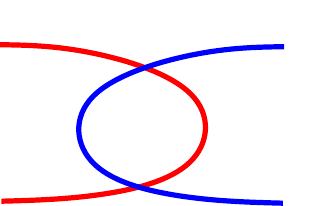
    \caption{The surgery of $m$ to $m'$ across a punctured bigon.}
    \label{figure:surgery_across_puncture}
\end{figure}

\noindent 
We claim that $m'$ is a vertex of $\PDis(Z,W)$. Let $c'$ be the single curve of $m'$ that is not a curve of $m$. Let $A$ be the punctured annulus cobounded by $c'$ and a curve of $m$. The punctures of $A$ are precisely the punctures of the innermost bigon $B$ between $m$ and $n$. Since $F(m) = F(n)$, any puncture contained in the  bigon $B$ is intermediate. Hence $A$ is an intermediate annulus. Let $\Pi$ be the set of punctures of $A$, and  let $f \in \pmcg(S)$ so that $f(Z)$ and $f(W)$ are  contained in different components of $S \sminus m$.

Since $A$ is intermediate, there exists a curve $c$ so that \begin{itemize}
    \item $c$ is contained in $S\sminus f^{-1}(m)$;
    \item $c$ is disjoint from both $Z$ and $W$;
    \item $c$  and a curve of $f^{-1}(m)$ cobound an intermediate annulus  whose punctures are exactly $\Pi$.
\end{itemize}
For such a curve $c$, the complement of $c$ in $S \sminus f^{-1}(m)$ is homeomorphic to the complement of $c'$ in  $S \sminus m$. Hence, there is an element $h \in \pmcg(S)$ so that $h(f^{-1}(m) \cup c) = m \cup c'$. Since $c$ is disjoint from both $Z$ and $W$, we have that $h(Z)$ and $h(W)$ are each contained in a different component of $S \sminus (m \cup c')$. This implies that $h(Z)$ and $h(W)$ are in different component of $S \sminus m'$ as well. 
 Therefore, $m'$ is a vertex of $\PDis(Z, W)$ and thus also a vertex of $F^{-1}(\mu)$.

Now, $m'$ is connected to $m$ in $F^{-1}(\mu)$ because $i(m',m) =0$ (Claim~\ref{claim:pull-back is connected: base case}) and $m'$ is  connected to $n$ in $F^{-1}(\mu)$ by   the induction hypothesis. Therefore $m$ is connected to~$n$ in $F^{-1}(\mu)$ as desired.
\end{proof}

Claim \ref{claim:connected components have the same image} implies each connected component of $\PDis(Z,W)$ is contained in $F^{-1}(\mu)$ for some $\mu \in \FO$, while Claim \ref{claim:pull-back is connected: induction} implies that for each $\mu \in \FO$, $F^{-1}(\mu)$ is contained in a single connected component of $\PDis(Z,W)$. Thus, we have the conclusion of Lemma~\ref{lemma:connectedcomponents}. 
\end{proof}

As shown in Lemma \ref{lemma:connected_components_are_thick_of_order_1}, each connected component of $\PDis(Z,W)$ will produce a thick of order at most~$1$ subset of $\ksep_\mf{S}(S)$. Since the connected components of $\PDis(Z,W)$ are precisely the fibers of elements of $\FO$ under the puncture\hyp{}filling  map $F$, these thick of order at most~$1$ subsets are indexed by the elements of $\FO$.

\begin{definition}
Given $\mu\in\FO$, define $\mc{X}(\mu)=\bigcup_{m\in F^{-1}(\mu)}P(m)$.
\end{definition}

\begin{lemma}\label{lemma:Xs are thick of order 1}
For each $\mu\in\FO$, $\mc{X}(\mu)$ is thick of order at most~1. \qed
\end{lemma}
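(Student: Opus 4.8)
The plan is to obtain this as an immediate consequence of Lemma~\ref{lemma:connectedcomponents} together with Lemma~\ref{lemma:connected_components_are_thick_of_order_1}. First I would recall that, by Lemma~\ref{lemma:connectedcomponents}, the subgraph $F^{-1}(\mu)$ is a connected component of $\PDis(Z,W)$; in particular $F^{-1}(\mu)$ is a connected graph. Since $\PDis(Z,W)$ is by definition the full subgraph of $\Dis(Z,W)$ spanned by the relevant vertices, every edge of $F^{-1}(\mu)$ is also an edge of $\Dis(Z,W)$, so the vertex set of $F^{-1}(\mu)$ is a connected subset of $\Dis(Z,W)$ in the sense needed by Lemma~\ref{lemma:connected_components_are_thick_of_order_1}.

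Next I would note that the standing hypotheses of this section put us exactly in the setting of Lemma~\ref{lemma:connected_components_are_thick_of_order_1}: the witnesses $Z$ and $W$ were fixed to be disjoint elements of $\mf{S} = \Wit\bigl(\G(S)\bigr)$ with $Z^c$ and $W^c$ connected, which is precisely what that lemma requires. Applying Lemma~\ref{lemma:connected_components_are_thick_of_order_1} with $\Omega = F^{-1}(\mu)$ then shows that $\bigcup_{m \in F^{-1}(\mu)} P_\mf{S}(m)$ is a thick of order at most~$1$ subset of $\ksep_\mf{S}(S)$. Since this union is exactly $\mc{X}(\mu)$, the lemma follows.

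The only point that needs a moment's care is the passage from ``$F^{-1}(\mu)$ is connected as a subgraph of $\PDis(Z,W)$'' to ``$F^{-1}(\mu)$ is a connected subset of $\Dis(Z,W)$'', and this is handled by the remark that $\PDis(Z,W)$ is an induced (full) subgraph of $\Dis(Z,W)$, so no edges are lost in passing between the two. There is no genuine obstacle in this lemma: all of the substantive work has already been carried out in Lemma~\ref{lemma:connected_components_are_thick_of_order_1} (the thickness and thick-chaining of product regions along a connected subgraph of $\Dis(Z,W)$) and in Lemma~\ref{lemma:connectedcomponents} (the identification of the connected components of $\PDis(Z,W)$ with the fibers of $F$).
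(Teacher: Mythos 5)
Your proof is correct and matches the paper's (unwritten, but clearly intended) argument: the lemma is meant to follow immediately from Lemma~\ref{lemma:connectedcomponents} (identifying $F^{-1}(\mu)$ as a connected component of $\PDis(Z,W)$) together with Lemma~\ref{lemma:connected_components_are_thick_of_order_1}, applied with $\Omega = F^{-1}(\mu)$. Your observation that $\PDis(Z,W)$ is an induced subgraph of $\Dis(Z,W)$, so that a connected component of the former is a connected subset of the latter in the sense required, is exactly the small book-keeping step that justifies the $\qed$; nothing further is needed.
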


To prove that $\ksep_\mf{S}(S)$ is thick of order at most~$2$, we need to be able to thickly chain together any two of the the thick of order at most~$1$ subsets $\mc{X}(\mu)$.  
Echoing the use of the connected subsets of $\PDis(Z,W)$ to understand the chaining of thick of order~$0$ subsets, we will use the connectivity of the following graph to guide the thick chaining of the $\mc{X}(\mu)$'s.

\begin{definition}
Let $\F$ be the graph with:
\begin{itemize}
    \item a vertex for every multicurve in $\FO$
    \item an edge between two vertices if they intersect at most 4 times on $\Sigma$.
\end{itemize}
\end{definition}

\begin{lemma}\label{lemma:F graph is connected}
The graph $\F$ is connected.
\end{lemma}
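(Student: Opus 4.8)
The plan is to prove that $\F$ is connected using Putman's trick (Lemma~\ref{lemma:putman_trick}), with $\pmcg(\Sigma)$ acting on $\F$ and base vertex $\mu_0 = F(\partial Z)$. The first step is to record that $\FO$ is exactly the $\pmcg(\Sigma)$\hyp{}orbit of $\mu_0$. One inclusion is the second item of Lemma~\ref{lemma: FO orbit}. For the reverse, given $\psi \in \pmcg(\Sigma)$, use the surjectivity of $F_* \colon \pmcg(S) \to \pmcg(\Sigma)$ from Proposition~\ref{proposition:capping commutes} to pick $f \in \pmcg(S)$ with $F_*(f) = \psi$; since $S \sminus \partial Z = Z \sqcup Z^c$ with $Z^c \supseteq W$, we have $\partial Z \in \PDis(Z,W)$, hence also $f(\partial Z) \in \PDis(Z,W)$, and $F(f(\partial Z)) = F_*(f)(F(\partial Z)) = \psi(\mu_0)$, so $\psi(\mu_0) \in \FO$. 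Recall as well (applying Lemma~\ref{lemma: FO orbit} with $m = \partial Z$) that $\Sigma \sminus \mu_0 = F(Z) \sqcup F(W)$, and that after absorbing the now unpunctured annuli $\Sigma = F(Z) \cup F(W)$, so $\mu_0$ is a separating multicurve of a fixed topological type on $\Sigma$. With this, the first hypothesis of Lemma~\ref{lemma:putman_trick} is automatic, since every vertex of $\F$ lies in $\pmcg(\Sigma) \cdot \mu_0$.

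It then remains to produce a generating set $X$ for $\pmcg(\Sigma)$ such that $\mu_0$ is connected to $g(\mu_0)$ in $\F$ for every $g \in X \sqcup X^{-1}$. Recall that in the setting of Theorem~\ref{theorem:thick_of_order_2_case} we have assumed $S$, and hence $\Sigma$, has genus at least~$1$; we therefore take $X$ to be the standard generating Dehn twists of $\pmcg(\Sigma)$ from Theorem~\ref{theorem:humphries_generators} (if the genus is at least~$2$) or Theorem~\ref{theorem:gervais_generators} (if the genus is~$1$). As in the proof of Theorem~\ref{theorem:thick case no separating annulus}, the key point will be to position a set of standard generating curves (Definition~\ref{definition:generating curves}) in good position with respect to $\mu_0$: each generating curve should either be disjoint from $\mu_0$, or meet $\mu_0$ in exactly two points, either crossing two distinct components of $\mu_0$ once each inside a twice\hyp{}holed torus, or crossing a single component of $\mu_0$ twice inside a four\hyp{}holed sphere. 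This is arranged by building the handle\hyp{}and\hyp{}puncture picture of $\Sigma$ compatibly with the splitting $\Sigma = F(Z) \cup F(W)$: group the genus and punctures of $F(Z)$ at one end of the chain and those of $F(W)$ at the other, with $\mu_0$ as the separating multicurve between them, so that the only standard curves meeting $\mu_0$ are those at the junction between the two parts together with the puncture\hyp{}connecting curves $e_j$ that must reach across $\mu_0$, which we route minimally.

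Granting such a position, verification of the second hypothesis of Lemma~\ref{lemma:putman_trick} is a local computation. If a generating curve $a$ is disjoint from $\mu_0$ then $T_a^{\pm 1}(\mu_0) = \mu_0$. Otherwise $T_a^{\pm 1}$ fixes every component of $\mu_0$ disjoint from $a$ and moves only the one or two components that $a$ crosses, whose images remain disjoint from the rest of $\mu_0$; in either local configuration a direct twist computation, exactly as in the twice\hyp{}holed\hyp{}torus computation in the proof of Theorem~\ref{theorem:thick case no separating annulus}, gives $i(\mu_0, T_a^{\pm 1}(\mu_0)) \le 4$. Thus $\mu_0$ is joined by an edge to $g(\mu_0)$ for every $g \in X \sqcup X^{-1}$ (the constant~$4$ in the definition of $\F$ is calibrated precisely to these two local pictures), and Lemma~\ref{lemma:putman_trick} yields that $\F$ is connected.

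I expect the main obstacle to be the positioning step of the second paragraph: showing that a standard generating set for $\pmcg(\Sigma)$ can always be drawn so that every generating curve meets the fixed separating multicurve $\mu_0 = \partial F(Z)$ in one of the two prescribed minimal ways. This will need a case analysis over the topological types of $F(Z)$ and $F(W)$---in particular the cases where one of them is a punctured disk, so that $\mu_0$ is a single curve and the bridging generators must cross it twice rather than crossing two curves once each---and is more delicate than the corresponding arrangement in Theorem~\ref{theorem:thick case no separating annulus}. Should a particular generator resist being made adjacent to $\mu_0$, it would suffice, by Putman's trick, to connect $\mu_0$ to $g(\mu_0)$ by a path in $\F$ through an intermediate element of the orbit $\pmcg(\Sigma) \cdot \mu_0$.
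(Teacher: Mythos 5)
Your proposal follows essentially the same approach as the paper: apply Putman's trick with the group $\pmcg(\Sigma)$ and a base vertex $F(\partial Z)$, observe via Lemma~\ref{lemma: FO orbit} that $\FO$ is contained in the $\pmcg(\Sigma)$-orbit of the base vertex so that the first hypothesis of Lemma~\ref{lemma:putman_trick} holds, arrange a set of standard generating curves so that each intersects the base multicurve at most twice, and conclude that each generating Dehn twist $\phi$ satisfies $i(\mu_0, \phi^{\pm 1}(\mu_0)) \le 4$, which produces the required edges of $\F$. The only differences are cosmetic: you verify explicitly (via the surjectivity of $F_*$) that the $\pmcg(\Sigma)$-action actually preserves the vertex set $\FO$, which the paper treats as implicit, and you spell out the two possible local intersection pictures with $\mu_0$ rather than referring to the analogous arrangement pictured in Figure~\ref{figure:gen_twists_separated_by_pants_Humphries}; neither of these changes the substance of the argument.
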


\begin{proof}
We will use Lemma~\ref{lemma:putman_trick}, with the group $\pmcg(\Sigma)$. Fix  $\mu \in \FO$. By Lemma \ref{lemma: FO orbit}, every vertex of $\F$ is in the $\pmcg(\Sigma)$-orbit of $\mu$.
 
For the second hypothesis of Lemma~\ref{lemma:putman_trick}, arrange a collection of standard generating curves for $\pmcg(\Sigma)$ so that each generating curve intersects the multicurve $\mu$ at most twice. This can be done  similarly to the examples in Figure~\ref{figure:gen_twists_separated_by_pants_Humphries}. Let $X$ be the set of Dehn twists around this collection of standard generating curves. For any $\phi \in X \sqcup X^{-1}$, we have that $\mu$ and $\phi(\mu)$ are joined by an edge of $\F$ because $i(\mu, \phi(\mu)) \leq 4$. Thus, $\F$ is connected. 
\end{proof}

Since $\F$ is connected, the remaining step for proving $\ksep_\mf{S}(S)$ is thick of order at most $2$ is to show that an edge in $\F$ between $\mu$ and $\nu$ implies $\mc{X}(\mu)$ and $\mc{X}(\nu)$ have infinite diameter coarse intersection.

\begin{proposition} \label{proposition: adjacent thick of order 1 pieces have thick intersection}
There exists $K$ depending only on $S$ such that if $\mu$ and $\nu$ are adjacent vertices of $\F$, then $\mc{N}_K(\mc{X}(\mu))\cap \mc{N}_K(\mc{X}(\nu))$ has infinite diameter. 
\end{proposition}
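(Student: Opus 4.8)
The plan is to reduce the statement to a statement about multicurves on $S$ that simultaneously witness membership in $F^{-1}(\mu)$ and $F^{-1}(\nu)$. The key observation is that $\mathcal{X}(\mu)$ contains $P_\mathfrak{S}(m)$ for every $m \in F^{-1}(\mu)$, and $P_\mathfrak{S}(m)$ has infinite diameter (indeed is thick of order $0$) by Corollary~\ref{corollary:product regions}, since $S \sminus m$ contains the two disjoint witnesses $f(Z), f(W)$. So it suffices to find a single multicurve $p$ on $S$ such that $P_\mathfrak{S}(p)$ is uniformly close to both $\mathcal{X}(\mu)$ and $\mathcal{X}(\nu)$; for this it is enough to find $m \in F^{-1}(\mu)$ and $n \in F^{-1}(\nu)$ with a common refinement, i.e.\ a multicurve $p$ with $m \subseteq p$ and $n \subseteq p$ (so $P_\mathfrak{S}(p) \subseteq P_\mathfrak{S}(m) \cap P_\mathfrak{S}(n) \subseteq \mathcal{X}(\mu) \cap \mathcal{X}(\nu)$), together with the fact that $P_\mathfrak{S}(p)$ has infinite diameter. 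Infinite diameter of $P_\mathfrak{S}(p)$ follows from Corollary~\ref{corollary:product regions} provided $S \sminus p$ still contains a disjoint pair of witnesses, which we arrange by keeping $p$ small enough that it is refined by some $\partial f(Z) \cup \partial f(W)$.

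First I would use the hypothesis $i(\mu,\nu) \le 4$ on $\Sigma$ to build compatible representatives on $S$. Pick any $m_0 \in F^{-1}(\mu)$; by Lemma~\ref{lemma: FO orbit} there is $f \in \pmcg(S)$ with $S \sminus m_0$ having components containing $f(Z)$ and $f(W)$, and $F(\partial f(Z)) = \mu$. Since $\nu$ is in the $\pmcg(\Sigma)$-orbit of $F(\partial Z)$ and intersects $\mu$ at most $4$ times, realize $\nu$ on $\Sigma$ disjointly-as-possible from $\mu$; the finitely many intersection points of $\mu$ with $\nu$ can all be pushed into the intermediate punctured annuli and punctured disks of $S \sminus (f(Z) \cup f(W))$ — concretely, lift $\nu$ to a multicurve $n_0$ on $S$ whose curves either coincide with curves of $\partial f(Z) \cup \partial f(W)$ or live in (neighborhoods of) the intermediate pieces, so that $n_0 \in F^{-1}(\nu)$. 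The point is that off the intermediate region, $\mu$ and $\nu$ differ only by the essential surgeries witnessing the bounded intersection, and I would arrange these so that $m_0$ and $n_0$ have a common refinement $p$ that still contains $\partial f(Z) \cup \partial f(W)$ as a subset (possibly after enlarging $m_0$ and $n_0$ within their fibers using the edge/flip moves of $\ksep$, which do not change the $F$-image by Claim~\ref{claim:connected components have the same image}).

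Then $P_\mathfrak{S}(p) \subseteq P_\mathfrak{S}(m_0) \cap P_\mathfrak{S}(n_0) \subseteq \mathcal{X}(\mu) \cap \mathcal{X}(\nu)$, and since $S \sminus p$ has a component containing $f(Z)$ and another containing $f(W)$ — both witnesses — Corollary~\ref{corollary:product regions} gives that $P_\mathfrak{S}(p)$ has infinite diameter. Taking $K$ to be any constant at least the quasi-isometry constants relating $\ksep_\mathfrak{S}(S)$ to itself plus the diameter bound on the number of $\pmcg(S)$-orbits of such configurations (so that $K$ depends only on $S$), we conclude $\mathcal{N}_K(\mathcal{X}(\mu)) \cap \mathcal{N}_K(\mathcal{X}(\nu))$ has infinite diameter.

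The main obstacle I expect is the bookkeeping in the second paragraph: verifying that one really can choose the lifts $m_0, n_0$ (or enlargements within their fibers) so that they admit a common refinement $p$ that is still refined by $\partial f(Z) \cup \partial f(W)$. This requires carefully tracking how the at-most-$4$ intersection points of $\mu$ and $\nu$ on $\Sigma$ interact with the intermediate annuli and punctured disks on $S$ — in the worst case an intersection point of $\mu$ and $\nu$ lies inside a curve of $\partial f(Z)$ that is "flipped" in passing to $n_0$, and one must check that the flip takes place inside a component of $S \sminus (f(Z) \cup f(W))$ large enough to contain an essential curve (using that $Z$ and $W$ are minimal and $S$ has genus at least $1$, so that the adjacent pieces of $\partial Z$ are not isolated). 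Handling this likely needs a short case analysis on whether each intersection of $\mu,\nu$ is resolved by a flip in an intermediate piece versus in $Z$ or $W$ itself, together with the change-of-coordinates principle to move everything into a standard position as in the proof of Lemma~\ref{lemma:connectedcomponents}.
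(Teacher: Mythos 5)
There is a genuine gap at the heart of the proposal. You aim to find $m_0 \in F^{-1}(\mu)$ and $n_0 \in F^{-1}(\nu)$ with a \emph{common refinement} $p$ (i.e.\ $m_0 \subseteq p$ and $n_0 \subseteq p$), so that $P_\mf{S}(p) \subseteq \mc{X}(\mu) \cap \mc{X}(\nu)$. But $m_0$ and $n_0$ can both be sub-multicurves of a single multicurve $p$ only if $m_0$ and $n_0$ are disjoint, and that is impossible whenever $i(\mu,\nu)>0$: the puncture-filling map can only decrease (geometric) intersection number, since disjoint representatives on $S$ remain disjoint in $\Sigma$, so $i(m_0,n_0)\ge i(F(m_0),F(n_0))=i(\mu,\nu)$. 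Adjacent vertices of $\F$ are only required to intersect at most $4$ times, not $0$, so in general there is no lift to disjoint multicurves. In fact, for the same reason $\mc{X}(\mu)\cap\mc{X}(\nu)$ is \emph{empty} whenever $i(\mu,\nu)>0$: a point in the intersection would be a multicurve containing some $m\in F^{-1}(\mu)$ and some $n\in F^{-1}(\nu)$ as disjoint sub-multicurves. So the proposition genuinely requires passing to coarse neighborhoods rather than exhibiting an infinite-diameter subset of the literal intersection, and the obstacle you flag as ``bookkeeping'' in the final paragraph is not a technical wrinkle but a structural obstruction to the strategy.

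The paper's proof handles this by working coarsely throughout. It first lifts $\mu$, $\nu$ to $m\in F^{-1}(\mu)$, $n\in F^{-1}(\nu)$ with $i(m,n)\le4$ (Claim~\ref{claim:adjecent vertices lift to bounded intersection number}), then extends $m$ and $n$ to vertices $x\in P_\mf{S}(m)$ and $y\in P_\mf{S}(n)$ whose intersection number is bounded by a constant depending only on $\xi(S)$ (Claim~\ref{claim:promoting to K(S)}); this yields a uniform bound $d_{\ksep_\mf{S}(S)}(x,y)\le K$, so $x\in\mc{N}_K(\mc{X}(\mu))\cap\mc{N}_K(\mc{X}(\nu))$. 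Infinite diameter does not come from a single product region sitting inside the intersection, but from acting on $x$ by a pseudo-Anosov element $f$ of $\mc{P}=\ker(F_*)$ (Claims~\ref{claim:push_fixes_X(mu)} and~\ref{claim:push_contains_a_pseudo-Anosov}): such $f$ stabilizes each $\mc{X}(\alpha)$ setwise, and by Corollary~\ref{corollary:pA_are_undistorted} the orbit $\langle f\rangle\cdot x$ is an unbounded subset of $\mc{N}_K(\mc{X}(\mu))\cap\mc{N}_K(\mc{X}(\nu))$. This use of the kernel of the puncture-filling homomorphism and Kra's theorem is the key ingredient your proposal is missing.
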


\begin{proof}

The proof of the proposition has three steps.
First we show that if $\mu,\nu \in \F$ are joined by an edge, then we can lift $\mu$ and $\nu$ to multicurves $m \in F^{-1}(\mu)$ and $n \in F^{-1}(\nu)$ on $S$ that intersect at most $4$ times (Claim \ref{claim:adjecent vertices lift to bounded intersection number}).
Next, we show that we can add curves to each of $m$ and $n$  to produce multicurves $x \in P_\mf{S}(m) \subseteq \mc{X}(\mu)$ and $y \in P_\mf{S}(n) \subseteq \mc{X}(\nu)$  whose intersection number is bounded by a number $N$ depending only on $S$ (Claim \ref{claim:promoting to K(S)}).  Because there are only finitely many $\pmcg(S)$-orbits of pairs of multicurves intersecting at most $N$ times, the bound $N$ on the intersection number between $x$ and $y$ produces a bound on the distance between $x$ and $y$ that depends only on $S$.
Finally, we find a pseudo-Anosov element $f \in \pmcg(S)$ that stabilizes both $\mc{X}(\mu)$ and $\mc{X}(\nu)$ (Claims \ref{claim:push_fixes_X(mu)} and \ref{claim:push_contains_a_pseudo-Anosov}). This implies the $f$-orbit of $x$ is an infinite diameter subset of the coarse intersection of $\mc{X}(\mu)$ and $\mc{X}(\nu)$.

\begin{claim}\label{claim:adjecent vertices lift to bounded intersection number}
Let $\mu$ and $\nu$ be adjacent vertices in $\F$. There exist $m \in F^{-1}(\mu)$ and $n \in F^{-1}(\nu)$ so that $i(m,n) \leq 4$ and $S \sminus m$ and $S \sminus n$ both have exactly two components.
\end{claim}

\begin{proof}
By Lemma \ref{lemma: FO orbit}, there exist $f,g \in \pmcg(S)$ so that $\mu = F(f(\partial Z))$ and $\nu = F(g(\partial Z) )$.  Let $m = f(\partial Z)$ and $n_1 = g(\partial Z)$. Note, $m,n_1 \in \PDis(Z,W)$ as they are in the $\pmcg(S)$-orbit of the boundary of $Z$. If $i(m,n_1) \le 4$, then we are finished, with $m =f(\partial Z)$ and $n = n_1 = g(\partial Z)$. Assume that $i(m,n_1)) > 4$. This implies that $m$ and $n_1$ form an innermost bigon that contains only intermediate punctures, because $i(\mu,\nu) \leq 4$ and $F(m) =\mu$ and $F(n_1) = \nu$. By performing a surgery across this bigon similarly to Figure \ref{figure:surgery_across_puncture}, there exists a multicurve $n_2$ so that $F(n_2) = \nu$ and $i(m,n_2) < i(m,n_1)$. Since the bigon we surgered $n_1$ over contained only intermediate punctures, then as in Claim~\ref{claim:pull-back is connected: induction}, there exists $h \in \pmcg(S)$ so that $S \sminus n_2$ has a component that contains $h(Z)$ and another that contains $h(W)$.
Thus, $n_2 \in \Dis(Z,W)$. If $i(m,n_2) \leq 4$, then we are finished with $m = f(\partial Z)$ and $n= n_2$. Otherwise, we can repeat this process at most $i(m,n_1) - 4$ times until we produce a multicurve $n_k$ so that $i(m,n_k) \leq 4$, $F(n_k) = \nu$, and $n_k \in \PDis(Z,W)$. We are then finished with $m = f(\partial Z)$ and $n = n_k$.
\end{proof}

\begin{claim}\label{claim:promoting to K(S)}
There exists $D$ depending only on $\xi(S)$ so that the following holds for all adjacent vertices $\mu$ and $\nu$ in $\F$. If $m \in F^{-1}(\mu)$ and $n \in F^{-1}(\nu)$ are the vertices of  $\PDis(Z,W)$  obtained in Claim~\ref{claim:adjecent vertices lift to bounded intersection number}, then there exist $x \in P_\mf{S}(m)$ and $y \in P_\mf{S}(n)$ such that $i(x,y) \leq D$
\end{claim}

\begin{proof}
Let $ S \sminus m =U_1 \sqcup U_2 $ and $S \sminus n = V_1 \sqcup V_2$. Note, if $u_i$ is any multicurve defining a pants decomposition of $U_i$, for $i=1,2$, and $v_i$ is any multicurve defining a pants decomposition of $V_i$, then $m \cup u_1 \cup u_2$ and $n\cup v_1 \cup v_2$ are elements of $P_\mf{S}(m)$ and $P_\mf{S}(n)$ respectively. We will show that there always exist choices for $u_i$ and $v_i$ that ensure the intersection number of the two resulting multicurves is uniformly bounded.

Since $n$ is a multicurve and $i(m,n)\le4$, the intersection $n\cap U_i$ is a collection of at most four arcs and at most $\xi(U_i)$ curves.
The number of $\mcg(U_i)$-orbits of pants decompositions of $U_i$ is  bounded in terms of $\xi(S)$. Thus there exists a number $D_1=D_1(\xi(S))$, such that every $\mcg(U_i)$\hyp{}orbit of pants decompositions of~$U_i$ has a representative which intersects $n \cap U_i$ at most $D_1$ times.
For $i=1,2$, we let $u_i$ be any multicurve defining a pants decomposition of $U_i$ with $i(u_i,n\cap U_i) \leq D_1$ and then define $x = m \cup u_1 \cup u_2$. 

Now, $i(x,n)$ will be at most $2D_1 + 4$. Thus, we can repeat the argument above with  $x$ and the $V_i$ to produce a  multicurve $v_i$ defining a pants decomposition of $V_i$, and a number $D_2$ depending only on $\xi(S)$, such that $i(v_i, x \cap V_i) \leq D_2$.
Defining $y = n \cup v_1 \cup v_2$  we have $i(x,y) \leq 2D_2 + 2D_1 +4$. This depends only on $\xi(S)$ as desired.
\end{proof}

We now use Claims \ref{claim:adjecent vertices lift to bounded intersection number} and \ref{claim:promoting to K(S)} to finish the proof of Proposition \ref{proposition: adjacent thick of order 1 pieces have thick intersection}.

Let $\mu,\nu$ be adjacent vertices of $\F$ and let $m \in F^{-1}(\mu)$  and $n \in F^{-1}(\nu)$  be as specified in Claim \ref{claim:adjecent vertices lift to bounded intersection number}. Claim \ref{claim:promoting to K(S)} provides a number $D \geq 0$, depending only on~$\xi(S)$, and vertices $x \in P_{\mf{S}}(m)$, $y \in P_\mf{S}(n)$ with $i(x,y) \leq D$. Since there are finitely many $\pmcg(S)$\hyp{}orbits of pairs of vertices of $\ksep_\mf{S}(S)$ intersecting at most $D$ times, there exists $K \geq 0$ depending only on $S$ such that $d_{\ksep_\mf{S}(S)}(x,y) \leq K$. In particular, $x \in \mc{N}_K(\mc{X}(\mu)) \cap \mc{N}_K(\mc{X}(\nu))$.
We now want to apply an appropriate pseudo\hyp{}Anosov mapping class to $x$ to obtain an infinite diameter subset of $\mc{N}_K(\mc{X}(\mu)) \cap \mc{N}_K(\mc{X}(\nu))$.
This mapping class will need to stabilize $\mc{N}_K(\mc{X}(\mu)) \cap \mc{N}_K(\mc{X}(\nu))$.

Let $\mc{P}$ be the kernel of the homomorphism $F_*\colon \pmcg(S) \to \pmcg(\Sigma)$ induced by filling in the intermediate punctures of $S$. 
We claim that $\mc{P}$ fixes both $\mc{X}(\mu)$ and $\mc{X}(\nu)$ setwise and contains a pseudo-Anosov element.

\begin{claim}\label{claim:push_fixes_X(mu)}
For each $\alpha\in \FO$, the subgroup $\mc{P}$ preserves $\mc{X}(\alpha)$ setwise.
\end{claim}

\begin{proof}
Recall that $\mc{X}(\alpha)=\bigcup_{a\in F^{-1}(\alpha)}P_\mf{S}(a)$. If $f\in\mc{P}=\ker(F_*)$, then for all $a \in F^{-1}(\alpha)$: \[F(f(a))= F_*(f)(F(a))= F(a) = \alpha.\]
Hence $f(a)\in F^{-1}(\alpha)$ for all $f \in \mc{P}$. Since $f$ preserves $F^{-1}(\alpha)$, it also preserves $\mc{X}(\alpha)=\bigcup_{a\in F^{-1}(\alpha)}P(a)$.
\end{proof}

\begin{claim}\label{claim:push_contains_a_pseudo-Anosov}
There exists a pseudo\hyp{}Anosov mapping class in the subgroup $\mc{P}$.
\end{claim}

\begin{proof}
Choose an intermediate puncture $\rho$ of~$S$, and let $\mc{P}_\rho$ be the kernel of the induced map on mapping class groups when we fill in just the puncture~$\rho$.
This is a subgroup of~$\mc{P}$.
By \cite[Theorem~2']{kra}, the subgroup $\mc{P}_\rho$ contains a pseudo\hyp{}Anosov mapping class.
Hence, $\mc{P}$ contains a pseudo\hyp{}Anosov mapping class.
\end{proof}

To finish the proof of Proposition \ref{proposition: adjacent thick of order 1 pieces have thick intersection}, let $f \in \mc{P}$ be a pseudo-Anosov element. Since $f$ preserves $\mc{X}(\mu)$ and $\mc{X}(\nu)$ setwise, we have $f^k(x) \in \mc{N}_K(\mc{X}(\mu)) \cap \mc{N}_K(\mc{X}(\nu))$ for all $k \in \mathbb{N}$.
Since the $f$-orbit of $x$ has infinite diameter in $\ksep_\mf{S}(S)$ by Corollary~\ref{corollary:pA_are_undistorted}, we have that $\mc{N}_K(\mc{X}(\mu)) \cap \mc{N}_K(\mc{X}(\nu))$ must also have infinite diameter.
\end{proof}

\begin{proof}[Proof of Theorem \ref{theorem:thick_of_order_2_case}]
Since the order of thickness is a quasi-isometry invariant, it suffices to prove $\ksep_\mf{S}(S)$ is thick of order at most~$2$.

 Since the action of $\pmcg(S)$ on $\ksep_\mf{S}(S)$ is cobounded, there exists $D$ depending only on $S$ so that $\ksep_\mf{S}(S) \subseteq \bigcup_{\mu \in \F} \mc{N}_D(\mc{X}(\mu))$. By Lemma~\ref{lemma:Xs are thick of order 1}, $\mc{X}(\mu)$ is thick of order at most~$1$ for each $\mu \in \F$. Finally, if $\mu,\nu \in \F$, then there exists a path $\mu=\mu_0, \mu_1, \dots, \mu_k = \nu$ in $\F$ connecting $\mu$ to $\nu$ by Lemma~\ref{lemma:F graph is connected}. By Proposition~\ref{proposition: adjacent thick of order 1 pieces have thick intersection}, there exists $K$ depending only on $S$ so that $\mc{N}_K(\mc{X}(\mu_{i-1})) \cap \mc{N}_K(\mc{X}(\mu_{i}))$ has infinite diameter for each $i \in \{1,\dots,k\}$. Therefore, $\ksep_{\mf{S}}(S)$ is thick of order at most~$2$.
\end{proof}

\section{Examples of the classification} \label{section:applications}

We now discuss how our classification of  hyperbolicity, relative hyperbolicity and thickness applies to specific examples of hierarchical graphs of multicurves. In our first two examples, Theorem \ref{theorem:witnesses_determine_geoemetry}  recovers previously known results from the literature.

\begin{example}[The Pants Graph]
The vertices of the pants graph, $\mc{P}(S)$, are all multicurves that define pants decompositions of  $S$. Two multicurves $x,y \in \mc{P}(S)$ are joined by an edge if  there exist curves $\alpha \in x$ and $\beta \in y$ such that $(x\sminus\alpha) \cup \beta = y$ and $\alpha$ and $\beta$ intersect minimally on the complexity~$1$ component of $S \sminus (x \sminus\alpha)$. The witnesses for $\mc{P}(S)$ are all connected subsurfaces with complexity at least $1$.

The pants graph is known to be hyperbolic when $\xi(S) \leq 2$ \cite{Brock_Farb_Rank}, relatively hyperbolic when $\xi(S) = 3$ \cite{Brock_Masur_WP_Low_complexity}, thick of order~$1$ when $\xi(S) >3$ and $S\neq S_{2,1}$ \cite{Brock_Masur_WP_Low_complexity,Behrstock_Drutu_Mosher_Thickness}, and thick of order at most~$2$ when $S= S_{2,1}$ \cite{Brock_Masur_WP_Low_complexity, sultanthesis}. One can verify that these results match up precisely with the classification in terms of witnesses given in Theorem~\ref{theorem:witnesses_determine_geoemetry}.
\end{example}

\begin{example}[The separating curve graph and the Torelli graph]
The vertices of the separating curve graph,  $\sep(S)$,  are all the separating curves on $S = S_{g,p}$. Two separating curves are joined by an edge if they are disjoint. For surfaces with low complexity that contain separating curves, we ensure the separating curve graph is connected by putting an edge between any two curves that intersect four times for $S = S_{2,0}$  or $S_{1,2}$, or intersect twice for  $S = S_{0,4}$.  The witnesses for $\sep(S)$ are connected subsurfaces $ W\subseteq S$ such that each component of $S \sminus W$  contains no genus and at most  one puncture of~$S$.

When $S= S_{0,p}$, the separating curve graph is equal to the curve graph of $S$ and is therefore hyperbolic \cite{mm1}. Sultan showed that $\sep(S_{2,0})$ is also hyperbolic \cite{sultan_genus_two}.
Previous work of the authors completed the classification, showing that $\sep(S)$ is hyperbolic if $p \geq 3$ or $(g,p) \in \{(2,0),(2,1),(1,2)\}$ \cite{vokessep}, relatively hyperbolic if $p =0$  and $g \ge 3$ or $p=2$ and $g \ge 2$ \cite{russell}, and thick of order at most~$2$ if $p=1$  and $g \ge 3$ \cite{russell_vokes}.  These results again match up with the  classification in terms of witnesses given in Theorem~\ref{theorem:witnesses_determine_geoemetry} and employ special cases of the techniques employed here.

Closely related to the separating curve graph is the Torelli graph introduced by Farb and Ivanov for a closed surface $S=S_{g,0}$ \cite{farbivanov}. The \emph{Torelli graph}, $\mc{T}(S)$, is the graph whose vertices are all separating curves of $S$ plus all bounding pairs of non-separating curves (a pair of disjoint, non-separating curves $c_1$ and $c_2$ form a \emph{bounding pair} if $S \sminus (c_1 \cup c_2)$ is disconnected). There is an edge between two vertices of $\mc{T}(S)$ if the multicurves are disjoint.  The Torelli graph is connected for $g \geq 3$ because the separating curve graph is connected in those cases and every bounding pair of non-separating curves is disjoint from some separating curve. While combinatorially distinct, we find that the Torelli graph has the same coarse geometry as the separating curve graph. 

\begin{lemma}
If $S=S_{g,0}$, then $\Wit\bigl(\sep(S)\bigr) = \Wit\bigl(\mc{T}(S)\bigr)$. In particular, the inclusion of $\sep(S)$ into $\mc{T}(S)$ is a quasi-isometry when $g \geq 3$ and thus $\mc{T}(S)$ is relatively hyperbolic with peripherals quasi-isometric to $\C(S_{0,g+1}) \times \C(S_{0,g+1})$.
\end{lemma}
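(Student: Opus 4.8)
The plan is to prove the equality of witness sets by hand, and then obtain everything else by feeding $\sep(S)$ into the machinery of the paper.

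First I would establish that $\Wit\bigl(\sep(S)\bigr) = \Wit\bigl(\mc{T}(S)\bigr)$. Since $S$ is closed, a connected non-pants subsurface $W\subseteq S$ lies in $\Wit\bigl(\sep(S)\bigr)$ exactly when every component of $S\sminus W$ has no genus: an essential curve contained in a planar complementary component is non-separating in $S$, whereas a complementary component carrying genus contains a separating curve of $S$ with trivial projection to $W$. Because every separating curve of $S$ is a vertex of $\mc{T}(S)$, the inclusion $\sep(S)\subseteq\mc{T}(S)$ immediately gives $\Wit\bigl(\mc{T}(S)\bigr)\subseteq\Wit\bigl(\sep(S)\bigr)$. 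For the reverse inclusion I would take $W\in\Wit\bigl(\sep(S)\bigr)$ and check that $W$ meets every bounding pair. If some bounding pair $\{c_1,c_2\}$ were disjoint from $W$, then (as $W$ is connected) $W$ would lie in one component $A$ of $S\sminus(c_1\cup c_2)=A\sqcup B$; since $c_1$ and $c_2$ are non-isotopic, $B$ is not an annulus, and as $B$ has exactly $c_1,c_2$ for its boundary this forces $B$ to carry genus, so the component of $S\sminus W$ containing $B$ carries genus, contradicting $W\in\Wit\bigl(\sep(S)\bigr)$. Write $\mf{S}$ for this common witness set.

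Next, for $g\ge3$, I would verify that $\mc{T}(S)$ is a hierarchical graph of multicurves in the sense of Definition~\ref{definition:hierarchical}: it is connected (by the remark preceding the lemma), $\mcg(S)$ acts on it by automorphisms, adjacent vertices are disjoint multicurves so Item~(3) holds with $R=0$, and $\Wit\bigl(\mc{T}(S)\bigr)=\mf{S}$ contains no annuli since $\sep(S)$ is hierarchical. By Lemma~\ref{lemma:k is hierarchical} and Proposition~\ref{proposition: k qi g}, both $\sep(S)$ and $\mc{T}(S)$ are quasi-isometric to $\ksep_{\mf{S}}(S)$. To upgrade this to the statement that the inclusion $\sep(S)\hookrightarrow\mc{T}(S)$ is itself a quasi-isometry, I would note that it is $1$-Lipschitz and coarsely surjective (every bounding pair is disjoint from some separating curve when $g\ge3$), and that it is a quasi-isometric embedding via the distance formula (Theorem~\ref{theorem: distance formula}): for separating curves $x,y$ both $d_{\sep(S)}(x,y)$ and $d_{\mc{T}(S)}(x,y)$ are quasi-comparable to $\sum_{Y\in\mf{S}}\ignore{d_Y(x,y)}{\sigma}$, a quantity depending only on $x$ and $y$ as multicurves on $S$.

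Finally I would deduce the relative hyperbolicity and identify the peripherals. For $g\ge3$, $\sep(S)$ has disjoint separating curves, so it is not hyperbolic (Theorem~\ref{theorem:hyperbolic_graphs_of_multicurves}), and the hypothesis of Theorem~\ref{theorem:relatively_hyperbolic_graphs_of_multicurves} holds: if $Z,W\in\mf{S}$ are disjoint and co-connected, then $Z,Z^c,W,W^c$ are all connected and planar, and a genus count forces each to be homeomorphic to $S_{0,0}^{g+1}$ with $S\sminus Z=W$. Hence $\sep(S)$, and therefore $\mc{T}(S)$, is hyperbolic relative to $\{P_{\mf{S}}(\partial Z):Z\in\mc{Z}\}$, where $\mc{Z}$ is the set of co-connected witnesses with co-connected complement, and every element of $\mc{Z}$ is in the $\mcg(S)$-orbit of a fixed $Z_0\cong S_{0,0}^{g+1}$. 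By Corollary~\ref{corollary:product regions}, $P_{\mf{S}}(\partial Z_0)$ is quasi-isometric to $\ksep_{\mf{S}_{Z_0}}(Z_0)\times\ksep_{\mf{S}_{Z_0^c}}(Z_0^c)$. I would then argue $\mf{S}_{Z_0}=\{Z_0\}$ (a proper essential subsurface $Y$ of the planar $Z_0$ has at most $g$ boundary curves, and since $\partial Y^c$ has the same number of curves as $\partial Y$ while $\chi(Y^c)=\chi(S)-\chi(Y)$, an Euler-characteristic comparison forces $S\sminus Y$ to carry genus, so $Y\notin\mf{S}$) and conclude, via Proposition~\ref{proposition: k qi g} applied to $\C(Z_0)$ (a hierarchical graph of multicurves on $Z_0$ with witness set $\{Z_0\}$), that $\ksep_{\{Z_0\}}(Z_0)$ is quasi-isometric to $\C(Z_0)=\C(S_{0,g+1})$, and likewise for $Z_0^c$. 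The hard part will be this last paragraph: carrying out the genus and Euler-characteristic bookkeeping needed to verify the hypothesis of Theorem~\ref{theorem:relatively_hyperbolic_graphs_of_multicurves}, to pin down $\mf{S}_{Z_0}$, and to recognize the resulting product region as a product of two curve graphs.
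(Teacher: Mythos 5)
Your proof is correct, and for the witness‐set equality it is essentially the same argument as the paper's: both directions use the inclusion of vertex sets plus the observation that both complementary pieces of a bounding pair must carry genus, so a witness for $\sep(S)$ (which has planar complement) cannot miss a bounding pair. Where you depart from the paper is in how you derive the downstream claims. For the quasi-isometry of the inclusion, the paper appeals to Proposition~\ref{proposition: k qi g} (both graphs include quasi-isometrically into the common model $\ksep_{\mf{S}}(S)$, so the inclusion between them is a quasi-isometry by a two-out-of-three argument); you instead go through the distance formula directly and note coarse surjectivity, which is a valid and slightly more explicit route. For the relative hyperbolicity and identification of the peripherals, the paper simply cites the earlier result of \cite{russell}, whereas you rederive it internally from Theorem~\ref{theorem:relatively_hyperbolic_graphs_of_multicurves} and Corollary~\ref{corollary:product regions}, including the Euler-characteristic bookkeeping showing that disjoint co-connected witnesses must be complementary copies of $S^{g+1}_{0,0}$ and that $\mf{S}_{Z_0}=\{Z_0\}$. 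Your route is more self-contained and makes the paper's classification machinery do the work rather than deferring to the prior paper; the paper's route is shorter but imports an external citation. Both are sound.
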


\begin{proof}
Since every vertex of $\sep(S)$ is also a vertex for $\mc{T}(S)$, we have $\Wit\bigl(\mc{T}(S)\bigr) \subseteq \Wit\bigl(\sep(S)\bigr)$. Now, let $W \in \Wit\bigl(\sep(S)\bigr)$. If $W$ is not a witness of $\mc{T}(S)$, then there exists a bounding pair of non-separating curves $c_1 \cup c_1$  so that $W$ is contained in a component of $S \sminus (c_1\cup c_2)$. Since each component of $S \sminus (c_1 \cup c_2)$ contains genus, this implies a component of $S \sminus W$ contains genus. However, this contradicts $W$ being a witness for $\sep(S)$. Therefore $ W \in \Wit\bigl(\mc{T}(S)\bigr)$ and we have  $\Wit\bigl(\sep(S)\bigr) = \Wit\bigl(\mc{T}(S)\bigr)$.

Since $\Wit\bigl(\sep(S)\bigr) = \Wit\bigl(\mc{T}(S)\bigr)$, Proposition \ref{proposition: k qi g} implies the inclusion  of $\sep(S)$ into $\mc{T}(S)$ is a quasi-isometry whenever $\mc{T}(S)$ is connected. The statement on relative hyperbolicity follows from the relative hyperbolicity of $\sep(S)$ when $S$ is closed; see~\cite{russell}.
\end{proof}

\begin{remark}
An anonymous referee communicated the following direct proof that $\sep(S)$ and $\mc{T}(S)$ are quasi-isometric. Consider a geodesic of $\mc{T}(S)$ with vertices $\alpha_0,\alpha_1, \dots, \alpha_n$. For each $i \in \{1,\dots, n\}$, $\alpha_{i-1}$ and $\alpha_{i+1}$ must intersect. Since each vertex of $\mc{T}(S)$ separates $S$, both $\alpha_{i-1}$ and $\alpha_{i+1}$ must be contained in the same component of $S \sminus \alpha_i$. Now, if $\alpha_i$ is a bounding pair, then there must be a separating curve $\beta_i$ that is in the component of $S \sminus \alpha_i$ that does not contain $\alpha_{i-1}$ and $\alpha_{i+1}$. Hence $\alpha_0, \dots, \alpha_{i-1}, \beta_i, \alpha_{i+1}, \dots, \alpha_n$ are also the vertices of a geodesic in $\mc{T}(S)$. Applying this inductively, any pair of separating curve in $\mc{T}(S)$ can be joined by a geodesic in $\mc{T}(S)$ whose vertices are all separating curves. This makes the inclusion $\sep(S) \to \mc{T}(S)$ an isometric embedding. Since every bounding pair is disjoint from some separating curve, this isometric embedding is a quasi-isometry. 
\end{remark}

\end{example}

Our third example is a family of complexes whose vertices are non-separating multicurves. This family contains the non-separating curve graph, the cut system graph, and a collection of graphs studied  by Hamenst\"adt.  

\begin{example}[Non-separating multicurve graphs] \label{example:non-separating}

Let $k \geq 1$ and $S= S_{g,p}$.  Let $\G(S)$ be any hierarchical graph of multicurves whose vertices are all  non-separating multicurves of $S$ with $k$ components. Examples of such graphs include the non-separating curve graph of $S$ ($k=1$), the graph of cut systems  introduced  by Hatcher and Thurston in~\cite{Hatcher_Thurston_complex} ($k=g$), and the collection of graphs studied by  Hamenst\"adt in \cite{Hamenstadt_non-separating}. 

 For any subsurface $Y \subseteq S$,  the total genus $g$ of $S$ is  equal to the the sum of the genus of $Y$, the genus of $S \sminus Y$, and the number  curves in the largest possible non-separating multicurve $\nu$ of $S$ where each curve of $\nu$ is isotopic to a curve of $\partial Y$.  Using this equation, one can show that a subsurface $Y$ is disjoint from   some vertex $\mu \in \G(S)$ if and only if  the genus of $Y$ is at most $g-k$. Hence, 
the  set of witnesses for $\G(S)$ is the set of all connected subsurfaces whose genus is at least $g - k +1$.
 Moreover, the coarse geometry of $\G(S)$ depends only on $g$, $p$ and~$k$.

Applying the results of this paper to $\G(S)$ we obtain Corollary~\ref{intor_corollary:Non-separating_curve_complexes}.
In particular, this recovers results of Hamenst\"adt for $k<g/2+1$~\cite{Hamenstadt_non-separating} and Li and Ma for the cut system graph of $S_{2,0}$~\cite{Li_Ma_Hatcher-Thurston}.
\end{example}

\introcor*

\begin{proof}
If $k < g/2 + 1$, then every witness of $\G(S)$ must have genus greater than $g/2$. Since any two subsurfaces of $S$ with genus  greater than $g/2$ must intersect, any two witnesses of $\G(S)$ must intersect. Thus, $\G(S)$ is hyperbolic by Theorem~\ref{theorem:hyperbolic_graphs_of_multicurves}.

Suppose $k = g/2 + 1$  and $p=0$.  Let $Z,W$ be a pair of disjoint, co-connected witnesses for $\G(S)$. Since $Z$ and $W$ are witnesses, they must each have genus at least~$g/2$. However, since $Z$ and $W$ are disjoint, each of them has genus exactly~$g/2$. Since $S$ is closed, this implies that $S \sminus Z = W$  and that $Z$ and $W$ are each homeomorphic to~$S^1_{g/2, 0}$. Thus, $\G(S)$ is relatively hyperbolic by Theorem \ref{theorem:relatively_hyperbolic_graphs_of_multicurves}, and the peripheral subsets are all quasi-isometric to $P_\mf{S}(\partial Z)$. 
Because  $Z$ and $W$  both have minimal genus for a witness for $\G(S)$, no proper subsurface of either $Z$ or $W$ is a witness for~$\G(S)$. Thus, Corollary~\ref{corollary:product regions} plus Proposition~\ref{proposition: k qi g}  implies that $P_\mf{S}(\partial Z)$ is quasi-isometric to $\C(S^1_{g/2,0}) \times \C(S^1_{g/2,0})$.

Now let $k = g/2 +1$ and $p>0$. There exists a pair of disjoint separating curves $c_1$ and $c_2$ so that $S \sminus (c_1 \cup c_2)$ has one component homeomorphic to an annulus with $p$ punctures and two components---$Z$ and $W$---both homeomorphic to $S^1_{g/2,0}$; see Figure~\ref{figure:Nonsep_order_2_case} for an example when $g=6$ and $k = 4$. Since $k = g/2+1$, both $Z$ and $W$ are witnesses for $\G(S)$. Thus, $Z$ and $W$ satisfy the hypotheses of Theorem~\ref{theorem:thick_of_order_2_case} and $\G(S)$ is thick of order at most~2. 
\begin{figure}[ht]
    \centering
    \def\svgscale{1}
    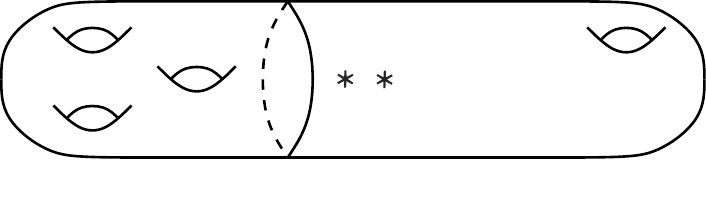
    \caption{An example of the subsurfaces $Z$ and $W$ for $g = 6$ and $k=g/2+1$.}
    \label{figure:Nonsep_order_2_case}
\end{figure}

Finally, suppose $k > g/2+1$. Let $h=g/2 - 1$ if $g$ is even or $(g-1)/2$ if $g$ is odd.  Every subsurface of $S$ with genus at least  $h$ is then a witness for $\G(S)$. Let $c_1$ and $c_2$ be a pair of disjoint separating curves on $S$ so that $S \sminus (c_1 \cup c_2)$ has two components---$Z$ and $W$---both homeomorphic to $S^1_{h,0}$ and a third component $Y$ that has genus at least~$1$; see Figure~\ref{figure:Nonsep_order_1_case} for an example when $g=5$ and $k = 3$.
Since  $Z$ and $W$ both have genus $h$, they are both witnesses of $\G(S)$. Thus, $Z$,$W$, and $Y$ satisfy the hypothesis of Theorem \ref{theorem:thick case no separating annulus} and $\G(S)$ is thick or order~$1$. \qedhere

\begin{figure}[ht]
    \centering
    \def\svgscale{1}
    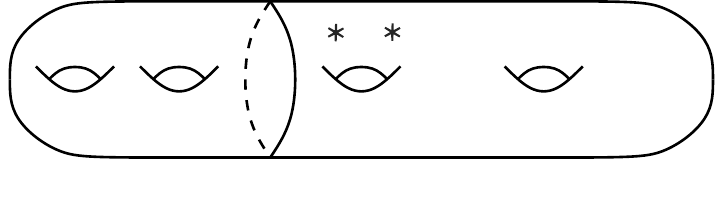
    \caption{An example of subsurfaces $Z$, $W$, and $Y$ for $g = 5$ and $k>g/2+1$.}
    \label{figure:Nonsep_order_1_case}
\end{figure}
\end{proof}

Studying the examples provided above may give the impression that our classification of thickness versus relative hyperbolicity can be simplified. For example, in the above examples, we only need to apply Theorem \ref{theorem:thick_punctured_disc_case} in the special case of  punctured spheres. Further, when one of the above graphs is relatively hyperbolic, the peripheral subsets are always the product of curve graphs of proper subsurfaces because each pair of disjoint witnesses does not contain any other witnesses for the graph. 

These observations are however purely the result of specific graphs and not indicative of a general phenomenon. In our final examples, we use Lemma \ref{lemma:ksep_is_hierarchical} to construct examples of hierarchical graphs of multicurves that require the full strength of  Theorem \ref{theorem:witnesses_determine_geoemetry}.  
We will also illustrate how the simple Euler characteristic criterion of Corollary~\ref{intro_corollary:closed_surface_case} does not work in general for surfaces with positive genus and punctures. The idea behind these examples is to force the set of witnesses to have the desired configuration by choosing  the minimal possible set of witnesses that contains the  mapping class group orbit of some specified subsurfaces.

\begin{example}[Relatively hyperbolic graph with complicated peripherals.]
Let $S = S_{2,6}$.  Let $c$ be a separating curve so that one component of $S \sminus c$ is homeomorphic to $S_{2,0}^1$ and the other is homeomorphic to $S_{0,6}^1$. Let $U \cong S_{1,0}^2$ be a subsurface of the component $S_{2,0}^1$ of $S \sminus c$ so that $\partial U$ contains $c$, and let $V \cong S_{0,3}^2$ be a subsurface of the $S_{0,6}^1$ component of $S\sminus c$ so that  $\partial V$ contains $c$; see Figure \ref{figure:Relatively_Hyperbolic_Case}.

\begin{figure}[ht]
    \centering
    \def\svgscale{1}
    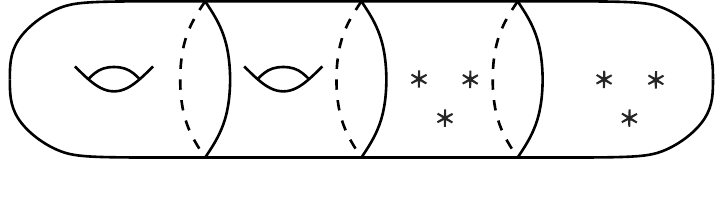
    \caption{The subsurfaces $U$ and $V$ in the complement of the curve $c$. }
    \label{figure:Relatively_Hyperbolic_Case}
\end{figure}

Let $\mf{S}$ be the set of all connected subsurfaces that contain a subsurface in the $\mcg(S)$-orbit of either $U$ or $V$.
Then $\ksep_{\mf S}(S)$ is a hierarchical graph of multicurves with $\mf{S}$ as its set of witnesses; see Lemma \ref{lemma:ksep_is_hierarchical}.
If $Z,W \in \mf{S}$ are a pair of disjoint, co-connected witnesses, then $Z$ and $W$ must be the components of $S \sminus \phi(c)$ for some $\phi \in\mcg(S)$. In particular, $S \sminus Z = W$, implying $\ksep_{\mf{S}}(S)$ is hyperbolic relative to the subsets $\{P_{\mf{S}}(\phi(c)) : \phi \in \mcg(S) \}$ by Theorem \ref{theorem:relatively_hyperbolic_graphs_of_multicurves}.

For $\phi \in \mcg(S)$, let $Z$ and $W$ be the connected components of $S \sminus \phi(c)$ so that $\phi(U) \subseteq Z$ and $\phi(V) \subseteq W$. Recall, for a subsurface $Y \subseteq S$, we define $\mf{S}_Y=\{Q \in \mf{S} : Q \subseteq Y\}$. By Corollary \ref{corollary:product regions}, $P_{\mf{S}}(\phi(c))$ is quasi-isometric to $ \ksep_{\mf{S}_{Z}}(Z) \times \ksep_{\mf{S}_{W}}(W)$. Since $Z$ strictly contains $\phi(U)$ and $W$ strictly contains~$\phi(V)$, both $\mf{S}_{Z}$ and $\mf{S}_{W}$ contain an infinite number of  witnesses.
In particular, the subsurface projection map from either $\ksep_{\mf{S}_{Z}}(Z)$ or  $\ksep_{\mf{S}_{W}}(W)$ to the curve graph of any proper subsurface of~$S$ is not a quasi\hyp{}isometry.

This also gives us an example of how the Euler characteristic criterion of Corollary~\ref{intro_corollary:closed_surface_case} does not work for general surfaces.
If $Z$ is the component of $S \sminus c$ containing~$U$ then $\vert \chi(Z) \vert < \frac{1}{2}\vert \chi(S) \vert$, and yet $\G(S)$ is relatively hyperbolic.
\end{example}

\begin{example}[Thick of order $1$ using Theorem \ref{theorem:thick_punctured_disc_case}]\label{example: using punctured disc thm}
Let $S = S_{2,4}$. Let $Z$ be any co-connected subsurface homeomorphic to $S_{2,0}^1$ and then let $W$ be any subsurface of $S \sminus Z$ that is homeomorphic to $D_3$; see Figure \ref{figure:Thick_1_disk_case}.  Let $\mf{S}$ be the set of connected subsurfaces that contain a subsurface in the $\mcg(S)$-orbit of either $Z$ or $W$. Since $Z$ and $W$ satisfy the hypotheses of Theorem \ref{theorem:thick_punctured_disc_case}, $\ksep_\mf{S}(S)$ is thick of order~$1$. Further, no pair of disjoint, co-connected elements of $\mf{S}$ will satisfy the hypotheses of Theorem~\ref{theorem:thick case no separating annulus}. 
\begin{figure}[ht]
    \centering
    \def\svgscale{1}
    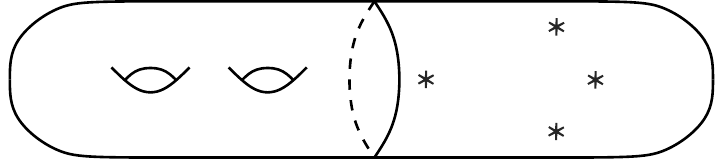
    \caption{The subsurfaces $Z$ and $W$  in Example~\ref{example: using punctured disc thm}.}
    \label{figure:Thick_1_disk_case}
\end{figure}
\end{example}

\begin{example}[Hyperbolic graphs with small witnesses]\label{example: hyperbolic graphs}
We finish with two examples of hyperbolic graphs of multicurves where there exists a witness $W$ which takes up ``less than half'' of the surface $S$ in the sense that $\vert \chi(W) \vert < \frac{1}{2} \vert \chi(S) \vert$.

For the first example let $S=S_{4,0}$.
Since $S$ is a closed surface, Corollary~\ref{intro_corollary:closed_surface_case} tells us that whenever $\mf{S}$ is a set of witnesses for a hierarchical graph of multicurves of~$S$ and whenever there exists a co-connected subsurface $W$ in $\mf{S}$ such that $\vert \chi(W) \vert < \frac{1}{2} \vert \chi(S) \vert$, then $\ksep_{\mf{S}}(S)$ is thick of order~1.
Our example will show why the assumption of co-connected is necessary here.
Let $W$ be a subsurface of $S$ homeomorphic to $S_{0,0}^4$, so that $S \sminus W$ is a disjoint union of four 1-holed tori; see Figure~\ref{figure:hyperbolicsmallwitness}.
Let $\mf{S}$ be the set of all connected subsurfaces containing some mapping class group translate of~$W$, and consider the hierarchical graph of multicurves $\ksep_\mf{S}(S)$.
Now, $2=\vert \chi(W) \vert < \frac{1}{2} \vert \chi(S) \vert=3$.
However, any two subsurfaces in $\mf{S}$ are forced to overlap, and hence $\ksep_\mf{S}(S)$ is hyperbolic by Theorem~\ref{theorem:hyperbolic_graphs_of_multicurves}.

\begin{figure}[ht]
    \centering
    \def\svgscale{.5}
    %% Creator: Inkscape inkscape 0.92.3, www.inkscape.org
%% PDF/EPS/PS + LaTeX output extension by Johan Engelen, 2010
%% Accompanies image file '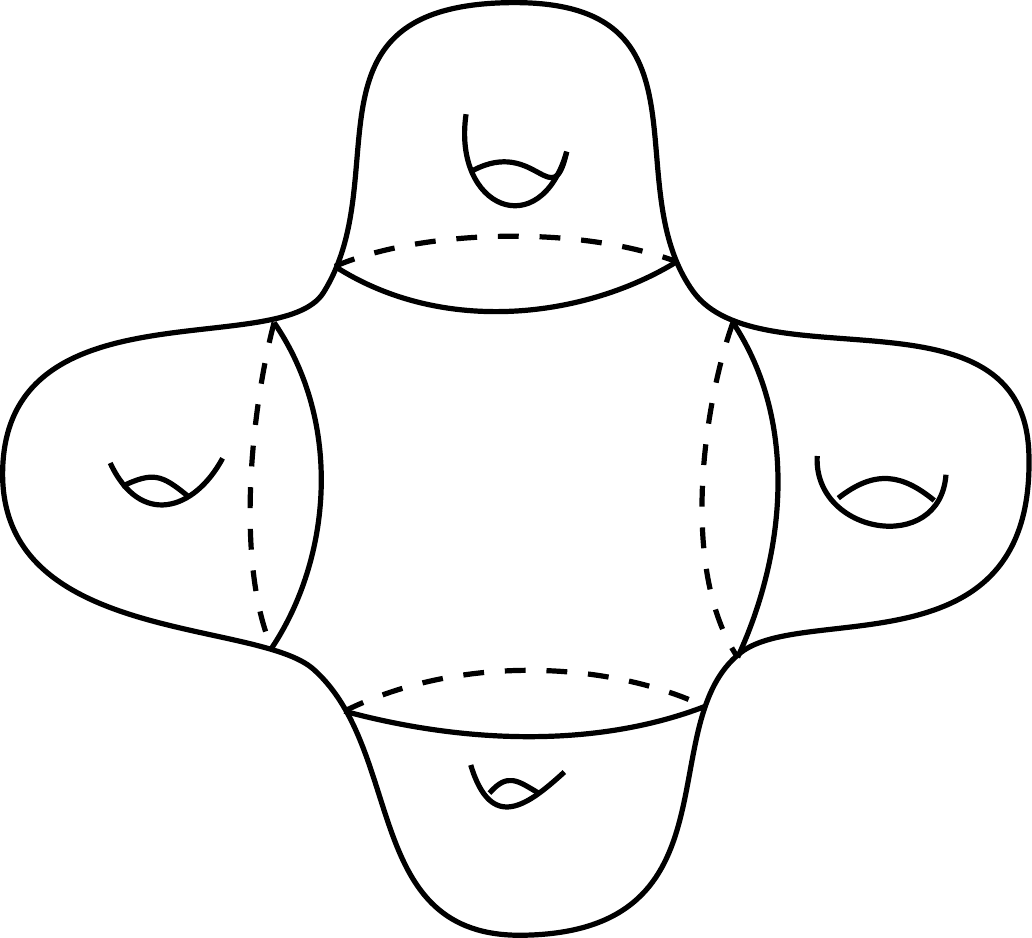' (pdf, eps, ps)
%%
%% To include the image in your LaTeX document, write
%%   \input{<filename>.pdf_tex}
%%  instead of
%%   \includegraphics{<filename>.pdf}
%% To scale the image, write
%%   \def\svgwidth{<desired width>}
%%   \input{<filename>.pdf_tex}
%%  instead of
%%   \includegraphics[width=<desired width>]{<filename>.pdf}
%%
%% Images with a different path to the parent latex file can
%% be accessed with the `import' package (which may need to be
%% installed) using
%%   \usepackage{import}
%% in the preamble, and then including the image with
%%   \import{<path to file>}{<filename>.pdf_tex}
%% Alternatively, one can specify
%%   \graphicspath{{<path to file>/}}
%% 
%% For more information, please see info/svg-inkscape on CTAN:
%%   http://tug.ctan.org/tex-archive/info/svg-inkscape
%%
\begingroup%
  \makeatletter%
  \providecommand\color[2][]{%
    \errmessage{(Inkscape) Color is used for the text in Inkscape, but the package 'color.sty' is not loaded}%
    \renewcommand\color[2][]{}%
  }%
  \providecommand\transparent[1]{%
    \errmessage{(Inkscape) Transparency is used (non-zero) for the text in Inkscape, but the package 'transparent.sty' is not loaded}%
    \renewcommand\transparent[1]{}%
  }%
  \providecommand\rotatebox[2]{#2}%
  \newcommand*\fsize{\dimexpr\f@size pt\relax}%
  \newcommand*\lineheight[1]{\fontsize{\fsize}{#1\fsize}\selectfont}%
  \ifx\svgwidth\undefined%
    \setlength{\unitlength}{297.11176178bp}%
    \ifx\svgscale\undefined%
      \relax%
    \else%
      \setlength{\unitlength}{\unitlength * \real{\svgscale}}%
    \fi%
  \else%
    \setlength{\unitlength}{\svgwidth}%
  \fi%
  \global\let\svgwidth\undefined%
  \global\let\svgscale\undefined%
  \makeatother%
  \begin{picture}(1,0.90911768)%
    \lineheight{1}%
    \setlength\tabcolsep{0pt}%
    \put(0,0){\includegraphics[width=\unitlength,page=1]{hyperbolicsmallwitness.pdf}}%
    \put(0.45329174,0.40407826){\color[rgb]{0,0,0}\makebox(0,0)[lt]{\lineheight{1.25}\smash{\begin{tabular}[t]{l}$W$\end{tabular}}}}%
  \end{picture}%
\endgroup%

    \caption{A ``small'' witness for a hyperbolic hierarchical graph of multicurves.}
    \label{figure:hyperbolicsmallwitness}
\end{figure}

For the second example, the ``small'' witness will be co-connected, but in this case the surface will have both punctures and genus.
Let $S=S_{2,4}$ and let $Z$ and $W$ be subsurfaces homeomorphic to $S^1_{2,2}$ and $D_3$ respectively (so $W$ is as in Figure~\ref{figure:Thick_1_disk_case} above, but $Z$ has been extended to include two of the punctures of $S$).
Define $\mf{S}$ to be the set of all connected subsurfaces containing a subsurface in the $\mcg(S)$\hyp{}orbit of either $Z$ or~$W$.
We have $2=\vert \chi(W) \vert < \frac{1}{2} \vert \chi(S) \vert=3$.
However, once again any two subsurfaces in $\mf{S}$ must intersect, and so $\ksep_\mf{S}(S)$ is hyperbolic.

\end{example}

\bibliography{Bibliography}{}

\begin{thebibliography}{BHS17b}

\bibitem[ABD21]{ABD}
Carolyn Abbott, Jason Behrstock, and Matthew~Gentry Durham.
\newblock Largest acylindrical actions and stability in hierarchically
  hyperbolic groups.
\newblock {\em Trans. Amer. Math. Soc. Ser. B}, 8:66--104, 2021.
\newblock With an appendix by Daniel Berlyne and Jacob Russell.

\bibitem[BDM09]{Behrstock_Drutu_Mosher_Thickness}
Jason Behrstock, Cornelia Dru\c{t}u, and Lee Mosher.
\newblock Thick metric spaces, relative hyperbolicity, and quasi-isometric
  rigidity.
\newblock {\em Math. Ann.}, 344(3):543--595, 2009.

\bibitem[BF06]{Brock_Farb_Rank}
Jeffrey Brock and Benson Farb.
\newblock Curvature and rank of {T}eichm\"{u}ller space.
\newblock {\em Amer. J. Math.}, 128(1):1--22, 2006.

\bibitem[BHS17a]{hhs1}
Jason Behrstock, Mark~F. Hagen, and Alessandro Sisto.
\newblock Hierarchically hyperbolic spaces {I}: {C}urve complexes for cubical
  groups.
\newblock {\em Geom. Topol.}, 21(3):1731--1804, 2017.

\bibitem[BHS17b]{BHS_coxeter_thick_random}
Jason Behrstock, Mark~F. Hagen, and Alessandro Sisto.
\newblock Thickness, relative hyperbolicity, and randomness in {C}oxeter
  groups.
\newblock {\em Algebr. Geom. Topol.}, 17(2):705--740, 2017.
\newblock With an appendix written jointly with Pierre-Emmanuel Caprace.

\bibitem[BHS19]{hhs2}
Jason Behrstock, Mark~F. Hagen, and Alessandro Sisto.
\newblock {Hierarchically hyperbolic spaces II: {C}ombination theorems and the
  distance formula}.
\newblock {\em Pacific J. Math.}, 299:257--338, 2019.

\bibitem[BM08]{Brock_Masur_WP_Low_complexity}
Jeffrey Brock and Howard Masur.
\newblock Coarse and synthetic {W}eil-{P}etersson geometry: {Q}uasi-flats,
  geodesics and relative hyperbolicity.
\newblock {\em Geom. Topol.}, 12(4):2453--2495, 2008.

\bibitem[DMS10]{DMS_divergence}
Cornelia Dru\c{t}u, Shahar Mozes, and Mark Sapir.
\newblock Divergence in lattices in semisimple {L}ie groups and graphs of
  groups.
\newblock {\em Trans. Amer. Math. Soc.}, 362(5):2451--2505, 2010.

\bibitem[DMS18]{DMS_Corrigendum}
Cornelia Dru\c{t}u, Shahar Mozes, and Mark Sapir.
\newblock Corrigendum to ``{D}ivergence in lattices in semisimple {L}ie groups
  and graphs of groups''.
\newblock {\em Trans. Amer. Math. Soc.}, 370(1):749--754, 2018.

\bibitem[FI05]{farbivanov}
Benson Farb and Nikolai~V. Ivanov.
\newblock The {T}orelli geometry and its applications: research announcement.
\newblock {\em Math. Res. Lett.}, 12(2-3):293--301, 2005.

\bibitem[FM12]{primer}
Benson Farb and Dan Margalit.
\newblock {\em A primer on mapping class groups}, volume~49 of {\em Princeton
  Mathematical Series}.
\newblock Princeton University Press, Princeton, NJ, 2012.

\bibitem[Hag19]{Hagen_free-by-cyclic}
Mark Hagen.
\newblock A remark on thickness of free-by-cyclic groups.
\newblock {\em Illinois J. Math.}, 63(4):633--643, 2019.

\bibitem[Ham14]{Hamenstadt_non-separating}
Ursula Hamenst\"{a}dt.
\newblock Hyperbolicity of the graph of nonseparating multicurves.
\newblock {\em Algebr. Geom. Topol.}, 14(3):1759--1778, 2014.

\bibitem[HT80]{Hatcher_Thurston_complex}
Allen Hatcher and William Thurston.
\newblock A presentation for the mapping class group of a closed orientable
  surface.
\newblock {\em Topology}, 19(3):221--237, 1980.

\bibitem[Hum79]{Humphries}
Stephen~P. Humphries.
\newblock Generators for the mapping class group.
\newblock In {\em Topology of low-dimensional manifolds ({P}roc. {S}econd
  {S}ussex {C}onf., {C}helwood {G}ate, 1977)}, volume 722 of {\em Lecture Notes
  in Math.}, pages 44--47. Springer, Berlin, 1979.

\bibitem[Kra81]{kra}
Irwin Kra.
\newblock On the {N}ielsen--{T}hurston--{B}ers type of some self-maps of
  {R}iemann surfaces.
\newblock {\em Acta Math.}, 146(3-4):231--270, 1981.

\bibitem[LM13]{Li_Ma_Hatcher-Thurston}
Youlin Li and Jiming Ma.
\newblock Hyperbolicity of the genus two {H}atcher--{T}hurston complex.
\newblock {\em Math. Z.}, 273(1-2):363--378, 2013.

\bibitem[MM99]{mm1}
Howard~A. Masur and Yair~N. Minsky.
\newblock Geometry of the complex of curves. {I}. {H}yperbolicity.
\newblock {\em Invent. Math.}, 138(1):103--149, 1999.

\bibitem[MM00]{mm2}
Howard~A. Masur and Yair~N. Minsky.
\newblock Geometry of the complex of curves. {II}. {H}ierarchical structure.
\newblock {\em Geom. Funct. Anal.}, 10(4):902--974, 2000.

\bibitem[Put08]{putman}
Andrew Putman.
\newblock A note on the connectivity of certain complexes associated to
  surfaces.
\newblock {\em Enseign. Math. (2)}, 54(3-4):287--301, 2008.

\bibitem[Rus22]{russell}
Jacob Russell.
\newblock From hierarchical to relative hyperbolicity.
\newblock {\em Int. Math. Res. Not. IMRN}, (1):575--624, 2022.

\bibitem[RV19]{russell_vokes}
Jacob Russell and Kate~M. Vokes.
\newblock The (non)-relative hyperbolicity of the separating curve graph.
\newblock {\em arXiv:1910.01051}, 2019.

\bibitem[Sis19]{HHS_survey}
Alessandro Sisto.
\newblock What is a hierarchically hyperbolic space?
\newblock In {\em Beyond Hyperbolicity}, volume 454 of {\em London Math. Soc.
  Lecture Note Ser.}, pages 117--148. Cambridge Univ. Press, Cambridge, 2019.

\bibitem[Sul14]{sultan_genus_two}
Harold Sultan.
\newblock Separating curve complex of the genus two surface: {Q}uasi-distance
  formula and hyperbolicity.
\newblock {\em Topology Proc.}, 44:161--176, 2014.

\bibitem[Sul15]{sultanthesis}
Harold Sultan.
\newblock The asymptotic cone of {T}eichm\"{u}ller space and thickness.
\newblock {\em Algebr. Geom. Topol.}, 15(5):3071--3106, 2015.

\bibitem[Vok22]{vokessep}
Kate~M. Vokes.
\newblock Hierarchical hyperbolicity of graphs of multicurves.
\newblock {\em Algebr. Geom. Topol.}, 22(1):113--151, 2022.

\end{thebibliography}
\bibliographystyle{alpha}

\end{document}